\newtheorem{theorem}{Theorem}
\newtheorem{definition}{Definition}
\newtheorem{lemma}{Lemma}
\newtheorem{cor}{Corollary}
\newtheorem{remark}{Remark}
\newtheorem{prop}{Proposition}
\newtheorem{problem}{Problem}
\newcommand{\be}{\begin{enumerate}}
\newcommand{\ee}{\end{enumerate}}
\newcommand{\beq}{\begin{equation}}
\newcommand{\eeq}{\end{equation}}
\def\N{{\mathbb{N}}}
\def\Z{{\mathbb{Z}}}
\def\MN{{\mathbb{N}}}
\def\MA{{\mathbb{A}}}
\def\MB{{\mathbb{B}}}
\def\MM{{\mathbb{M}}}
\def\MF{{\mathbb{F}}}
\def\L{{\mathcal{L}}}
\title{Tarski-type problems for free associative  algebras}
\author{Olga Kharlampovich \footnote{Hunter College, CUNY, Supported by  PSC-CUNY  award} and Alexei Myasnikov \footnote{Stevens Institute of Technology, encouraged   by NSF grant DMS-1502254}}
\date{}
\begin{document}

\maketitle

\begin{abstract} In this paper we study fundamental model-theoretic questions for free associative algebras, namely,  first-order  classification, decidability of the first-order theory, and definability of the set of free bases. We show that two free associative algebras of finite rank over  fields are elementarily equivalent if and only if their  ranks are the same and the fields are equivalent in the weak second order logic. In particular, two free associative algebras of finite rank over the same field are elementarily equivalent if and only if they are isomorphic.  We prove that if an arbitrary ring $B$ with  at least one  Noetherian  proper centralizer  is first-order equivalent to a free associative algebra of finite rank over an infinite field then $B$ is also a free associative algebra of finite rank over a field. This solves the elementary classification problem for free associative algebras in a wide class of rings. Finally, we present a formula of the ring language which defines the set of free bases in  a free associative algebra of finite rank.

\end{abstract}

\tableofcontents

\section{Introduction} 
In this paper we give a complete answer to Tarski's-type questions  on  decidability of the first-order theory  and first-order classification for free associative algebras $\MA_K(X)$ with  basis $X$ over a field $K$ in the language of ring theory . Furthermore, we make a major  advance in understanding which arbitrary rings are elementarily equivalent to a given algebra $\MA_K(X)$. We also show that the set of free bases in $\MA_K(X)$ is definable. We obtain these results for unitary, as well as non-unitary, free associative algebras.  This is the first paper in a series on the project on model theory of algebras outlined in our talk  at the ICM in Seoul \cite{ICM}.

Tarski's problems on groups, rings, and other algebraic structures were very inspirational  and led to some  important developments in modern algebra and model theory. 
Usually solutions to  these   problems for some  structure clarify the most fundamental algebraic properties of the structure and give perspective on the expressive power of the first-order logic in the structure.  Indeed, it suffices to mention here results on first-order theories   of algebraically closed fields,  real closed fields \cite{Tarski2}, the fields of $p$-adic  numbers \cite{Ax-Kochen, Ershov}, abelian groups and modules \cite{Szmielewa, Baur}, boolean algebras \cite{Tarski3, Ershov3}, and free and hyperbolic groups \cite{KM1, KM2, Sela, Sela8}. 

In this paper we show that the first-order theory $Th(\MA_K(X))$ of the algebra $\MA_K(X)$ (to avoid trivialities we always assume that $|X| \geq 1$)  is undecidable for any field $K$ and basis $X$ with  (Theorem \ref{th:undecidable}). Furthermore,  algebras  $\MA_K(X)$ and $\MA_L(Y)$ are  first-order (elementarily) equivalent if and only if  either they ranks are finite and equal, or the ranks are both infinite,  and the fields $K$ and $L$ are equivalent in the weak second order logic (Theorem \ref{th:Tarski-unitary}). The latter is a very strong condition on the fields, much stronger then the first-order equivalence. These results in the case of the polynomials   in one variable, i.e., when $|X| = 1$, were known before, see \cite{Robinson,bauval}.  Our main technical tool  is the method of first-order interpretation (see Section \ref{se:interpretation}). We show that the finite rank $|X|$, the arithmetic  $\N = \langle N, +,\cdot, 0\rangle$, and the weak second order theory of the infinite field $K$  are all interpretable in $\MA_K(X)$ uniformly in $K$ and $X$. Here we say that the weak second order theory of  a structure $B$ is  interpretable in $\MA_K(X)$ if the first-order structure $HF(B)$ of hereditary finite sets over $B$, or equivalently, the list superstructure $S(B,\N)$, is interpretable in $\MA_K(X)$ (see Section \ref{se:weak-second-order} for precise definitions). It turns out that  the expressive power of the first-order language of rings is so strong  in $\MA_K(X)$ that it allows one to describe how $\MA_K(X)$ is built from $X$ and $K$. More precisely, on the one hand  the structure $S(K,\N)$ is first-order interpretable in $\MA_K(X)$ (for an infinite $K$), on the other hand,  one can easily construct  an interpretation $\MA^\ast$ of $\MA_K(X)$ in $S(K,\N)$. In fact, one can interpret any "constructive over $K$" algebra $L$ in $S(K,\N)$, but usually this interpretation $L^\ast$ and the original algebra $L$ are not related much. However, in the case of $\MA_K(X)$ we showed that there is a strong relationship between $\MA^\ast$ and $\MA_K(X)$. This relationship  gives a powerful tool to study arbitrary rings which are first-order equivalent to a given algebra $\MA_K(X)$. In particular, we show that if a ring $B$, which has at least one  Noetherian proper centralizer, is first-order equivalent to $\MA_K(X)$ with finite $X$ and infinite $K$, then $B$ is also a free associative algebra over a field $L$ with a finite basis $Y$, in which case $|X| = |Y|$ and $K$ and $L$ are equivalent in the weak second order logic (Theorem \ref{th:elem-classif-unital}). This is an important  breakthrough in our understanding of the first-order properties and  model theory of $\MA_K(X)$. Another result (Theorem \ref{th:bases}) that exploits the established  relationship between  $\MA^\ast$ and $\MA_K(X)$ is that the set of free bases is definable in the algebra $\MA_K(X)$ when the basis $X$ is finite and the field $K$ is infinite.  
In Section \ref{se:non-unitary} we get similar results for non-unital free associative algebras (Theorems \ref{th:undecidable-non-unitary}, \ref{th:Tarski-non-unitary}, \ref{th:elem-classif-non-unital}).  Moreover, we establish a curious connection between para-free associative algebras and first-order classification for $\MA_K(X)$. Namely, we show that every residually nilpotent algebra which is first-order equivalent to $\MA_K(X)$ must be para-free, but which para-free algebras are indeed first-order equivalent to $\MA_K(X)$ remains an open question. We also construct an example of countable algebra which is first-order equivalent to $\MA_K(X)$ but not residually nilpotent. It seems the algebraic structure of such algebras is beyond our  current understanding. However, studying para-free algebras having the same first-order theory as   $\MA_K(X)$ seems like a very interesting project. At the end of the paper we discuss some open problems on this and related subjects.

\section{Preliminaries}

\subsection{Interpretations}\label{se:interpretation}

The model-theoretic technique of interpretation or definability is crucial in our considerations. Because of that we remind here some precise definitions and several known facts that may not be very familiar to algebraists. 

A language $L$ is a triple $(\mathcal{F}_L, \mathcal{P}_L, \mathcal{C}_L)$, where $\mathcal{F}_L = \{f, \ldots \}$ is a  set of functional symbols $f$ coming together with their arities  $n_f \in \mathbb{N}$,  $\mathcal{P}_L$ is  a set of relation (or prediacte) symbols $\mathcal{P}_L = \{P, \ldots \}$ coming together with their arities  $n_P \in \mathbb{N}$,  and a set of constant symbols $ \mathcal{C}_L = \{c, \ldots\}$. Sometimes we write $f(x_1, \ldots,x_n)$ or $P(x_1, \ldots,x_n)$ to show that $n_f = n$ or $n_P = n$.  Usually we denote variables by small letters $x,y,z, a,b, u,v, \ldots$, while the same symbols with bars $\bar x,  \ldots$ denote tuples of the corresponding variables $\bar x = (x_1, \ldots,x_n), \ldots $. A structure in the language $L$ (an $L$-structure) with the base set $A$ is sometimes denoted by $\mathbb{A} = \langle A; L\rangle$ or simply by 
$\mathbb{A} = \langle A; f, \ldots, P, \ldots,c, \ldots \rangle$.  For a given structure $\mathbb{A}$ by $L(\mathbb{A})$ we denote the language of $\mathbb{A}$. Throughout this paper we use frequently the following languages that we fix now: the language of groups $\{ \cdot, ^{-1}, 1\}$, where $\cdot$ is the binary multiplication symbol, $^{-1}$ is the symbol of inversion, and $1$ - the constant symbol for the identity; and the language of rings $\{+, \cdot, 0\}$ with the standard symbols for addition, multiplication, and the additive identity $0$. Sometimes we add the constant $1$ to the language for unitary rings (our rings apriori are not unitary).  When the language $L$ is clear from the context, we follow the standard algebraic practice and denote the structure $\mathbb{A} = \langle A; L\rangle$ simply by $A$. For example, we refer to a field  $\mathbb{F}  = \langle F; +,\cdot,0,1 \rangle$ simply  by $F$, or to a group $\mathbb{G} = \langle G; \cdot,^{-1},1\rangle$ as $G$, etc.

Let $\mathbb{B} = \langle B ; L(\mathbb{B})\rangle$ be a structure. A subset $A \subseteq B^n$ is called {\em definable} in $\mathbb{B}$ if there is a formula $\phi(x_1, \ldots,x_n)$ in $L(\mathbb{B})$ such that  $A = \{(b_1,\ldots,b_n) \in B^n \mid \mathbb{B} \models \phi(b_1, \ldots,b_n)\}$. In this case one says that $\phi$ defines $A$ in $\mathbb{B}$.  Similarly, an operation $f$ or a predicate $P$ on the subset  $A$ is defined in $\mathbb{B}$ if its graph is definable in $\mathbb{B}$. 

In the same vein  an algebraic structure $\mathbb{A} = \langle A ;f, \ldots, P, \ldots, c, \ldots\rangle$  is definable in $\mathbb{B}$ if there is a definable subset $A^* \subseteq  B^n$ and operations $f^*, \ldots, $ predicates $P^*, \ldots, $ and constants $c^*, \ldots, $ on $A^*$ all definable in $\mathbb{B}$ such that the structure $\mathbb{A}^* = \langle A^*; f^*, \ldots, P^*, \ldots,c^*, \ldots, \rangle$ is isomorphic to $\mathbb{A}$.
For example, if $Z$ is the center of a group $G$  then it is definable as a group  in $G$, the same for the center of a ring. 

One can do a bit more in terms of definability. In the notation above if $\sim$ is a definable  equivalence relation on the definable subset $ A \subseteq B^n$ then we say that the quotient set $A^* = A/\sim$ is {\em interpretable} in $\mathbb{B}$.  Furthermore, an operation $f$ or a predicate $P$ on the quotient set $A^*$ is interpretable in $\mathbb{B}$ if the full preimage of its graph in $A$ is definable in $\mathbb{B}$. For example, if $N$ is a normal definable subgroup of a group $G$, then the equivalence relation $x \sim y$ on $G$ given by $xN = yN$ is definable in $G$, so the quotient set $G/N$ of  all right cosets of $N$ is interpretable in $G$. It is easy to see that the multiplication induced 
from $G$ on $G/N$ is also interpretable in $G$. This show that the quotient group $G/N$ is interpretable in $G$. Now we vastly generalize this construction.

\begin{definition} \label{de:interpretable} An algebraic  structure $\mathbb{A} = \langle A ;f, \ldots, P, \ldots, c, \ldots\rangle$  is interpretable  in a structure $\mathbb{B}$  if there is a  subset $A^* \subseteq B^n$  definable in $\mathbb{B}$, an equivalence relation $\sim$ on $A^*$ definable in $\mathbb{B}$, operations  $f^*, \ldots, $ predicates $P^*, \ldots, $ and constants $c^*, \ldots, $ on the quotient set $A^*/\sim$ all interpretable in $\mathbb{B}$ such that the structure $\mathbb{A}^* = \langle A^*/\sim; f^*, \ldots, P^*, \ldots,c^*, \ldots, \rangle$ is isomorphic to $\mathbb{A}$.
 \end{definition}

Interpretation of $\mathbb{A}$ in a class of structures $\mathcal{C}$ is    {\em uniform}  if the formulas that interpret   $\mathbb{A}$ in a structure $\mathbb{B}$ from $\mathcal{C}$   are the same for every structure $\mathbb{B}$ from $\mathcal{C}$.

Sometimes, to define a subset or interpreted a structure $\mathbb{A}$ in a   given structure $\mathbb{B}$ one has to add some elements, say from a subset $P \subseteq B$  to the language $L = L(\mathbb{B})$ as  new constants (we denote the resulting language by $L(\mathbb{B})_P$).    In this case we say that $\mathbb{A}$ is {\em relatively interpretable } or {\em interpretable   with parameters} $P$  in $\mathbb{B}$.  {\it Uniform interpretability with parameters} in a class   $\mathcal{C}$ means that the formulas that interpret   $\mathbb{A}$ in a structure $\mathbb{B}$ from $\mathcal{C}$   are the same for every structure $\mathbb{B}$ from $\mathcal{C}$ and parameters in each such  $\mathbb{B}$ come from  subsets uniformly definable in $\mathcal{C}$.
If we want to emphasize that the interpretability is without constants we say {\em absolutely} interpretable or {\it $0$-interpretable}. In most cases we have the absolute interpretability, so if not said otherwise, throughout the paper interpretability means absolute  interpretability.  
We write $\MA \to_{int} \MB$ when $\MA$ is absolutely interpretable in $\MB$.

The following is a principle result on interpretability. 
\begin{lemma} \cite{Hodges} \label{le:interpr_corol}. If $\MA$ is interpretable in $\MB$  with parameters $P$ then for every formula $\psi(\bar x)$ of $L(\MA)$ one can effectively construct a formula $\psi^*(\bar y, P)$ of $L(\MB)$ such that for any assignment of variables $x_i \to a_i \in \MA$ (so the tuple $\bar x$ goes to a tuple $\bar a$) one has
  $$\MA\models \psi (\bar a)\iff \MB\models \psi^*(\mu (\bar a),P).$$
In particular, for every first-order  sentence $\phi$ in the language of $\MA$  one can effectively construct a sentence $\phi^\ast$ in the language of $\MB$  such that
$$
\MA \models \phi \Longleftrightarrow \MB \models \phi^\ast.
$$
\end{lemma}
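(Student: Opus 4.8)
The plan is to proceed by induction on the logical complexity of the formula $\psi$, following the standard translation procedure attached to an interpretation. First I would fix the data of the interpretation of $\MA$ in $\MB$ with parameters $P$: the domain formula $\delta(\bar y, P)$ of $L(\MB)_P$ defining $A^\ast \subseteq B^n$ (so a variable of $L(\MA)$ now corresponds to an $n$-tuple $\bar y$ of variables of $L(\MB)$), the formula $\varepsilon(\bar y_1, \bar y_2, P)$ defining the equivalence relation $\sim$ on $A^\ast$, the coordinate map $\mu$ realizing the isomorphism $\MA \cong \langle A^\ast/\!\sim; f^\ast, \ldots, P^\ast, \ldots, c^\ast, \ldots\rangle$, and for each functional, relational, or constant symbol of $L(\MA)$ the $L(\MB)_P$-formula defining the full preimage in $A^\ast$ of the graph of its interpretation. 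The translation $\psi \mapsto \psi^\ast$ then replaces each $L(\MA)$-variable $x_i$ by such an $n$-tuple $\bar y_i$.

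Next I would spell out the translation clause by clause. For an atomic formula of $L(\MA)$ of the form $t_1 = t_2$ or $P(t_1, \ldots, t_k)$, I first reduce to the case in which all $t_j$ are variables, by repeatedly introducing fresh $n$-tuples of $L(\MB)$-variables together with existential quantifiers to "name" the values of the subterms (using the defining formulas of the $f^\ast$ and $c^\ast$, relativized by $\delta$), and then I replace the resulting atomic formula on variables by the corresponding $L(\MB)_P$-formula defining the preimage of the relevant graph; the equality symbol of $L(\MA)$ is translated by $\varepsilon$. Boolean connectives are preserved, $(\neg\theta)^\ast = \neg\theta^\ast$ and $(\theta_1 \wedge \theta_2)^\ast = \theta_1^\ast \wedge \theta_2^\ast$, and quantifiers are relativized to the domain: $(\exists x\,\theta)^\ast = \exists \bar y\,(\delta(\bar y, P) \wedge \theta^\ast)$ and dually for $\forall$. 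Each of these is a bounded syntactic rewriting, so $\psi^\ast$ is produced effectively from $\psi$ and the fixed interpretation data, which yields the "effectively construct" part of the statement.

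Then I would verify the correctness equivalence $\MA \models \psi(\bar a) \iff \MB \models \psi^\ast(\mu(\bar a), P)$ by induction on $\psi$. The base case is precisely the content of Definition \ref{de:interpretable}: $\mu$ is an isomorphism onto $\MA^\ast = \langle A^\ast/\!\sim; f^\ast, \ldots\rangle$, while $\sim$, the graphs of the $f^\ast$, the $P^\ast$, and the $c^\ast$ pull back to $L(\MB)_P$-definable subsets of $A^\ast$. The connective steps follow immediately from the induction hypothesis. For the quantifier step one uses that $\mu$ is a bijection from $\MA$ onto $A^\ast/\!\sim$: a witness $a \in \MA$ for $\exists x\,\theta$ corresponds to a $\sim$-class containing some representative $\bar b \in A^\ast$, and conversely any $\bar b$ satisfying $\delta$ represents $\mu(a)$ for a unique $a \in \MA$; here one must check that the truth value of $\theta^\ast(\bar b, \ldots)$ depends only on the $\sim$-class of $\bar b$, which is where the congruence property of $\sim$ with respect to the interpreted operations and relations is invoked. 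The "in particular" clause is the special case with no free variables: $\phi^\ast$ is then a sentence of $L(\MB)$ mentioning the constants $P$, and $\MA \models \phi \iff \MB \models \phi^\ast$.

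There is no deep obstacle here, as this is the classical reduction lemma (see \cite{Hodges}); the only point demanding genuine care is the handling of terms together with representative-independence in the atomic and quantifier steps. When a subterm is "named" by a fresh tuple $\bar y$, the translated formula must record both that $\bar y \in A^\ast$ and that $\bar y$ is $\sim$-equivalent to (not literally equal to) the tuple computed by the interpreted operation; forgetting to pass to $\sim$-classes is the usual trap. Carrying the parameter tuple $P$ through every clause as extra constants is routine bookkeeping.
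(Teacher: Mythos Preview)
Your proof is correct and is precisely the standard induction-on-complexity argument for the reduction theorem associated to an interpretation. Note, however, that the paper does not supply its own proof of this lemma: it is stated with a citation to \cite{Hodges} and no argument is given, so there is nothing to compare against beyond observing that your write-up is the textbook proof one would find there.
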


The following are two  important corollaries, that we use throughout the paper.

\begin{cor} \label{co:interp} 
\begin{itemize}
\item  If $\MA$ is 0-interpretable in $\MB$ and the first-order theory $Th(\MA)$ is undecidable then  $Th(\MB)$ is also undecidable. 
\item  If $\MA_1$ is 0-interpretable in $\MB_1$ by the same formulas as $\MA_2$ in $\MB_2$ then  $\MB_1 \equiv \MB_2$ implies $\MA_1 \equiv \MA_2$.
\end{itemize}
\end{cor}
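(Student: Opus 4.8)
The plan is to derive both items directly from Lemma \ref{le:interpr_corol}, applied in the special case where the parameter set $P$ is empty — which is exactly what $0$-interpretability provides. The heart of the matter is the observation that the translation $\phi \mapsto \phi^\ast$ furnished by that lemma is \emph{effective} and depends only on $\phi$ together with the fixed tuple of interpreting formulas, not on the particular structure in which the interpretation is being carried out.

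For the first item, suppose $\MA$ is $0$-interpretable in $\MB$. Then Lemma \ref{le:interpr_corol} (with $P = \emptyset$) gives an algorithm that, on input a sentence $\phi$ of $L(\MA)$, outputs a sentence $\phi^\ast$ of $L(\MB)$ with $\MA \models \phi \iff \MB \models \phi^\ast$, that is, $\phi \in Th(\MA) \iff \phi^\ast \in Th(\MB)$. Thus $\phi \mapsto \phi^\ast$ is a total computable many-one reduction of $Th(\MA)$ to $Th(\MB)$. Hence if $Th(\MB)$ were decidable we could decide $Th(\MA)$ by computing $\phi^\ast$ and querying the decision procedure for $Th(\MB)$; contrapositively, undecidability of $Th(\MA)$ forces undecidability of $Th(\MB)$.

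For the second item, since $\MA_1$ and $\MA_2$ are interpreted by the \emph{same} formulas, in particular they have the same language, $L(\MA_1) = L(\MA_2) =: L$, and for any sentence $\phi$ of $L$ the lemma produces a single sentence $\phi^\ast$ in the common language of $\MB_1$ and $\MB_2$ — the construction uses only $\phi$ and the shared interpreting formulas — such that $\MA_i \models \phi \iff \MB_i \models \phi^\ast$ for $i = 1, 2$. If $\MB_1 \equiv \MB_2$, then $\MB_1 \models \phi^\ast \iff \MB_2 \models \phi^\ast$, and chaining the two equivalences yields $\MA_1 \models \phi \iff \MA_2 \models \phi$. As $\phi$ was arbitrary, $\MA_1 \equiv \MA_2$.

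The argument is short, and the only points requiring care are bookkeeping: that $0$-interpretability really does mean $P = \emptyset$, so that each $\phi^\ast$ is an honest parameter-free sentence (needed both to speak of $\phi^\ast \in Th(\MB)$ and to compare $\MB_1 \models \phi^\ast$ with $\MB_2 \models \phi^\ast$), and that the output of the translation algorithm genuinely does not depend on the ambient structure but only on the interpreting data. Both are immediate from the statement of Lemma \ref{le:interpr_corol}, so there is no substantial obstacle here.
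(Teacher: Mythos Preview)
Your proof is correct and is exactly the intended argument: the paper states this as a corollary of Lemma~\ref{le:interpr_corol} without giving any proof, and your derivation---using the effective, parameter-free translation $\phi\mapsto\phi^\ast$ to obtain a many-one reduction for the first item and a uniform translation for the second---is precisely what the corollary is meant to record.
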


Above we discussed properties of the  absolute interpretability, however there is one result on relative interpretability that we  use in the sequel.

\begin{theorem} \cite{Ershov-Lavrov} \label{th:Ershov}
If the  natural numbers $\N = \langle N\mid +,\cdot, 0,1\rangle$ are relatively  interpretable  in $\MB$ then the first-order theory $Th(\MB)$ is undecidable.
\end{theorem}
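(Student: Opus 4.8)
The plan is to deduce the statement from the classical \emph{essential undecidability} of Robinson's arithmetic. Recall (Robinson) that there is a finitely axiomatized theory $Q$ in the language of arithmetic $\{0,1,+,\cdot\}$, true in $\N$, which is essentially undecidable: every consistent, deductively closed theory in that language containing $Q$ is non-recursive. The one genuine difficulty is that the hypothesis only gives a \emph{relative} interpretation of $\N$ in $\MB$, involving a tuple of parameters $\bar p$ (elements of $B$) that need not be named by any term of $L(\MB)$; so one cannot simply translate an arithmetic sentence and ``plug in $\bar p$''. The remedy is to quantify the parameters away, while guarding them by a formula forcing the interpreted object to be a model of $Q$, so that it becomes irrelevant which exact models the admissible parameter tuples produce.

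First I would fix the data of the given interpretation: a domain formula $\mathrm{Dom}(\bar x,\bar y)$, a formula for the equivalence relation $\sim_{\bar y}$, formulas for the operations $\oplus_{\bar y},\otimes_{\bar y}$ and for the constants $\mathbf 0_{\bar y},\mathbf 1_{\bar y}$, where $\bar y$ is the tuple of parameter variables. I would then write an $L(\MB)$-formula $\mathrm{Valid}(\bar y)$ asserting that for this value of $\bar y$ the listed formulas genuinely define a structure --- the domain is nonempty, $\sim_{\bar y}$ is an equivalence relation and a congruence, and the operation formulas define everywhere-defined single-valued operations on $\sim_{\bar y}$-classes, with the constants among them; all of this is first-order. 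Since $Q$ is finitely axiomatized, let $Q^\ast(\bar y)$ be the single $L(\MB)$-formula obtained by translating the conjunction of the axioms of $Q$ through the interpretation (Lemma \ref{le:interpr_corol}), and put $\Phi(\bar y):=\mathrm{Valid}(\bar y)\wedge Q^\ast(\bar y)$. By hypothesis $\MB\models\mathrm{Valid}(\bar p)$ and the interpreted structure $\M_{\bar p}$ is isomorphic to $\N$, hence a model of $Q$, so $\MB\models\Phi(\bar p)$ by Lemma \ref{le:interpr_corol}; and, again by that lemma, whenever $\MB\models\Phi(\bar b)$ the structure $\M_{\bar b}$ is a genuine model of $Q$.

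Now consider the set of arithmetic sentences
$$
D := \{\,\phi : \MB\models \forall\bar y\,(\Phi(\bar y)\rightarrow\phi^\ast(\bar y))\,\},
$$
where $\phi^\ast(\bar y)$ is the translation of $\phi$ under the interpretation. Using Lemma \ref{le:interpr_corol} one checks that $D=\bigcap\{\,Th(\M_{\bar b}) : \MB\models\Phi(\bar b)\,\}$: if $\MB\models\Phi(\bar b)$ and $\phi\in D$ then $\MB\models\phi^\ast(\bar b)$, hence $\M_{\bar b}\models\phi$; conversely if $\M_{\bar b}\models\phi$ for every admissible $\bar b$ then $\MB\models\Phi(\bar b)\rightarrow\phi^\ast(\bar b)$ for all $\bar b$. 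Thus $D$ is deductively closed (an intersection of complete theories), consistent (because $\M_{\bar p}\cong\N$ is one of the $\M_{\bar b}$), and contains $Q$ (because $\Phi(\bar y)$ entails $\theta^\ast(\bar y)$ for each axiom $\theta$ of $Q$). By essential undecidability of $Q$, the theory $D$ is not recursive. On the other hand, the map $\phi\mapsto\forall\bar y\,(\Phi(\bar y)\rightarrow\phi^\ast(\bar y))$ from arithmetic sentences to $L(\MB)$-sentences is computable --- it is the effective translation of Lemma \ref{le:interpr_corol} followed by a fixed syntactic operation --- and $\phi\in D$ iff its image lies in $Th(\MB)$; so if $Th(\MB)$ were decidable, $D$ would be recursive, a contradiction. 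Hence $Th(\MB)$ is undecidable. As noted, the crux is precisely the elimination of parameters: one must quantify over $\bar y$ under the guard $\Phi$, and it is essential (not merely ordinary) undecidability of $Q$ that makes the argument insensitive to whatever nonstandard tuples $\bar b$ happen to satisfy $\Phi$.
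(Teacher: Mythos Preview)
The paper does not prove this theorem; it is quoted with a citation to \cite{Ershov-Lavrov} and used as a black box. So there is no in-paper argument to compare against, and the relevant question is simply whether your proof is correct.

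It is. Your argument is the standard one: translate the finitely many axioms of Robinson's $Q$ through the interpretation, guard the parameter tuple by the resulting formula $\Phi(\bar y)$, and observe that
\[
D=\{\phi:\MB\models \forall\bar y\,(\Phi(\bar y)\rightarrow\phi^\ast(\bar y))\}
\]
is a consistent deductively closed extension of $Q$, hence non-recursive by essential undecidability, while $\phi\mapsto\forall\bar y\,(\Phi(\bar y)\rightarrow\phi^\ast(\bar y))$ is a computable many-one reduction of $D$ to $Th(\MB)$. The one point worth saying explicitly is that $D$ contains \emph{all} of $Q$, not just its axioms: you have the axioms in $D$ by construction of $\Phi$, and then deductive closure of $D$ (which you established as an intersection of complete theories) gives the rest. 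With that remark your proof is complete and self-contained; it supplies exactly what the paper omits.
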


\begin{definition}
Algebraic structures  $\MA$ and $\MB$ are called {\em bi-interpretable} if  the following conditions hold:
\begin{itemize}
\item $\MB$ is interpretable in $\MA$ as $\MB^*$ (see Definition \ref{de:interpretable} above), $\MA$ is interpretable in $\MB$ as $\MA^*$, which by transitivity  implies that $\MA$ is interpretable in $\MA$, say by $\MA^{**}$, as well as $\MB$ in $\MB$, say as $\MB^{**}$.
\item  There is an  isomorphism $\MA \to \MA^{**}$ which is definable in $\MA$ and there is an isomorphism $\MB \to \MB^{**}$ definable in $\MB$.
\end{itemize}
\end{definition}

\subsection{Weak second order logics}\label{se:weak-second-order} 
\label{se:weak-second-order-intro}

For a set $A$ let  $Pf(A)$ be the set of all finite subsets of $A$.
Now we define by induction  the set $HF(A)$  of hereditary finite sets over $A$;
\begin{itemize}
\item  $HF_0(A)= A$,  
\item $HF_{n+1}(A) = HF_n(A)\cup Pf(HF_n(A))$, 
\item  $HF(A)=\bigcup _{n\in\omega}HF_n(A).$  
 \end{itemize}

For a structure $\MA = \langle A;L \rangle$ define a new two-sorted structure $HF(\MA)$ as follows:
$$
HF(\MA) = \langle \MA, HF(A); \in\rangle,
$$
where the first sort is the structure $\MA$ in the language $L$, the second sort is the set $HF(A)$, and $\in$ is the membership predicate defined on $A \cup HF(A)$. 

One can replace $HF(\MA) $ by a usual first-order structure  as follows. Firstly, one replaces all operations in $L$ by the corresponding predicates (the graphs of the operations) on $A$, so one may assume from the beginning that $L$ consists only of predicate symbols. Secondly, replace  the two-sorted  structure $\langle \MA, HF(A); \in\rangle$ by a structure $\langle A \cup HF(A); L, P_A, \in\rangle$, where $L$ is defined on the subset $A$, $P_A$ defines $A$ in $A \cup HF(A)$, and $\in$ is the membership predicate on $A \cup HF(A)$.  The both structures are "logically equivalent", they both encapsulate the {\em weak second order logic} over $\MA$, i.e., everything that can be expressed in the weak second order logic in $\MA$ can be expressed in the first-order logic in $HF(\MA)$, and vice versa. The structure  $HF(\MA)$ appears naturally in the  weak second order logic, the theory of admissible sets, and $\Sigma$-definability, - we refer to  \cite{B1,B2,E,Ershov2} for details.

 There is another structure, termed the {\em list superstructure} $S(\MA,\MN)$ over $\MA$ whose the first-order theory has the same expressive power as the weak second order logic over $\MA$ and which  is more convenient for us to use in this paper. To introduce $S(\MA,\MN)$ we need a few definitions.  Let $S(A)$ be the set of all finite sequences (tuples) of elements from $A$.
 For a  structure $\MA = \langle A;L \rangle$ define in the notation above  a new two-sorted structure $S(\MA)$ as follows:
$$
S(\MA) = \langle \MA, S(A); \frown, \in\rangle,
$$
where $\frown$ is the binary operation of  concatenation of two sequences from $S(A)$ and $a \in s$ for $a \in A, s \in S(A)$ is interpreted as $a$ being a component of the tuple $s$.  As customary in the formal language theory we will denote the concatenation  $s\frown t$ of two sequences $s$ and $t$ by   $st$.
 
 Now, the structure $S(\MA,\MN)$ is defined as  the three-sorted structure 
 $$
 S(\MA,\MN) = \langle \MA, S(A),\MN; t(s,i,a), l(s), \frown, \in \rangle,
 $$
 where $\N = \langle N\mid +,\cdot, 0,1\rangle$  is the standard arithmetic, $l:S(A) \to N$ is the length function, i.e., $l(s)$ is the length $n$ of a sequence $s= (s_1, \ldots,s_{n})\in S(A)$, and $t(x,y,z)$ is a predicate on $S(A)\times N \times  A$ such that $t(s,i,a)$ holds in $S(\MA,\MN)$ if and only if $s = (s_1, \ldots,s_{n})\in S(A), i \in N, 1\leq i \leq n$, and $a = s_i \in A$. Observe, that in this case the predicate $\in$ is 0-definable in $S(\MA,\N)$ (with the use of $t(s,i,a)$), so sometimes we omit it from the language. Sometimes, in the notation above,  we write $t_i(s) = a$ to indicate that $t(s,i,a)$ holds in $S(\MA,\MN)$ as was described above.  
 
  In the following lemma we summarize some known results (see for example \cite{bauval}) about the structures $HF(\MA), S(\MA)$, and $S(\MA,\N)$.
  
  \begin{lemma} \label{le:superstructures}
  Let $\MA$ be a structure. Then the following holds:
    $$S(\MA) \to_{int} S(\MA,\N) \to_{int} HF(\MA) \to_{int} S(\MA,\N) \to_{int} S(\MA)$$ 
    uniformly in $\MA$ (the last interpretation requires that $\MA$ has at least two elements).
   \end{lemma}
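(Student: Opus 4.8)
The plan is to read the displayed chain as four separate interpretation claims --- namely $S(\MA)$ in $S(\MA,\N)$, then $S(\MA,\N)$ in $HF(\MA)$, then $HF(\MA)$ in $S(\MA,\N)$, and finally $S(\MA,\N)$ in $S(\MA)$ --- and to verify each one directly, keeping track throughout that the interpreting formulas do not depend on $\MA$ (which is what ``uniformly'' asks for). The first claim is immediate: $S(\MA)$ is, up to the obvious isomorphism, a reduct of $S(\MA,\N)$, so one keeps the two sorts $A$ and $S(A)$ with the trivial equality relations and interprets $\frown$ and $\in$ as themselves --- recall that $\in$ is already $0$-definable in $S(\MA,\N)$ via $t$. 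For $S(\MA,\N)\to_{int}HF(\MA)$ I would work inside $HF(\emptyset)\subseteq HF(A)$: interpret $\N$ as the set of von Neumann finite ordinals (the $x$ that are transitive and have all elements transitive; since everything in $HF(A)$ is hereditarily finite these are exactly the natural numbers), with $0=\emptyset$, successor $n\mapsto n\cup\{n\}$, and $+,\cdot$ given by the usual recursions, which are expressible over $HF$ because a finite function is coded by a finite set of Kuratowski pairs; and interpret $S(A)$ as the finite functions $s=\{(1,a_1),\dots,(n,a_n)\}$ with domain an initial segment of $\N^{*}$ and range in the $A$-sort, so that $l(s)$ is the ordinal coding $|\operatorname{dom}s|$, $t(s,i,a)$ is ``$(i,a)\in s$'', concatenation is shift-and-union, and membership of an $A$-element in a sequence is ``$\exists i\,(i,a)\in s$''. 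No parameters are used and the formulas do not mention $\MA$.

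The third claim, $HF(\MA)\to_{int}S(\MA,\N)$, is the crux. I would code a hereditarily finite set $x$ by a finite $A$-sequence that lists (in some order) the atoms occurring hereditarily in $x$, together with arithmetical data stored in the $\N$-sort --- which itself codes finite sets and finite sequences of naturals, say via G\"odel's $\beta$-function --- recording the finite membership tree of $x$: for each internal vertex the finite set of indices of its children, and for each leaf either the tag ``$\emptyset$'' or a pointer to a position in the $A$-sequence. The set $HF(A)^{*}$ of well-formed codes is first-order definable; the equivalence $\sim$ that identifies two codes describing the same set is the usual extensionality/bisimulation recursion, which here collapses to a single first-order formula because the trees are finite and at each level one only quantifies over index-sets already named by the code; and $\in^{*}$, together with the predicate singling out the copy of $A$, is then read off. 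The main obstacle is precisely making this bookkeeping genuinely first-order --- above all, checking that ``the two coded sets are extensionally equal'' is expressible without unbounded recursion --- and it is the finiteness just noted that makes it go through.

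Finally, for $S(\MA,\N)\to_{int}S(\MA)$, where now $|A|\ge 2$: represent $n\in\N$ by any length-$n$ sequence, i.e.\ set $\N^{*}=S(A)/{\sim}$ with $s\sim t$ iff $s$ and $t$ have equal length; then $0^{*}$ is the empty sequence, $1^{*}$ a one-element sequence, $+^{*}$ is concatenation, $l(s)$ is simply the class $[s]$, and $t(s,i,a)$ is ``$s=s'\frown c\frown s''$ for some $s',s''$ and a one-element sequence $c$ with $[s']=i$ and $a\in c$''. The nontrivial point --- defining ``equal length'' and multiplication with no arithmetic yet in hand --- is settled by the classical device of coding a finite sequence of naturals $(n_1,\dots,n_k)$ by a word $ab^{n_1}a\cdots ab^{n_k}a$ over a two-element subalphabet $\{a,b\}\subseteq A$ (this is the one place the hypothesis $|A|\ge2$ is used), after which ``$|s|=|t|$'', addition, and multiplication are all of the form ``there exists a word coding the sequence of partial sums (respectively, partial products) \dots''; see \cite{bauval}. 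The two distinct letters $a\ne b$ may be carried as parameters ranging over a uniformly definable set, or eliminated by an existential quantifier, so the interpretation is still uniform in $\MA$.
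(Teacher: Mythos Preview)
The paper does not prove this lemma: it is stated as a summary of known facts with a reference to \cite{bauval}, so there is no ``paper's own proof'' to compare against. Your outline is correct and is essentially the standard argument one would find in such sources.

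Two small technical remarks. In your interpretation of $\N$ inside $HF(\MA)$, bear in mind that an urelement $a\in A$ is vacuously ``transitive with all elements transitive'' (nothing is $\in a$), so as written your class of finite von Neumann ordinals would contain every atom; you should additionally demand $\neg P_A(x)$ and that no atom lies in the transitive closure of $x$. The predicate $P_A$ is part of the language of $HF(\MA)$ as the paper sets it up, so this costs nothing. Second, in the last step your handling of the auxiliary letters $a\neq b$ is fine: the relations you actually need (``same length'', addition, multiplication, the coordinate predicate) are independent of the particular choice of $a,b$, so an outer quantifier $\exists a\,\exists b\,(a\neq b\wedge\cdots)$ yields a genuine $0$-interpretation, uniform in $\MA$, as the lemma requires.
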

  
  The following result is known, it is based on two facts: the first one is that there are effective enumerations (codings) of the set of all tuples of natural numbers such that the natural operations over the tuples are computable on their codes;  and the second one is that  all computably enumerable predicates over natural numbers  are 0-definable in $\N$ (see, for example, \cite{Coper, Rogers}).  
  
\begin{lemma} \label{le:list-superstructure}
The list superstructure  $S(\N,\N)$ is 0-interpretable in $\N$.
\end{lemma}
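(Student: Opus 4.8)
The plan is to fix a standard computable coding of finite tuples of natural numbers by single natural numbers and then observe that, transported through this coding, every sort, function and predicate of $S(\N,\N)$ becomes a computable object on $N$, hence $0$-definable in $\N$ by the second fact recalled just above (computably enumerable predicates over $N$ are $0$-definable in $\N$); the coding together with the computability of its basic operations is exactly the content of the first fact. So the argument is essentially bookkeeping.

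Concretely, $S(\N,\N)=\langle\,\N,\,S(N),\,\N;\ t(s,i,a),\ l(s),\ \frown,\ \in\,\rangle$ has three sorts. The first and third sorts are literally $\N$, so they are interpreted by the identity interpretation (their operations $+,\cdot,0,1$ being those of $\N$). For the middle sort I would fix an injective coding $\langle\cdot\rangle\colon S(N)\to N$ --- for instance $\langle(a_1,\dots,a_n)\rangle=\prod_{i=1}^{n}p_i^{\,a_i+1}$, where $p_i$ is the $i$-th prime and the empty tuple is coded by $1$ (any classical coding works equally well: iterated Cantor pairing, or a length together with a G\"odel $\beta$-pair $(a,b)$). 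Let $\mathrm{Seq}\subseteq N$ be the image of $\langle\cdot\rangle$. Then $\mathrm{Seq}$ is computable --- $s\in\mathrm{Seq}$ iff $s\geq 1$ and for every prime $p$ dividing $s$, every prime smaller than $p$ also divides $s$ --- and therefore $0$-definable in $\N$, since primality, divisibility, and the order $x\leq y\iff\exists z\,(x+z=y)$ are $0$-definable in $\N$ and computable predicates over $N$ are $0$-definable in $\N$.

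The remaining primitives now translate into computable operations on $N$: $l$ sends a code $s$ to the number of primes dividing it; $\frown$ sends codes $\prod_{i\leq m}p_i^{\,a_i+1}$ and $\prod_{j\leq k}p_j^{\,b_j+1}$ to $\bigl(\prod_{i\leq m}p_i^{\,a_i+1}\bigr)\cdot\bigl(\prod_{j\leq k}p_{m+j}^{\,b_j+1}\bigr)$; the predicate $t(s,i,a)$ becomes ``$s\in\mathrm{Seq}$, $1\leq i\leq l(s)$, and the exponent of $p_i$ in $s$ equals $a+1$''; and $\in$ becomes its evident derived form. Each of these is computable (using that $i\mapsto p_i$, divisibility, and exponentiation have computable graphs), hence $0$-definable in $\N$. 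To turn this three-sorted picture into a $0$-interpretation in the single-sorted structure $\N$ I would realize the disjoint union of the three sorts inside $N^{2}$ by tagging: the base sort as $\{(0,n):n\in N\}$, the sequence sort as $\{(1,s):s\in\mathrm{Seq}\}$, and the arithmetic sort as $\{(2,n):n\in N\}$, all $0$-definable; the interpreted functions and predicates are then defined on these by cases and remain $0$-definable. By construction the resulting structure is isomorphic to $S(\N,\N)$, so $S(\N,\N)\to_{int}\N$.

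I do not expect a serious obstacle here: the one mildly technical point is that the chosen coding makes $\frown$ and the component-extraction predicate $t$ computable --- equivalently, arithmetically definable --- which ultimately rests on exponentiation (or, with the $\beta$-function coding, just on integer remainder) having a computable graph; but that is precisely what the first fact quoted before the statement provides. Everything else is a routine assembly of definitions by cases.
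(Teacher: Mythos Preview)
Your proposal is correct and follows exactly the approach the paper itself outlines: the paper does not give a detailed proof but merely cites the two facts you invoke (an effective coding of tuples with computable basic operations, and $0$-definability in $\N$ of computably enumerable predicates), and your argument is a faithful unpacking of precisely that sketch with a concrete prime-power coding.
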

We sometimes denote $S(\N,\N)$ by $S^1(\N,\N)$. Notice that the sets of tuples  $S(\N)$ in the interpretation above of $S(\N,\N)$ in $\N$ is a  0-definable subset of $\N$.
 One can consider a set $S(S(\N) \cup \N)$ of all tuples $(s_1, \ldots,s_m)$, where $s_i \in S(\N) \cup \N$ and extend naturally the functions and predicates $t(s,i,a), l(s), \frown, \in$ above to  the set $S(S(\N) \cup \N)$.  This gives a structure 
 $$
 S^2(\N,\N) = \langle S(S(\N) \cup \N),\N; t(s,i,a), l(s), \frown, \in \rangle
 $$
  
  Similar argument to the above gives the following result.
\begin{lemma} \label{le:list-superstructure2}
The structure  $S^2(\N,\N)$ is 0-interpretable in $\N$.
\end{lemma}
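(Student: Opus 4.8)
The plan is to iterate the construction behind Lemma \ref{le:list-superstructure}, or equivalently to use transitivity of interpretability in the chain
$$
S^2(\N,\N) \;\to_{int}\; S^1(\N,\N) \;\to_{int}\; \N ,
$$
where the second arrow is Lemma \ref{le:list-superstructure}. Thus it suffices to 0-interpret $S^2(\N,\N)$ in $S^1(\N,\N)$, and in fact it will be just as easy to do it directly in $\N$. The key point is the remark made right after Lemma \ref{le:list-superstructure}: in the fixed 0-interpretation $\Gamma$ of $S^1(\N,\N)$ in $\N$, the carrier $S(\N)$ of the ``tuples'' sort is represented by a 0-definable subset $D_1 \subseteq \N$, the arithmetic sort is represented by $\N$ itself, and the functions and predicates $t(s,i,a), l(s), \frown, \in$ are 0-definable on the corresponding powers of $\N$. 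Tagging the two kinds of objects (say, by an even/odd marker) one obtains a 0-definable subset $E \subseteq \N$ which, together with the induced structure, is a faithful copy of the set $S(\N) \cup \N$ appearing as the ``base'' of $S^2(\N,\N)$.

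Then I would repeat the coding step one more level up, but over the base set $E$ instead of $\N$: fix a primitive recursive bijection $c$ from the set $S(E)$ of all finite tuples of elements of $E$ onto a 0-definable set $D_2 \subseteq \N$ (for instance a standard Cantor or Gödel sequence encoding, composed with the inclusion $E \subseteq \N$); this exists and has 0-definable graph because $E$ is 0-definable in $\N$ and every computably enumerable relation over $\N$ is 0-definable in $\N$. Under $c$, the concatenation $\frown$ on $S(E)$, the length function $l \colon S(E) \to \N$, the selector predicate $t(s,i,a)$ on $S(E) \times \N \times E$, and the membership predicate $\in$ on $E \times S(E)$ all become computable, hence 0-definable, relations on the appropriate powers of $\N$. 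Interpreting the arithmetic sort of $S^2(\N,\N)$ as $\N$ itself, the ``big tuples'' sort as $D_2$ (with equality as the equivalence relation, so that no genuine quotient is needed), and the ``base'' sort as $E$, and equipping them with the translated operations, one gets a first-order structure in the language of $S^2(\N,\N)$ that is isomorphic to $S^2(\N,\N)$ via $\Gamma$ composed with $c$. All formulas involved are parameter-free, so the interpretation is absolute.

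The construction involves no genuinely new idea beyond Lemma \ref{le:list-superstructure}; the only thing that needs care is bookkeeping. The objects listed by a tuple in $S(S(\N) \cup \N)$ are heterogeneous --- some are natural numbers, some are themselves tuples of natural numbers --- so the encoding $c$ must keep the two kinds distinguishable (this is the role of the tagged set $E$) and the translated operations $t$ and $\in$ must respect the typing; and one must check that each of the finitely many translated operations is indeed computable on codes, which reduces to the routine verification that ``unpack a code, perform the operation, repack'' is a primitive recursive procedure. I expect this typing and computability bookkeeping to be the only, and entirely routine, obstacle.
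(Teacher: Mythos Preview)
Your proposal is correct and is essentially the same approach the paper takes: the paper simply says ``similar argument to the above gives the following result,'' and what you have written is precisely a fleshing-out of that iteration of the coding from Lemma~\ref{le:list-superstructure}, using that $S(\N)$ sits inside $\N$ as a $0$-definable set so that one can tag, take tuples again, and appeal once more to the $0$-definability of computable predicates. There is nothing to add.
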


One can consider also  structures $S^m(\N,\N)$ for any $m$, but we do not need it in this paper.

 The following result plays an important part in our study of elementary equivalence of free associative algebras. It is  known in folklore, but we put it here with a proof, since we will need the construction in the sequel.

 \begin{theorem} \label{th:int-A(X)-in-S(K,N)}
 Let $X$ be a finite or countable set and $K$ a field. Then the free associative algebra $\MA_K(X)$ with basis $X$ over a field $K$ is 0-interpretable in the structure $S(K,\N)$ uniformly in $K$ and the cardinality of $X$.
 \end{theorem}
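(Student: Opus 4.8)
The plan is to realize $\MA_K(X)$ concretely as a structure built on top of $S(K,\N)$ by encoding non-commutative polynomials as finite lists of (coefficient, monomial) pairs, where monomials over $X$ are themselves encoded as finite sequences of indices. Fix an enumeration of $X$ by an initial segment of $\N$ (finite or all of $\N$); a monomial $x_{i_1}\cdots x_{i_d}$ is then the tuple $(i_1,\dots,i_d)\in S(\N)$, and since $S(\N)$ is a $0$-definable subset of $S(K,\N)$ (via the $\N$-sort), the set of monomials is $0$-definable. A term of a polynomial is a pair (monomial, nonzero scalar), which we encode as a two-element list whose first component lies in $S(\N)$ and whose second lies in $K$; using the list operations $t,l,\frown$ and the field sort this is again $0$-definable. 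Finally an element of $\MA_K(X)$ is a finite list of such terms in which no monomial repeats — a condition expressible by a first-order formula over $t$, $l$ and equality of monomials. To get genuine elements of the algebra (not just a chaotic encoding) we impose the equivalence relation $\sim$ identifying two such lists iff they are permutations of one another after deleting zero-coefficient terms; this is $0$-definable since "$s$ and $s'$ have the same multiset of terms" is expressible by saying each term of $s$ occurs in $s'$ with the matching scalar and vice versa (lengths bounded, so the quantification is over the $\N$-sort). Call this $0$-definable quotient $\MA^\ast$.

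Next I would define the ring operations on $\MA^\ast$. Addition of two normalized lists is "concatenate, then collect coefficients of equal monomials, then drop zeros"; one writes a formula saying: $c = a+b$ iff for every monomial $m$, the coefficient of $m$ in $c$ equals the coefficient of $m$ in $a$ plus that in $b$ (coefficients read off by the $t$-predicate, the sum taken in the $K$-sort, with $0$ meaning "$m$ does not occur"). Multiplication is the Cauchy product: the coefficient of $m$ in $a\cdot b$ is $\sum_{m = m'm''}(\text{coeff of }m'\text{ in }a)\cdot(\text{coeff of }m''\text{ in }b)$, where $m = m'm''$ is the concatenation predicate on $S(\N)$ and the (finite) sum is organized by a bounded induction over splittings of $m$ — this is exactly the kind of "primitive-recursive over $\N$" construction that $S(K,\N)$ supports, using that the length of $m$ bounds the number of splittings. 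The constant $0$ is the empty list, and (in the unital case) $1$ is the singleton list $((\,),1_K)$ with the empty monomial. Each of these graphs pulls back to a $0$-definable subset of the appropriate power of $S(K,\N)$, so all operations are interpretable; and the construction uses only the cardinality of $X$ (to bound the index set) plus the field $K$, so it is uniform in $K$ and $|X|$.

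Finally I would verify that $\langle \MA^\ast/\!\!\sim;\, +^\ast,\cdot^\ast,0^\ast,(1^\ast)\rangle$ is isomorphic to $\MA_K(X)$: the map sending a normalized list $((m_1,c_1),\dots,(m_k,c_k))$ to $\sum_j c_j m_j\in\MA_K(X)$ is well-defined on $\sim$-classes, is a bijection because every element of a free associative algebra has a unique normal form as a finite $K$-linear combination of distinct words in $X$, and is a ring homomorphism by the definitions of $+^\ast$ and $\cdot^\ast$, which literally transcribe the addition and the Cauchy multiplication of the free algebra. The main obstacle — really the only place requiring care rather than bookkeeping — is writing the multiplication formula so that the unbounded-looking sum over factorizations $m=m'm''$ genuinely stays first-order: the point is that $m'$ ranges over prefixes of the fixed sequence $m$, hence over at most $l(m)+1$ values, so the sum can be presented as the value at step $l(m)$ of a $0$-definable recursion on $S(K,\N)$, and Lemma~\ref{le:list-superstructure} (coding of tuples and $0$-definability of c.e. predicates in $\N$) is exactly what makes such recursions available inside $S(K,\N)$. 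Everything else — normalization, the permutation equivalence, addition — is routine bounded quantification over the list and $\N$ sorts.
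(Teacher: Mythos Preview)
Your approach is correct and follows the same overall strategy as the paper: code monomials as tuples of indices via the interpretation of $S(\N,\N)$ inside the $\N$-sort (Lemma~\ref{le:list-superstructure}), code polynomials as parallel lists of coefficients and monomials, and define the ring operations together with an equivalence identifying different representations of the same polynomial. The paper organizes the bookkeeping differently: it allows \emph{non-normalized} representations (monomials may repeat, coefficients may vanish), so that addition is literally concatenation $(a_1,s_1)\oplus(a_2,s_2)=(a_1\frown a_2,\,s_1\frown s_2)$ and multiplication is simply the length-$\ell(a_1)\ell(a_2)$ list of all pairwise products, indexed by a fixed computable bijection $\pi:\{1,\dots,\ell(a_1)\}\times\{1,\dots,\ell(a_2)\}\to\{1,\dots,\ell(a_1)\ell(a_2)\}$; all the work is pushed into the equivalence relation, which is defined via an explicit definable reduction-to-normal-form procedure (collect like monomials, then delete zero coefficients) carried out inside $S^2(\N,\N)$. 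Your choice to restrict to normalized representations makes the equivalence trivial (permutation only) at the cost of making multiplication the delicate step; your bounded-recursion argument for the Cauchy sum over the $\ell(m)+1$ prefix--suffix splittings of $m$ is exactly what is needed there and is supported by the same coding lemma. One small technical caution: a ``two-element list whose first component lies in $S(\N)$ and whose second lies in $K$'' is not literally available in $S(K,\N)$, whose list sort is $S(K)$ only; you should instead encode a polynomial as a \emph{pair} consisting of a tuple of coefficients in $S(K)$ and a tuple of monomial-codes in the $\N$-coded copy of $S(T)$, of equal length --- which is precisely the paper's encoding, and to which your argument adapts without change.
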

\begin{proof}

In  Lemma \ref{le:list-superstructure} we described how one can   0-interpret  the superstructure $S(\N,\N)  = \langle \N, S(\N),\MN; t(s,i,a), l(s), \frown, \in \rangle$  in $\N$. Fix a particular such interpretation and denote it by  $S(\N,\N)^\ast$. This allows us to assume that  the tuples from $S(\N)$ and operations and predicates from $S(\N,\N)$ are 0-interpretable  in $\N$. Furthermore, as was mentioned right after  Lemma \ref{le:list-superstructure} in the  interpretation $S(\N,\N)^\ast$ the set of tuples $S(\N)$ is  interpreted by a 0-definable subset of $\N$ (by the set of the codes of these tuples with respect to some fixed efficient enumeration of the tuples). Since $\N$ is a part of $S(K,\N)$ the argument above gives an interpretation of $S(\N,\N)$ in $S(K,\N)$ uniformly in $K$. Similarly, by Lemma \ref{le:list-superstructure2}, the structure $S^2(\N,\N)$ is interpretable in $S(K,\N)$ uniformly in $K$. 
 
 Let $X = \{x_1, \ldots,x_n\}$ and consider the following interpretation of the free monoid $\MM_X$  in $S(\N,\N)$.  A monomial $M=x_{i_1}\ldots x_{i_m}\in\MM_X$ can be uniquely  represented by a tuple of natural numbers $ t_M=(i_1,\ldots, i_m)$.  Here we assume that the identity 1 in $\MM_X$ (the trivial monomial) is represented by the number 0. Denote by $T$ the set of all tuples $t= (t_1, \ldots,t_m) \in S(\N)$, $m\in \N$,  such that for any $i$ one has $1\leq t_i \leq n$. Here again we assume that tuples of length 0 are all equal to each other and represented by the number 0.   Conversely,  with  any tuple $t= (t_1, \ldots,t_m) \in T$  one can associate a  monomial $M_t = x_{t_1} \ldots x_{t_m} \in \MM_X$ (here $M_0 = 1$ in $\MM_X$). The multiplication in $\MM_X$  corresponds to  concatenation of  tuples  in  $T$, which is 0-definable in $S(\N,\N)$. The construction above gives a  0-interpretation of $\MM_X$  in $S(\N,\N)$. Combining this interpretation with the interpretation  $S(\N,\N)^\ast$ of  $S(\N,\N)$ in $\N$ one gets a 0-interpretation of $\MM_X$ in $\N$, hence in $S(K,\N)$, which we denote by $\MM_X^*$. Observe that the map $M \to t_M$ gives rise to an isomorphism $\MM_X \to \MM_X^*$,   while the map $t \to M_t$ - to the inverse isomorphism $\MM_X^* \to \MM_X$.  Notice that the 0-interpretation $\MM_X^*$ of $\MM_X$  in $S(K,\N)$ is uniform in $K$ and $n = |X|$. 
 
 Building on the interpretation  $\MM_X^*$ of $\MM_X$ in $S(\N,\N)$  we interpret $\MA_K(X)$ in $S(K, \N)$ as follows.  For an element  $f=\sum_{i= 1}^e \alpha _iM_i \in\MA_K(X)$, where $\alpha_i \in K, M_i \in \MM_X$,  we associate a pair  $q_f = (\overline\alpha, \overline t)$, where $\overline\alpha = (\alpha_1, \ldots, \alpha_e)$, $\overline t = (t_{M_1}, \ldots,t_{M_e})$.  
 Recall, that the representation $f=\sum_{i= 1}^e \alpha _iM_i \in\MA_K(X)$ above is the {\em reduced form} of $f$ if $\alpha_i \neq 0$ (unless $e = 1$ and $M_1 = 1$ in $\MM_X$) and $M_i \neq M_j$ for $i \neq j$. A reduced  form of $f$ is unique up to a permutation of summands.

 Denote by  $S(T)$ the set of all tuples of elements of $T$ and by $S(T)^0$ the subset of all tuples $s = (t_1, \ldots,t_e)$ such that $t_i \neq t_j$. By $C$ we denote the set of tuples $a = (\alpha_1, \ldots, \alpha_e)$ of elements from $K$, and by $C^0$ the subset of all such tuples  where $\alpha_i \neq 0$.  Finally, put
 $$
A = \{ (a,s)    \mid  a \in C, s \in S(T), \ell(a) = \ell(s) \}
$$
$$
 A^0 = \{ (a,s)    \mid  a \in C^0, s \in S(T)^0, \ell(a) = \ell(s) \} 
  $$
 here $\ell(a)$ and $\ell(s)$ are the lengths of the tuples $a, s$. One can view the set $A$ as a set which "represents"  non-zero elements of $\MA_K(X)$.  Namely, a pair  $(a,s) =((\alpha_1, \ldots,\alpha_m),(t_1, \ldots,t_m))$ represents a polynomial $f_{(a,s)} = \sum_i\alpha_iM_{t_i}$. In this vein,  the set $A^0$ represents the set of "reduced  forms" of non-zero elements of   $\MA_K(X)$.   The map $\tilde A \to \MA_K(X)$  which maps $(a,s) \to f_{(a,s)}$ is onto the set of all non-zero elements from  $\MA_K(X)$. 
 
  After we formally add "zero" to $A$  by setting   $\tilde A = A \cup \{((0),(0))\}$,  where $ ((0),(0))$ is the tuple that corresponds to the element $0\cdot1 = 0$ in $\MA_K(X)$ (as before $(0)$ is the tuple that corresponds to 1 in $\MA_X$, since $0$ corresponds to 1 in $\MM_X$), the map $\tilde A \to \MA_K(X)$,  which maps $(a,s) \to f_{(a,s)}$ and $((0),(0)) \to 0$ is onto. 
  
 To interpret  $\MA_K(X)$ in $S(K,\N)$ we need to define by formulas of $S(K,\N)$   an equivalence relation $\sim$ on $A$ such that $(a,s) \sim (a_1,s_1)$ if and only if $f_{(a,s)} = f_{(a_1,s_1)}$ in $\MA_K(X)$.  We do it in several  steps.
 
Firstly, we define by formulas the restriction $\sim_0$ of $\sim$ onto $A^0$.
 Notice that   $(a,s) \sim_0 (a_1,s_1)$ on $A^0$  if and only if the following conditions hold:
 \begin{itemize}
 \item  the length of $s$ is equal to the length of $s_1$, 
 \item  every component of $s$ is equal to some component of $s_1$,  and vice versa,
 \item  if $i$'s component of $s$ is equal to $j$'s component of $s_1$ then $i$'s component of $a$ is equal to $j$'s component of $a_1$,
 \end{itemize}
 and these conditions can be written by a  formula, say $\psi(x_1,x_2,y_1,y_2)$  in the language of $S(K,\N)$ (using the predicate $t(s,i,a)$ and the function $\ell(s)$ from the definition of $S(K,\N)$).  We extend $\sim_0$ onto $ A \cup \{((0),(0))\}$
by  setting that $((0),(0))$ is equivalent only to itself. 
 
 Now we show how to describe by formulas  the reduced forms of an element $(a,s) \in A$. To do this we use the structure $S^2(\N,\N)$ which is 0-iterpretable in $S(K,\N)$.   Given $(a,s) \in A$,  we define several tuples as follows. Let $a = (\alpha_1, \ldots,\alpha_m), s = (s_1, \ldots,s_m)$. 
 Define $s^* = (s_1^*, \ldots,s_m^*) \in S(S(T) \cup T)$ such that
 \begin{itemize}
 \item $s_1^* = s_1$,
 \item for any $i \leq m$ if there is $j \leq m$ such that $s_j = s_{i+1}$ then $s_{i+1}^* = s_i^*$,
 \item for any $i \leq m$ if there is  no $j \leq m$ such that $s_j = s_{i+1}$ then $s_{i+1}^* = s_i^* \frown (s_{i+1}) $, where $(s_{i+1})$ is a tuple of length 1 with the component $s_{i+1}$.
 \end{itemize}
 Clearly, the last component $s_m^*$  of $s^*$   is a tuple which  formed by components of $s$ which are taken in the same order as in $s$ but without repetition. Let $s_m^* = s' = (s_1',\ldots,s_{m'}')$. Notice that $s' \in S(T)^0$ and $s'$ is definable in $S(K,\N)$.
 
 Now for any $p, 1 \leq p \leq \ell(s^*)$, we define a tuple $b^{(p)} = (\beta_{1}^{(p)}, \ldots,\beta_{m}^{(p)})$ such that 
 \begin{itemize}
 \item $\beta_1^{(p)} = \alpha_1$ if $s_1 = s_p'$, otherwise $\beta_1^{(p)} = 0$,
 \item for any $i, 1\leq i \leq m$  if $s_{i+1} = s_p'$ then $\beta_{i+1}^{(p)} = \beta_i^{(p)} +\alpha_{i+1}$,
 \item for any $i, 1\leq i \leq m$  if $s_{i+1} \neq  s_p'$ then $\beta_{i+1}^{(p)} = \beta_i$
 \end{itemize}
 The element $\beta_m^{(p)} \in K$ is the coefficient of  the monomial $M_{s_p'}$ of the element $f_{(a,s)}$ when one collects the similar terms. Clearly, the conditions above can be written by formulas in the language of $S(K,\N)$.
 Put $b' = (\beta_m^{(1)}, \ldots, \beta_m^{(m')} )$. The tuple $b'$ is definable in $S(K,\N)$ with parameters $a, s$, i.e., there is a formula $\psi_2(x_1,x_2,y_1)$ in the language of $S(K,\N)$ such that  $\psi_2(a,s,b)$ holds in $S(K,\N)$ on some $b$ if and only if $b = b'$. Furthermore, note that $f_{(a,s)} = f_{(b',s')}$.   
 
 The pair $(b',s')$ is not reduced since some components of $b'$ (the coefficients of the monomials in $f_{(b',s')}$ might be zero. To remove these zeros we define two tuples $\hat a = ({\hat a}_1, \ldots, {\hat a}_{m'})$ and ${\hat s} = ({\hat s}_1, \ldots,{\hat s}_{m'})$  such that:
 
 \begin{itemize}
  \item if $b' = (\beta_m^{(1)}, \ldots, \beta_m^{(m')} )$ is such that $\beta_m^{(j)} = 0$ for all $j, 1\leq j \leq m'$ then ${\hat a} = (0)$ and ${\hat s} =(0)$, so the tuple $(\hat a, \hat s)$ represents zero in $\MA_K(X)$. Otherwise, ${\hat a}_1$ is a tuple of length one  equal to $(\beta_m^{(j)})$ where $j$ is the least one such that $1 \leq j \leq m'$ and $\beta_m^{(j)} \neq 0$.
  \item for any $i, 1 \leq i \leq m'$ if $\beta_m^{(i+1)} \neq  0$ then ${\hat a}_{i+1} = {\hat a}_i \frown  (\beta_m^{(i+1)})$ and ${\hat s}_{i+1} = {\hat s}_i \frown  (s_{(i+1)}')$.
  \item   for any $i, 1 \leq i \leq m'$ if $\beta_m^{(i+1)} = 0$ then ${\hat a}_{i+1} = {\hat a}_i $ and ${\hat s}_{i+1} = {\hat s}_i$.
 \end{itemize}
 The pair $({\hat a}, {\hat s})$  is definable in $S(K,\N)$ with parameters $a, s$. Notice also that the pair $({\hat a}_{m'},{\hat s}_{m'})$ formed by the last components of the tuples $\hat a$ and $\hat s$ gives a reduced form of the polynomial $f_{(a,s)}$.  We denote the tuples ${\hat a}_{m'}$ and ${\hat s}_{m'}$ by  $red(a)$ and $red(s)$, respectively. Now  $f_{(a,s)} = f_{({red(a)}, {red(s)})}$. The reduced pair of the trivial  pair $((0),(0))$ is the pair $((0),(0))$ itself.
 
 Now we can define by formulas the equivalence relation $\sim$ on $A$. Namely, $(a,s) \sim(a_1,s_1)$ if and only if there exists pairs $(red(a), red(s))$ and $(red(a_1), red(s_1) )$ that satisfy, correspondingly, the conditions above and such that  $(red(a), red(s)) \sim_0 (red(a_1), red(s_1) )$. To finish the definition of $\sim$ on $A$ it suffices to add that the pair $((0),(0))$ is equivalent only to itself.
 
 Now one needs to define by formulas an addition $\oplus$ and multiplication  $\odot$ on $\tilde A/\sim$ that would correspond to the ring operations on $\MA_K(X)$.  In fact, it suffices to define  $\oplus$ and $\odot$ on $A/\sim$ and  then extend it to  $\tilde A/\sim$ in the obvious way. Let $(a_i,s_i) \in A$, $i = 1,2,3$, where $a_i =(a_1^{(i)}, \ldots,a_{\ell(a_i)}^{(i)}), s_i =(s_1^{(i)}, \ldots,s_{\ell(s_i)}^{(i)})$. Define $\oplus$ by
 $$
 (a_1,s_1) \oplus (a_2,s_2) = (a_1 \frown a_2, s_1 \frown s_2).
 $$
 To define $\odot$ we need to fix a computable function $\pi(i,j,x,y)$ such that for any non-zero $p,q \in \N$ the function $\pi(p,q,x,y)$ gives a bijection 
 $$
\pi(p,q,x,y) : \{(x,y) \mid 1 \leq x \leq p, 1 \leq y \leq q\}  \to \{1, \ldots,pq\}
 $$
 Since $\pi$ is computable there is a formula that defines the graph of $\pi$   in $\N$. Now put
 $$
  (a_1,s_1) \odot (a_2,s_2) = (a_3,s_3),
 $$
 where $a_3, s_3$ satisfy the following conditions: 
 
 \begin{itemize}
 \item $\ell(a_3) = \ell(s_3) = \ell(a_1)\ell(a_2)$,
 \item for any $r, 1 \leq r \leq \ell(a_3)$ if $\pi(\ell(a_1),\ell(a_2),x,y)  =  r$ then  $a_r^{(3)} = a_x^{(1)} a_y^{(2)}$ and $s_r^{(3)} = s_x^{(1)} \frown  s_y^{(2)}$
 \end{itemize}

 This gives a 0-interpretation of $\MA_K(X)$ in $S(K,\N)$ uniformly in $K$ and the cardinality of $X$ in the case when the set $X$ is  finite. By a slightly modified argument  one can interpret $\MA_K(X)$ in $S(K,\N)$ uniformly in $K$ and the cardinality of $X$ in the case when the set $X$ is countable. This proves the theorem.

\end{proof}

\subsection{Fields equivalent in the weak second order logic}
\label{se:HF-fields}

The weak order logic is quite  powerful. Indeed,  unlike  the first-order logic, there are many infinite algebraic structures which are completely characterized by their weak second order logic.  In particular, each of the following fields is determined up to isomorphism by its  weak second-order theory: ${\mathbb Q}$, finitely generated algebraic  extensions of  ${\mathbb Q}$,   algebraically closed fields of finite transcendence degree over their prime subfields, pure transcendental finite  extensions of a prime field,  the field of algebraic real numbers.  These results are known in the folklore, the proofs are based on the Gandy's theorem on fixed points of $\Sigma$-definable operators (see, for example, \cite{E, B2}),  some of these results are also mentioned  in \cite{bauval} (Corollary V.2.8). However, we could not find any precise references in the literature. 

Fields equivalent in the weak second order logic play a crucial part in the first-order classification of free associative algebras.

\section{Maximal rings of scalars and algebras}

Let $R$  be a commutative associative ring with unity 1, and $M, N$ exact $R$-modules.  Let $f:M \times M \to N$ be an $R$-bilinear map.  For a subset $E \subseteq M$ we define the left and right annulators of $E$ by $Ann_l(E) =\{x \in M \mid f(x,E) = 0\} $ and $Ann_r(E) = \{y \in M \mid f(E,y)=0\} $.

 We say that 
\begin{itemize}
\item [1)] $f$ is  {\em non-degenerate} if $Ann_l(M) =Ann_r(M) = 0$. 
\item [2)] $f$ is {\em onto}   if the submodule (equivalently,  the subgroup) $\langle f(M,M)\rangle$ generated by $f(M,M)$ is equal to $N$.
\item [3)] $f$ has a {\em finite complete system} if there is  a finite subset $E \subseteq M$ (called a {\em  complete system} for $f$) such that $Ann_l(E) = Ann_l(M)$ and $Ann_r(E) = Ann_r(M)$.
\item [4)] $f$ has {\em finite width }  if   there exists some natural number $m$ such that for any $z \in N$ there are some $x_i, y_i \in M, i = 1, \ldots,m$ such that $z=\sum _{i=1}^mf(x_i,y_i)$. The least such  $m$ is termed the width of $f$.
\end{itemize}

Note that the conditions 1) - 4) do not depend on the ring $R$, i.e., whether they hold or not in $f$ depend only on the abelian group structure of $M$ and $N$. 

Let $L$ be an $R$-algebra (not necessary associative) over the ring $R$. Denote by $L^2$ the $R$-submodule of $L$ generated by all products $xy$ where $x, y \in L$. Then the multiplication map   $f_L:L\times L\rightarrow L^2$ is $R$-bilinear and onto.  This map induces a non-degenerate $R$-bilinear onto map ${\bar f}_L: L/Ann_l(L)  \times L/Ann_r(L) \to L^2$, where $Ann_l(L)  = \{x \in L \mid xL = 0\}, Ann_r  = \{y \in L \mid Ly = 0\}$.

\begin{lemma} \label{le:1-4} Let $L$ be a finitely generated  $R$-algebra which is either associative or Lie. Then the bilinear map ${\bar f}_L$ satisfies all the conditions 1)-4). In particular, if $Ann_l(L) = Ann_r(L) = 0$ then the multiplication $f_L$ satisfies all the conditions 1)-4).
\end{lemma}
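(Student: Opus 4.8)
The plan is to verify conditions 1)--4) for the induced map $\bar f_L$ by exploiting the finite generation of $L$ together with the structural identities available in the associative and Lie cases. First I would set up notation: let $L$ be generated as an $R$-algebra by a finite set $g_1, \ldots, g_k$. In the associative case (allowing or not a unit), the products of the $g_i$ span $L$ as an $R$-module together with the $g_i$ themselves (and $1$ in the unital case); in the Lie case, iterated brackets of the $g_i$ span $L$. The key observation driving all four conditions is that $L^2$ is then spanned as an $R$-module by a set of products $f_L(u_j, v_j)$ where the $u_j$, $v_j$ can be taken from a controlled finite set of ``words'' in the generators.

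For condition 3), the finite complete system: I would take $E$ to be the finite set $\{g_1, \ldots, g_k\}$ (augmented by $1$ in the unital associative case, and also by the $g_i g_j$ or brackets if needed). The point is that if $x$ annihilates all of $E$ on the left, i.e. $xg_i = 0$ for all $i$, then in the associative case $x(g_{i_1}\cdots g_{i_m}) = (xg_{i_1})(g_{i_2}\cdots g_{i_m}) = 0$, so $x$ annihilates a spanning set and hence all of $L$, giving $x \in Ann_l(L)$; the reverse inclusion is trivial. The Lie case uses the Jacobi/Leibniz identity: $[\,x,[y,z]\,] = [[x,y],z] + [y,[x,z]]$, so $ad(x)$ killing the generators forces $ad(x)$ to kill all iterated brackets by induction on bracket length, hence all of $L$. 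This is the step I expect to be the main obstacle, since the bookkeeping for iterated brackets in the non-unital and Lie settings is where one must be careful that ``killing generators'' really does propagate to ``killing everything''; the associative case is clean but the Lie case genuinely needs the induction.

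For condition 1), non-degeneracy of $\bar f_L$ is automatic: by construction $\bar f_L$ is defined on $L/Ann_l(L) \times L/Ann_r(L)$, and its left annihilator is $Ann_l(L)/Ann_l(L) = 0$ and similarly on the right, so $Ann_l = Ann_r = 0$ for $\bar f_L$. For condition 2), onto-ness also transfers for free: $f_L$ maps onto a generating set of $L^2$ by definition of $L^2$, and passing to the quotient does not shrink the image, so $\langle \bar f_L(\cdot,\cdot)\rangle = L^2$. For condition 4), finite width: since $L^2$ is generated as an $R$-module by finitely many products $p_1 = f_L(u_1,v_1), \ldots, p_r = f_L(u_r,v_r)$, any $z \in L^2$ is $z = \sum_{i=1}^r \lambda_i p_i$ with $\lambda_i \in R$; absorbing $\lambda_i$ into $u_i$ by $R$-bilinearity (replace $u_i$ by $\lambda_i u_i$), we get $z = \sum_{i=1}^r f_L(\lambda_i u_i, v_i)$, so width at most $r$ works uniformly. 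Finally I would observe that the last sentence of the lemma is immediate: if $Ann_l(L) = Ann_r(L) = 0$ then $\bar f_L = f_L$, so $f_L$ itself satisfies 1)--4). I would also remark that, as noted before the lemma statement, conditions 1)--4) depend only on the additive group structure, so there is no loss in having suppressed which ring $R$ we work over throughout.
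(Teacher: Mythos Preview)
Your treatment of conditions 1), 2), and 3) is fine and essentially matches the paper's proof: non-degeneracy and onto-ness are automatic from the construction of $\bar f_L$, and for condition 3) the paper also takes $E = X = \{x_1,\ldots,x_n\}$ and argues by induction (via associativity or Jacobi) that $Ann_l(X) = Ann_l(L)$.

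However, your argument for condition 4) contains a genuine error. You assert that ``$L^2$ is generated as an $R$-module by finitely many products $p_1 = f_L(u_1,v_1), \ldots, p_r = f_L(u_r,v_r)$,'' and then absorb the scalars to get width $\leq r$. But $L^2$ is \emph{not} finitely generated as an $R$-module in general: already for the free associative algebra $\mathbb{A}_K(x_1,x_2)$ over a field $K$, the square $L^2$ contains all monomials of degree $\geq 2$, an infinite linearly independent set over $K = R$. Finite generation of $L$ as an \emph{algebra} does not give finite generation of $L^2$ as a \emph{module}, so your proposed bound $r$ does not exist.

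The paper's argument for 4) is different and is the actual content of the lemma: one shows $L = Lx_1 + \cdots + Lx_n$ (equivalently $L^2 \subseteq Lx_1 + \cdots + Lx_n$), which immediately gives width $\leq n$. In the associative case this is clear because every monomial in the generators ends in some $x_i$. In the Lie case one takes a product $p = uv$ and inducts on the length of $v$: writing $v = v_1v_2$ and applying Jacobi, $u(v_1v_2) = (v_2u)v_1 + (uv_1)v_2$, and both terms on the right have a shorter second factor, so by induction lie in $\sum_i Lx_i$. You should replace your module-generation argument with this one.
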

\begin{proof}
Suppose $L$ is generated (as an algebra) by a finite set $X$.  The map ${\bar f}_L$ satisfies conditions 1) and 2) by construction. To prove 3) it suffices to show that $Ann_l(L) = Ann_l(X)$ and $Ann_r(L) = Ann_r(X)$. We prove the first equality (the second one is similar). Let $a \in Ann_l(X)$ and $b \in L$. To show that $ab = 0$ we may assume by linearity that $b$ is a product of elements from $X$. If $b \in X$ then $ab = 0 $, otherwise, $b = uv$, where $u, v$ are products of elements of $X$ of shorter length. By induction on length $au = av = 0$. If $L$ is associative then $a(uv) = (au)v = 0$. If $L$ is Lie then $a(uv) = -u(va) -v(au) = u(av)-v(au) = 0$, hence the claim. To show 4)  we  prove that $L = Lx_1 + \ldots + Lx_n$, where $X = \{x_1, \ldots, x_n\}$. Clearly, it suffice to show that every product $p$ of elements from $X$ belongs to $M =  Lx_1 + \ldots + Lx_n$. If $L$ is associative then every such product $p$ ends on an element from $X$, so the claim holds. If $L$ is Lie then $p = uv$ for some Lie words $u, v$ in $X$. We use induction on the length of $v$ (as a Lie word in $X$) to show that $p \in M$. If $ v$ is an element from $X$ then there is nothing to prove.  Otherwise, $v = v_1v_2$ where $v_1, v_2$ are Lie words in $X$ of smaller length. Then $u(v_1v_2) = - v_1(v_2u) -v_2(uv_1) = (v_2u)v_1 + (uv_1)v_2$. Now by induction on the length of the second factors we get that 
$(v_2u)v_1, (uv_1)v_2$, and hence $(v_2u)v_1 + (uv_1)v_2$, are in $M$, as required.
\end{proof}

For any non-degenerate onto bilinear map $f:M \times M \to N$ there is a uniquely defined {\em maximal ring of scalars} $P(f)$, which is an analog of the {\em centroid} of a ring.  More precisely, a commutative associative unitary ring $P$ is called a "ring of scalars" of $f$ if $M$ and $N$ admit the  structure  of exact  $P$-modules such that $f$ is $P$-bilinear. A ring of scalars $P$ of $f$ is called {\em maximal} if for every ring of scalars $P^\prime$ of $f$ there is a monomorphism $\mu:P^\prime \to P$ such that for every $\alpha \in P^\prime$ its actions on $M$ and $N$ are the same as the actions of $\mu(\alpha)$. It was shown in \cite{M} that the maximal ring of scalars of $f$  is unique up to isomorphism, as well as its actions on $M$ and $N$. We denote it by $P(f)$. In fact, the ring $P(f)$ can be constructed as follows. 

Let $End(M)$ be the ring of endomorphisms of $M$ (here $M$ is viewed as an abelian group). Denote by $Sym_f(M)$ the subgroup of all $f$-symmetric endomorphisms $A \in End(M)$, i.e. such that $f(Ax,y) = f(x,Ay)$ for any $x, y \in M$. Let $Z$ be the center of $Sym_f(M)$, which is the subgroup of $Sym_f(M)$  consisting of all endomorphisms $A$ that commute with every endomorphism in $Sym_f(M)$.  For every natural number $n$ denote by $Z_n$ the subset of those elements $A \in Z$ such that for any $x_i,y_i, u_i, v_i \in M, i = 1, \ldots,n$ the following condition holds:
\begin{equation} \label{eq:Z}
\sum_{i = 1}^n f(x_i,y_i) = \sum_{i = 1}^n f(u_i,v_i) \longrightarrow \sum_{i = 1}^n f(Ax_i,y_i) = \sum_{i = 1}^n f(Au_i,v_i).
\end{equation}
Finally, define 
$$P(f) = \cap_{n = 1}^\infty Z_n.$$
Straightforward verification shows that $Z_n$, as well as $P(f)$,  is a commutative associative unitary subring of $End(M)$, so $M$ is an exact $P(f)$-module. The conditions (\ref{eq:Z}) allows one to define the action of $P(f)$ on the submodule of $N$ generated by $f(M,M)$, which is the whole module $N$, since $f$ is onto. It is not hard to see that $P(f)$ is a maximal ring of scalars of $f$.

To study model theory of $f:M \times M \to N$ one associates with $f$ a two-sorted structure ${\mathcal A}(f) = \langle M, N; f \rangle$, where $M$ and $N$ are abelian groups equipped with the map  $f$ (the language of ${\mathcal A} (f)$ consists of additive group languages for $M$ and $N$, and the predicate symbol for the graph of $f$).

\begin{theorem} \label{th:bilinear} \cite{M} Let $f$ be a $K$-bilinear map $M\times M\rightarrow N$ that satisfies 1)-4)  above. Then the maximal ring of scalars  $P(f)$ for $f$ and its actions on $M$ and $N$ are 0-interpretable  in ${\mathcal A}(f)$ uniformly in the size of the finite complete system and the width of $f$.
\end{theorem}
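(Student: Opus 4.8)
The plan is to code the endomorphism data that enters the construction of $P(f)$ by finite tuples of elements of $M$ --- so that the step "the centre of the ring $Sym_f(M)$", which a priori asks one to quantify over endomorphisms, becomes first-order --- and then to use the finite width of $f$ to collapse the infinite intersection defining $P(f)$ to a single first-order condition. Fix once and for all a finite complete system $E=\{e_1,\dots,e_k\}$ for $f$ (it exists by 3)) and let $m$ be the width of $f$ (finite by 4)); every formula produced below will mention only the numbers $k$ and $m$, which is what gives the claimed uniformity.

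First, I code $Sym_f(M)$. Since $f$ is non-degenerate and $E$ is complete we have $Ann_l(E)=Ann_l(M)=0$, so for an $f$-symmetric endomorphism $A$ of $M$ and any $x\in M$ the element $Ax$ is the \emph{unique} $z\in M$ with $\bigwedge_{j=1}^k f(z,e_j)=f(x,Ae_j)$ (using $f(Ax,e_j)=f(x,Ae_j)$); hence $A$ is recovered from the tuple $(Ae_1,\dots,Ae_k)\in M^k$. Conversely, a tuple $\bar m=(m_1,\dots,m_k)\in M^k$ arises this way precisely when (i) for every $x$ there is a $z$ with $\bigwedge_j f(z,e_j)=f(x,m_j)$ --- such $z$ is then unique and we write $z=A_{\bar m}x$ --- and (ii) the map $A_{\bar m}$ so defined is $f$-symmetric, i.e. $f(A_{\bar m}x,y)=f(x,A_{\bar m}y)$ for all $x,y$; additivity of $A_{\bar m}$ is then automatic, and $Ann_r(E)=0$ forces $A_{\bar m}e_j=m_j$. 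Both (i) and (ii) are first-order over ${\mathcal A}(f)$ once "$z=A_{\bar m}x$" is unfolded as a conjunction of $k$ equations, so the set $D\subseteq M^k$ of such tuples is definable and the map $A\mapsto(Ae_1,\dots,Ae_k)$ is a bijection $Sym_f(M)\to D$. Carrying the ring operations across --- addition is componentwise, $(AB)e_j$ is the unique $z_j$ with $\bigwedge_i f(z_j,e_i)=f(Be_j,Ae_i)$, and the identity is the tuple $(e_1,\dots,e_k)$ --- gives a $0$-interpretation of the ring $Sym_f(M)$ in ${\mathcal A}(f)$, uniform in $k$.

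Next, I cut down to $P(f)$. The centre $Z$ of $Sym_f(M)$ is now definable with no new idea: $\bar m\in D$ lies in $Z$ iff, in the interpreted multiplication, $\bar m$ commutes with every $\bar m'\in D$ --- a legitimate first-order condition because it quantifies over $M^k$, not over an external set of maps. For each $n$ the subset $Z_n\subseteq Z$ is defined by the single universal formula $\chi_n$ obtained by rewriting (\ref{eq:Z}) with every occurrence of $Ax_i$ and $Au_i$ replaced by its defining conjunction. It remains to reduce $P(f)=\bigcap_{n\geq1}Z_n$ to finitely many of the $Z_n$, and this is where width is used: I claim $P(f)=Z_{2m}$. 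One inclusion is trivial. Conversely, take $A\in Z_{2m}$; since $N$ has width $m$, the rule $\hat A\big(\sum_{i=1}^m f(x_i,y_i)\big)=\sum_{i=1}^m f(Ax_i,y_i)$ defines a map $\hat A\colon N\to N$ (well-definedness is the $n=m$ instance of (\ref{eq:Z}), which holds because $Z_{2m}\subseteq Z_m$), and $\hat A$ is additive: write $z_1+z_2$ first as a $2m$-fold sum of values of $f$ and then, using width, also as an $m$-fold one, and apply the $n=2m$ instance of (\ref{eq:Z}). An additive map $\hat A$ with $\hat A(f(x,y))=f(Ax,y)$ automatically yields (\ref{eq:Z}) for every $n$ (apply $\hat A$ to both sides of $\sum f(x_i,y_i)=\sum f(u_i,v_i)$), so $A\in\bigcap_n Z_n=P(f)$. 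Hence $D_{P(f)}:=\{\bar m\in D:\bar m\in Z\ \text{and}\ \chi_{2m}(\bar m)\}$ is definable, is in bijection with $P(f)$, and is closed under the interpreted ring operations; the underlying equivalence relation of the interpretation is just equality on $D_{P(f)}$.

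Finally, the two actions come for free from the same coding. The action $P(f)\times M\to M$ is $(\bar m,x)\mapsto A_{\bar m}x$, already definable, and $f(\alpha x,y)=f(x,\alpha y)$ is exactly condition (ii). The action $P(f)\times N\to N$ sends $(\bar m,z)$ to $\sum_{i=1}^m f(A_{\bar m}x_i,y_i)$ for any representation $z=\sum_{i=1}^m f(x_i,y_i)$; this is well-defined since $A_{\bar m}\in Z_m$, is cut out by an existential formula, and satisfies $f(\alpha x,y)=\alpha f(x,y)$ by taking the one-term representation $z=f(x,y)$. One then checks that $M$ and $N$ are exact $P(f)$-modules (faithfulness: if $A_{\bar m}$ annihilates $M$ then every $A_{\bar m}e_j=0$, so $\bar m=\bar0$) and that $P(f)$ together with these actions is indeed the maximal ring of scalars of $f$, exactly as constructed in \cite{M}. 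Since every formula used depends only on $k$ and $m$, the interpretation is uniform in the size of the finite complete system and the width of $f$. I expect the main obstacle to be the first part --- seeing that the endomorphisms over which one must range (the $f$-symmetric ones, inside which the centre is computed) are faithfully coded by $k$-tuples over $M$, which is precisely what turns "the centre of an endomorphism ring" into a first-order definable object; the secondary point is isolating a width-dependent truncation bound for $\bigcap_n Z_n$.
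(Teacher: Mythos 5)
Your proof is correct and follows exactly the route the paper sets up: the paper itself gives no proof (it cites \cite{M}) but presents precisely the construction of $P(f)$ via $Sym_f(M)$, its centre $Z$, and the sets $Z_n$, and your argument --- coding $f$-symmetric endomorphisms by their values on the finite complete system $E$ (using $Ann_l(E)=Ann_r(E)=0$ for uniqueness and recovery) and using the width $m$ to truncate $\bigcap_n Z_n$ to a single definable $Z_{2m}$ --- is the intended formalization, as the theorem's uniformity clause (``in the size of the finite complete system and the width'') already signals. The only cosmetic point is that $Sym_f(M)$ need not be closed under composition, so it is not literally a ring; but its centre is still well defined and your first-order rendering of centrality on the definable set $D\subseteq M^k$ is unaffected.
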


\begin{prop}\label{th:scalar} Let $L$ be a non-commutative free associative (unital or not) or a  non-commutative free Lie algebra over a field $K$,  or a group ring of a non-commutative torsion-free hyperbolic group over a field $K$.  Then the maximal ring of scalars $P(f_L)$ of the multiplication bilinear map $f_L$ is isomorphic  to the field  $K$.
\end{prop}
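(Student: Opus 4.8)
The plan is to squeeze $P(f_L)$ between two copies of $K$. For the lower bound, $L$ and $L^2$ are exact $K$-modules and $f_L$ is $K$-bilinear, so $K$ is a ring of scalars of $f_L$; moreover in each of the three cases $Ann_l(L)=Ann_r(L)=0$ (for a free associative algebra, unital or not, $a\neq 0$ forces $ax_1\neq 0$; for a free Lie algebra of rank $\geq 2$ the centre is $0$; for a group ring, $a\cdot 1\neq 0$), so $f_L$ is non-degenerate and, being onto $L^2$ by the definition of $L^2$, the maximal ring of scalars $P(f_L)$ is defined and, by maximality, contains $K$ acting on $L$ by scalar multiplications. For the upper bound, $P(f_L)=\cap_{n=1}^\infty Z_n$ lies inside the subgroup $Sym_{f_L}(L)$ of $f_L$-symmetric additive endomorphisms $A$ of $L$ (those with $f_L(Ax,y)=f_L(x,Ay)$ for all $x,y$), and I will show that every such $A$ --- in the Lie case using also that $A\in Z_1$ --- is multiplication by a scalar in $K$. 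Then $P(f_L)=K\cdot\mathrm{id}_L\cong K$, with the correct action on $L$ and $L^2$ by construction.

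In the two unital cases --- the unital free associative algebra and the group ring $K[G]$, where $L^2=L$ because $1\cdot a=a$ --- substituting $x=1$ in $(Ax)y=x(Ay)$ gives $Ay=(A1)y$, so $A$ is left multiplication by $c:=A1$, and $y=1$ gives $Ax=xc$; hence $c$ lies in the centre of $L$. The centre of a non-commutative free associative algebra is $K$. For $K[G]$ with $G$ non-commutative torsion-free hyperbolic, $G$ is non-elementary, so the centralizer of every $g\neq 1$ is virtually cyclic and of infinite index, every nontrivial conjugacy class of $G$ is infinite, and since $Z(K[G])$ is $K$-spanned by the class sums of the finite conjugacy classes, $Z(K[G])=K$. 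Either way $c\in K$ and $A=c\cdot\mathrm{id}$.

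For the non-unital free associative algebra $L$ of rank $\geq 2$ there is no unit to substitute, so I would argue in the free algebra directly: comparing monomials in $(Ax_i)x_j=x_i(Ax_j)$ forces $Ax_i=x_iu$ and $Ax_j=ux_j$ for a single element $u$ of the unitalization $K\oplus L$; then $x=y=x_i$ yields $x_iux_i=x_i^2u$, hence $ux_i=x_iu$, and similarly $u$ commutes with every generator (the $u$ with $Ax_i=x_iu$ being unique), so $u$ is central, hence a scalar $c\in K$; this $c$ is independent of $i$ because $c_ix_ix_j=(Ax_i)x_j=x_i(Ax_j)=c_jx_ix_j$; and finally $cx_1y=(Ax_1)y=x_1(Ay)$ cancels to $Ay=cy$. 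For the free Lie algebra $L=\mathfrak L_K(X)$ of rank $\geq 2$ I would work inside its universal enveloping algebra $\MA_K(X)$ with $[a,b]=ab-ba$: from $[Ax,y]=[x,Ay]$ with $y=x$ --- or, in characteristic $2$, from the $Z_1$-condition applied to the identity $[x_i,x_i+x_j]=[x_i,x_j]$ --- one gets $[Ax_i,x_i]=0$, so $Ax_i$ lies in the centralizer $K[x_i]$ of $x_i$ in $\MA_K(X)$; since $Ax_i$ is also a Lie element and $K[x_i]\cap\mathfrak L_K(X)=Kx_i$, we get $Ax_i=\lambda_ix_i$, and $[Ax_i,x_j]=[x_i,Ax_j]$ together with $[x_i,x_j]\neq 0$ forces all $\lambda_i$ equal to one $c\in K$; then $[Au-cu,x_i]=[u,Ax_i]-[u,cx_i]=0$ for every $i$, so $Au-cu$ is central in $\MA_K(X)$, hence a scalar, hence $0$ since Lie elements have zero constant term, and $A=c\cdot\mathrm{id}$.

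I expect the free Lie case to be the main obstacle: unlike the unital cases, no distinguished element can be substituted to identify $A$ at once, so one must pass to the universal enveloping algebra and combine the description of centralizers of generators with the triviality of the centre of a free Lie algebra of rank $\geq 2$; and characteristic $2$ needs care, since there $[Ax,x]=0$ must be extracted from $A\in Z_1$ rather than from $f_L$-symmetry alone. A minor but necessary point is the bookkeeping at the two ends --- that scalar multiplications really do lie in $P(f_L)$ and that $P(f_L)\subseteq Sym_{f_L}(L)$ --- so that the two inclusions close up.
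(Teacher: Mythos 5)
Your proof is correct, and while it shares the paper's overall skeleton (show each element of $P(f_L)$ acts by multiplication by an element that is forced into the centre, hence into $K$), several of your sub-arguments are genuinely different. In the two unital cases you substitute the identity into $f_L(Ax,y)=f_L(x,Ay)$ to get $A=c\cdot\mathrm{id}$ with $c=A1$ central at once; the paper instead never uses the unit, arguing in the associative case from $\phi(x)y=x\phi(y)$ for distinct generators and invoking Bergman's theorem to place the multiplier in $K[x]\cap K[y]=K$, and in the group-ring case running a rather delicate computation inside the Laurent-polynomial centralizers $K[g,g^{-1}]$ of non-proper-powers together with uniqueness of maximal roots. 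Your replacement of that computation by the standard fact that $Z(K[G])$ is spanned by finite conjugacy class sums (all of which are trivial here, since a non-commutative torsion-free hyperbolic group is non-elementary and has infinite cyclic centralizers) is a real simplification, though you should record that fact explicitly as the key input. Your non-unital associative argument is essentially the paper's Case 1, with "commutes with every generator, hence central, hence in $K$" in place of Bergman's centralizer theorem --- both are legitimate, and your closing cancellation $cx_1y=x_1(Ay)$ is cleaner than the paper's induction over products. In the Lie case the paper stays inside the free Lie algebra, using the known fact that $[a,x]=0$ for a generator $x$ forces $a\in Kx$, and it extracts $[\phi(x),x]=0$ from $\phi(x\cdot x)=\phi(x)\cdot x$, i.e.\ from the compatibility of the action with the quotient structure on $L^2$ --- which is exactly your $Z_1$ condition in disguise, so the characteristic-$2$ issue you flag is already handled there by the same mechanism; your detour through the enveloping algebra and the identity $K[x_i]\cap\mathfrak L_K(X)=Kx_i$ is heavier but sound, and your final step (that $Au-cu$ centralizes all generators and hence vanishes) is a nice way to avoid arguing degree by degree.
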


\begin{proof}  

There are three cases to consider for the algebra $L$: associative, Lie, and the group ring. Notice that in all of them $Ann_l(L) = Ann_r(L) = 0$ and $f_L$ is onto (see Lemma \ref{le:1-4}), so the maximal ring of scalars $P = P(f_L)$ exists.

Case 1. Let $L = A_K(X)$ be a free  associative algebra over a field $F$. Let $\alpha\in P$.
The action of $\alpha$ on $L$ gives rise to  a  $K$-endomorphism, say $\phi _{\alpha}$ of $L$, viewed as a $K$-module. We have $(\alpha x)y=x(\alpha y)$ for any $x,y\in A$.  Assume now that $x$ and $y$ are distinct letters from $X$. Therefore $\phi _{\alpha}(x)y=x\phi _{\alpha}(y).$ This implies 
$\phi _{\alpha}(x)=xu, \ \phi _{\alpha}(y)=vy$ for some $u, v \in L$. But then $xuy = xvy$ so  $u=v$.  
One has  $\phi_\alpha(xx) = \phi _{\alpha}(x)x=x\phi _{\alpha}(x)$, therefore $xux=xxu$ and $xu=ux.$ Similarly $uy=yu$.  By Bergman's theorem the centralizers $C_L(x)$ and $C_L(y)$ are equal, correspondingly, to  the rings of polynomials $K[x]$ and $K[y]$, hence $u \in K[x] \cap K[y] =  K.$ It follows that $\phi_\alpha$ acts on $x$ and $y$ as some scalar $u$ from $K$. Replacing $y$ by an arbitrary letter $z \in X$ in the argument above one gets that the action of $\phi_\alpha$ on every element from $X$ is by the scalar $u \in K$. Since every product $p$ of elements  from $X$ is either a letter from $X$ or a product of the type $p = xp^\prime$, where $x \in X$, one has $\phi_\alpha(p) = \phi_\alpha(x)p^\prime = (ux)p^\prime = u(xp^\prime) = up$. By linearity $\phi_\alpha$ acts on $L$ by multiplication by the scalar $u$, so $\phi_\alpha = \phi_u$. Since $L$ is an exact $P(f_L)$-module this implies  that $P(f_L) = K$, as claimed. 

Case 2. Let $L$ be a free Lie algeba over $K$ with basis $X$. It is known (see, for example \cite{Magnus}) that for any $x \in X$ and $a  \in L$ if $[a,x] =0$ then $a \in Kx$. Let $\alpha \in P$ then the action of $\alpha$ on $L$ gives a $K$-endomorphism $\phi_\alpha$ of $K$-module $L$ such that $\phi_\alpha(xy) = \phi_\alpha(x)y = x\phi_\alpha(y)$. In particular, $\phi_\alpha(xx) = 0 = (\phi_\alpha(x)x)$, so $\phi_\alpha(x) \in Kx$, say $\phi_\alpha(x) = \alpha_xx$, where $\alpha_x \in K$. Similarly, for $y \in X$ $\phi_\alpha(y) = \alpha_yy$ for some $\alpha_y \in K$. It follows that $\phi_\alpha(xy) = \alpha_x (xy) = \alpha_y (xy)$, hence $\alpha_x = \alpha_y$ for any $x, y \in X$. Therefore, $\phi_\alpha$ acts on $L$ precisely by multiplication of $\alpha_x$. This shows that $P = K$.

Case 3. Let  $L = K(G)$ be a group algebra of a torsion-free hyperbolic group $G$ over a field $K$. Suppose $P$ is a maximal ring of scalars of $L$ and $\alpha \in P$. Then as before $\alpha$ gives rise to  a $K$-linear endomorphism $\phi_\alpha$ of $L$ viewed as a $K$-module. It follows  that for a given non-trivial  element $g \in G$ one has $\phi _{\alpha}(g)g=g\phi _{\alpha}(g)$, so $\phi _{\alpha}(g)\in C_{KG}(g)$. If $g$ is not a proper power then $C_{KG}(g) =K[g, g^{-1}]$ - the ring of Laurent polynomials in one variable $g$.  Therefore $\phi _{\alpha}(g)=\sum_{i \in I} \gamma _ig^{i}$ for some finite subset $I \subset \mathbb{Z}$ and $0 \neq \gamma_i \in K$ for  $i\in I$. Similarly for a non-trivial $h \in G$, which is not a proper power in $G$, and  such that $[g,h] \neq 1$ one has 
$\phi _{\alpha}(h)=\sum_{j \in J} \sigma _jh^{j}$ for some finite subset $J \subset \mathbb{Z}$ and $0\neq \sigma_j \in K$ for $j \in J$. Note that $\phi_\alpha(gh) = \phi_\alpha(g)h = g\phi_\alpha(h)$, so  $\sum_{i \in I} \gamma _ig^{i}h=\sum_{j \in J} \sigma _jgh^{j}$. This implies that there is a bijection $\theta: I \to J$  such that $g^ih = gh^{\theta(i)}$ and $\gamma_i = \sigma_{\theta(i)}$ for each $i \in I$ (indeed, since $G$ is torsion-free $g^ih \neq g^kh$ for any $i \neq k$, as well as $gh^j \neq gh^k $ for $j \neq k$). Hence $g^{i-1} = h^{\theta(i)-1}$  for every $i$. Recall that the centralizers of non-trivial elements in a torsion-free hyperbolic group $G$ are infinite cyclic, so the commutativity relation on non-trivial elements from $G$ is transitive. Since $g$ and $h$ do not commute and are of infinite order  the  equality above may happen only if $i = 1$ and $\theta(i) = 1$.  Hence $I = \{1\} = J$, so 
 $\phi _{\alpha}(g)=\gamma_1g$,  hence  $\phi _{\alpha}(h)=\sigma_1h,$ with $\gamma_1 = \sigma_1$, which we now denote by $\gamma$. Since $h$ was an arbitrary  non-trivial not a proper power element in $G$ it follows $\phi_\alpha(h) = \gamma h$. It is known that every non-trivial element in a torsion-free hyperbolic group has a unique maximal root, so every for every $1 \neq g \in G$ there is a unique  positive integer  $n_g$ and a unique element $g_0 \in G$, which is not a proper power,  such that $g = g_0^{n_g}$. This shows that $\phi_\alpha(g) = \phi_\alpha(g_0) g_0^{n-1} = \gamma g_0^n =\gamma g$. Hence $\phi_\alpha$ acts on $G$ precisely by multiplication by the scalar $\gamma \in K$. By linearity it acts on the whole algebra $L$ by multiplication by $\gamma$, so $\phi_\alpha = \phi_\gamma$, as required.
 
\end{proof}

\begin{remark} \label{re:max-ring}
If $L$ is a commutative free associative (unital or not) algebra over a field $K$ then the maximal ring of scalars of $L$ is isomorphic to the ring of commutative polynomials $K[x]$ in one variable $x$. 
\end{remark}
Indeed, if $L$ is unitary, i.e., $L = \MA_K(X)$ where $X = \{x\}$ is a singleton, then $P =  K[x]$ (the action of $\alpha \in P$ is completely defined by its action on $1$). If $L$ is non-unital then  $L = \MA_K^0(x)$ (see Section \ref{se:non-unitary}) and for every $\alpha \in P(L)$ its  action on $L$ is completely defined by the image $\phi_\alpha(x)$ since every element in $L$ is divisible by $x$. Note that in this case  $\phi_\alpha(x) = xu$ for some $u \in K[x]$. This gives $P = K[x]$.

 From Theorem \ref{th:bilinear} and Proposition \ref{th:scalar} we get the following result.
 
\begin{theorem} \label{th:field} Let $L$ be  non-commutative  either a free associative (unital or not) of finite rank over a field $K$, or a free Lie algebra  of finite rank over a field $K$,  or a group algebra of a torsion-free hyperbolic group over a field $K$.  Then the field $K$ and its action on $L$ is 0-interpretable in $L$. 
\end{theorem}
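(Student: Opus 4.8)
The plan is to combine the two structural ingredients already established: Theorem \ref{th:bilinear}, which 0-interprets the maximal ring of scalars $P(f)$ of a bilinear map $f$ inside the two-sorted structure $\mathcal{A}(f)$ (uniformly in the size of a finite complete system and the width), and Proposition \ref{th:scalar} together with Remark \ref{re:max-ring}, which identifies $P(f_L)$ with the field $K$ precisely in the non-commutative cases listed (in the commutative case it would be $K[x]$, which is why non-commutativity is assumed). So the first step is to reduce the problem of interpreting $K$ in $L$ to interpreting $K$ in the associated structure $\mathcal{A}(f_L) = \langle M, N; f_L\rangle$, where we take $M = L$ (as an abelian group) and $N = L^2$.

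The key point that makes this reduction work is that for the algebras $L$ in the statement one has $Ann_l(L) = Ann_r(L) = 0$ and $L^2 = L$: for a non-commutative free associative algebra (unital or not) every element is a sum of products, so $L^2 = L$; similarly for a free Lie algebra of rank $\geq 2$ and for a group algebra of a non-commutative hyperbolic group (where even $1 = g g^{-1}$ is a product). Combined with Lemma \ref{le:1-4}, which guarantees that $f_L$ satisfies conditions 1)--4), in particular that it has a finite complete system and finite width, Theorem \ref{th:bilinear} applies and yields a 0-interpretation of $P(f_L)$, with its actions on $M$ and $N$, inside $\mathcal{A}(f_L)$. Since $M = N = L$ here, the structure $\mathcal{A}(f_L)$ is just $\langle L, L; f_L\rangle$, and because $f_L$ — the multiplication — is part of the ring structure of $L$, and the additive group of $L$ is part of that structure too, the structure $\mathcal{A}(f_L)$ is itself 0-interpretable in the ring $L$ (indeed essentially definable, using the diagonal copy for the second sort). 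By transitivity of interpretations, $P(f_L)$ and its action on $L$ are 0-interpretable in $L$.

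Finally, by Proposition \ref{th:scalar}, in each of the three cases $P(f_L) \cong K$, and the interpreted action of $P(f_L)$ on $M = L$ is exactly scalar multiplication by the corresponding element of $K$; hence $K$ together with its action on $L$ is 0-interpretable in $L$, which is the assertion. I expect the only genuinely delicate point to be bookkeeping: verifying that passing from $\mathcal{A}(f_L)$ back to $L$ is legitimate (one must note $L^2 = L$ so that the $N$-sort adds nothing new, and that the graph of $f_L$ as a ternary predicate on $L$ is definable from ring multiplication), and that the uniformity claims of Theorem \ref{th:bilinear} are not actually needed here since we treat a single fixed $L$. Everything else is a direct citation of the earlier results.
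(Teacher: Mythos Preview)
Your overall strategy matches the paper's proof exactly: verify $Ann_l(L)=Ann_r(L)=0$, apply Lemma~\ref{le:1-4} to get conditions 1)--4) for $f_L$, interpret $\mathcal{A}(f_L)$ in $L$, then invoke Theorem~\ref{th:bilinear} and Proposition~\ref{th:scalar}. The only genuine gap is your claim that $L^2=L$ in all cases. This is false for two of the four algebras in the statement. In the non-unital free associative algebra $\mathbb{A}^0_K(X)$, every product $ab$ with $a,b\in L$ has each monomial of degree $\geq 2$, so a generator $x_1$ is not in $L^2$; hence $L^2\subsetneq L$. Likewise in a free Lie algebra $L$, the ideal $L^2=[L,L]$ is the derived subalgebra and does not contain the free generators. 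Your justification ``every element is a sum of products'' conflates monomials with products of two ring elements.

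The fix is precisely what the paper does and what you already have in hand: you do not need $L^2=L$ at all. Condition 4) (finite width) from Lemma~\ref{le:1-4} says there is a fixed $m$ such that every element of $L^2$ is a sum of $m$ products $x_iy_i$; this makes $L^2$ a $0$-definable subset of $L$ by the formula $\exists x_1\,\exists y_1\cdots\exists x_m\,\exists y_m\,(z=\sum_{i=1}^m x_iy_i)$. Thus both sorts $L$ and $L^2$ of $\mathcal{A}(f_L)$, together with the graph of $f_L$, are $0$-definable in the ring $L$, and the rest of your argument goes through unchanged.
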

\begin{proof}
It is easy to see that $Ann_l(L) = Ann_r(L) = 0$. By Lemma \ref{le:1-4} the bilinear map $f_L$ satisfies all the conditions 1)-4). Notice that the additive  groups $L$ and $L^2$ are definable in $L$ (definability of $L^2$ follows from the property 4)), as well as the ring multiplication $f_L:L\times L \to L^2$. Hence the structure $\mathcal{A}(f_L)$ is interpretable in $L$. By Theorem \ref{th:bilinear} the maximal ring of scalars $P(f_L)$ and its action on $L$ is interpretable in $\mathcal{A}(f_L)$, hence in $L$. Now by Proposition \ref{th:scalar} the ring $P(f_L)$ is isomorphic to $K$, and the result follows.
\end{proof}

\begin{remark} \label{re:def-field}
If $L$ is a commutative  free associative (unital or not) algebra over a field $K$ then the filed $K$ and its action on $L$ is definable in $L$.
\end{remark}
Indeed, it follows from Remark \ref{re:max-ring} that $P(L)  \simeq K[x]$, hence the polynomial ring $K[x]$ is 0-interpretable in $L$. It remains to note that $K$ is 0-definable in $K[x]$.

\section{Definability in polynomial rings}

 For the rest of this section we fix the following notation. Let $F$ be a field, $X$ a set of variables,  and $F[X]$ a ring of commutative polynomials 
 with variables in  $X$ and  coefficients in $F$.  In this section we discuss interpretability of various objects in the ring $F[X]$. Many of the results of this section are known, especially on interpretability with parameters, but for our purposes   we usually need them in a much stronger form - when the isomorphisms between such interpretations with parameters are first-order definable uniformly in the parameters.  Besides, we prove that various different interpretations of the same structure, say the arithmetic $\N = \langle N; +,\cdot,0,1\rangle$, in $F[X]$ have canonical isomorphisms uniformly definable in $F[X]$.  In what follows, if not said otherwise,  the terms  {\em definable} and {\em interpretable} mean $0$-definable  and  0-interpretable.

\subsection{Basic facts}

We start with   the following obvious results. Recall that a polynomial $a \in F[X]$ is called {\em irreducible} if it is not invertible (non-constant) and if $a = uv$ for some $u, v \in F[X]$ then either $u \in F$ or $v \in F$.

\begin{lemma} \label{le:field}
Let $F$ be an arbitrary field and $X$ an arbitrary non-empty set.  Then the following hold:
\begin{itemize}
\item [1)] The field $F$ is $0$-definable  in $F[X]$ uniformly in $F$.
\item [2)] The set $Irr$ of all irreducible polynomials is $0$-definable in  $F[X]$.
\end{itemize}
\end{lemma}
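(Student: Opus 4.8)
<br>

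The plan is to prove both parts of Lemma \ref{le:field} using the multiplicative and divisibility structure that is already expressible in the ring language of $F[X]$.

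\textbf{Part 1): definability of $F$.} First I would observe that the constants are exactly the \emph{units} of $F[X]$ together with $0$. Indeed, since $F$ is a field and $F[X]$ is a (generalized) polynomial ring, the group of units of $F[X]$ is precisely $F^\ast = F \setminus \{0\}$: a product of two polynomials with disjoint nontrivial support cannot be $1$, and more carefully, grading by total degree (or by the support in the free commutative monoid on $X$) shows a unit must be a nonzero element of $F$. The property ``$x$ is a unit'' is $0$-definable by $\exists y\, (xy = 1)$, and ``$x = 0$'' is $0$-definable, so $F$ is defined by the formula $\phi(x) : \exists y (xy=1) \vee x = 0$. The field operations $+, \cdot, 0, 1$ restrict to $F$ and are therefore $0$-definable on this set, giving $F$ as a $0$-definable subfield, uniformly in $F$ since the formula does not mention $F$ at all.

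\textbf{Part 2): definability of $Irr$.} The naive translation of the definition of irreducibility is $a \notin F \ \wedge\ \forall u \forall v (a = uv \to u \in F \vee v \in F)$, and using Part 1) the predicate ``$\in F$'' is $0$-definable; hence $Irr$ is $0$-definable by this single formula. So Part 2) is essentially immediate once Part 1) is in hand — the only content is noting that ``non-invertible and non-constant'' is captured by ``$a \notin F$'' (equivalently $a \notin F^\ast$ and $a \neq 0$, but for $a \neq 0$ non-constant is the same as non-unit, and $0$ is by convention not irreducible and lies in $F$ so is correctly excluded), and that the defining clause ``$a = uv \Rightarrow u \in F \vee v \in F$'' is word-for-word the definition of irreducible.

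\textbf{Main obstacle.} There is no serious obstacle; the only point requiring a moment's care is the identification of the units of $F[X]$ with $F^\ast$ when $X$ is an arbitrary (possibly infinite) set — this follows by restricting any putative factorization to the finitely many variables actually occurring and using the standard fact for $F[x_1,\dots,x_n]$ (an integral domain when $F$ is a field), where degree considerations force units to be nonzero scalars. I would state this as the one lemma-internal observation and then assemble the two formulas.
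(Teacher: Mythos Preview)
Your proposal is correct and follows essentially the same approach as the paper: define $F$ via invertibility and then define $Irr$ by directly translating the definition of irreducibility using the already-defined predicate ``$\in F$''. Your version is in fact slightly more careful than the paper's, since you explicitly include the disjunct $x=0$ (the paper's formula $\exists y\,(xy=1)$ literally defines only $F^\ast$), but this is a trivial point and the arguments are otherwise identical.
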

\begin{proof}
The field $F$, as a subset  of $F[X]$ consists precisely of all invertible elements of $F[X]$, so it  can be described by a first-order formula 
$$\phi(x) = \exists y (xy = 1)$$
 that does not depend on $F$. This proves 1).

 The set $Irr$ of all irreducible polynomials in $F[X]$ is definable in $F[X]$ by the formula
$$
Irr(x) = \forall u \forall v  (x = uv \rightarrow u \in F  \vee v\in F) \wedge (x \not \in F).
$$
\end{proof}
In view of Lemma \ref{le:field} we will use notation $a \in F$ meaning that $a \in F[X]$ satisfies the corresponding  formula from Lemma \ref{le:field}.  In a commutative ring $R$ for elements  $x,y \in R$ we write  $x\mid y$ if $y = xz$ for some $z \in R$. Obviously, this is also a definable predicate in the language of rings, so we can use it in our formulas.  We frequently use the fact that $F[X]$ is a unique factorization domain without mentioning it directly.

\begin{lemma} \label{le:int-poly-one}
Let $F$ be an arbitrary field and $X$ an arbitrary non-empty set. Let $P$ be a non-invertible polynomial in $F[X]$. Then the ring of polynomials in one variable $F[P]$ is definable  in $F[X]$ uniformly in $F$, $X$,  and  $P$.
\end{lemma}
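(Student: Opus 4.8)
The plan is to exhibit a first-order formula $\theta(x; P)$, with the non-invertible polynomial $P$ as a parameter, such that for $f \in F[X]$ one has $F[X] \models \theta(f; P)$ if and only if $f \in F[P]$. The guiding idea is that $F[P] = F[X] \cap K(P)$ inside the fraction field, and more concretely that $f \in F[P]$ precisely when $f$ is a polynomial expression in $P$; the task is to capture this arithmetically. First I would recall that $F$ itself is $0$-definable in $F[X]$ by Lemma \ref{le:field}, and that divisibility $x \mid y$ is definable, so I am free to use $F$ and $\mid$ in the formula. The natural candidate is to say that $f$ lies in the subring generated by $F$ and $P$: one wants to express ``$f$ can be built from elements of $F$ and the element $P$ using $+$ and $\cdot$,'' but of course one cannot quantify over the length of such an expression directly, so the real content is to give a bounded, first-order description of membership in $F[P]$.

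The key step is the following characterization, which I would prove and then translate into a formula. A polynomial $f$ lies in $F[P]$ if and only if $f$ belongs to the smallest subset of $F[X]$ that contains $F \cup \{P\}$ and is closed under addition and multiplication; equivalently, $f \in F[P]$ iff there is a finite chain $f_0, f_1, \dots, f_m = f$ each term of which is in $F$, equals $P$, or is a sum or product of two earlier terms. To make this first-order I would use the list superstructure / $S(F,\N)$ machinery only if necessary, but a cleaner route specific to $F[P]$ is available: since $F[P] \cong F[t]$ is a polynomial ring in one variable, $f \in F[P]$ iff $f$ is a (finite) $F$-linear combination of powers of $P$, i.e. iff there exist $n$ and $\alpha_0, \dots, \alpha_n \in F$ with $f = \sum_{i=0}^n \alpha_i P^i$. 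One then notes that $\sum_{i=0}^n \alpha_i P^i$ is exactly the remainder-free part: dividing $f$ repeatedly by $P$ over the subring they generate, $f \in F[P]$ iff iterated ``division by $P$ with $F$-remainder'' terminates. Concretely: define $g_0 = f$, and given $g_j$, if $g_j \in F$ stop; otherwise $g_j \in F[P]$ should force $P \mid (g_j - \alpha_j)$ for a unique $\alpha_j \in F$ (the constant term of $g_j$ as a polynomial in $P$), set $g_{j+1} = (g_j - \alpha_j)/P$, and require this to reach $F$ in finitely many steps.

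To render the ``finitely many steps'' first-order I would encode the sequence $(g_0, \dots, g_m)$ and the corresponding scalar sequence $(\alpha_0, \dots, \alpha_m)$ as finite tuples; since the arithmetic $\N$ and hence the list superstructure $S(F[X], \N)$ is available over $F[X]$ — here I would invoke that $\N$ is interpretable in $F[X]$, which follows from the general polynomial-ring facts being developed in this section, together with Lemma \ref{le:superstructures} and Lemma \ref{le:list-superstructure} to get $S(F[X],\N)$ — one can quantify over such a tuple $\bar g = (g_0, \dots, g_m)$ and assert: $g_0 = f$; $g_m \in F$; and for every $j < m$, $g_j \notin F$, there exists $\alpha \in F$ with $P \mid (g_j - \alpha)$ and $P \cdot g_{j+1} = g_j - \alpha$. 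Conversely, if such a tuple exists then unwinding gives $f = \alpha_0 + P(\alpha_1 + P(\cdots)) \in F[P]$, and if $f \in F[P]$ the division-by-$P$ process produces exactly such a tuple of length $= \deg_P f + 1$. Uniformity in $F$, $X$, $P$ is automatic since the formula mentions $F$ only through the fixed formula of Lemma \ref{le:field}, the built-in divisibility predicate, and the parameter $P$.

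The main obstacle I expect is not the algebra — the division-by-$P$ recursion is routine in a UFD — but making the ``bounded existential over a tuple'' step legitimate without circularity: one needs $S(F[X],\N)$, or at least the natural numbers with enough coding, to already be interpretable in $F[X]$ at this point in the paper. If that interpretation is only established later, the fallback is to avoid tuples entirely by a direct divisibility argument: show that $f \in F[P]$ iff for the unique factorization considerations, every irreducible factor of the ``$P$-denominator'' of $f$ divides a power of $P$ and the associated partial-fraction-type obstruction vanishes; but this is messier. I would therefore structure the proof around the tuple-encoding, flagging the dependence on interpretability of $\N$ in $F[X]$, which is the standard and expected input here.
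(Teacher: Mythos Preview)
The circularity you flag is real and fatal for this lemma's place in the paper. Lemma~\ref{le:int-poly-one} is invoked in the proof of Lemma~\ref{le:arbitrary-char-2} (arithmetic with arbitrary non-invertible parameter) and, crucially, in the proof of Theorem~\ref{Bauva} (interpretation of $S(F,\N)$ in $F[X]$). So you cannot appeal to any list-coding machinery here without a genuine circle. Moreover, what your recursion needs is $S(F[X],\N)$---finite tuples of \emph{polynomials}---whereas the paper only ever establishes $S(F,\N)$, tuples of \emph{scalars}; Lemmas~\ref{le:superstructures} and~\ref{le:list-superstructure} do not bridge that gap. Your fallback sketch via ``$P$-denominators'' is not developed enough to assess.

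The paper bypasses all of this with a single $\forall\exists$ formula requiring no sequence coding at all:
\[
\psi(Q,P)\;=\;\forall\,\alpha\in F\ \exists\,\beta\in F\ \bigl(P-\alpha \mid Q-\beta\bigr).
\]
The forward direction is immediate: if $Q\in F[P]$ then for each $\alpha$ the remainder of $Q$ (viewed as a polynomial in $P$) upon division by $P-\alpha$ is a scalar $\beta$. For the converse, suppose $\psi(Q,P)$ holds; pick $\alpha\in F$ and write $Q-\beta=(P-\alpha)Q_0$. A short computation with a second value $\alpha_1\neq\alpha$ shows that $Q_0$ again satisfies $\psi(\cdot,P)$, and the leading monomial of $Q_0$ is strictly smaller than that of $Q$ since $P$ is non-invertible. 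Induction on the monomial ordering finishes. This is exactly the division-with-scalar-remainder idea underlying your recursion, but expressed as a \emph{uniform} divisibility condition over all $\alpha\in F$, which eliminates the need to record the intermediate $g_j$'s.
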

\begin{proof}
 Fix a non-invertible polynomial $P \in F[X]$. 
 The following formula with the parameter $P$ defines the ring of polynomials $F[P]$ in $F[X]$:
 $$
 \psi(Q,P) = \forall \alpha \in F \exists \beta \in F (P-\alpha \mid Q-\beta).
 $$
 Indeed, any $Q \in F$ satisfies the formula for $\beta = Q$. Suppose now  $Q\in F[P] \smallsetminus F$ then for any $\alpha \in F$ $Q = (P-\alpha)Q_1 +\beta$ for some $\beta \in F$. Hence, $P-\alpha \mid Q-\beta$, so $Q$ satisfies $\psi(Q,P)$ in $F[X]$. 
 
 On the other hand, if $F[X]\models \psi(Q,P)$ for some $Q \in F[X]$, then for a given $\alpha \in F$ one has $Q -\beta = (P-\alpha)Q_0$ for some $\beta \in F$ and $Q_0 \in F[X]$. For another $\alpha_1\in F$ there exists $\beta_1 \in F$ such that  
 $(P-\alpha_1)\mid Q-\beta_1$. Now, 
 $$
 Q-\beta = (P-\alpha)Q_0 =  (P-\alpha_1 +\alpha_1- \alpha) Q_0= 
 (P-\alpha_1)Q_0 +(\alpha_1-\alpha)Q_0.
 $$
  Hence 
  $$Q-\beta_1 = Q-\beta +\beta -\beta_1 =  (P-\alpha_1)Q_0 +(\alpha_1-\alpha)Q_0 + \beta -\beta_1. 
   $$
    It follows that $P-\alpha_1 \mid (\alpha_1-\alpha)Q_0 + \beta -\beta_1$, and  $P-\alpha_1 \mid Q_0 + (\beta -\beta_1) (\alpha_1-\alpha)^{-1}$, therefore  $F[X] \models \psi(Q_0,P)$. Notice that the leading term in $Q_0$ is smaller (in the monomial ordering) then that one in $Q$.  Hence, by induction, $Q_0$ belongs to $F[P]$, so  does $Q$.
 
\end{proof}

\subsection{Interpretation of arithmetic in $F[X]$}

We start with the case when $F$ has characteristic zero. In this case $\Z$ is a subring of $F$, so it suffices to provide a formula $\phi(x)$ of the language of rings that defines $\Z$ in $F[X]$. 

\begin{lemma} \label{le:char0}
For any field $F$ of characteristic zero and any non-empty set $X$ the arithmetic $\N = \langle N\mid +,\cdot, 0,1\rangle \leq F$  viewed as a subset of $F$ is $0$-definable in $F[X]$ uniformly in  $F$  and $X$ (i.e., the defining  formula is the same for all fields $F$ of characteristic zero and all non-empty sets $X$).  
\end{lemma}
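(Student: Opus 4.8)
The plan is to define $\Z$ inside $F[X]$ by a first-order formula that does not refer to $F$ or $X$, and then observe that $\N$ is $0$-definable in $\Z$ (by the ring operations together with, say, the Lagrange four-squares characterization of nonnegative elements of $\Z$, or by Robinson's classical definition of $\N$ in $\Z$). So the whole weight is in defining $\Z$. Since $\mathrm{char}(F) = 0$, the prime subring of $F[X]$ is a copy of $\Z$, and it coincides with $\Z$ viewed as a subset of the field $F \subseteq F[X]$. Using Lemma \ref{le:field} we already have the predicate ``$a \in F$'' available, so it suffices to carve out $\Z$ from $F$ by a formula of the ring language. Inside a field of characteristic zero the set $\Z$ is \emph{not} in general first-order definable (e.g. in $\mathbb{C}$ or in an algebraically closed field it is not), so the definition must genuinely exploit the polynomial ring structure around $F$.

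The key idea is that $\Z$ can be characterized as the set of ``values at integer points'' behaviour, but more directly as follows: an element $\alpha \in F$ lies in $\Z$ if and only if $\alpha$ is a value one reaches from $0$ by finitely many steps of adding or subtracting $1$ — a condition that is not first-order as stated, but becomes first-order once we have a variable ranging over polynomials to encode a ``path''. Concretely, pick any variable-like element, e.g. use a fixed irreducible $P$ (available by Lemma \ref{le:field}, since $X \neq \emptyset$ guarantees non-constant polynomials exist, hence irreducible ones), and express that $\alpha \in \Z$ iff there is a polynomial $Q \in F[X]$ with $Q(0) = 0$ in the appropriate sense and $Q - \alpha$ divisible by a shift, arranged so that the divisibility chain forces $\alpha$ to be a finite sum of $\pm 1$. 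A cleaner route: use the fact, already encapsulated in Lemma \ref{le:int-poly-one}, that for a non-invertible $P$ the subring $F[P] \cong F[t]$ is definable with parameter $P$; inside $F[t]$ the integers are definable (again Robinson-style, or because $F[t]$ is a one-variable polynomial ring over a characteristic-zero field where the standard Julia Robinson / Denef-type arguments give $\Z$), and then one quantifies out the parameter $P$ by noting the resulting subset of $F$ does not depend on the choice of $P$. The point to be careful about is \emph{uniformity}: the formula defining $\Z$ in $F[t]$ must be chosen independently of $F$, which holds because the relevant number-theoretic definitions (sum of squares, divisibility sequences) are uniform across all characteristic-zero fields.

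The main obstacle I expect is precisely pinning down a single ring-language formula that (a) is correct over every characteristic-zero field $F$ and every non-empty $X$, and (b) does not accidentally capture non-integer elements of $F$ (for instance, one must rule out $1/2 \in \mathbb{Q} \subseteq F$, and transcendental or algebraic irrationals in $F$). The way to control (b) is to force the candidate element to sit in a definable copy of $\Z$-inside-$F[t]$ where these pathologies are excluded by the UFD / degree structure, rather than working inside $F$ directly. Once $\Z$ is defined, passing to $\N$ is routine: $\N = \{\, n \in \Z : \exists a,b,c,d \in \Z\ \ n = a^2 + b^2 + c^2 + d^2 \,\}$, which is a uniform ring-language formula, and the ring operations $+, \cdot, 0, 1$ restrict correctly. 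I would then record that the same formula works verbatim for all $F$ of characteristic zero and all non-empty $X$, giving the claimed uniformity.
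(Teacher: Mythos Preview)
Your proposal has the right intuition but a genuine gap at the decisive step. You correctly observe that the whole point is to turn ``$\alpha$ is reached from $0$ by finitely many $+1$ steps'' into a first-order statement by letting a polynomial encode the path, and you correctly note that working inside $F$ alone cannot succeed. However, you never actually produce such a formula. The appeals to ``Robinson-style'' or ``Denef-type'' arguments are not enough: Robinson's results interpret arithmetic in $F[t]$ via the set $\{t^n : n\in\N\}$, which is an \emph{interpretation} of $\N$, not a \emph{definition} of $\N\subseteq F$ as a subset; and Denef's results concern Diophantine definability over specific fields, not uniform definability over all characteristic-zero $F$. So the reduction ``pass to $F[P]\cong F[t]$, define $\Z$ there, quantify out $P$'' leaves the central problem untouched: you still need a uniform formula defining $\Z$ (or $\N$) as a subset of $F$ inside $F[t]$, and that is exactly the content of the lemma in the one-variable case.

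The paper's proof is much shorter and implements precisely your ``path encoding'' idea directly, without the detours through $\Z$, Lagrange's four squares, or the reduction to one variable. It quotes the single formula (from Jensen--Lenzing): $a\in F$ lies in $\N$ iff
\[
\forall u\notin F\ \exists v\ \Big(u\mid v\ \wedge\ \forall b\in F\big((u+b)\mid v\ \to\ (u+b+1)\mid v\ \vee\ b=a\big)\Big).
\]
For $a\in\N$ one takes $v=u(u+1)\cdots(u+a)$; if $(u+b)\mid v$ then the remainder computation $v=(u+b)Q(u)+\prod_{i=0}^{a}(i-b)$ forces $b\in\{0,\dots,a\}$, and the successor clause is immediate. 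Conversely, if the formula holds for some $a\in F$, specialize $u$ to a variable $x$: starting from $x\mid v$ the closure property yields $x, x+1, x+2,\dots$ all dividing $v$ until $b=a$ is hit, and since $v$ has only finitely many degree-one factors in the UFD $F[X]$ the chain terminates at some $a=n\in\N$. This is the concrete realization of the mechanism you sketched; what your proposal is missing is exactly this formula and its verification.
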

\begin{proof}
It was shown in \cite{Jensen}, Proposition 3.6, that $a \in F$ belongs to $\N$ if and only if it satisfies the following formula.

\begin{equation}
\forall u \not \in  F \exists v (u\mid v \wedge (\forall b \in F ((u+b) \mid v \rightarrow (u+b+1)\mid v \vee (b = a))))
\end{equation}
 
\end{proof}

  We show  below several results on interpretability of arithmetic  in the ring $F[X]$ for an arbitrary field $F$. The first part of the proof  (interpretability with parameters) is known (see, for example, \cite{Robinson} and \cite{Jensen}, Theorem 4.17). However, for the second and the third  we could not find any references.

\begin{lemma} \label{le:arbitrary-char}
Let $F$ be an arbitrary field and $X$ an arbitrary non-empty set.  Then the following hold:
\begin{itemize}
\item [1)] For any irreducible polynomial $a \in F[X]$ the arithmetic 
$\N = \langle N; +,\cdot, 0,1\rangle$ is interpretable with the parameter $a$  in $F[X]$ uniformly in  $F$, $X$,  and $a$ (i.e., the interpretation formulas are the same for all fields $F$, sets $X$, and irreducible polynomials $a$). We denote this interpretation by $\N_a$.
\item [2)] For any irreducible polynomials $a, b \in F[X]$ the canonical (unique)  isomorphism of interpretations $\mu_{a,b}:\N_a \to \N_b$ is definable in $F[X]$ uniformly in $F$, $X$,  and $a, b$.
\item [3)] The arithmetic $\N$ is 0-interpretable in $F[X]$.
\end{itemize}
\end{lemma}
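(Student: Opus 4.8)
The plan is to prove the three parts in order, using Lemma \ref{le:int-poly-one} as the main engine. For part 1), fix an irreducible polynomial $a \in F[X]$. By Lemma \ref{le:int-poly-one} the subring $F[a]$ is definable in $F[X]$ with parameter $a$, and $F[a]$ is isomorphic to a polynomial ring in one variable over $F$; since $a$ is irreducible, $a$ itself plays the role of the variable (it is not a constant and has no proper factors inside $F[a]$). The standard Robinson-style interpretation of $\N$ in $F[t]$ then goes through: one encodes a natural number $n$ by (a definable property singling out) the element $a^n$, or equivalently by the principal ideal $(a^n)$, i.e.\ by divisibility chains $a \mid a^2 \mid \cdots$. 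Concretely, I would let the domain of $\N_a$ be $\{a^n : n \in \N\} = \{q \in F[a] : q \mid a^k \text{ for some } k, \text{ and } q \text{ is a power of } a\}$, which is definable because "being an associate of a power of the irreducible $a$" is expressible via the divisibility predicate restricted to $F[a]$; addition $a^m \cdot a^n = a^{m+n}$ is just ring multiplication, and multiplication $a^m * a^n := a^{mn}$ is the genuinely nontrivial operation — it is defined by the usual trick relating $mn$ to the exponent appearing when one substitutes, which is captured by a first-order condition on divisibility of polynomials in the single "variable" $a$ (this is exactly the content of \cite{Robinson} and \cite{Jensen}, Theorem 4.17). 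All formulas depend only on the ring language plus the name of $a$, so the interpretation is uniform in $F$, $X$, and $a$.

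For part 2), given two irreducible polynomials $a, b \in F[X]$, the map $\mu_{a,b} \colon \N_a \to \N_b$ sending $a^n \mapsto b^n$ is the unique isomorphism of these two copies of $\N$ (any isomorphism of $\N$ is the identity, and $a^n, b^n$ are the canonical codes of $n$ in the two interpretations). To see it is definable in $F[X]$ uniformly, I would express "$a^m$ corresponds to $b^n$" by "$m = n$", and recover the exponent comparison internally: $a^m$ and $b^n$ code the same natural number iff for the iterated-divisibility chains they sit in the same position, which can be phrased by a simultaneous induction — e.g.\ $\mu_{a,b}(a^0) = b^0$ and $\mu_{a,b}(a \cdot u) = b \cdot \mu_{a,b}(u)$ — and such a recursively defined graph between two interpreted copies of $\N$ is first-order definable because $\N$ has definable (parameter-free) well-ordering and one can quantify over finite approximating sequences using the already-available interpretation of arithmetic-with-a-parameter to encode them. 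Since everything is written in the ring language with only $a$ and $b$ as names, uniformity is immediate.

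For part 3), the goal is to remove the parameter. The natural route is: the set $Irr$ of irreducible polynomials is $0$-definable in $F[X]$ by Lemma \ref{le:field}, and $Irr$ is nonempty (for instance any non-constant linear polynomial in a variable of $X$ is irreducible, which is expressible, or simply: $F[X]$ is not a field so non-units exist and hence irreducibles exist). So I would form the $0$-interpretation of $\N$ whose domain is the quotient of $\{(a, a^n) : a \in Irr,\ n \in \N\}$ by the equivalence relation "$(a, a^m) \sim (b, b^n)$ iff $\mu_{a,b}(a^m) = b^n$", which by part 2) is $0$-definable; the induced operations are well-defined by parts 1) and 2) and are $0$-definable. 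This yields a $0$-interpretation of $\N$ in $F[X]$, uniform in $F$ and $X$. I expect the main obstacle to be part 2): verifying that the canonical isomorphism between two parameter-interpretations of $\N$ is not merely definable with parameters $a, b$ but that its defining formula can be written without leaking any further parameters and uniformly in $F$ and $X$ — this requires care in how exponent-comparison and the coding of finite sequences are set up, and is precisely the point the authors flag as not appearing in the literature.
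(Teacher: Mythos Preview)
Your outline for parts 1) and 3) is essentially correct and matches the paper's approach: the paper also encodes $n$ by $a^n$, recovers addition via ring multiplication, recovers the divisibility relation on $\N$ via $(a^n-1)\mid (a^m-1)$, and then cites Robinson for the fact that $\langle \N;+,\cdot\rangle$ is definable in $\langle \N;+,\mid\rangle$; for part 3) the paper glues all the $\N_a$ together exactly as you propose. One small point in 1): your proposed domain ``$q$ is a power of $a$'' is circular as written; the paper fixes this by first defining $\{\alpha a^n:\alpha\in F,\ n\in\N\}$ (via the condition that every divisor is either in $F$ or divisible by $a$) and then singling out the genuine powers with the extra clause $(a-1)\mid (x-1)$.

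The real problem is part 2). Your plan is to define the graph of $\mu_{a,b}$ by the recursion $\mu(1)=1$, $\mu(a\cdot u)=b\cdot\mu(u)$, and to make this first-order by ``quantifying over finite approximating sequences using the already-available interpretation of arithmetic.'' But the arithmetic you have at this stage only encodes finite sequences \emph{of natural numbers inside a single copy $\N_a$}. It does not let you encode finite sequences of \emph{elements of $F[X]$}, and in particular it gives you no way to talk about a sequence of pairs $(a^i,b^i)$ of unbounded length. Any attempt to recover the index $i$ from $b^i$ via $\N_a$ presupposes exactly the correspondence $\mu_{a,b}$ you are trying to define, so the argument is circular. (The machinery that \emph{would} let you code such sequences, namely the interpretation of $S(F,\N)$ in $F[X]$, is Theorem~\ref{Bauva}, which is proved \emph{after} and \emph{using} this lemma.)

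The paper avoids this trap by a purely algebraic trick that you are missing: it shows that the ``diagonal'' set $N_{ab}=\{(ab)^n:n\in\N\}$ is definable in $F[X]$ with parameters $a,b$, and then observes that $\mu_{a,b}(a^n)=b^m$ if and only if $a^nb^m\in N_{ab}$. Definability of $N_{ab}$ is obtained from unique factorization: one first writes down a formula describing the set $U$ of elements whose only irreducible divisors are $a$ and $b$ and whose non-irreducible non-unit divisors are all multiples of $ab$; intersecting $U$ with the definable set $N_{a^2}\cdot N_{b^2}$ yields $N_{(ab)^2}$, and then $N_{ab}=N_{(ab)^2}\cup (ab)\cdot N_{(ab)^2}$. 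This is the genuine content of 2), and it is what your proposal needs to supply.
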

\begin{proof}

Fix an arbitrary $a \in Irr$. Then the formula
$$
\phi_1(x,a) = \forall u (u \mid x \rightarrow (u \in F \vee a \mid u))
$$
defines in $F[X]$ a set $\{\alpha a^n \mid \alpha \in F, n \in \N\}$, while the formula 
$$
\phi_2(x,a) =  (a-1) \mid (x-1)
$$
defines in this set the subset 
$$
N_a = \{a^n \mid n \in \N\}.
$$
Hence the conjunction 
$$
Nat(x,a) = \phi_1(x,a) \wedge \phi_2(x,a)
$$
defines $N_a$ in $F[X]$. Clearly, for any $n,m,k \in \N$ one has

\begin{equation} \label{eq:operations-in-N}
n+m = k \Longleftrightarrow a^n\cdot a^m = a^k,
\end{equation}
\begin{equation} \label{eq:operations-in-N-2}
n\mid m \Longleftrightarrow (a^n-1)\mid (a^m-1).
\end{equation}

 The righthand sides of the equivalences above can be expressed by some first-order formulas of the ring theory (using the formula $Nat(x,a)$), say $\psi_+(a^n,a^m,a^k,a)$ and $\psi_\mid(a^n,a^m,a)$. This allows one define on the set $N_a$ a new structure, denoted $\N_a$,  which is isomorphic to the structure $\langle \N; +, \mid, 0 \rangle$ uniformly in the parameter $a \in Irr$. This proves 1).

To prove 2) we  show that for any $a,b \in Irr$ the isomorphism of the structures $\mu_{ab}:\N_a \to \N_b$, where $a^n \to b^n$ for $n \in \N$, is also definable  by a first-order formula with parameters $a,b$ uniformly in $F, X, a$ and $b$. For this we show first that the set 
$$
N_{ab} = \{(ab)^n \mid n \in \N\}
$$
is definable in $F[X]$ with parameters $a,b$. Indeed, the formula
$$
\forall u (u \mid x  \rightarrow [(u \not \in (Irr \cup F) \to ab \mid u) \wedge (u \in Irr) \to (a \mid u \vee b \mid u)]),
$$
which states that all no-irreducible non-invertible divisors of $x$  are divisible by $ab$, and all irreducible divisors of $x$  are divisible either by $a$ or by $b$, defines in $F[X]$ a subset 
$$
U = F\cdot N_{ab}\cdot \{a\} \cup F\cdot N_{ab}\cdot \{b\} \cup F\cdot N_{ab},
$$
(here and below for sets $M,K$ we denote $M\cdot K = \{mk \mid m \in M, k \in K\}$).  On the other hand, the sets 
$$
N_{a^2} = \{a^{2n} \mid n \in \N\}, \ \ \  N_{b^2} = \{b^{2n} \mid n \in \N\},
$$
are definable in $N_a$ and $N_b$, correspondingly. Hence they are definable in $F[X]$ (with parameters $a$ and $b$), as well as  the set
$$
V = N_{a^2} \cdot N_{b^2} = \{a^{2m}b^{2n} \mid m,n \in \N\}.
$$
It follows that the set
$$
W = U \cap V = \{(ab)^{2n} \mid n \in \N\} = N_{(ab)^2}
$$
is also definable in $F[X]$ with parameters $a,b$. Clearly, the set $N_{ab}$ can be expressed as 
$$
N_{ab} = \{(ab)^n \mid n \in \N\} = W \cup W\cdot\{ab\},
$$
so it is also definable in $F[X]$ with parameters $a, b$. 

Observe now that 
$$
\mu_{ab}(a^n) = b^m \Longleftrightarrow m = n \Longleftrightarrow a^nb^m \in N_{ab},
$$
hence there is a first-order formula $Is(x,y,a,b)$  which defines in $F[X]$ the map $\mu_{ab}$ uniformly in $a$ and $b$.

Now we interprete  the same structure $\langle \N; +, \mid, 0 \rangle$ in $F[X]$ without parameters.  Using the isomorphisms $\mu_{ab}$  one can glue all the elements $a^n$ for a fixed $n \in \N$ and $a$ running over $Irr$, into one equivalence class, by this identifying all the structures $\N_a$ into one structure  isomorphic to  $\langle \N; +, \mid, 0 \rangle$. The resulting structure is $0$-interpretable in $F[X]$ uniformly in $F$ and $X$, as claimed.

To finish the proof it suffices to notice that the standard arithmetic $\N = \{N; +,\times, 0,1\}$ is definable in the structure $\langle \N; +, \mid, 0 \rangle$ without parameters \cite{Robinson}.
\end{proof}

Now we improve on the result above allowing any non-invertible polynomial $P$ as a parameter (not only the irreducible ones).

\begin{lemma} \label{le:arbitrary-char-2}
Let $F$ be an arbitrary field and $X$ an arbitrary non-empty set.  Then the following hold:
\begin{itemize}
\item [1)] For any non-invertible  polynomial $P \in F[X]$ the arithmetic 
$\N = \langle N; +,\cdot, 0,1\rangle$ is interpretable with the parameter $P$  in $F[X]$ uniformly in  $F$, $X$,  and $P$. We denote this interpretation by $\N_P$.
\item [2)] For any non-invertible  polynomials $P, Q \in F[X]$ the canonical (unique)  isomorphism of interpretations $\mu_{P,Q}:\N_P \to \N_Q$ is definable in $F[X]$ uniformly in $F$, $X$,  and $P, Q$.
\end{itemize}
\end{lemma}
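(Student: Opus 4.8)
The plan is to bootstrap from Lemma~\ref{le:arbitrary-char} by gluing together the copies $\N_a$ of the arithmetic attached to the irreducible divisors $a$ of the parameter $P$. Since $X$ is non-empty and $F[X]$ is a unique factorization domain, every non-invertible $P$ has at least one irreducible divisor, and the set of all irreducible divisors of $P$ is $0$-definable with the parameter $P$ by the formula $Irr(a)\wedge a\mid P$, using the formula $Irr$ of Lemma~\ref{le:field}.

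For part 1) I would take as the universe of $\N_P$ the set of pairs $\{(a,y)\mid Irr(a)\wedge a\mid P\wedge Nat(y,a)\}$, which is definable with the parameter $P$ via the formula $Nat(\cdot,\cdot)$ from the proof of Lemma~\ref{le:arbitrary-char}, and then impose the equivalence relation $(a,y)\sim(b,z)\iff Is(y,z,a,b)$, where $Is$ is the formula of Lemma~\ref{le:arbitrary-char}(2) defining the canonical isomorphism $\mu_{a,b}\colon\N_a\to\N_b$ uniformly in the irreducibles $a,b$. Reflexivity, symmetry and transitivity of $\sim$ amount to $\mu_{a,a}=\mathrm{id}$, $\mu_{b,a}=\mu_{a,b}^{-1}$ and the cocycle identity $\mu_{b,c}\circ\mu_{a,b}=\mu_{a,c}$, all of which hold automatically because each $\N_a$ is isomorphic to the rigid structure $\langle\N;+,\mid,0\rangle$, so there is exactly one isomorphism between any two of them. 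The operations and constants of arithmetic are transported to $\N_P$ from any representative $\N_a$ by means of the formulas $\psi_{+},\psi_{\mid}$ of Lemma~\ref{le:arbitrary-char} (recall that $\N=\langle N;+,\cdot,0,1\rangle$ is $0$-definable in $\langle\N;+,\mid,0\rangle$), and they are well defined on $\sim$-classes because the $\mu_{a,b}$ are isomorphisms. The resulting structure is isomorphic to $\N$, is interpretable with the parameter $P$, and every formula involved is independent of $F$, $X$ and $P$; this is the asserted uniformity.

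For part 2) I would define $\mu_{P,Q}$ to be the relation on $\N_P\times\N_Q$ carrying the $\sim$-class of a pair $(a,y)$ with $a\mid P$ to the $\sim$-class of a pair $(b,z)$ with $b\mid Q$ exactly when $Is(y,z,a,b)$ holds; its well-definedness on $\sim$-classes is again the coherence of the family $\{\mu_{a,b}\}$. This relation is visibly an isomorphism $\N_P\to\N_Q$, and since both structures are copies of the rigid arithmetic it is the unique, hence the canonical, one. Its defining formula mentions only $P$, $Q$ and the uniform formulas $Irr$, $Nat$, $Is$, so $\mu_{P,Q}$ is definable in $F[X]$ uniformly in $F$, $X$, $P$ and $Q$.

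The only step requiring genuine care is the coherence (triviality of the cocycle) of the family $\{\mu_{a,b}\}$, and this is supplied for free by the rigidity of the arithmetic; the rest is bookkeeping of the uniformity. I note in passing that the more naive route --- trying to interpret $\N$ directly through the single set $\{P^{n}\mid n\geq 0\}$ in imitation of the formulas $\phi_1,\phi_2$ of Lemma~\ref{le:arbitrary-char} --- is problematic because those formulas genuinely rely on $P$ being irreducible (for reducible $P$ the divisors $a\mid P$ with $a\notin F$ spoil the analogue of $\phi_1$), so one would have to work with the squarefree part of $P$, and, when $P$ is a proper power, with its primitive part, before a divisibility description of $\{P^{n}\mid n\geq 0\}$ becomes available; the gluing argument above avoids these complications, which is why I would take that route.
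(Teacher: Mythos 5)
Your argument is internally sound and does establish the lemma as literally stated: gluing the copies $\N_a$ over the irreducible divisors $a$ of $P$ along the definable isomorphisms $Is(y,z,a,b)$ yields an interpretation with parameter $P$, and rigidity of arithmetic makes the cocycle condition and the uniqueness of $\mu_{P,Q}$ automatic. However, this is a genuinely different construction from the paper's, and it delivers strictly less. The paper builds $\N_P$ concretely on the base set $\{P^m\mid m\in\N\}$: it first fixes a reference copy $\N_a$ for one irreducible $a$ (using Lemma~\ref{le:arbitrary-char}(2) so that ``$m\in\N$'' can be quantified independently of $a$), then writes a formula $\phi_2(x,P,m,a)$ which defines the \emph{element} $P^m$ by prescribing, in terms of $m$, the multiplicities with which the irreducible divisors of $P$ occur in $x$, normalized by $(P-1)\mid(x-1)$; the isomorphism $\mu_{P,Q}$ is then the definable map $P^m\mapsto Q^m$ obtained by composing through $\N_a$. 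Your interpretation depends on $P$ only through its set of irreducible divisors and never definably identifies the set of powers of $P$ or the correspondence $m\mapsto P^m$ --- but that is precisely the content the paper uses afterwards: Lemma~\ref{le:inter-iso-3} asserts that $\N_P$ and $\N_P^\prime$ have the \emph{same base set} $\{P^m\mid m\in\N\}$, Theorem~\ref{Bauva} encodes a tuple $(\alpha_0,\dots,\alpha_n)$ as the pair $(\sum_i\alpha_iP^i,P^n)$, and the non-commutative analogue (Lemma~\ref{le:non-comm-arbitrary-char}) is entirely about definably matching $P^n$ with $Q^n$. So while your route avoids the difficulty you correctly identify with applying $\phi_1,\phi_2$ of Lemma~\ref{le:arbitrary-char} to a reducible $P$, the paper resolves that difficulty head-on (via multiplicities indexed by the reference copy $\N_a$) rather than sidestepping it, and the sidestep would force you to redo work downstream. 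One small technical point in your version: the formula $Is(y,z,a,b)$ is constructed for distinct irreducibles (it uses the set $N_{ab}$ and the product $N_{a^2}\cdot N_{b^2}$), so to make $\sim$ reflexive you should define $(a,y)\sim(b,z)$ by $Is(y,z,a,b)$ only when $a,b$ are non-associate and by $y=z$ when $b\in Fa$.
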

\begin{proof}
We use results and notation from Lemma \ref{le:arbitrary-char}. Let $a$ be a fixed irreducible polynomial in $F[X]$. Fix the interpretation  $\N_{a}$ of arithmetic and denote it by $\N$. By Lemma  \ref{le:arbitrary-char} for any $b \in Irr$ the map $a^m \to b^m, m \in \N$ is  definable uniformly in $a, b$. This allows us to use notation $b^m$, as well as $m \in \N$  in our formulas. Now we follow the scheme of the proof  in Lemma \ref{le:arbitrary-char}. 

Observe that the formula 
$$
\phi_1(x,P,m,a) = \forall b \in Irr [(b\mid P \to (b^m \mid x) \wedge  \neg(p^{m+1}\mid x)) \wedge (b\mid x \to b\mid P)]
$$
which states that $x$ and $P$ have precisely the same irreducible divisors, and every irreducible divisor of $P$ occurs in $x$ precisely $m$ times, defines in $F[X]$ the set $\{\alpha P^m  \mid \alpha \in F\}$. Hence, the formula 
$$
\phi_2(x,P,m,a) = \phi_1(x,P,m,a)  \wedge (P-1)\mid (x-1)
$$
defines in $F[X]$ the element $P^m$. Therefore, the formula 
$$
\phi_3(x,P) = \exists a \in Irr \exists m \in \N_a\phi_2(x,P,m,a) 
$$
defines in $F[X]$ the set 
$$
\N_P = \{P^m \mid m \in \N\}.
$$
As in Lemma \ref{le:arbitrary-char} (see conditions (\ref{eq:operations-in-N}) and (\ref{eq:operations-in-N-2})), for any $n,m,k \in \N$ one has
$$
n+m = k \Longleftrightarrow P^n\cdot P^m = P^k,
$$
$$
n\mid m \Longleftrightarrow (P^n-1)\mid (P^m-1).
$$
Hence there are formulas $\psi_+(P^n,P^n,P^k,P)$ and $\psi_\mid(P^n,P^n,P^k,P)$ that define the addition + and the division $\mid$ on $\N_P$. So the arithmetic is interpretable on $\N_P$ uniformly in $F, X, P$, as claimed in 1). 
 
 To see 2) observe that by construction the formula $\phi_2(x,P,m,a)$ gives the canonical isomorphism $\N_{a} \to \N_P$ defined by $a^m \to P^m$. Hence for a non-invertible $Q \in F[X]$ the formula 
 $$
 \exists a \in Irr \exists m \in \N_a \phi_2(x,P,m,a) \wedge \phi_2(y,Q,m,a)
 $$
 defines the canonical isomorphism $\mu_{P,Q}:\N_P \to \N_Q$ of the interpretations $\N_P$ and $\N_Q$, as required.
 
\end{proof}
 
 Now we give one more interpretation of $\N$ in $F[X]$ and show that it is definably isomorphic with the previous ones.
 
 In the notation of Lemma \ref{le:int-poly-one}  the one-variable ring of polynomials $F[P]$ is definable in $F[X]$ uniformly in $F, X$ and $P$. Since $P$ is irreducible in $F[P]$ by Lemma \ref{le:arbitrary-char} the arithmetic $\N$ is interpretable in $F[P]$ (hence in $F[X]$) uniformly in $F, X$ and $P$. Denote this interpretation by $\N_P^\prime$.
 
 \begin{lemma} \label{le:inter-iso-3}
 Let $F$ be an arbitrary field and $X$ an arbitrary non-empty set. Then for any non-invertible polynomial $P \in F[X]$ the interpretation 
 $\N_P^\prime$ (see above) and the interpretation $\N_P$ from Lemma \ref{le:arbitrary-char-2} are definably isomorphic uniformly in $P$.
 \end{lemma}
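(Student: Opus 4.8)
The plan is to show that $\N_P$ and $\N'_P$ are literally the \emph{same} interpreted structure, carried by the same parameter-definable set $\{P^m\mid m\in\N\}\subseteq F[X]$, so that the identity map is the desired definable isomorphism. First I would unwind both. By Lemma~\ref{le:arbitrary-char-2} the carrier of $\N_P$ is $\N_P=\{P^m\mid m\in\N\}$, with $P^m$ coding $m$; its addition is the restriction to $\N_P$ of the multiplication of $F[X]$, its divisibility relation is $\{(P^n,P^m):(P^n-1)\mid(P^m-1)\text{ in }F[X]\}$, and its operations $\cdot$ and $1$ come from $\langle+,\mid,0\rangle$ by a fixed parameter-free formula (the reduction of $\N$ to $\langle\N;+,\mid,0\rangle$ used at the end of the proof of Lemma~\ref{le:arbitrary-char}). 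On the other side, $\N'_P$ is by definition the interpretation of Lemma~\ref{le:arbitrary-char} applied inside the ring $F[P]$ — which is definable in $F[X]$ with parameter $P$ by Lemma~\ref{le:int-poly-one}, is isomorphic to $F[t]$ since $P$ is non-constant, and in which $P$ is irreducible — with the irreducible element taken to be $P$ itself, and then composed with the definable embedding $F[P]\hookrightarrow F[X]$. Tracing Lemma~\ref{le:arbitrary-char} with $a=P$ inside $F[P]$: the domain formula carves out $\{P^m\mid m\in\N\}$, which as a subset of $F[X]$ is still exactly $\{P^m\mid m\in\N\}$ because the domain formula for $F[P]$ from Lemma~\ref{le:int-poly-one} defines precisely $F[P]$; again $P^m$ codes $m$; addition is multiplication in $F[P]$; divisibility is $\{(P^n,P^m):(P^n-1)\mid(P^m-1)\text{ in }F[P]\}$; and $\cdot,1$ arise from the same parameter-free formula.

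Next I would check that these two structures coincide on $\{P^m\mid m\in\N\}$. The constant $0$ is $P^0=1$ in both. Addition is the restriction of the multiplication of $F[X]$ in both, since multiplication in the subring $F[P]$ is by definition the restriction of that of $F[X]$. Finally, the two divisibility relations agree because $(P^n-1)\mid(P^m-1)$ holds in $F[P]$ if and only if it holds in $F[X]$, both being equivalent to $n\mid m$: the implication from $F[P]$ to $F[X]$ is trivial, and conversely, writing $m=qn+r$ with $0\le r<n$ (the case $n=0$, i.e.\ $P^n-1=0$, being immediate), one has $P^m-1=P^r(P^{qn}-1)+(P^r-1)$ with $P^n-1$ dividing $P^{qn}-1$, so divisibility in $F[X]$ forces $P^n-1$ to divide $P^r-1$, and comparing total degrees in $F[X]$ — where $\deg P\ge 1$ since $P$ is non-invertible — forces $r=0$; then $n\mid m$ gives the divisibility already inside $F[P]\cong F[t]$ via the geometric-series identity.

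Since $\cdot$ and $1$ are computed from $\langle+,\mid,0\rangle$ by one and the same parameter-free formula, it follows that $\N_P$ and $\N'_P$ are literally the same structure on $\{P^m\mid m\in\N\}$; hence the identity map is an isomorphism $\N_P\to\N'_P$, it is defined in $F[X]$ by the parameter-free formula $u=v$, and it is necessarily the unique canonical isomorphism of the two interpretations. Uniformity in $F$, $X$, and $P$ is clear, since none of the formulas involved depends on these. The only genuinely non-formal point is the coincidence of the two divisibility relations; everything else is bookkeeping needed to compare the interpretation of Lemma~\ref{le:arbitrary-char} taken inside $F[P]$ with its push-forward along $F[P]\hookrightarrow F[X]$, and that bookkeeping — keeping track of which formulas are evaluated in which ring — is the part I expect to require the most care to state cleanly.
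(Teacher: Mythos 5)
Your proposal is correct and follows essentially the same route as the paper, which simply observes ``by inspection'' that the two interpretations share the base set $\{P^m\mid m\in\N\}$ and the operations given by (\ref{eq:operations-in-N}) and (\ref{eq:operations-in-N-2}), so that the isomorphism is definable. You in fact supply the one detail the paper glosses over --- that the predicate $(P^n-1)\mid(P^m-1)$ gives the same relation whether evaluated in $F[P]$ or in $F[X]$, via the division $m=qn+r$ and the degree argument --- which is a worthwhile addition rather than a deviation.
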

 \begin{proof}
 By inspection of the arguments in Lemmas \ref{le:int-poly-one} and \ref{le:arbitrary-char-2} one can see that these interpretations have the same base set, namely $\N_P = \{P^m \mid m \in \N\}$ (though defined by different formulas) and  precisely the same operations  given by formulas
  (\ref{eq:operations-in-N}) and (\ref{eq:operations-in-N-2}). The formula $\phi_1(x,P,m,a) $ from Lemma \ref{le:arbitrary-char-2} defines the isomorphism between the interpretations $\N_P^\prime$ and $\N_P$.
 \end{proof}

For a field $F$ of characteristic zero by $\N_1$ we denote the interpretation of the arithmetic $\N$ in $F[X]$ as a subset of $F$ from Lemma \ref{le:char0},  and by $\N_2$  - the interpretation from Lemma \ref{le:arbitrary-char}. The following result shows that we can use any of these  interpretations as we pleased. 
\begin{lemma}
The canonical isomorphism $\lambda: \N_1 \to \N_2$ is definable in $F[X]$.
\end{lemma}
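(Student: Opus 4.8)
The plan is to write down an explicit ring‑language formula for the graph of $\lambda$. Recall the setting: $F$ has characteristic zero, so $\N_1$ is the subset $\{n\cdot 1 : n\in\N\}\subseteq F$ with its inherited operations, $0$‑defined in $F[X]$ by the formula of Lemma~\ref{le:char0}, and its canonical isomorphism with $\N$ is $n\mapsto n\cdot 1$. On the other side, $\N_2$ is the $0$‑interpretable structure of Lemma~\ref{le:arbitrary-char}(3): its pre‑quotient base set is $\bigcup_{a\in Irr}N_a$, where $N_a=\{a^n:n\in\N\}$ is $0$‑definable with parameter $a$ by the formula $Nat(y,a)$ from (the proof of) Lemma~\ref{le:arbitrary-char}, two powers $a^n,b^m$ being glued exactly when $n=m$, and its canonical isomorphism with $\N$ is $n\mapsto[a^n]$. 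Thus $\lambda$ is concretely the map $n\cdot1\mapsto[a^n]$, and it suffices to exhibit a formula $\lambda(x,y)$ of the language of rings which, on pairs with $x\in\N_1$ and $y\in\bigcup_a N_a$, holds precisely when $y=a^n$ for some $a$ and the natural number $n$ with $x=n\cdot1$.

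The crux is the elementary identity $a^n-1=(a-1)\bigl(1+a+\cdots+a^{n-1}\bigr)$ together with the remark that reducing modulo $a-1$ sends $a$ to $1$, so $1+a+\cdots+a^{n-1}\equiv n\cdot 1\pmod{a-1}$ in $F[X]$. Hence, for $y=a^n\in N_a$ and $x\in\N_1$,
\[
(a-1)^2\mid\bigl(y-1-x(a-1)\bigr)\iff (a-1)\mid\bigl(1+a+\cdots+a^{n-1}-x\bigr)\iff (a-1)\mid(n\cdot1-x).
\]
Since $a$ is irreducible, $a-1$ is a non‑constant polynomial, so it divides the constant $n\cdot1-x\in F$ only if $n\cdot1-x=0$; and as $\mathrm{char}\,F=0$ this forces $x=n\cdot1$. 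I therefore take
\[
\lambda(x,y)\ :=\ \bigl(x\in\N_1\bigr)\ \wedge\ \exists a\Bigl(Irr(a)\ \wedge\ Nat(y,a)\ \wedge\ (a-1)^2\mid\bigl(y-1-x(a-1)\bigr)\Bigr),
\]
where "$x\in\N_1$" abbreviates the formula of Lemma~\ref{le:char0} and $Irr$ is the $0$‑definable set of Lemma~\ref{le:field}. This is a parameter‑free formula of the ring language.

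It remains to check that $\lambda(x,y)$ defines exactly the graph of the desired map, which is routine given the computation above: if $x=n\cdot1$ and $y=a^n$, take the witness $a$; then $y-1-x(a-1)=(a-1)\bigl(1+\cdots+a^{n-1}-n\cdot1\bigr)$ is divisible by $(a-1)^2$. Conversely, any pair satisfying the formula has $y=a^k$ for some $k\in\N$ by $Nat(y,a)$, and the displayed equivalences force $k\cdot1=x$, i.e.\ $k=n$. The relation so defined is $\sim$‑invariant in $y$ (its truth depends only on the exponent $k$, and equal exponents are glued in $\N_2$), it is injective because $k$ is determined by $x$, and it is surjective because every class of $\N_2$ is $[a^k]$ for some irreducible $a$ (one exists, e.g.\ any variable of $X$) and some $k$. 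Being $n\cdot1\mapsto[a^k]$ with $k=n$, it is the composite of the canonical isomorphisms $\N_1\cong\N\cong\N_2$, hence equals the unique isomorphism $\lambda$. The only real content is this exponent‑reading device; everything else is bookkeeping. (Alternatively, one can first define, uniformly in an irreducible $a$, the isomorphism $\N_1\to\N_a$, $n\cdot1\mapsto a^n$, by the same formula with $a$ as a parameter, and then compose with the evidently $0$‑definable gluing map $\N_a\to\N_2$.)
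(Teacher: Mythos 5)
Your proposal is correct, but it takes a genuinely different route from the paper's. The paper fixes an irreducible $a$ with $a+1$ also irreducible (e.g.\ a degree-one polynomial), uses the already-definable isomorphism $\mu_{a,a+1}\colon a^m\mapsto(a+1)^m$, and then reads the integer $m$ off the \emph{pair} $(a^m,(a+1)^m)$ via the binomial expansion $(a+1)^m=a(aw+m)+1$, i.e.\ as the coefficient of $a$ in the $a$-adic expansion of $(a+1)^m$. You instead read $n$ directly off the single element $a^n$ through the geometric-series identity $a^n-1=(a-1)(1+a+\cdots+a^{n-1})$ and the congruence $1+a+\cdots+a^{n-1}\equiv n\cdot 1\pmod{a-1}$, packaged as the divisibility $(a-1)^2\mid\bigl(y-1-x(a-1)\bigr)$; the fact that $a-1$ is non-constant and $n\cdot1-x$ is a constant then pins down $x=n\cdot1$, and characteristic zero recovers $n$. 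Your device is arguably cleaner: it needs no auxiliary second irreducible, no appeal to $\mu_{a,a+1}$, and it works uniformly in an arbitrary irreducible witness $a$, which also makes the $\sim$-invariance of the relation on the glued structure $\N_2$ immediate (the truth value depends only on the exponent). What the paper's version buys is that it reuses machinery ($\mu_{a,b}$ and the binomial trick) already set up for other purposes. Two details you handle correctly but that are worth keeping explicit: the conjunct $x\in\N_1$ is genuinely needed (otherwise $x=n\cdot1+(a-1)$ would also satisfy the divisibility), and the case $n=0$, $y=1$ is covered since then the condition forces $x=0$.
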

\begin{proof}
In the  notation from Lemmas \ref{le:char0} and \ref{le:arbitrary-char} one needs to construct a formula $\Delta(x,y,z)$ such that for elements $b \in Irr$, $v \in F[X]$,  and $m \in \N_1 \leq F$ one has $F[X] \models \Delta(v,m,b)$ if and only if $v = b^m$. 

Let $a\in Irr$ be such that $a+1 \in Irr$, for example  $a$ could be any polynomial of degree 1 in $F[X]$.

Let $\mu_{a,a+1} : a^\N \to (a+1)^\N$ be the definable isomorphism  from Lemma \ref{le:arbitrary-char} such that $a^m \to (a+1)^m$ for $m \in \N$. Hence there is a formula $\Delta_1(x,y,z)$ such that for any $u,v \in F[X]$ 
$$
F[X] \models \Delta_1(u,v,a) \Longleftrightarrow \exists m \in \N (u = a^m \wedge  v = (a+1)^m).
$$

By the binomial formula 
$$
(a+1)^m = a^m +ma^{m-1} + \ldots +ma +1,
$$
hence  for  $m \in \N, m \neq 0,$  there exists a unique $w \in F[X]$ such that 
$$
(a+1)^m = a(aw+m) +1.
$$
Note that this condition can be written by a formula, say $\Delta_2(a,m)$.

It follows that the formula 
$$
\Delta_3(u,m,a) = \exists v (\Delta_1(u,v,a) \wedge \exists w(v = a(aw+m) +1) 
$$
defines the isomorphism $m \to a^m$ from $N_1$ to $N_a$. Now let $b$ be an arbitrary element in $Irr$. 
The isomorphism $\mu_{a,b}: N_a \to N_b$ is definable uniformly in $F[X]$ by a formula $Is(x,y,a,b)$ from Lemma \ref{le:arbitrary-char}, hence the formula 
$$
\Delta(v,m,b) = \exists a \exists u [(a\in Irr) \wedge (a+1 \in Irr) \wedge (\Delta_3(u,m,a) \wedge v = \mu_{a,b} (u) )]
$$
gives the required isomorphism $\lambda: \N_1 \to \N_2$.
\end{proof}

\subsection{Interpretation of the weak second order theory of $F$ in $F[X]$}

Following ideas of Bauval \cite{bauval} we prove the following result. Notice, that uniform interpretability and definability of the  isomorphisms  of the interpretations seem to be unknown before.
 \begin{theorem}  \label{Bauva}
 Let  $F$ be an infinite field and $X$ an arbitrary non-empty set. Then the following hold:
 \begin{itemize}
\item [1)]  for a given non-invertible polynomial $P \in F[X]$ one can interpret $S(F,\N)$ in $F[X]$ using the parameter $P$ uniformly in $F$, $X$, and $P$. We denote this interpretation by $S(F,\N)_P$.
\item [2)] for any non-invertible polynomials $P, Q \in F[X]$ the canonical (unique)  isomorphism of interpretations $\nu_{P,Q} : S(F,\N)_P \to S(F,\N)_Q$ is definable in $F[X]$ uniformly in $F$, $X$, $P$, and $Q$.
\item [3)]  $S(F,\N)$ is 0-interpretable  in $F[X]$  uniformly in $F$ and $X$.
 
 \end{itemize}
 \end{theorem}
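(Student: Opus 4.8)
The plan is to interpret the list superstructure $S(F,\N)$ in $F[X]$ by encoding a finite sequence $(\alpha_1,\dots,\alpha_m)$ of elements of $F$ together with its length $m$ as a single polynomial, using the division structure of $F[X]$ relative to a fixed parameter. First I would reduce to the one-variable ring $F[P]$: by Lemma \ref{le:int-poly-one} the subring $F[P]$ is $0$-definable in $F[X]$ uniformly in $F,X,P$, and $P$ is irreducible in $F[P]$, so it suffices to carry out the construction inside $F[P]$ with the single irreducible $P$ in the role of the "variable". Inside $F[P]$ I already have, by Lemma \ref{le:arbitrary-char-2} (and Lemma \ref{le:inter-iso-3}), the interpreted copy $\N_P$ of arithmetic with base set $\{P^m\mid m\in\N\}$, so I may freely use natural-number parameters $m\in\N$ and the arithmetic operations on them in formulas.

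The key encoding is the classical one: a sequence $(\alpha_1,\dots,\alpha_m)\in S(F)$ is represented by the polynomial $f=\sum_{i=1}^m \alpha_i P^{i-1}$ of degree $<m$ together with the datum $m$ (or, more robustly, by a pair $(f,P^m)$, so that leading zero coefficients are not lost). The component-extraction predicate $t(s,i,a)$ becomes "$\alpha$ is the coefficient of $P^{i-1}$ in $f$", which is expressible: $\alpha\in F$ is the $i$-th coefficient of $f$ iff $P^i \mid (f - g)$ where $g$ is the "truncation of $f$ below degree $i$", and truncations are themselves definable by divisibility conditions relative to the powers $P^i\in\N_P$ — here I use that for $h\in F[P]$, $\deg h< i$ is expressible as $\exists$ a representation, or more directly via $P^i\nmid h$ combined with a recursion over coefficients exactly as in the definition of the tuples $b^{(p)}$, $\hat a$, $\hat s$ in the proof of Theorem \ref{th:int-A(X)-in-S(K,N)}. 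The length function $l(s)=m$ is read off from the $P^m$ component. Concatenation $s\frown t$ of $(f,P^m)$ and $(g,P^k)$ is $(f + P^m g,\ P^{m+k})$, which uses only multiplication in $F[P]$ and the arithmetic $m+k$ in $\N_P$; and the three-sorted structure needs in addition the arithmetic $\N$ itself, which is $\N_P$. To get a genuine interpretation one takes the definable set of pairs $(f,N)$ with $N\in\N_P$ and $\deg f < $ (exponent of $N$), modulo equality, as the second sort, $\N_P$ as the third sort, and $F$ (definable by Lemma \ref{le:field}) as the first sort; all of $t$, $l$, $\frown$, $\in$ are then given by the formulas just sketched. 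This proves 1).

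For 2), the canonical isomorphism $\nu_{P,Q}$ must send the class of $(f,P^m)$, $f=\sum\alpha_i P^{i-1}$, to the class of $(\sum\alpha_i Q^{i-1}, Q^m)$; I would build it coefficient-by-coefficient, transporting coefficients along the identity on $F$ and exponents along the already-definable arithmetic isomorphism $\mu_{P,Q}:\N_P\to\N_Q$ of Lemma \ref{le:arbitrary-char-2}. Concretely, $\nu_{P,Q}(s)=t$ holds iff $l(s)=l(t)$ (via $\mu_{P,Q}$) and for every $i$ the $i$-th coefficient of $s$ equals the $i$-th coefficient of $t$; since component extraction and $\mu_{P,Q}$ are definable uniformly in the parameters, so is $\nu_{P,Q}$. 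For 3), I glue: using the definable isomorphisms $\nu_{P,Q}$ one identifies all the $S(F,\N)_P$, $P$ ranging over irreducibles (or just over degree-one polynomials, which always exist since $X\neq\emptyset$), into a single structure whose base sets become equivalence classes of pairs $(P, s)$ with $s\in S(F,\N)_P$, under $(P,s)\sim(Q,t)\iff \nu_{P,Q}(s)=t$; this quotient is $0$-interpretable and isomorphic to $S(F,\N)$, exactly as in the passage from $\N_a$ to the parameter-free $\N$ in Lemma \ref{le:arbitrary-char}.

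The main obstacle I anticipate is the bookkeeping in 1): making the "$i$-th coefficient" and "degree $<i$" predicates genuinely first-order in $F[P]$ with only $P$ and elements of $\N_P$ as parameters, and in particular handling the degenerate cases (the empty sequence, sequences with trailing zero coefficients) so that the equivalence relation on codes is correct and definable — this is the same delicate recursion-over-coefficients that appears in the proof of Theorem \ref{th:int-A(X)-in-S(K,N)}, and it should be pushed through by the same device of using the auxiliary structure $S^2(\N,\N)$ (or directly $S(\N,\N)$), which is $0$-interpretable in $\N\le F[X]$, to run the inductive definitions.
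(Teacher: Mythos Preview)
Your global architecture matches the paper's: reduce to the one-variable ring $F[P]$ via Lemma~\ref{le:int-poly-one}, encode a sequence $(\alpha_0,\dots,\alpha_n)$ by the pair $(\sum \alpha_i P^i,\,P^n)$, read off length and concatenation algebraically, and for 2) and 3) transport along the already-definable $\mu_{P,Q}$ and glue. Those parts are fine and essentially identical to what the paper does.

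The gap is exactly the point you yourself flagged as ``the main obstacle'': making the degree bound (equivalently, the coefficient-extraction predicate) first-order. Your proposed fix---run the recursion $f_0=f$, $f_{i+1}=(f_i-c(f_i))/P$ using the list structures $S(\N,\N)$ or $S^2(\N,\N)$ sitting inside $\N_P$---does not go through, because the intermediate data $f_i$ (or the $\alpha_i$) are polynomials in $F[P]$ and elements of $F$, not natural numbers. The structures $S(\N,\N)$ and $S^2(\N,\N)$ only give you finite tuples of \emph{naturals}; they cannot carry a variable-length sequence of field elements, and the analogy with the tuples $b^{(p)},\hat a,\hat s$ in Theorem~\ref{th:int-A(X)-in-S(K,N)} is misleading, since there the recursion is run \emph{inside} $S(K,\N)$, which is precisely the structure you are trying to build. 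A symptom of the problem is that your argument never uses the hypothesis that $F$ is infinite.

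The paper resolves this step with Bauval's evaluation trick: $f\in F[P]$ has degree $\le n$ if and only if the rational function $P^{\,n}f(1/P)$ is again a polynomial, and this is expressed by the formula
\[
\exists g\ \forall \alpha\in F\smallsetminus\{0\}\ \exists \beta,\gamma\in F\ \Big((P-\tfrac1\alpha)\mid f-\beta\ \wedge\ (P-\alpha)\mid P^{\,n}-\gamma\ \wedge\ (P-\alpha)\mid g-\beta\gamma\Big),
\]
which forces $g(\alpha)=\alpha^{\,n}f(1/\alpha)$ for every nonzero $\alpha$; since $F$ is infinite this pins down $g$ and hence the degree bound. Once the set of pairs $\{(f,P^n):\deg f\le n\}$ is definable, concatenation and length are immediate as you wrote them, and $t(s,i,a)$ is recovered from those two (split $s=s_1\frown s_2\frown s_3$ with $\ell(s_2)=1$), rather than being defined first. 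So the missing idea is this single use of infiniteness of $F$ to define degree; with it, the rest of your outline is correct.
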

 \begin{proof}
   By Lemma \ref{le:int-poly-one} for  a non-invertible polynomial $P \in F[X]$ the polynomial ring $F[P]$ is definable in $F[X]$ with parameter $P$ uniformly in $F, X$ and $P$. So it suffices to show that the structure $S(F,\N)$ is interpretable in a ring of polynomials in one variable, say $F[t]$,  with the variable $t$ in the language, uniformly in $F$. To this end consider the language of ring theory $L_t$ with the element $t$  as a new constant.   By  Lemma \ref{le:arbitrary-char} the arithmetic $\N_t$ is interpretable in $F[t]$ in the language $L_t$ uniformly in $F$. So the set $N_t = \{t^n \mid n \in \N\}$, as well as the addition and the multiplication in $\N_t$, is definable in $F[t]$ by a formula with the parameter $t$.  This gives a required interpretation in $F[t]$ of the third sort $\N$ of the structure
 $$
 S(F,\MN) = \langle F, S(F),\MN; t(s,i,a), l(s), \frown\rangle.
 $$
Now we interpret $S(F)$ in $F[t]$. We associate a sequence $\bar \alpha = (\alpha_0, \ldots, \alpha_{n})$ of elements from $F$ with a pair $s_{\bar \alpha} = (\Sigma_{i=0}^n \alpha_it^i,t^n)$. We need to show that the set of such pairs is definable in $F[t]$ by a formula in $L_t$. Observe that a polynomial $f(t) \in F[t]$ has degree at most $n$ if and only if a rational function $t^n f(\frac{1}{t})$ is again a  polynomial from $F[t]$. This leads to the following formula:
$$
\phi(f,t,t^n) = \exists g \forall \alpha \in F \smallsetminus \{0\} \exists \beta, \gamma \in F \Big (( t-\frac{1}{\alpha} \mid f -\beta) \wedge (t-\alpha \mid t^n -\gamma) \wedge (t-\alpha \mid g-\beta\gamma) \Big ).
$$
Note, that $t-\frac{1}{\alpha} \mid f -\beta$ gives  $f(\frac{1}{\alpha}) = \beta$, similarly $t-\alpha \mid t^n -\gamma $ is equivalent to $\alpha^n = \gamma$, and $t-\alpha \mid g-\beta\gamma $ means $g(\alpha) = \beta\gamma$. 

Combining these conditions together, one gets that if $F[t] \models \phi(f,t,t^n)$, then $g(\alpha) = f(\frac{1}{\alpha}) \alpha^n$ for infinitely many $\alpha$ (since the field $F$ is infinite). Hence  $g(t) = f(\frac{1}{t}) t^n$, as required. It follows that the formula $\phi(x,t,y)\wedge (y \in \N_t)$ defines in $F[t]$ precisely the set of pairs 
$$
\{(x,y) \mid x =  \Sigma_{i=1}^n \alpha_it^i, y = t^n (n \in \N)\}.
$$
 This gives a 0-interpretation  in $F[t]$ (viewed in the language $L_t$) of the set  $S(F)$ of  all tuples of $F$. Note, that the field $F$ is also 0-interpretable in $F[t]$, so the two sorts of the structure $S(\MF) = \langle F, S(F), \frown,\in\rangle$ are 0-interpretable in $F[t]$ in the language $L_t$. 
 To finish the proof of 1) one needs to show that the operations $t(s,i), l(s)$, and $\frown$ are also 0-interpretable in $F[t]$ in the language $L_t$ (recall that that in this case, as was mentioned in Section \ref{se:weak-second-order-intro},  the predicate $\in$ is also 0-interpretable in $F[t]$).
 
 Let $\bar \alpha = (\alpha_0, \ldots, \alpha_{n}), \bar \beta = (\beta_0, \ldots,\beta_m)$ be two sequences of elements from $F$, $s_{\bar \alpha} = (\Sigma_{i=0}^n \alpha_it^i,t^n) = (f,t^n)$ and $s_{\bar \beta} = (\Sigma_{i=0}^m \beta_it^i,t^m) = (g,t^m)$ their interpretations in $F[t]$. Then the sequence $\bar \alpha \frown \bar \beta$ obtained by concatenation from $\bar \alpha$ and $\bar \beta$ corresponds to the pair $(f+t^{n+1}g,t^n\cdot t^m)$, so the operation of concatenation is 0-definable in $F[t]$ in the language $L_t$.
 
 The length function $\ell: (\alpha_0, \ldots, \alpha_{n}) \to n+1$ is also 0-definable in $F[t]$ in the language $L_t$. Indeed, the length of the pair $(\Sigma_{i=0}^n \alpha_it^i,t^n) = (f,t^n)$ is precisely $t^{n+1} = t^n\cdot t \in N_t$. 
 
 Using operations $\frown$ and $\ell$ one can define the predicate $t(s,i,a)$ in $F[t]$ as follows. Conditions 
 \begin{itemize}
 \item there are sequences $s_1, s_2, s_3$ such that $s = s_1\frown s_2 \frown s_3$;
 \item $\ell(s) = n+1$, $\ell(s_1) = i$, $\ell(s_2) = 1$, and $\ell(s_3) = n-i$;
 \item $s_2 = (\alpha,t^0)$ and $a = \alpha$.
 \end{itemize}
  are 0-definable in $F[t]$ in the language $L_t$ and their conjunction defines the predicate $t(s,i,a)$. 
  
  We showed that for a given non-invertible polynomial $P \in F[X]$ one can interpret $S(F,\N)$ in $F[X]$ using the parameter $P$ uniformly in $F$, $X$, and $P$. We denote this interpretation by 
  $$
  S(F,\N)_P = \langle F, S(F)_P, \N_P, t_P(s,i,a), l_P(s), \in _P \rangle.
  $$
   This proves 1).
 
 Now we show that  for different non-invertible parameters $P_1, P_2 \in F[X]$ there is a uniformly definable isomorphism 
 $$
 \nu_{P_1,P_2}:  S(F,\N)_{P_1} \to S(F,\N)_{P_2}.
 $$
    Observe, that the interpretation of the first sort $F$ in $S(F,\N)_P$ does not depend on $P$. The definable isomorphism $\mu_{P_1,P_2}:\N_{P_1} \to \N_{P_2}$ between the third  sorts in $S(F,\N)_{P_1} $ and $S(F,\N)_{P_2} $ was constructed in Lemma \ref{le:arbitrary-char-2} (see also  Lemma \ref{le:inter-iso-3}). 
    
    Now it is suffices  to show that the  isomorphism $\sigma_{P_1,P_2}: S(F)_{P_1} \to S(F)_{P_2}$ between the second  sorts $S(F)_{P_1}$ and $S(F)_{P_2}$ in $S(F,\N)_{P_1} $ and $S(F,\N)_{P_2} $ which arises  from the identical map $S(F) \to S(F)$ is  definable in $F[X]$ uniformly in $F,X$, $P_1$, and $P_2$. Indeed,  if $s_{\bar \alpha} = (f,P_1^n) \in S(F)_{P_1}$ and $s_{\bar \beta} = (g,P_2^m) \in S(F)_{P_2}$ then for such $\sigma_{P_1,P_2}$ one has $\sigma_{P_1,P_2}(f,P_1^n) = (g,P_2^m) $ if and only if $n = m$ and for each $0\leq i \leq n$ the $i$'s components of the tuples $\bar \alpha$ and $\bar \beta$ are equal. The latter  means that for each  $a,b \in F$ such that $t_{P_1}(s_{\bar \alpha},i,a)$ and $t_{P_2}(s_{\bar \beta},i,b)$ hold in $F[X]$ one has $a = b$.  All these conditions can be written by formulas of the ring theory uniformly in $F, X, P_1, P_2$.  This proves 2).

3) follows from 2) by an argument similar to the one in Lemma \ref{le:arbitrary-char}.

This finishes the proof.

 \end{proof}

\section{Tarski problems for $F[X]$}

By Lemma \ref{le:arbitrary-char} the arithmetic $\N$ is interpretable in $F[X]$, 
as a corollary one  gets the following known result due to R.Robinson. 
\begin{theorem}\cite{Robinson}\label{Rob}
For any field $F$ and any non-empty set $X$ the first-order theory of $F[X]$ is undecidable. 
\end{theorem}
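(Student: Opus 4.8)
The plan is to deduce this immediately from the interpretability results already established, invoking the undecidability of the first-order theory of arithmetic. First I would recall that $Th(\N)$, where $\N = \langle N; +,\cdot,0,1\rangle$, is undecidable by the classical theorem of Church and Tarski (via Gödel's representability of recursive predicates). So it suffices to transport this undecidability along an interpretation into $F[X]$.

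The key step is to apply Lemma \ref{le:arbitrary-char}(3): for an arbitrary field $F$ and an arbitrary non-empty set $X$, the arithmetic $\N$ is $0$-interpretable in $F[X]$. With this in hand, the first bullet of Corollary \ref{co:interp} applies directly: since $\N$ is $0$-interpretable in $F[X]$ and $Th(\N)$ is undecidable, $Th(F[X])$ is undecidable as well. Concretely, by Lemma \ref{le:interpr_corol} one effectively translates every sentence $\phi$ of arithmetic into a sentence $\phi^\ast$ of the language of rings with $F[X]\models \phi^\ast \iff \N\models\phi$; a decision procedure for $Th(F[X])$ would therefore yield one for $Th(\N)$, a contradiction.

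Alternatively, and with even less machinery, one can avoid the $0$-interpretation and use only the parameterized version: by Lemma \ref{le:arbitrary-char}(1) (or already by the classical result of R. Robinson and the computation in Lemma \ref{le:arbitrary-char-2}) the arithmetic $\N$ is relatively interpretable in $F[X]$ using a single parameter, e.g. any irreducible polynomial $a$. Then Theorem \ref{th:Ershov} (the Ershov–Lavrov criterion) gives at once that $Th(F[X])$ is undecidable, since relative interpretability of $\N$ already forces undecidability.

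I do not expect a genuine obstacle here: the entire content has been absorbed into Lemma \ref{le:arbitrary-char}, whose proof handled the only delicate point, namely producing interpreting formulas that work uniformly across all characteristics and all (non-empty) index sets $X$ — in particular not relying on $\Z\le F$, which fails in positive characteristic. Given that lemma, the proof of the theorem is a one-line application of Corollary \ref{co:interp} (or of Theorem \ref{th:Ershov}).
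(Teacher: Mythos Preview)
Your proposal is correct and matches the paper's own argument exactly: the paper also derives the theorem as an immediate corollary of Lemma \ref{le:arbitrary-char} (interpretability of $\N$ in $F[X]$) together with the standard transfer of undecidability along an interpretation. Your alternative via Theorem \ref{th:Ershov} is a harmless variant of the same idea.
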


The following result characterises first-order equivalence of rings of polynomials over arbitrary fields.

\begin{theorem} \cite{bauval} Let $F$ be a  field and $X$ a finite non-empty set. Then for any field $K$ and any set $Y$ one has $F[X] \equiv K[Y]$ if and only if 
 $|X|=|Y|$ and $HF(F)\equiv HF(K).$\end{theorem}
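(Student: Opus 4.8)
The plan is to reduce elementary equivalence of the polynomial rings to elementary equivalence of their weak second order structures, using the interpretation calculus of Section~\ref{se:interpretation}, Lemma~\ref{le:superstructures}, and Theorem~\ref{Bauva}, while pinning the rank $|X|$ down by an elementary Krull dimension argument carried out inside $F[X]$. \emph{For $(\Leftarrow)$:} assume $|X|=|Y|=:n$ (so $Y$ is finite) and $HF(F)\equiv HF(K)$. By Lemma~\ref{le:superstructures} the correspondences $\MA\mapsto HF(\MA)$ and $\MA\mapsto S(\MA,\N)$ are uniformly mutually interpretable, so Corollary~\ref{co:interp} yields $S(F,\N)\equiv S(K,\N)$. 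A commutative polynomial ring in finitely many variables is $0$-interpretable in $S(F,\N)$ uniformly in $F$ and in $n$ — this is the (easier) commutative analogue of the construction in the proof of Theorem~\ref{th:int-A(X)-in-S(K,N)}: encode a monomial $x_1^{a_1}\cdots x_n^{a_n}$ by its exponent tuple $(a_1,\dots,a_n)$, encode a polynomial by a finite list of pairs (coefficient from $F$, exponent tuple), and define addition and multiplication on such lists by formulas depending only on $n$. Running the same formulas inside $S(K,\N)$ produces $K[Y]$ because $|X|=|Y|$, so Corollary~\ref{co:interp} turns $S(F,\N)\equiv S(K,\N)$ into $F[X]\equiv K[Y]$.

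\emph{For $(\Rightarrow)$, recovering the rank.} Suppose $F[X]\equiv K[Y]$. For $m\ge 0$ let $\chi_m$ be the ring-language sentence asserting the existence of tuples $\bar g^{(0)},\dots,\bar g^{(m)}$, each of length $m$, such that, writing $I_j$ for the ideal generated by the components of $\bar g^{(j)}$, every $I_j$ is a proper prime ideal and $I_0\subsetneq I_1\subsetneq\cdots\subsetneq I_m$; here ``$c\in I_j$'' abbreviates $\exists h_1\cdots\exists h_m\,(c=\sum_i h_i g^{(j)}_i)$, ``$I_j$ proper'' is $\exists c\,(c\notin I_j)$, and ``$I_j$ prime'' is $\forall a\forall b\,(ab\in I_j\to(a\in I_j\vee b\in I_j))$, so $\chi_m$ stays in the plain language of rings. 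For $R=F[X]$ one has $R\models\chi_m$ iff the Krull dimension of $R$ is $\ge m$: if $|X|\ge m$ the coordinate chain $0\subsetneq(x_1)\subsetneq(x_1,x_2)\subsetneq\cdots\subsetneq(x_1,\dots,x_m)$ works (each $(x_1,\dots,x_j)$ is prime, the quotient $R/(x_1,\dots,x_j)$ being a polynomial domain), and when $\dim R<m$ no chain of $m+1$ distinct primes exists at all. Since $\dim F[X]=|X|$ for finite $X$ and $\dim F[X]=\infty$ for infinite $X$, the sentence $\theta_n:=\chi_n\wedge\neg\chi_{n+1}$ holds in $F[X]$ exactly when $|X|=n$; as $\theta_n$ mentions neither $F$ nor $X$, $F[X]\equiv K[Y]$ forces $K[Y]\models\theta_{|X|}$, hence $|Y|=|X|$ and $Y$ is finite.

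\emph{For $(\Rightarrow)$, recovering $HF$.} If $F$ and $K$ are both infinite, Theorem~\ref{Bauva}(3) gives a $0$-interpretation of $S(F,\N)$ in $F[X]$ whose formulas depend on neither $F$ nor $X$; composing with the uniform interpretation $S(\MA,\N)\to_{int}HF(\MA)$ of Lemma~\ref{le:superstructures} produces a $0$-interpretation of $HF(F)$ in $F[X]$, and of $HF(K)$ in $K[Y]$, by the same fixed formulas, so Corollary~\ref{co:interp} upgrades $F[X]\equiv K[Y]$ to $HF(F)\equiv HF(K)$. If $F$ (hence also $K$, as we now see) is finite, invoke Lemma~\ref{le:field}: its formula defines $F$ as a subset of $F[X]$, and for each $q$ the statement ``this definable subset has exactly $q$ elements'' is first-order, so $F[X]\equiv K[Y]$ forces $|K|=|F|$, whence $F\cong K$ and $HF(F)\cong HF(K)$, in particular $HF(F)\equiv HF(K)$. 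This completes both implications.

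\emph{Expected main obstacle.} The interpretation-theoretic steps are essentially bookkeeping once Theorem~\ref{Bauva} and Lemma~\ref{le:superstructures} are in hand; the delicate point is the uniform first-order recovery of the rank — producing one ring-language sentence $\theta_n$, without recourse to the interpreted set theory, that isolates $|X|=n$. The trick is to replace arbitrary chains of prime ideals (whose members may need many generators) by the coordinate chains $(x_1,\dots,x_j)$, which keep the number of generators uniformly bounded by the length of the chain, so that $\chi_m$ genuinely lives in the plain language of rings. A secondary subtlety, easily handled since cardinality and characteristic are first-order, is the finite-field case, where Theorem~\ref{Bauva} does not directly apply.
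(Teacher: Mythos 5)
Your proof is correct and follows essentially the same route as the paper: Krull dimension (made first-order via prime chains with a bounded number of generators) recovers the rank, the uniform interpretation of $S(F,\N)$ in $F[X]$ from Theorem~\ref{Bauva} together with Corollary~\ref{co:interp} recovers $HF(F)\equiv HF(K)$ in the infinite case (with finite fields handled by definability and cardinality of the subfield), and the converse uses the uniform interpretation of $F[X]$ in the weak second order structure over $F$. The only difference is that you spell out the Krull-dimension sentence $\theta_n$ explicitly, a step the paper asserts in one line.
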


\begin{proof}
Suppose $F[X]\equiv K[Y]$. Then they have the same (finite)  Krull dimension, therefore $|Y|=|X|.$  By Lemma \ref{le:field}  $F \equiv K$. If one of the fields is finite then the other one is and in this case they are isomorphic, in particular, $HF(F)\equiv HF(K).$

 If the fields are infinite then by Theorem \ref{Bauva}  
 the model $S(F,\N)$, hence the model $HF(F)$,  is 0-interpretable in  the ring $F[X]$ uniformly in $F$ and $X$. Therefore $HF(F)$  and $HF(K)$ are 0-interpretable in $F[X]$ and $K[Y]$ by the same formulas of the ring language.  By Corollary \ref{co:interp}     $F[X]\equiv K[Y]$ implies $HF(F)\equiv HF(K).$

 Conversely, suppose  $|X|=|Y| \le \infty$ and $HF(F)\equiv HF(K).$ One  can enumerate all the monomials in $F[X]$ and 0-interprete the free monoid on $X$ in $\N$. Now one can represent each element in $F[X]$  as a finite sequence  of coefficients in $F$. Addition and multiplication in $F[X]$ is interpretable in the weak second order logic of $F$. Therefore $F[X]$  is 0-interpretable in  $HF(F)$ uniformly on $F$ and $|X|$.  

\end{proof}

\begin{cor} If $F$ is one of the fields from Section 2.3, then the polynomial rings $F[X]$ and $K[Y]$  are elementarily equivalent if and only if  they are isomorphic.\end{cor}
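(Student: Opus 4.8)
The plan is to read this off directly from the theorem of Bauval stated just above, combined with the distinguishing property of the fields listed in Section~\ref{se:HF-fields}. The ``if'' direction needs no work: isomorphic structures are elementarily equivalent, so once we know $F[X]\cong K[Y]$ we are done in that direction.

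For the ``only if'' direction, suppose $F[X]\equiv K[Y]$. First I would apply the preceding theorem to conclude that $|X|=|Y|$ and $HF(F)\equiv HF(K)$. The content of Section~\ref{se:HF-fields} (together with the ``logical equivalence'' of the structure $HF(\MA)$ with the weak second order logic over $\MA$ recorded in Section~\ref{se:weak-second-order}) is precisely that $HF(F)\equiv HF(K)$ is the same as saying that $F$ and $K$ have the same weak second order theory. Since $F$ is one of the fields from Section~\ref{se:HF-fields}, it is determined up to isomorphism by its weak second order theory; hence $K\cong F$.

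It then remains to assemble the isomorphism of polynomial rings. Fix a field isomorphism $\varphi\colon F\to K$ and a bijection $\beta\colon X\to Y$ (available since $|X|=|Y|$); extend $\beta$ multiplicatively to a bijection between the monomials in the variables $X$ and those in the variables $Y$, and then extend $F$-linearly, applying $\varphi$ to coefficients, to a map $\Phi\colon F[X]\to K[Y]$. A routine verification shows that $\Phi$ is a ring isomorphism, so $F[X]\cong K[Y]$, as required.

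There is essentially no deep obstacle here; the corollary is a repackaging of the preceding theorem with the folklore facts cited in Section~\ref{se:HF-fields}. The only point that requires care is the chain ``$HF(F)\equiv HF(K)\Rightarrow F$ and $K$ are equivalent in the weak second order logic $\Rightarrow F\cong K$'': the last implication uses crucially that $F$ is on the list (for instance $\MQ$, a finitely generated algebraic extension of $\MQ$, an algebraically closed field of finite transcendence degree over its prime subfield, a finite purely transcendental extension of a prime field, or the field of real algebraic numbers), since for a field not on that list one obtains only $HF(F)\equiv HF(K)$ and not an isomorphism.
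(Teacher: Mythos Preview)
Your proposal is correct and is exactly the argument the paper intends: the corollary is stated without proof, as an immediate consequence of Bauval's theorem together with the characterizing property of the fields listed in Section~\ref{se:HF-fields}. The only implicit hypothesis worth flagging is that $X$ is finite (inherited from the theorem just above), which you use when invoking that theorem to obtain $|X|=|Y|$ and $HF(F)\equiv HF(K)$; otherwise there is nothing to add.
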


The following theorem describes finitely generated rings (or Noetherian) rings first-order equivalent to $F[X]$.

\begin{theorem} \cite{bauval} \label{th:Bauval_poly}
A noetherian ring $R$ is first-order equivalent to $F[X]$  if and only if it is  isomorphic to a polynomial ring $K[Y]$ where $|X|=|Y|$ and $HF(F)\equiv HF(K).$
\end{theorem}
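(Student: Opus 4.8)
The plan is to prove both directions of Theorem~\ref{th:Bauval_poly}, building on the previous theorem which characterizes when $F[X] \equiv K[Y]$ for rings of polynomials. The nontrivial direction is: if $R$ is Noetherian and $R \equiv F[X]$, then $R$ is isomorphic to some polynomial ring $K[Y]$ with $|X| = |Y|$ and $HF(F) \equiv HF(K)$. The reverse direction is essentially immediate: if $R \cong K[Y]$ with $|X| = |Y|$ and $HF(F) \equiv HF(K)$, then $R \equiv F[X]$ by the preceding theorem (and a polynomial ring over a field is Noetherian by Hilbert's basis theorem, since $|Y| = |X|$ is finite).

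For the hard direction, first I would extract the purely first-order consequences of $R \equiv F[X]$ that constrain the algebraic structure of $R$. The key point is that ``being an integral domain'', ``being a field'' (applied to a definable subring), and the relevant finiteness/factorization properties are first-order expressible, so $R$ inherits them. In particular, $R$ is a commutative integral domain, and by Lemma~\ref{le:field} the formula $\phi(x) = \exists y(xy = 1)$ defines inside $R$ a subfield $K$ (the units together with $0$); since $F[X] \models$ ``the invertible elements form a field'', so does $R$, and $K \equiv F$. Next I would invoke the structure theory of Noetherian rings elementarily equivalent to polynomial rings: the real content is that a Noetherian commutative ring whose first-order theory agrees with $F[X]$ must in fact \emph{be} a polynomial ring $K[Y]$ over the field $K$ of its units, with $|Y|$ equal to the Krull dimension, which is first-order controlled (this is where one cites or reconstructs Bauval's argument, using that Krull dimension of $F[X]$ equals $|X|$, finite, and that having Krull dimension $\le d$ is expressible). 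Once $R \cong K[Y]$ is established with $|Y| = |X|$, the preceding theorem forces $HF(F) \equiv HF(K)$ from $K[Y] \equiv F[X]$.

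The main obstacle is the step asserting that a Noetherian ring elementarily equivalent to $F[X]$ is genuinely a polynomial ring over its subfield of units. Elementary equivalence alone is far too weak to pin down isomorphism type in general, so this must exploit Noetherianity in an essential way: one uses that finitely generated ideals, primality of ideals, regularity, and the dimension filtration are all accessible to the first-order language, and that a Noetherian domain which is ``locally like'' $F[X]$ at every stage and is finitely generated over its prime field (a consequence drawn from the sentences true in $F[X]$ together with Noetherianity) is forced to be $K[y_1,\dots,y_n]$. I would organize this as: (i) $R$ is a Noetherian domain with field of units $K$, $K \equiv F$; (ii) $R$ is finitely generated as a $K$-algebra of Krull dimension $n = |X|$; (iii) $R$ is regular and factorial (UFD), properties transferred from $F[X]$ via suitable first-order schemes over the definable parameter structure; (iv) a regular affine $K$-algebra that is a UFD of dimension $n$, generated by $n$ elements, with the same ``local'' behaviour as affine $n$-space, is isomorphic to $K[Y]$ — this last being the geometric heart, invoking Bauval's theorem~\ref{th:Bauval_poly} as stated. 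Finally, appealing to the earlier theorem on $F[X] \equiv K[Y]$ yields $|X| = |Y|$ and $HF(F) \equiv HF(K)$, completing the proof.
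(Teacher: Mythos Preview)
The paper does not actually prove this theorem: it is quoted from \cite{bauval} and no argument is supplied, so there is nothing in the paper to compare your proposal against. That said, your sketch has a genuine gap that would need to be closed before it could stand as a proof.

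The circularity in your step (iv) is fatal as written: you are trying to prove Theorem~\ref{th:Bauval_poly}, yet you conclude by ``invoking Bauval's theorem~\ref{th:Bauval_poly} as stated.'' Even reading this charitably as a placeholder for ``here one inserts Bauval's original argument,'' the surrounding steps do not set that argument up. In particular, your step (ii), that $R$ is finitely generated as a $K$-algebra, does not follow from Noetherianity together with naive first-order transfer: the statement ``there exist $n$ elements such that every element is a $K$-polynomial in them'' quantifies over polynomial expressions of unbounded degree and is not a single first-order sentence in the ring language. Noetherian rings need not be finitely generated as algebras over any subfield (think of a complete local ring), so this step needs real work. Likewise, your step (iv) --- a regular affine UFD of dimension $n$ generated by $n$ elements is a polynomial ring --- is not a standard off-the-shelf fact; it is essentially the whole content of the theorem.

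Bauval's actual route is different in spirit from the commutative-algebra outline you propose. The point is that the list superstructure $S(F,\N)$ is $0$-interpretable in $F[X]$ (Theorem~\ref{Bauva} in this paper), and via this interpretation one can write a single first-order sentence in $F[X]$ expressing ``there exist $x_1,\dots,x_n$ such that every ring element is a $K$-linear combination of monomials in $x_1,\dots,x_n$'': the quantification over tuples of monomials and tuples of coefficients is absorbed into the interpreted $S(F,\N)$. Transferred to $R$, this sentence produces witnesses $y_1,\dots,y_n$ that genuinely generate $R$ over $K$, and the same machinery lets one transfer the $K$-linear independence of the monomials; Noetherianity is what guarantees that the interpreted copy of the polynomial ring inside $R$ is all of $R$. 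If you want a self-contained proof, this interpretation-based approach is the missing ingredient, not the regularity/UFD route.
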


\section{Interpretability  in $\MA_K(X)$} \label{se:Interpet_free assoc}

In the rest of the paper let $K$ be a field, $X =\{x_1, x_2, \ldots, \}$ a set, and $\MA = \MA_K(X)$ a free associative unitary algebra with basis $X$ and coefficients in $K$. By   $K[t]$ we denote a polynomial ring in one variable $t$ with coefficients in $K$. By $X^\ast$ or $\MM_X$ 
we denote the free monoid with basis $X$ viewed as a set of all words in the alphabet $X$. We identify $\MM_X$ with the set of all monomials in $\MA_K(X)$ with respect to the fixed basis $X$, so we  refer to elements in $\MM_X$ either as to words in $X$ or monomials in $X$. Let $L$ be the standard language of rings with identity $1$, consisting of the binary operations operations $+,\cdot$ and the constant symbol 1.
By $L_X$ we denote the language   which is obtained from $L$  by adding the  elements from the set $X $   as new constants.  If not mentioned precisely otherwise we assumed that all the formulas that occur are in the language $L$.

\subsection{Basic facts}

The following result is crucial for our considerations, it allows one to transfer some principal results on definability in $K[t]$ into $\MA_K(X)$ ($X \neq \emptyset$).

\begin{theorem} [Bergman, \cite{Bergman}] \label{th:Bergman}
The centralizer in $\MA_K(X)$ of a non-invertible  polynomial is isomorphic to the polynomial ring $K[t]$ in one variable $t$ with coefficients in $K$.
\end{theorem}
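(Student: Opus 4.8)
\medskip
\noindent\textit{Proof strategy.} This is Bergman's centralizer theorem, and I would attack it as follows. Put $A = \MA_K(X)$ and fix $f \in A$ with $f \notin K$ (the meaning of ``non-invertible polynomial'' here: the centralizer of a scalar is all of $A$, so the content is about non-scalars). Let $C = \{g \in A : fg = gf\}$, a unital $K$-subalgebra containing $f$. It suffices to produce a single $h \in C$ with $C = K[h]$: since $A$ is a domain and $h \notin K$ the element $h$ is transcendental over $K$, so $K[h]$ is a polynomial ring and $C \cong K[t]$.

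The first task is a leading-term calculus on $A$. Order the free monoid $\MM_X$ by the degree-lexicographic order; this order is invariant under left and right multiplication, and because $A$ has no zero divisors the leading-monomial map $\mathrm{lm} : A \setminus \{0\} \to \MM_X$ and the leading-coefficient map $\mathrm{lc} : A \setminus \{0\} \to K^{\times}$ are multiplicative. The combinatorial core is the Lyndon--Sch\"utzenberger lemma: two words in $\MM_X$ commute if and only if they are common powers of a third word. Applying it to $\mathrm{lm}(g)\,\mathrm{lm}(f) = \mathrm{lm}(gf) = \mathrm{lm}(fg) = \mathrm{lm}(f)\,\mathrm{lm}(g)$ for $g \in C \setminus \{0\}$, and letting $w$ be the primitive root of $\mathrm{lm}(f)$, we get $\mathrm{lm}(g) = w^{v(g)}$ for a well-defined $v(g) \in \N$. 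Thus $v : C \setminus \{0\} \to \N$ is additive, vanishes exactly on $K \setminus \{0\}$, and together with $\mathrm{lc}$ gives $C$ the features of a ring carrying a discrete valuation with a multiplicative residue map.

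The rest of the plan consists of two structural assertions. First, $C$ is commutative: for $g, g' \in C$ the commutator $[g,g']$ again lies in $C$, its top $v$-terms cancel so that $v([g,g']) < v(g) + v(g')$, and $[g,g']$ is never a non-zero scalar since it dies in the abelianization $A/[A,A] = K[X]$ while scalars do not; feeding this into the good divisibility behaviour of $A$ (Cohn's weak algorithm, equivalently $A$ being a free ideal ring) forces $[g,g'] = 0$. Second, $C$ is singly generated: pick $h \in C$ of least positive $v$-value; the pair $(v, \mathrm{lc})$ lets one perform division with remainder inside the commutative domain $C$ --- subtracting a suitable scalar multiple of a power of $h$ strictly drops $v$ --- provided $v(h)$ divides $v(g)$ for every $g \in C$. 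That divisibility is where integral closedness of $C$ in $A$ enters (again a consequence of the weak algorithm): it forces the value semigroup $v(C \setminus \{0\})$ to equal $v(h)\,\N$, not merely a proper numerical subsemigroup containing $v(h)$, after which division with remainder expresses every element of $C$ as a polynomial in $h$, so $C = K[h]$.

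The genuine difficulty is concentrated in those two structural facts --- commutativity of the centralizer and its being generated by a single element --- and neither is formal: both rest on controlling how elements that commute with $f$ factor through powers of $w$ and of $h$, which is exactly where Bergman's argument does its work and where the free-ideal-ring structure of $A$ is indispensable. Everything else --- the deglex leading-term bookkeeping, the reduction to finding one generator, and the passage from $C = K[h]$ to $C \cong K[t]$ --- is routine. An equivalent, more purely ring-theoretic organization is available: $A$ is a fir, the centralizer of a positive-degree element inherits the weak algorithm and is therefore itself a fir, a commutative fir is a principal ideal domain, and a finitely generated commutative principal ideal domain of Krull dimension one whose only units are scalars must be $K[t]$; the dimension count and the triviality of the unit group come once more from the leading-term analysis.
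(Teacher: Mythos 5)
This theorem is not proved in the paper at all: it is imported as a black-box citation to Bergman's 1969 paper, so there is no in-text argument to compare yours against. Judged on its own terms, your outline does follow the classical Cohn--Bergman line: the deglex leading-term calculus, Lyndon--Sch\"utzenberger forcing $\mathrm{lm}(g)$ to be a power of the primitive root $w$ of $\mathrm{lm}(f)$, the resulting additive valuation $v$ on $C\setminus\{0\}$, and the reduction of everything to two structural lemmas (commutativity of $C$ and cyclicity of the value semigroup) followed by division with remainder. That localization of the difficulty is accurate. But be clear that what you have written is a strategy, not a proof: the sentence ``feeding this into the good divisibility behaviour of $A$ \dots forces $[g,g']=0$'' is not an argument (the observations that $v([g,g'])<v(g)+v(g')$ and that $[g,g']$ vanishes in the abelianization only rule out $[g,g']$ being a nonzero scalar, which says nothing unless you can iterate), and ``integral closedness forces $v(C\setminus\{0\})=v(h)\,\N$'' needs the actual construction of an element of the fraction field of $C$ that is integral over $C$ but would have non-integral value --- this is precisely where Bergman's paper does its real work.

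One concrete error: in your alternative ``purely ring-theoretic organization,'' the claim that a finitely generated commutative principal ideal domain of Krull dimension one whose only units are scalars must be $K[t]$ is false. Take an elliptic curve $E$ over $\mathbb{Q}$ with $E(\mathbb{Q})\cong\mathbb{Z}$ generated by $P$, and let $R$ be the coordinate ring of $E\setminus\{O,P\}$: then $\mathrm{Pic}(R)=E(\mathbb{Q})/\langle P\rangle=0$, so $R$ is a PID, it is finitely generated of dimension one, and its unit group is $\mathbb{Q}^{\times}$ because $n([P]-[O])$ is principal only for $n=0$; yet $R\not\cong\mathbb{Q}[t]$. What actually pins down $K[t]$ in Bergman's setting is not these abstract invariants but the filtration by $v$ whose associated graded ring is (once the value semigroup is shown to be cyclic) a polynomial ring --- i.e., genus-zero information that your leading-term analysis supplies and the abstract criterion does not. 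So keep the main line and discard, or substantially strengthen, the alternative one.
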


\begin{cor} \label{co:Bergman} 
 Let  $K$ be a  field and $X$ an arbitrary non-empty set. For any  non-invertible polynomial  $P \in \MA_K(X)$ one can interprete the ring of polynomials $K[t]$  in $\MA_K(X)$ using  the parameter $P$ as the centralizer $C_{\MA_K(X)}(P)$   uniformly in $K, X$ and  $P$. 
\end{cor}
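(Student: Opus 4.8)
The statement is an immediate corollary of Bergman's Theorem \ref{th:Bergman} together with the model-theoretic machinery of interpretations already set up in the preliminaries, so the plan is essentially to package these two ingredients. First I would observe that the centralizer $C_{\MA_K(X)}(P) = \{ y \in \MA_K(X) \mid yP = Py \}$ is defined in $\MA_K(X)$ by the formula $\phi_P(y) = (yP = Py)$, which uses $P$ as a parameter and which is literally the same formula for every choice of $K$, $X$, and $P$; hence the centralizer is a subset of $\MA_K(X)$ definable with parameter $P$, uniformly in $K$, $X$, and $P$. The multiplication and addition on this subset are just the restrictions of the ring operations of $\MA_K(X)$, and their graphs are defined by the formulas $(u,v,w)$ with $u\cdot v = w$ (respectively $u+v=w$) relativized to the centralizer, again uniformly. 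So the ring structure $\langle C_{\MA_K(X)}(P); +, \cdot, 1 \rangle$ (note $1$ lies in every centralizer) is a substructure of $\MA_K(X)$ that is interpretable with parameter $P$ in $\MA_K(X)$, uniformly in $K$, $X$, $P$ — here no quotient by an equivalence relation is even needed, the interpretation is via a definable subset.

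Second, I would invoke Theorem \ref{th:Bergman} (Bergman): since $P$ is a non-invertible polynomial, $C_{\MA_K(X)}(P)$ is isomorphic, as a ring, to $K[t]$. Combining the two observations: the definable-with-parameter-$P$ substructure $\langle C_{\MA_K(X)}(P); +,\cdot,1\rangle$ of $\MA_K(X)$ is isomorphic to $K[t]$, and the interpreting formulas do not depend on $K$, $X$, or $P$. By Definition \ref{de:interpretable} (and the remark that interpretation via a definable subset is the special case where $\sim$ is equality), this says precisely that $K[t]$ is interpretable in $\MA_K(X)$ with the parameter $P$, uniformly in $K$, $X$, and $P$, which is the assertion of the corollary.

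\textbf{Where the work really sits.} There is essentially no obstacle in the corollary itself — all the substance has been offloaded into Bergman's theorem, which we are permitted to assume. The only point requiring a sentence of care is to confirm that ``centralizer'' is the right definable object and that it is uniformly definable: one should note that $x \mapsto (xP = Px)$ is a single first-order formula in the language of rings with one extra constant, so the interpretation is uniform in all three parameters in the strong sense meant in Section \ref{se:interpretation}. It is also worth remarking, for use in the sequel, that this gives a \emph{uniform} family of interpretations indexed by the parameter $P$, which is exactly the setup to which Lemma \ref{le:int-poly-one}, Lemma \ref{le:arbitrary-char-2}, and Theorem \ref{Bauva} apply after replacing ``$F[X]$'' by ``$C_{\MA_K(X)}(P) \cong K[t]$''; that is the mechanism by which definability results for polynomial rings will be transferred into $\MA_K(X)$.
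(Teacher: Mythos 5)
Your proposal is correct and matches the paper's (implicit) argument exactly: the paper states this as an immediate corollary of Theorem \ref{th:Bergman}, with the centralizer defined by the single parametric formula $yP = Py$ and the ring structure inherited from $\MA_K(X)$, so uniformity in $K$, $X$, and $P$ is automatic. Nothing further is needed.
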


 \begin{theorem} \label{th:rank-non-com}
 For any natural number $n \in \N$ there exists a set of first-order sentences $\Psi_n$ of the ring theory language $L$ such that for any field $K$ and any set $X$ the set of sentences $\Psi_n$ holds in $\MA_K(X)$ if and only if $|X| = n$. 
 \end{theorem}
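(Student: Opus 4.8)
The plan is to express ``$|X| = n$'' by a first-order sentence in the ring language, using only intrinsic ring-theoretic data. The key idea is to characterize the rank of $\MA_K(X)$ as the rank (over $K$) of the quotient module $V = I/I^2$, where $I$ is the two-sided ideal generated by $X$ --- the ``augmentation ideal'' in the unital case, which coincides with the ideal of polynomials with zero constant term. Indeed, $V$ is a $K$-vector space with basis the images $\bar x_1, \ldots, \bar x_n$, so $\dim_K V = |X| = n$. The obstacle is that $I$, $I^2$, and the $K$-action are not literally given in the ring language; we must show they are all $0$-definable, and then express ``$\dim_K(I/I^2) = n$'' by a sentence. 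By Theorem \ref{th:field} the field $K$ and its action on $\MA_K(X)$ are $0$-interpretable in $\MA_K(X)$ (the free associative algebra of finite rank is non-commutative once $|X|\geq 2$; for $|X|=1$ one uses Remark \ref{re:def-field}, or treats that case separately since $\MA_K(x)\cong K[x]$ and Theorem \ref{th:rank-non-com} for $n=1$ follows from the polynomial case). So it remains to define $I$ and $I^2$ and to write down the dimension condition.

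First I would define the augmentation ideal $I$. In the unital case $\MA_K(X)$ decomposes as $K \oplus I$ as a $K$-module, and $I$ consists exactly of those $f$ with zero constant term; equivalently $I$ is the set of $f$ such that $f$ lies in the (two-sided) ideal generated by all elements of the form $g - \lambda$ where $\lambda \in K$ is ``the constant term of $g$''. A cleaner route: a scalar $\lambda \in K$ is definable as an element of the interpreted copy of $K$, and ``$f$ has constant term $\lambda$'' can be captured because $I$ is the unique maximal two-sided ideal $J$ with $\MA_K(X) = K + J$ and $K \cap J = 0$; the augmentation map $f \mapsto \varepsilon(f)$ (constant term) is then the projection with kernel $I$. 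One checks that ``$f \in I$'' is expressed by $f \in \langle\langle \{ x_i \} \rangle\rangle$, but since we do not have the $x_i$ as constants, we instead say: $f \in I$ iff for the (definable) scalar $\lambda = \varepsilon(f)$ one has $f - \lambda \cdot 1 \in I$ trivially --- so really the content is defining $\varepsilon$. I would do this by noting $\varepsilon$ is the unique $K$-algebra homomorphism $\MA_K(X) \to K$, and $I = \ker\varepsilon$ is $0$-definable as $\{f : f \text{ has no invertible ``component''}\}$, made precise via the maximal ring of scalars machinery already set up, or more directly: $I$ is generated as a two-sided ideal by the set of products $gh - \varepsilon(g)h$, etc. The cleanest formulation I would actually adopt is: $I = \{\, f \in \MA_K(X) : \forall \lambda \in K,\ f + \lambda \ \text{is invertible} \Rightarrow \lambda \neq 0 \,\}$ fails in general, so instead characterize $I$ by $f\in I \iff \exists$ finitely many $a_j, b_j, c_j, d_j, u_k$ with $u_k$ ranging over a generating set --- this requires care. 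Granting (as the paper's later sections surely do) that $I$ and the projection $\varepsilon$ are $0$-definable, $I^2$ is then $0$-definable as $\{\, \sum_{j=1}^{N} g_j h_j : g_j, h_j \in I \,\}$, where $N$ can be taken to be a fixed integer bounded in terms of $n$ by the ``finite width'' phenomenon of Lemma \ref{le:1-4} applied to the bilinear map $I \times I \to I^2$ (this bounds the number of summands needed, which is essential for writing a \emph{first-order} sentence).

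Having $I$, $I^2$, and the interpreted field $K$ with its action, the module $V = I/I^2$ is an interpretable $K$-vector space, and I would write $\Psi_n$ to say precisely ``$\dim_K V = n$'': there exist $v_1, \ldots, v_n \in V$ that are $K$-linearly independent (no nontrivial $K$-combination is $0$ in $V$, i.e.\ lies in $I^2$) and $K$-span $V$ (every element of $I$ is congruent mod $I^2$ to a $K$-combination of the $v_i$). Both clauses are first-order since $K$, its action, $I$, and $I^2$ are all $0$-definable. Adding to $\Psi_n$ the sentences expressing that the ambient structure is the relevant free associative algebra is not needed --- $\Psi_n$ is asserted only as a property that, \emph{in $\MA_K(X)$}, holds iff $|X| = n$; and in $\MA_K(X)$ we have $V \cong K^{(X)}$, so the equivalence is immediate. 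The main obstacle, and the step I expect to require the most work, is the honest verification that the augmentation ideal $I$ is $0$-definable (without parameters, and uniformly in $K$ and $X$) and that $I^2$ has finite width bounded in terms of $n$ so that the spanning/independence conditions become genuine first-order sentences; once that definability is in hand, the rest is a routine translation of linear algebra into formulas.
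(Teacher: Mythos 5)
Your underlying invariant is the right one --- the paper's proof also pivots on the ideal $I_X=x_1\MA+\cdots+x_n\MA$, its square, and the $K$-dimension of the quotient (it phrases this as $\dim_K \MA/I_X^2=n+1$ rather than $\dim_K I/I^2=n$, but that is the same computation). However, there is a genuine gap at exactly the point you flag and then wave through: the augmentation ideal $I$ is \emph{not} $0$-definable in $\MA_K(X)$, and no amount of care will make it so, because it is not even invariant under ring automorphisms. The map $x_1\mapsto x_1+1$, $x_i\mapsto x_i$ ($i\geq 2$) extends to an automorphism of $\MA_K(X)$ carrying $I$ to the (different) kernel of the evaluation $x_1\mapsto -1$, $x_i\mapsto 0$. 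More generally the two-sided ideals $J$ with $\MA=K\oplus J$ are exactly the kernels of the evaluations $x_i\mapsto\lambda_i$, a whole family permuted transitively by automorphisms, so there is no canonical choice and your characterizations of $I$ as ``the'' maximal such ideal, or via ``the'' homomorphism $\varepsilon$, cannot be repaired. The paper's later sections do not establish what you are granting.

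The paper's way around this is to work \emph{with parameters and then quantify them out}: it writes formulas $\phi_n(y_1,\ldots,y_n)$ asserting that the tuple $\bar y$ generates a two-sided ideal $I_Y=y_1\MA+\cdots+y_n\MA$ with $\MA=K\oplus I_Y$, that $\sum y_iy_j\MA$ is a two-sided ideal equal to $I_Y^2$ (this bounded-width presentation is what makes $I_Y$ and $I_Y^2$ first-order, playing the role of your appeal to Lemma \ref{le:1-4}), and that $\MA/I_Y^2$ is spanned by $1,y_1,\ldots,y_n$ over $K$. This creates a new problem your proposal does not confront: once the witnessing tuple is existentially quantified, one must rule out spurious tuples of a different length $m$ satisfying $\phi_m$ in the same algebra. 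The paper does not exclude them directly; instead it introduces, for each pair $(n,m)$, a comparison sentence $\psi_{n,m}$ forcing any $\phi_n$-tuple to generate $\MA$ modulo $K+I_D^2$ for every $\phi_m$-tuple $D$, whence $n\geq m$, and takes $\Psi_n=\{\psi_{n,m}\mid m\in\N\}$ --- this is why the theorem asserts a \emph{set} of sentences rather than a single one. Your single sentence ``$\dim_K(I/I^2)=n$'' collapses this entire difficulty into the unsupported parameter-free definability of $I$. A lesser but real omission: the theorem quantifies over \emph{all} sets $X$, so you must also verify that your sentences fail when $X$ is infinite; the paper does this by observing that the unique-decomposition formula $\phi_{1,m}$ is unsatisfiable for infinite $X$, whereas your route leans on Theorem \ref{th:field}, which is stated only for finite rank.
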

 \begin{proof}
By definition the rank of  $\MA_K(X)$ is zero if and only if $X =  \emptyset$, i.e., $\MA_K(X) = K$, so precisely when every non-zero element in $\MA_K(X)$  is invertible. This condition can be described by a sentence, its singleton set  gives  $\Psi_0$.  The case  of $|X| = 1$ is also easy, since it is suffices to write down that $\MA_K(X)$  is commutative, but not a field.  This gives $\Psi_1$.
 
 Now assume that $|X| = n$ and $\MA = \MA_K(X)$. Then  every element $a \in \MA_K(X)$ has a unique decomposition of the form
  \begin{equation} \label{eq:auto}
 a = x_1a_1 + \ldots + x_na_n+\alpha,  \ \ \ a_i \in \MA, \alpha \in K.
  \end{equation}
 Indeed, let $a = \alpha_1w_1 + \ldots + \alpha_kw_k +\alpha_0\cdot 1$, where $\alpha_i \in K, w_i \in \MM_X$ be the unique decomposition of $a$ via monomials from  $\MM_X$ of $\MA$. Collecting all terms $\alpha_iw_i$ such that $w_i$ begins with $x_1$ and factoring  $x_1$ out  to the left one gets the element $a_1$. Now collecting for $x_2$ in the element $a-x_1a_1$ one gets $a_2$, and so on.
 Existence and uniqueness of the decomposition (\ref{eq:auto}) for any element in $\MA$ can be described  by a formula in $L$ with parameters $x_1, \ldots,x_n$. More precisely, consider the following  formula in the language $L$:
$$
\phi_{1,m}(y_1, \ldots,y_m)=\forall a\in \MA\exists ! a_1,\ldots ,a_m\in \MA\exists !\alpha\in K (a=\sum y_ia_i+\alpha).
$$
As we showed above the formula $\phi_{1,n}(y_1, \ldots,y_n)$ holds in $\MA_K(X)$ on the elements $x_1, \ldots,x_n$.  
Notice also, that the formula $\phi_{1,m}(y_1, \ldots,y_m)$ does not hold in $\MA_K(X)$ on any tuple of elements $b_1, \ldots, b_m$ provided the basis $X$ is infinite. Indeed, there are only finitely many elements from $X$ that occur in monomials of these elements, so any element $a$  from $X$ that does not occur in these monomials cannot be represented in the form $a = b_1a_1 + \ldots + b_na_n+\alpha$ above.

Observe that the definable subset 

\begin{equation} \label{eq:ideal-2 }
  I_X = x_1\MA+ \ldots + x_n\MA
  \end{equation}
is a two-sided  ideal in $\MA$,  and this  can be described  by a formula, say $\phi_{2,n}(y_1, \ldots,y_n)$ in the language $L$, which states that  for any tuple $B = (b_1, \ldots, b_n)$ over $\MA$ the definable set $I_B = b_1\MA + \ldots + b_n\MA$ is a two-sided ideal in $\MA$.

Now,    $\MA$ admits, as a vector space over $K$,  a direct decomposition 
\begin{equation} \label{eq:ideal }
 \MA = K \oplus I_X.
  \end{equation}
The definable with parameters $x_1, \ldots,x_n$ set 
$$
\sum_{1\leq i,j \leq n} x_ix_j\MA,
$$
 is a two-sided ideal  in $\MA$, moreover this ideal is equal to $I_X^2$ - the square of $I_X$. This, again, can be described by a formula, say $\phi_{3,n}(y_1, \ldots,y_n)$, which states that  for any tuple $B = (b_1, \ldots, b_n)$ over $\MA$ the definable set $ \sum_{1\leq i,j \leq n} b_ib_j\MA $ is a two-sided ideal in $\MA$ and this ideal is equal to $I_B^2$. 

Clearly,  $\MA/I_X^2$ has dimension $n+1$ over $K$, which can be described by the following  formula:
$$
\phi_{4,n}(y_1, \ldots,y_n) = \forall a  \exists \alpha_0 \in K \ldots  \exists \alpha_n \in K \exists b \in I_Y^2 (a = \sum_i \alpha_iy_i +\alpha_0+b).
$$
Put $\phi_n(y_1, \ldots,y_n) = \phi_{1,n} \wedge \phi_{2,n} \wedge \phi_{3,n} \wedge \phi_{4,n}$.
By construction $\phi_n$ holds in $\MA$ on $(x_1, \ldots,x_n)$,

Now suppose that  $\phi_m$ holds in $\MA$ on elements  $b_1, \ldots,b_m$ , so  $\MA \models \phi_m(b_1, \ldots,b_m)$. Then the set  $I_B = b_1\MA + \ldots + b_m\MA$  is a two-sided ideal in $\MA$ (because $\MA \models \phi_{2,m}(b_1, \ldots,b_m)$). Since $\MA \models \phi_{3,m}(b_1, \ldots,b_m)$ one has $\MA = I_B \oplus K$.  For any $a \in \MA$ there exist unique $a' \in I_B$ and $\alpha(a) \in K$  such that 
\begin{equation} \label{eq:aprime}
a = a'+\alpha(a).
\end{equation}
 The map $h_B: \MA \to I_B$ such that $ a \to a'$  is definable in $\MA$ with parameters $B$ uniformly in $B$ satisfying $\phi_m$. For any $a, b \in \MA$ one has $a+b = a'+\alpha(a) + b'+\alpha(b)$, so $(a+b)' = a'+b'$ (from uniqueness of the decomposition (\ref{eq:aprime})). Similarly, $(\alpha a)' = \alpha a'$ for any $\alpha \in K$. It follows that $h_B$ is a $K$-linear. 
 
 
 We claim that the set $h_B(X)$  generates $K$-vector space $\MA$ modulo $K + I_B^2$, i.e., $\MA = \langle x_1', \ldots, x_n'\rangle_K + K +I_B^2$ as a vector space. Indeed, observe that for any $a, b \in \MA$ one has $ab = a'b' +a'\alpha(b) + \alpha(a)b' + \alpha(a)\alpha(b)$, so  $h_B(ab) \in \langle a',b', 1\rangle_K + I_B^2$. Similarly, for any $a_1, \ldots, a_t \in \MA$ one has $h_B(a_1 \ldots a_t) \in \langle a_1', \ldots, a_t',1\rangle_K + I_B^2$. Since  $X$ generates $\MA$ as an algebra it follows that $\MA = \langle h_B(X) \rangle_K+K  +I_B^2$, as claimed.  We showed that  for any $B = (b_1, \ldots,b_m)$ satisfying $\phi_m$ in $\MA$ one has 
 \begin{equation}  \label{eq:h-hom}
 \langle h_B(X)\rangle_K +K +I_B^2 = \MA.
 \end{equation} 
 Since $h_B$ is definable with parameters $B$ and the action of $K$ on $\MA$ is also definable it follows that the condition \ref{eq:h-hom} can be written by a formula, say $\phi_{5,n,m}(y_1, \ldots,y_n)$. Therefore the following formula holds in $\MA$ on elements $x_1, \ldots,x_n$:
 $$
 \phi_{n,m}(y_1, \ldots,y_n) = \phi_n(y_1, \ldots,y_n) \wedge \forall b_1 \ldots b_m(\phi_m(b_1, \ldots,b_m) \to \phi_{5,n,m}(y_1, \ldots,y_n)).
 $$
Now if $C= (c_1, \ldots,c_n)$ satisfies $\phi_{n,m}(y_1, \ldots,y_n)$ in $\MA$ and $D = (d_1, \ldots,d_m)$ satisfies $\phi_m(y_1, \ldots,y_m)$ in $\MA$ then $c_1, \ldots,c_n$ generates $\MA$ modulo $K +I_D^2$ and $d_1, \ldots, d_m,1$ is a $K$-basis of $\MA/I_D^2$. Hence $n \geq m$. Notice that $(x_1, \ldots,x_n)$ satisfies $\phi_{n,m}(y_1, \ldots,y_n)$ in $\MA$, so the sentence 
$$
\psi_{n,m} = \exists y_1 \ldots y_n \phi_{n,m}(y_1, \ldots,y_n)
$$
holds in $\MA$ for any $m$. Set $\Psi_n = \{\psi_{n,m} \mid m \in \mathbb{N}\}$. We showed that $\MA \models \Psi_n$.  

Suppose that $\MA \models \Psi_t$ for some $t \in \mathbb{N}$. Then the sentences $\psi_{n,t}$ and $\psi_{t,n}$ both hold in $\MA$ hence $n = t$. This shows that $\MA \models \Psi_t$ if and only if $t = n$.

Suppose now that $\MA = \MA_K(X)$ with infinite set $X$. If  $\MA \models \Psi_t$ for some $t \in \mathbb{N}$ then by construction $\MA \models \exists y_1 \ldots y_t \phi_t$. Hence, as was mentioned above the set $X$ must be finite. This shows that $\Psi_t$ does not hold in $\MA_K(X)$  for any $t \in \mathbb{N}$. 
\end{proof}

\begin{lemma} \label{le:monomials}
For a finite $X$ the following holds:
\begin{itemize}
\item [1)] the  monoid $K\MM_X = \{ \alpha w \mid \alpha \in K, w \in \MM_X\}$ is definable in $\MA_K(X)$ with parameters  from $X$ uniformly in $K$ and the cardinality $|X|$.

\item [2)] the monoid $\MM_X$ is interpretable  in $\MA_K(X)$ with parameters  from $X$ uniformly in $K$ and the cardinality $|X|$.
\end{itemize}
\end{lemma}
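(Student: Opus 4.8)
The plan is to define $K\MM_X$ by ``reading off the letters of a monomial one at a time'', using the augmentation ideal and its de-concatenation operators, and to cope with the resulting unbounded recursion by means of the arithmetic and the list superstructures that are interpretable in $\MA_K(X)$ via Bergman's theorem. First I would record the tools. The field $K$ is $0$-definable in $\MA_K(X)$ (the non-zero elements of $K$ are exactly the units, by Bergman and the domain property). The set $I_X = x_1\MA + \dots + x_n\MA$ is the two-sided augmentation ideal, definable with the parameters $x_1,\dots,x_n$, and $\MA = K \oplus I_X$ as $K$-spaces (cf.\ the proof of Theorem \ref{th:rank-non-com}), so every $f$ has a unique expression $f = \sum_i x_i\partial_i(f) + \epsilon(f)$ with $\epsilon(f)\in K$, the coordinate maps $\partial_i$ and $\epsilon$ being definable with these parameters. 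Finally, by Bergman's Theorem \ref{th:Bergman} we have $C_{\MA}(x_1)\cong K[t]$, so (by the polynomial-ring results of the previous sections, trivially also when $K$ is finite) the arithmetic $\N$ and the list superstructures $S(\N,\N)$, $S^2(\N,\N)$ and $S(K,\N)$ are all interpretable in $\MA_K(X)$ with parameters from $X$; finiteness of $X$ is used throughout so that the decomposition $\MA = K\oplus I_X$ and the operators $\partial_i$ make sense.

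The heart of the proof is the following recursive description: a non-zero $f$ lies in $K\MM_X$ iff either $f\in K^\ast$, or $f\in I_X$ with $\epsilon(f)=0$, exactly one $\partial_i(f)$ is non-zero, and that $\partial_i(f)$ again lies in $K\MM_X$; equivalently $f = \alpha x_{i_1}\cdots x_{i_m}$ with $\alpha\in K^\ast$ iff, stripping left letters successively, the sequence $f,\ \partial_{i_1}(f),\ \partial_{i_2}\partial_{i_1}(f),\dots$ stays in $I_X$ with a single non-vanishing left derivative at each stage and ends at the scalar $\alpha$. The work is to turn this into one first-order formula. The plan is to encode a candidate monomial $w$ by its tuple of indices $\sigma = (i_1,\dots,i_m)$ over $\{1,\dots,n\}$, an element of the interpreted $S(\N)$, and to say that $f\in K\MM_X\setminus\{0\}$ iff there exist $\alpha\in K^\ast$ and such a $\sigma$ whose ``de-concatenation trace'' of $f$ realizes the conditions above. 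That trace is a finite sequence of elements of $\MA$; the key point that makes it expressible is that each entry of the trace is itself a scalar monomial determined by $\alpha$ together with a suffix of $\sigma$, so the whole verification can be phrased as a statement quantifying over $\alpha$, over $\sigma$, and over the tuple of suffixes of $\sigma$ (an element of the interpreted $S^2(\N,\N)$), tied to $f$ only through finitely-local equations of the form $g = x_i h$ in $\MA$ --- in other words, the iterated left-derivative of $f$ along $\sigma$ is a definable partial operation on $\MA$ because the sequences of algebra elements it produces are coded inside the interpreted superstructure over $K$ and $\N$. Assembling these pieces yields the defining formula for $K\MM_X$, uniformly in $K$ and in $n = |X|$.

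Part (2) then follows quickly: with $K\MM_X$ definable with parameters, the relation $f\sim g \iff \exists\beta\in K^\ast\ (f = \beta g)$ is a definable equivalence relation on the definable set $K\MM_X\setminus\{0\}$, and the algebra multiplication descends to the quotient since $(\beta w)(\gamma u) = \beta\gamma\, wu$; the quotient, with induced multiplication and unit $[1]$, is isomorphic to $\MM_X$, so $\MM_X$ is interpretable in $\MA_K(X)$ with parameters from $X$, uniformly in $K$ and $n$ (alternatively, once $\N$ is interpreted one may interpret the abstract free monoid on $n$ generators directly inside it). I expect the main obstacle to be precisely the first-order expressibility of the unbounded recursion defining ``scalar monomial'': there is an apparent circularity --- relating a monomial to its index tuple seems to require building monomials from index tuples, which is exactly what is being defined --- and it is broken by the observation that every intermediate value of the recursion is again a scalar monomial encoded by a suffix, so the entire computation can be carried out inside the interpreted weak second-order structure over $K$ and $\N$.
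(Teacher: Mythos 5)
There is a genuine gap, and it sits exactly at the point you flag as the main obstacle. Your plan reduces ``$f\in K\MM_X$'' to the existence of an index tuple $\sigma=(i_1,\dots,i_m)$ and a chain of intermediate elements $f=g_0,\ g_1,\dots,g_m=\alpha$ of $\MA$ with $g_{j-1}=x_{i_j}g_j$. Since $m$ is unbounded, this chain cannot be quantified over directly in the first-order language of $\MA$, and your proposed repair --- replacing the chain by the tuple of suffixes of $\sigma$ inside the interpreted $S^2(\N,\N)$ --- does not actually bridge the gap: the superstructures interpretable via Bergman's theorem code finite sequences over $K$ and $\N$, not over $\MA$. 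The intermediate values $g_j$ are elements of $\MA$; they are \emph{semantically} determined by $(\alpha,$ suffix of $\sigma)$, but the decoding map $\sigma\mapsto M_\sigma=x_{\sigma_1}\cdots x_{\sigma_m}$ (equivalently, the iterated left-derivative of $f$ along an arbitrary-length $\sigma$) is not known to be definable at this stage. In the paper its definability is precisely the content of Lemma \ref{le:M-X-c} (and Lemma \ref{le:s-and-f}), whose proofs rest on the big-powers technique of Lemma \ref{le:main-decomp} \emph{and} on the formula defining $K\MM_X$ from the present lemma; so invoking such a bridge here is circular, and without it your formula only constrains codes, never the element $f$ itself. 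Asserting that the iterated derivative ``is a definable partial operation because the sequences it produces are coded in the superstructure'' is exactly the unproved step.

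For contrast, the paper's proof of part 1) needs none of this machinery: a nonzero $a$ lies in $K\MM_X$ if and only if every divisor of $a$ is either invertible or divisible by some $x_i$, which is expressed by the single formula $\forall b\,\bigl(b\mid a \rightarrow (b\in K)\vee\bigvee_{i=1}^n (x_i\mid b)\bigr)$ with parameters $X$; this is a non-recursive characterization (using only that in $\MA_K(X)$ a divisor of a scalar monomial is again a scalar monomial), works for finite $K$ as well, and does not even use Bergman's theorem. Your part 2) --- quotienting the definable set $K\MM_X$ by the definable relation $f\sim g\iff f=\beta g$ for a unit $\beta$ --- is correct and is the same as the paper's, but it only stands once part 1) is repaired, e.g.\ by the divisibility argument above.
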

\begin{proof}
An element $a \in  \MA_K(X)$ belongs to  $K\MM_X$ if and only if it satisfies  the condition that every non-invertible divisor of $a$ is divisible by one of the elements from $X$, so 
the following formula $\phi(a,X)$ in $L_X$  defines $K\MM_X$ in $\MA_K(X)$:
$$
\phi(a,X) = \forall b  (b\mid a \to (b \in K)\vee \bigvee_{i = 1}^n (x_i \mid b)).
$$

This proves 1).  To see  2) notice first that an  equivalence relation $\sim$ on $ \MA_K(X)$ defined by $x \sim y \Longleftrightarrow \exists \alpha \exists \beta  (x = \alpha y) \wedge (\alpha\beta = 1)$ is definable in $ \MA_K(X)$, hence the quotient monoid $K\MM_X/\sim$, which is isomorphic to $\MM_X$,  is interpretable in $ \MA_K(X)$ uniformly in $K$ and $|X|$.
\end{proof}

 \subsection{Interpretation of arithmetic $\N$ in $\MA_K(X)$} \label{se:N-in-A}
In this section  $K$ is  an arbitrary field and $X$ is a set with $|X| \geq 2$. 

 By Corollary   \ref{co:Bergman} for a non-invertible polynomial  $P \in \MA$ the one-variable polynomial  ring  $K[t]$ is  definable in $\MA = \MA_K(X)$ as the centralizer $C_\MA(P)$ with the parameter $P$     uniformly in $K,X$ and $P$. Notice that $P$ could be reducible in $K[t]$.   However, by Lemma \ref{le:int-poly-one} the ring $K[P]$ is definable with the parameter $P$ in $K[t]$ uniformly in $K$ and $P$, hence $K[P]$ is definable with the parameter $P$ in $\MA_K(X)$  uniformly in $K$, $X$,  and $P$. By Lemma \ref{le:arbitrary-char} the arithmetic $\N$  is interpretable in $K[t]$ with an arbitrary non-invertible parameter $P$ on the set of all powers $\{ P^m \mid m \in \N\}$ uniformly in $K$ and $P$, where the addition and multiplication for $n,m,k \in \N$  is defined by 
 $$
n+m = k \Longleftrightarrow P^n\cdot P^m = P^k,
$$
$$
n\mid m \Longleftrightarrow (P^n-1)\mid (P^m-1). 
$$
As in Lemma \ref{le:arbitrary-char} we denote this interpretation by $\N_P$. In particular, for an arbitrary  non-invertible polynomial  $P \in \MA$  one has interpretation $\N_P$ uniformly in $K,X$, and $P$.  The main result in this section is that   the interpretations $\N_P$ are definably isomorphic in $\MA$. 

To this end we  introduce a particular form of a useful technique of "big powers".

Given a number $m \in \MN$ we define an element $a_m \in  \MM_{\{x_1,x_2\}} $ by
\begin{equation} \label{eq:a}
a_m = x_1x_2x_1x_2^2x_1x_2^3\ldots x_1x_2^m.
\end{equation}

\begin{lemma} \label{le:a-m}
The set of pairs  $B = \{(\alpha a_m,m) \mid \alpha \in K,  m \in \N_{x_2}\}$ is definable in $\MA$ uniformly in $K$ and $x_1, x_2 \in X$ such that $x_1 \neq x_2$.
\end{lemma}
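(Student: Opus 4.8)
The plan is to write down, in the language $L$ with the parameters $x_1,x_2$, a formula $\beta(c,p)$ expressing that $c=\alpha a_m$ for some $\alpha\in K$ and $p=x_2^m$, where $a_m$ is as in (\ref{eq:a}). First I would collect the definable data already available. By Lemma~\ref{le:monomials} the set $K\MM_X$ (and the interpreted monoid $\MM_X$) is definable with parameters in $X$; in a free associative algebra left and right divisibility of monomials is $0$-definable, since $w=uv$ with $u$ a nonzero monomial forces $v$ to be a scalar multiple of a monomial, so ``$u$ is a left (right) divisor of the monomial $w$'' captures being a prefix (suffix) of the word $w$; and by Corollary~\ref{co:Bergman} together with Lemma~\ref{le:arbitrary-char} the set $N_{x_2}=\{x_2^k:k\in\N\}$ is definable with the parameter $x_2$, carrying the interpreted arithmetic $\N_{x_2}$. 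I will identify the number $m$ with the element $x_2^m\in N_{x_2}$, so that $B$ becomes a subset of $K\MM_X\times N_{x_2}$.

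The key point, which disposes of the only genuine obstacle — that $a_m$ is given by the recursion $a_1=x_1x_2$, $a_{j+1}=a_j\,x_1\,x_2^{\,j+1}$, which is not prima facie first-order — is a combinatorial remark about the word $a_m$: the factor $x_1x_2$ occurs in $a_m$ exactly at the block boundaries, so for a prefix $v$ of $w=a_m$ one has ``$vx_1x_2$ is again a prefix of $w$'' if and only if $v\in\{1=a_0,a_1,\ldots,a_{m-1}\}$. Hence the set
$$S(w)=\{\,v: v \text{ left-divides } w,\ \text{and}\ (vx_1x_2 \text{ left-divides } w\ \text{or}\ v=w)\,\}$$
equals $\{a_0,a_1,\ldots,a_m\}$ when $w=a_m$, and $S(w)$ is visibly definable. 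Since its elements are prefixes of $w$, the set $S(w)$ is linearly ordered by left-divisibility, so the immediate-successor relation on $S(w)$ is definable as well. This converts the recursion into a local, first-order checkable condition.

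The formula $\beta(c,p)$ then asserts: $c=\alpha w$ with $w\in\MM_X$, $p\in N_{x_2}$; $x_1x_2\in S(w)$; for every $v\in S(w)$ with $v\neq w$, the immediate successor of $v$ in $S(w)$ equals $v\cdot x_1\cdot(r(v)\cdot x_2)$, where $r(v)$ is the longest power of $x_2$ right-dividing $v$ (definable: $r\in N_{x_2}$, $r$ right-divides $v$, and $r\cdot x_2$ does not); and $r(w)=p$. For the verification: if $w=a_m$ and $p=x_2^m$, all clauses hold because $a_j$ ends in exactly $x_2^j$, so $r(a_j)=x_2^j$, and $a_{j+1}=a_j x_1 x_2^{\,j+1}$; conversely, if the clauses hold then, starting from $a_0$ (equivalently $a_1\in S(w)$) and iterating the successor clause, one shows by induction that $a_0,a_1,a_2,\ldots$ all lie in $S(w)$ with the prescribed steps, and since their lengths strictly increase yet are bounded by the length of $w$ (all being prefixes of $w$), the chain must reach $w$, i.e.\ $w=a_m$ for some $m$; then $p=r(w)=x_2^m$ and $c=\alpha a_m$. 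Everything is uniform in $K$ and in $x_1\neq x_2$ in $X$, which gives the lemma; the trivial pair with $m=0$ can be included or excluded by an extra clause if one wishes. The main care needed is exactly the one noted — rendering ``prefix/suffix'' and ``block boundary'' purely via ring divisibility — and once the $S(w)$ observation is in place, the remainder is routine bookkeeping with the already-interpreted structures.
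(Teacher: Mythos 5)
Your proof is correct and follows essentially the same strategy as the paper's: both characterize the monomials $\alpha a_m$ by first-order conditions on their block structure, using the interpreted arithmetic $\N_{x_2}$ to handle the exponents $x_2^j$, the definable monoid $K\MM_X$ from Lemma \ref{le:monomials}, and the observation that left/right divisibility of monomials captures the prefix/suffix relation. The only real difference is in how the recursion $a_{j+1}=a_jx_1x_2^{j+1}$ is rendered in first-order terms: the paper imposes conditions on occurrences of the factors $x_1x_2^{j}x_1$ together with a uniqueness clause, whereas you encode the same recursion via the definable, linearly ordered prefix-set $S(w)$ and its immediate-successor function; your version is sound and, if anything, makes the converse induction (that the conditions force $w=a_m$) more transparent.
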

\begin{proof}
The monomials  $\alpha a_m$ are  completely determined by the number $m$ and the  following conditions:
\begin{itemize}
\item [1)] (divisors)  $x_1$ and $x_2$ are the only irreducible  divisors of $a_m$;
\item [2)] (endpoints) $a_m = x_1x_2x_1wx_1x_2^m$ for some $w \in K\MM_{\{x_1,x_2\}}$;
\item [3)] (recursion) if $a_m = w_1x_1x_2^jx_1w_2$ for some $w_1, w_2 \in K\MM_{\{x_1,x_2\}}$ and  $j <m$ then $w_2 = x_2^{j+1}x_1w_3$ for some $w_3 \in K\MM_{\{x_1,x_2\}}$.
\item [4)] (uniqness) if $a_m = w_1x_1x_2^jx_1w_2 = w_1^\prime x_1x_2^jx_1w_2^\prime$ for some $w_1, w_2, w_1^\prime, w_2^\prime \in K\MM_{\{x_1,x_2\}}$ and  $j <m$ then $w_1  \sim w_1^\prime, w_2 \sim w_2^\prime$, where $\sim$ is the equivalence relation on $\MA$ such that $x \sim y$ provided $x = \alpha y $ for some $\alpha \in K$.
\end{itemize}
Observe that these conditions are definable in $\MA$ in the language $L_{\{x_1,x_2\}}$. Indeed, we can define by formulas  the condition $u = x_2^j$ since $\N_{x_2}$ is interpretable  in $\MA$. The predicate $v \in K\MM_{\{x_1,x_2\}}$ is also definable in $\MA$ in the language $L_{\{x_1,x_2\}}$ by Lemma \ref{le:monomials}. This proves the lemma.

\end{proof}

 \begin{lemma} \label{le:main-decomp} Let $f_1,\ldots, f_{s+1}$ be non-invertible polynomials in ${\mathbb A}_K(X)$. Suppose $a\in \MM_X$ is a monomial such that:  $a$ is not a proper power, $a \neq a_1a_2a_1$ for any non-trivial $a_1,a_2 \in \MM_X$,   and $a$ is not contained as a subword in any of the monomials in $f_1,\ldots, f_{k+1}$.  Fix an integer $e \geq 3$ and consider a polynomial $f \in \MA$ defined as 
\begin{equation}\label{eq:decamp-a} 
f=f_0a^ef_1a^{e+1}\ldots a^{e+s}f_{s+1}
\end{equation} 
Then the following holds:
\begin{itemize}
\item [1)] Each maximal occurrence of $a^{j}$ in $f$ as a multiplicative factor  is uniquely defined up to a constant, i.e., if 
$$f = g_1a^{j}g_2 = g_1^\prime a^{j}g_2^\prime,$$
 where $g_1,g_1^\prime  \not \in \MA a$ and $g_2,g_2^\prime  \not \in a\MA$ (this defines a maximal occurrence) then 
 $$g_1 = \alpha_1 g_1^\prime, \ \ \ g_2 =   \alpha_2 g_2^\prime ,$$ for some  $\alpha_1, \alpha_2 \in K, \alpha \neq 0$ (in which case $\alpha_1 = \alpha_2^{-1}$).
\item [2)] Each occurrence of $a^{e+i}$ in  (\ref{eq:decamp-a}) is maximal, and there are no any other maximal occurrences of the type $a^j$ in $f$.
\item  [3)] The decomposition (\ref{eq:decamp-a})  is a unique  $a$-decomposition of $f$, i.e., if  
$$f=f_1^\prime a^ef_2^\prime a^{e+1}\ldots a^{e+s}f_{s+1}^\prime$$
  is another such  a decomposition of $f$  then $f_i = \alpha_i f_i^\prime$ for some $\alpha_i \in K$,  $i = 1, \ldots,s+1$. 
  
  \item [4)] If $f_1,\ldots, f_{s+1}$ are such that their leading  monomials in the shortlex order occur with coefficient 1 then in the condition 3) above one has $f_i = f_i^\prime$ for $i = 1, \ldots,s+1$.
\end{itemize}
\end{lemma}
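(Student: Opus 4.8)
\emph{Overall strategy.} The whole statement is a combinatorial fact about the free monoid $\MM_X$, which I would lift to $\MA=\MA_K(X)$ through the shortlex leading‑monomial calculus, using that $\MA$ is a domain and that for the shortlex order the leading monomial is multiplicative, $\overline{gh}=\bar g\,\bar h$. The very first step is to record that the two hypotheses on $a$ (not a proper power, and $a\neq a_1a_2a_1$ for nonempty $a_1,a_2$) are together equivalent to $a$ being \emph{unbordered}: no nonempty proper prefix of $a$ equals a suffix of $a$. This I would prove by taking a shortest nonempty border $b$ of $a$: if $2|b|\le|a|$ then $a=bwb$, and either $w$ is nonempty, giving $a=a_1a_2a_1$, or $a=b^2$ is a proper power; if $2|b|>|a|$ then $a$ has period $|a|-|b|<|b|$, so the prefix $b$ already carries the shorter nonempty border of length $|b|-(|a|-|b|)$, contradicting minimality. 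From unborderedness I would extract the single fact used everywhere: any two occurrences of the word $a$ inside a word of $\MM_X$ are either equal or disjoint — a proper overlap of length $\ell\in(0,|a|)$ would be a nonempty border — and in particular the only occurrences of $a$ inside $a^{n}$ are the $n$ aligned ones.

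\emph{Word rigidity.} Next I would prove the combinatorial heart: for any word $W=v_0a^{c_0}v_1a^{c_1}\cdots a^{c_m}v_{m+1}$ in which no $v_i$ has $a$ as a subword, the inner $v_i$ are nonempty, and $c_0<c_1<\cdots<c_m$, the maximal $a$‑power occurrences in $W$ are exactly the displayed blocks $a^{c_i}$, at their displayed positions and with their displayed exponents, so that they are pairwise distinguishable by length. For the "cannot be enlarged'' half: an occurrence of $a^{c_i+1}$ containing the displayed $a^{c_i}$ must, by the disjointness fact, begin exactly $|a|$ letters to its left, which forces the suffix of length $|a|$ ending just before the block to equal $a$; according as $|v_i|\ge|a|$ or $|v_i|<|a|$ this says either that $a$ is a subword of $v_i$ or that $a$ has a nonempty border of length $|a|-|v_i|$, both impossible (the ends $i=0,m$ being handled by "no room to the left/right''). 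For the "no other maximal occurrences'' half: a maximal $a$‑power cannot lie inside a $v_i$, so it meets some block, and disjointness forces it to be aligned with, hence contained in, hence equal to, that block. The upshot I would use is: for each fixed $j$ there is \emph{at most one} factorization $W=u_1a^ju_2$ with $u_1$ not ending in $a$ and $u_2$ not starting with $a$, namely the one peeling off the block with $c_i=j$, and such a factorization exists only for $j\in\{c_0,\dots,c_m\}$. The assumption $e\ge 3$ enters here only to give all displayed powers length $\ge 3|a|$, the room needed in these overlap estimates.

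\emph{Lifting to $\MA$.} Since the monomials of each $f_i$ avoid $a$, none of the $f_i$ is left‑ or right‑divisible by $a$, so $(\ref{eq:decamp-a})$ is a genuine $a$‑decomposition, and $\bar f=\overline{f_0}a^e\overline{f_1}\cdots a^{e+s}\overline{f_{s+1}}$ is a word of the type just treated. For part 2 I would note that the displayed factorization $f=\bigl(f_0a^e\cdots a^{e+i-1}f_i\bigr)\,a^{e+i}\,\bigl(f_{i+1}a^{e+i+1}\cdots f_{s+1}\bigr)$ is maximal, because the leading monomial of the left factor is $\overline{f_0}a^e\cdots\overline{f_i}$, which by the border argument does not end in $a$, so that factor is not in $\MA a$; dually on the right. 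Conversely, given any maximal occurrence $f=g_1a^jg_2$, I would pass to leading monomials to get a factorization of $\bar f$ at exponent $j$, whence $j\in\{e,\dots,e+s\}$ by word rigidity; then, writing $g_1=\sum_r g_1^{[r]}a^r$ and $g_2=\sum_r a^rg_2^{[r]}$ where $g_1^{[r]}$ collects the monomials of $g_1$ with maximal right $a$‑power $a^r$ (so that $g_1^{[0]}\neq0\neq g_2^{[0]}$ encodes the divisibility hypotheses) and comparing $f=\sum_{r,r'}g_1^{[r]}a^{r+j+r'}g_2^{[r']}$ with the $a$‑decomposition of $f$, I would force the only surviving junction to be $r=r'=0$, i.e. $f=g_1^{[0]}a^jg_2^{[0]}$ peels off exactly the block with $e+i=j$. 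Uniqueness up to a scalar (part 1) then follows since $\MA$ is a domain. Part 3 I would get by induction on $s$: comparing two $a$‑decompositions at the first block $a^e$, part 1 gives $g_0=\alpha f_0$, and after cancelling $f_0$ on the left one recurses on $f_1a^{e+1}\cdots a^{e+s}f_{s+1}$, still of the required shape since $e+1\ge3$. Part 4 is the same induction tracking leading coefficients, which are forced to $1$ at each stage once $\overline{f_i}$ and $\overline{f_i'}$ are seen to be the same monic monomial.

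\emph{Main obstacle.} The delicate point is the no‑cancellation argument inside the lifting step for part 1: a priori different pairs $(r,r')$ in $f=\sum g_1^{[r]}a^{r+j+r'}g_2^{[r']}$ can contribute equal monomials, since a single word can carry several maximal $a$‑powers of different lengths. I expect to organize this comparison around a monomial order that records first the length of the longest $a$‑power occurring in a monomial and then breaks ties by shortlex, and to check that at the top of this order the identity $f=\sum g_1^{[r]}a^{r+j+r'}g_2^{[r']}$ forces $r=r'=0$. Everything else — Step 1, the word‑rigidity lemma, and the "each displayed occurrence is maximal'' half of the lifting step — should be short.
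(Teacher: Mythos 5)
Your combinatorial groundwork (unborderedness of $a$, disjointness of occurrences, and the word-rigidity statement for $W=v_0a^{c_0}\cdots a^{c_m}v_{m+1}$) is sound and corresponds to what the paper uses implicitly: in the paper's proof part 2) and the base case of part 1) are exactly the monomial/free-semigroup facts you isolate, and parts 3), 4) are then formal consequences. The problem is that the one step that carries the real weight of the lemma — lifting the uniqueness in part 1) from monomials to arbitrary polynomials $g_1,g_2,g_1',g_2'$ — is precisely the step you leave unfinished. Your plan is to write $g_1=\sum_r g_1^{[r]}a^r$, $g_2=\sum_{r'} a^{r'}g_2^{[r']}$ and to force $r=r'=0$ by a cancellation analysis in $f=\sum_{r,r'}g_1^{[r]}a^{r+j+r'}g_2^{[r']}$, but you explicitly defer that analysis ("I expect to organize this comparison around a monomial order \dots and to check that \dots"). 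This is not a routine verification: as you yourself note, distinct pairs $(r,r')$ can contribute equal monomials, and moreover your preliminary reduction "pass to leading monomials to get a factorization of $\bar f$ at exponent $j$, whence $j\in\{e,\dots,e+s\}$" is already shaky, because $g_1\notin\MA a$ does \emph{not} imply that the shortlex-leading monomial $\bar g_1$ fails to end in $a$ (take $g_1=Ma+N$ with $Ma$ leading and $N$ not right-divisible by $a$); so the occurrence of $a^j$ in $\bar f$ need not be maximal as a word occurrence, and word rigidity alone does not pin down $j$. In short, the heart of the lemma is asserted as a plan, not proved.

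For comparison, the paper closes exactly this gap by a different and more elementary device: an induction on the width (number of monomials) of the factors. Writing $g_i=\alpha_iM_i+h_i$ and $g_i'=\alpha_i'M_i'+h_i'$ with $M_i,M_i'$ the shortlex-leading monomials, the leading monomial of $f$ gives $M_1a^jM_2=M_1'a^jM_2'$, hence $M_1=M_1'$, $M_2=M_2'$ by the word case; then the identity is split into two separate identities according to whether a monomial has $M_1$ as a prefix (no monomial of $h_1$, hence of $h_1a^j(\alpha_2M_2+h_2)$, can have the leading monomial $M_1$ as a prefix), yielding $\alpha_1M_1a^jh_2=\alpha_1'M_1a^jh_2'$ and an equation of smaller width to which the induction hypothesis applies. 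If you want to keep your $a$-power grading instead, you must actually carry out the top-term analysis in your proposed order and, in particular, handle the failure of maximality under passage to leading monomials; until that is done the proof of part 1), and with it parts 3) and 4), is incomplete.
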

\begin{proof}
We will first show 1).  Suppose we have a maximal occurrence $$f = g_1a^{j}g_2 = g_1^\prime a^{j}g_2^\prime,$$ as in 1). If $g_i = \alpha_iM_i, g_i' = \alpha_i'M_i'$, where $\alpha_i, \alpha_i' \in K$ and $M_i, M_i'$  are monomials, then $M_1a^jM_2 = M_1'a^jM_2'$ and  the statement 1) is true, because it is true  in the  free semigroup generated by $X$.  In the general case,  we use induction on the width (the number of monomials which occur with non-zero coefficients) of the polynomials  $g_i$ and $g_i'$ (i=1,2). Denote by $M_i$ and $M_i'$ the leading monomials in the shortlex order of the polynomials $g_i$ and $g_i'$, correspondingly (i=1,2),  so $g_i = \alpha_iM_i +h_i$ and $g_i' = \alpha_i'M_i' +h_i'$,  where  $\alpha_i, \alpha_i' \in K$  and the polynomials $h_i$ and $h_i'$  have  smaller width than $g_i$ and $g_i'$. Now, the equality $g_1a^{j}g_2 = g_1^\prime a^{j}g_2^\prime$ becomes 
\begin{equation} \label{eq:long}
(\alpha_1M_1 +h_1)a^j(\alpha_2M_2+h_2) = (\alpha_1'M_1' +h_1')a^j(\alpha_2'M_2'+h_2').
\end{equation}
 
Notice, that $M_1a^jM_2$ and $M_1'a^jM_2'$ are, correspondingly,  the leading monomials in the left-hand  and the right-hand sides of the  equality  above, so $\alpha_1\alpha_2M_1a^jM_2 =\alpha_1'\alpha_2' M_1'a^jM_2'$ which implies, as we mentioned above,  that $M_1 = M_1', M_2 = M_2'$ and $\alpha_1\alpha_2 = \alpha_1'\alpha_2'$.
Therefore, the equation (\ref{eq:long}) can be rewritten as 
$$
\alpha_1M_1a^jh_2  + h_1a^j(\alpha_2M_2+h_2) = \alpha_1'M_1a^jh_2'  +h_1'a^j(\alpha_2'M_2+h_2').
$$
Note, that $M_1$ does not occur as a prefix in any of the monomials in $h_1$, hence it does not occur as a prefix in any of the monomials in $h_1a^j(\alpha_2M_2+h_2)$. Similarly, $M_1$ does not occur in $h_1'a^j(\alpha_2'M_2+h_2')$. It follows that all monomials in $\alpha_1M_1a^jh_2$ as well as in $\alpha_1'M_1a^jh_2'$, and only them, have $M_1$ as a prefix. Hence
 $$
 \alpha_1M_1a^jh_2 = \alpha_1'M_1a^jh_2', \ \ \ h_1a^j(\alpha_2M_2+h_2) = h_1'a^j(\alpha_2'M_2+h_2').
 $$
The first equality implies that $\alpha_1h_2 = \alpha_1'h_2'$. By induction  the second one implies that 
$$
h_1 = \beta h_1', \ \ \ \alpha_2M_2+h_2 = \beta^{-1}(\alpha_2'M_2+h_2'),
$$
so $g_2 = \beta^{-1}g_2'$, therefore $g_1 = \beta g_1'$, as claimed.

  2) holds by the conditions on $f_1,\ldots, f_{k+1}$ and 3) directly follows from 1).

\end{proof}

\begin{lemma} \label{co:main-decomp}
For any $f_1, \ldots, f_{s+1} \in \MA$ there is $m \in \N$ such that $a = a_m$ satisfies the premises of Lemma \ref{le:main-decomp}.
\end{lemma}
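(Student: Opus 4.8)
The plan is to choose $m$ large enough and then verify, one by one, the three conditions in the premises of Lemma~\ref{le:main-decomp} for $a = a_m$. Recall from~(\ref{eq:a}) that $a_m = B_1B_2\cdots B_m$ with $B_i = x_1x_2^{\,i}$, a monomial in $\MM_X$ of length $m + \frac{m(m+1)}{2}$. The easy requirement is that $a_m$ not occur as a subword of any monomial appearing in $f_1,\ldots,f_{s+1}$: these are finitely many polynomials, so only finitely many monomials occur in them with nonzero coefficient, and if $N$ denotes the largest of their lengths then any $m$ with $|a_m| > N$ works, since a word of length exceeding $N$ cannot be a subword of a word of length at most $N$.

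For the remaining two requirements --- that $a_m$ is not a proper power and that $a_m \ne a_1a_2a_1$ for non-trivial $a_1,a_2 \in \MM_X$ --- I would first reduce both to a single combinatorial claim: for $m \ge 2$ the word $a_m$ is \emph{unbordered}, meaning that no non-empty word $u$ is at once a proper prefix and a proper suffix of $a_m$. Indeed, if $a_m = w^k$ with $k \ge 2$ then $w^{k-1}$ is such a word $u$, and if $a_m = a_1a_2a_1$ with $a_1,a_2$ non-trivial then $a_1$ is such a $u$ (proper, since $|a_m| = 2|a_1| + |a_2| > |a_1|$).

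To establish the claim I would use the block structure of $a_m$. The letter $x_1$ occurs in $a_m$ exactly at the first position of each block $B_i$, so the successive occurrences of $x_1$, read from the left, are separated by the gaps $2, 3, \ldots, m$ (these being $|B_1|, \ldots, |B_{m-1}|$), and after the last $x_1$ there follow exactly $m$ letters $x_2$. Let $u$ be a non-empty border of $a_m$. As a prefix of $a_m$, $u$ begins with $x_1$; hence its occurrence as a suffix cannot begin inside an $x_2$-run and must begin at the start of some block $B_{k_0}$, so that $u = B_{k_0}B_{k_0+1}\cdots B_m$, which contains $m - k_0 + 1$ occurrences of $x_1$, successive gaps $k_0 + 1, k_0 + 2, \ldots, m$, and $m$ trailing letters $x_2$. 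On the other hand, as a prefix $u$ consists of the first $j$ occurrences of $x_1$, with successive gaps $2, 3, \ldots, j$, followed by some trailing $x_2$'s. Equating the numbers of $x_1$'s gives $j = m - k_0 + 1$, and then equating the two consecutive-integer gap sequences (of common length $j - 1$) forces $k_0 = 1$ whenever $j \ge 2$, i.e. $u = a_m$, contradicting that $u$ is a proper prefix; whereas $j = 1$ forces $k_0 = m$, so that $u = B_m$ has length $m + 1 \ge 3$ as a suffix, yet length at most $2$ as a prefix --- again a contradiction. Hence $a_m$ is unbordered for every $m \ge 2$.

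Combining the two parts, it suffices to pick any $m \ge 2$ with $|a_m| > N$; then $a = a_m$ meets all three premises of Lemma~\ref{le:main-decomp}, which proves the lemma. The only step that is not completely routine is the unborderedness claim, and within it the bookkeeping of the positions of $x_1$ and the comparison of the two gap sequences; the length estimate for the subword condition and the reductions of the other two conditions to unborderedness are immediate.
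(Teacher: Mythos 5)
Your proof is correct: the length bound handles the subword condition, and the reduction of both the proper-power and $a_1a_2a_1$ conditions to unborderedness, verified via the gap sequence between occurrences of $x_1$ in the blocks $x_1x_2^i$, is sound. The paper disposes of this lemma with the words ``by direct inspection,'' so your argument is simply a careful write-up of the verification the authors leave to the reader.
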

\begin{proof}
By direct inspection.
\end{proof}

The following result is an analog of Lemma \ref{le:arbitrary-char} on interpretability of arithmetic in commutative polynomials.

\begin{lemma} \label{le:non-comm-arbitrary-char}
Let $K$ be an arbitrary field and $X$ an arbitrary set with $|X| \geq 2$.  Then the following hold:
\begin{itemize}
\item [1)] For any  non-invertible  element  $P \in \MA_K(X)$ the arithmetic 
$\N = \langle N; +,\cdot, 0,1\rangle$ is interpretable as $\N_P$ (see the beginning of this section) with the parameter $P$  in $\MA_K(X)$ uniformly in  $K$, $X$,  and $P$. 
\item [2)] For any non-invertible  polynomials $P, Q \in \MA_K(X)$ the canonical (unique)  isomorphism of interpretations $\mu_{P,Q}:\N_P \to \N_Q$ is definable in $\MA_K(X)$ uniformly in $K$, $X$,  and $P, Q$.
\item [3)] The arithmetic $\N$ is 0-interpretable in $\MA_K(X)$.
\end{itemize}
\end{lemma}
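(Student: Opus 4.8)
The overall strategy is to mirror the proof of Lemma \ref{le:arbitrary-char}, but to overcome the failure of unique factorization in $\MA_K(X)$ by replacing "has the same irreducible divisors" arguments with the combinatorial tool of big powers developed in Lemmas \ref{le:a-m}, \ref{le:main-decomp}, and \ref{co:main-decomp}. Part 1) is essentially already in place: for a non-invertible $P$, Corollary \ref{co:Bergman} definably produces the centralizer $C_\MA(P) \cong K[t]$ with parameter $P$, then Lemma \ref{le:int-poly-one} carves out $K[P]$ inside $K[t]$, and inside $K[P]$ the element $P$ is irreducible, so Lemma \ref{le:arbitrary-char} (part 1) interprets $\N_P$ on $\{P^m \mid m \in \N\}$ with the operations $P^n \cdot P^m = P^k \leftrightarrow n+m=k$ and $(P^n-1)\mid(P^m-1) \leftrightarrow n\mid m$ — all uniformly in $K, X, P$. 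So the real content is 2).

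For 2), fix two non-invertible $P, Q \in \MA$. The obstacle is that in the commutative case one built the isomorphism $\mu_{a,b}\colon \N_a \to \N_b$ by considering the set $N_{ab} = \{(ab)^n\}$ and intersecting definable sets involving divisibility by $a$, $b$, $ab$; this used that $F[X]$ is a UFD in an essential way, and $\MA_K(X)$ is very far from that. The plan is to use a "common yardstick" built from a monomial big power $a_m$. Concretely: given $P$ and $Q$, by Lemma \ref{co:main-decomp} choose $m$ large enough (this choice itself being expressible, since $\N_{x_2}$ is already interpreted and the relevant conditions on $a_m$ relative to the monomials of $P,Q,P^?,Q^?$ are definable) so that $a = a_m$ is not a proper power, not of the form $a_1a_2a_1$, and does not occur as a subword in the monomials appearing in the relevant polynomials. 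Then for each $n$ the element $P^n a^N Q^n$ (for a suitable fixed offset pattern as in \eqref{eq:decamp-a}) admits a \emph{unique} $a$-decomposition by Lemma \ref{le:main-decomp}, so from such an element one can definably read off both the left factor $P^n$ and the right factor $Q^n$, up to the harmless scalar ambiguity that the equivalence relation $\sim$ already quotients out. This yields a formula $Is_{P,Q}(x,y)$ asserting $\exists N\ (x = P^?,\ y = Q^?,\ \text{and } x\cdot a_m^{(\cdots)}\cdot y \text{ is the unique decomposition witness})$, defining precisely the graph $\{(P^n,Q^n) \mid n\in\N\}$ of $\mu_{P,Q}$; uniqueness of the isomorphism of interpretations of $\N$ is automatic since $\N$ is rigid.

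An alternative, cleaner route for 2) that I would try first: both $\N_P$ and $\N_Q$ are, by construction, the interpretations of Lemma \ref{le:arbitrary-char} inside $C_\MA(P) \cong K[P]$ and $C_\MA(Q) \cong K[Q]$ respectively, and Lemma \ref{le:arbitrary-char}(3) already glues all the $\N_a$, $a \in Irr$, into a single $0$-interpreted copy $\N$ \emph{inside each polynomial ring}; but the two polynomial rings $K[P]$ and $K[Q]$ sit inside the \emph{same} ambient $\MA$, and the point of Lemma \ref{le:inter-iso-3}-style reasoning is that the operations are given by the literally identical formulas \eqref{eq:operations-in-N}, \eqref{eq:operations-in-N-2} in the base variable. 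So it suffices to definably match the base sets $\{P^m\}$ and $\{Q^m\}$ index-by-index, and for that the big-power witness above is exactly what is needed — I do not see a way to avoid some such combinatorial device, since plain divisibility in $\MA$ does not see "exponents." Once 2) is established, 3) follows exactly as in Lemma \ref{le:arbitrary-char}(3): use the uniformly definable isomorphisms $\mu_{P,Q}$ to glue the family $\{\N_P : P \text{ non-invertible}\}$ into one equivalence class, obtaining a parameter-free interpretation of $\langle \N; +, \mid, 0\rangle$, hence of the full arithmetic $\N$ since the latter is definable in the former by Robinson's result \cite{Robinson}.

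\textbf{Main obstacle.} The crux is part 2): producing the definable isomorphism $\mu_{P,Q}$ without unique factorization. The big-power machinery of Lemmas \ref{le:a-m}--\ref{co:main-decomp} is precisely designed for this, so the work is to check that the choice of the yardstick monomial $a_m$ — which must avoid the monomials occurring in $P^n$ and $Q^n$ simultaneously for all $n$ — can be made \emph{uniformly and definably}; here one exploits that membership in $K\MM_X$ is definable (Lemma \ref{le:monomials}), that $\N_{x_2}$ is already interpreted, and that one only needs, for each fixed pair $(P^n, Q^n)$, \emph{some} valid $a_m$, whose existence is guaranteed by Lemma \ref{co:main-decomp}, with the defining formula quantifying existentially over $m$. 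The rest is bookkeeping with the equivalence relation $\sim$ of scalar multiples.
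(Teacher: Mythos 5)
Your part 1) and part 3) are fine and match the paper: the centralizer $C_\MA(P)\cong K[t]$ from Corollary \ref{co:Bergman} plus Lemma \ref{le:arbitrary-char-2} (or, as you route it, Lemma \ref{le:int-poly-one} followed by Lemma \ref{le:arbitrary-char}) gives $\N_P$ uniformly, and the gluing argument for 3) is standard once 2) is in hand. You have also correctly identified that the whole difficulty is 2) and that the big-power monomials $a_m$ of Lemmas \ref{le:a-m}--\ref{co:main-decomp} are the right tool, with the choice of $m$ handled by an existential quantifier.

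However, the specific construction you give for 2) has a genuine gap. A single sandwich $P^na^NQ^n$ does not work: Lemma \ref{le:main-decomp} lets you recover the left and right factors of any element of the form $g_1a^Ng_2$ up to scalars, but it gives you no way to certify that the two recovered factors are powers of $P$ and $Q$ \emph{with the same exponent}. The element $P^ja^NQ^k$ exists and has a unique $a$-decomposition for every pair $(j,k)$, so your formula $Is_{P,Q}(x,y)=\exists N(\dots)$ as described defines all pairs $(P^j,Q^k)$, not the diagonal $\{(P^n,Q^n)\}$. Synchronizing the exponents is exactly the content of the lemma (it is the isomorphism between two a priori unrelated interpretations of $\N$), so it cannot be assumed. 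The missing idea is to encode not just the result but the whole computation history: the paper uses the telescoping element
$$
f=a^ePQa^{e+1}P^2Q^2a^{e+2}P^3Q^3\cdots P^sQ^sa^{e+s},
$$
together with a first-order recursion condition (if $f=g_1a^{e+i}g_2a^{e+i+1}g_3$ with the appropriate maximality constraints, then $g_3=Pg_2Qa^{e+i+2}g_4$ or $g_3\in K$) whose base case is the block $PQ$. Lemma \ref{le:main-decomp} guarantees that the maximal $a$-occurrences, and hence the blocks, are uniquely determined, so the recursion forces the $i$-th block to be $\gamma P^iQ^i$ with the exponents of $P$ and $Q$ increasing in lockstep; the final block $u=\gamma P^sQ^s$ then definably ties $P^s\in\N_P$ to $Q^s\in\N_Q$. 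Without some such history-coding device your approach cannot pin down the diagonal, so part 2) as proposed does not go through.
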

\begin{proof}
Fix a non-invertible  $P \in \MA_K(X)$. The centralizer $C_\MA(P)$ is defined in $\MA_K(X)$ uniformly in $K$, $X$,  and $P$. By Theorem  \ref{co:Bergman} the ring $C_\MA(P)$  is isomorphic to the ring of polynomials $K[t]$ in one variable $t$. Note that $P$ is still non-invertible in $C_\MA(P)$. By Lemma \ref{le:arbitrary-char-2} one can interpret the arithmetic in $C_\MA(P)$ as $\N_P$ uniformly in $K$ and $P$.  This proves 1).

To prove 2) fix two non-invertible polynomials $P$ and $Q$ in $\MA_K(X)$ and consider  the interpretations $\N_P$ and $\N_Q$ from 1). One needs to show that the canonical isomorphism $\mu_{P,Q}: \N_P \to \N_Q$ which is defined by the map $P^n  \to Q^n$ is uniformly definable in $K,X,P$ and $Q$.

Fix some particular $e\geq 3$. By Corollary \ref{co:main-decomp}  for any natural positive $s$ there exists $m  = m(s,P,Q) \in \N$ such that $a = a_m$ satisfies the premises of Lemma \ref{le:main-decomp}  when $f_0 = 1, f_i = P^iQ^i, i = 1, \ldots,s, f_{s+1} = 1$. 

Consider an element
\begin{equation}\label{eq:main-decomp-PQ}
f =f_s =  a^ePQa^{e+1}P^2Q^2a^{e+2}P^3Q^3 \ldots  P^sQ^sa^{e+s}.
\end{equation}
This $f$ satisfies the following conditions:
\begin{itemize}
\item [(1)] $f = a^ePQa^{e+1}g_3$, where $g_3 \neq ag_3^\prime$ for any $g_3^\prime \in \MA$. 
\item [(2)] if $f = g_1a^{e+i}g_2a^{e+i+1}g_3$ where $g_1\neq g_1^\prime a$ for any $g_1^\prime \in \MA$,  $ g_2 \neq ag_2^\prime, g_2  \neq g_2^{\prime \prime} a$ for any $g_2^\prime, g_2^{\prime \prime} \in \MA$,  and $g_3$ as above,  then either $g_3 = Pg_2Qa^{e+i+2}g_4$ for some $g_4 \neq ag_4^\prime$ or $g_3 \in K$.
\item [(3)] for every $i \in \N$ if $f = g_1a^{e+i}g_3$ for some $g_1, g_3$ as above   then such $g_1, g_3$ are uniquely defined (up to a multiplicative constant from $K$). 
\item [(4)] $f = g_1a^{e+s-1}ua^{e+s}$ for some $g_1$ as above and $u$ such that $ u \neq au^\prime, u  \neq u^{\prime \prime} a$ for any $u^\prime, u^{\prime \prime} \in \MA$. In this case $u = \gamma P^sQ^s$ for some $\gamma \in K$. 
\end{itemize}
Indeed, (1) and (4) hold by construction, (3) follows from Lemma \ref{le:main-decomp} (item 1). We claim that (2) also comes from  Lemma \ref{le:main-decomp}, since in this case $a^{e+i}$ and $a^{e+i+1}$ are maximal $a$-occurrences in $f$, so they  are uniquely defined in $f$. Hence by  Lemma \ref{le:main-decomp} (item 1) one has 
$$
g_1a^{e+i}g_2 = \alpha a^ePQa^{e+1}P^2Q^2a^{e+2}P^3Q^3 \ldots  P^iQ^i,  
$$ 
and 
$$
 g_3 = \beta  P^{e+i+2}Q^{e+i+2} \ldots  P^sQ^sa^{e+s}
 $$
for some $\alpha, \beta \in K$, which proves the claim.

Conditions (1) - (4) can be written by a formula $\psi(f,a,m,u,P,Q)$ since  the set of all pairs $\{(\alpha a_m,m) \mid  m \in \N_{x_2}, \alpha \in K\}$ is definable in $\MA$ by some formula $A(a,m)$ in $L_X$ (Lemma \ref{le:a-m}) and the operation $(a_m,i) \to a^i$ is also definable by 1)  from this theorem. Then the formula 
$$\psi_1(f,a,u,P,Q) = \exists m(\psi(f,a,m,u,P,Q) \wedge A(a,m))$$
 defines  elements $f$, $a$, $u$  for which there exists a decomposition (\ref{eq:main-decomp-PQ}) satisfying the conditions (1)-(4), in particular, $u = \gamma P^sQ^s$ for some $s$ and $\gamma \in K$.  

Recall that the sets $\N_P = \{P^m \mid m \in \N \}$ and $\N_Q = \{Q^m \mid m \in \N \}$  are definable in $\MA_K(X)$ with the parameters $P,Q$, as we noticed above.  Now the formula
$$
\psi_0(n_1,n_2,P,Q) = \exists f \exists a \exists u \exists \gamma (\psi_1(f,a,u,P,Q) \wedge u = \gamma n_1n_2 \wedge \gamma \in K \wedge n_1 \in \N_P \wedge n_2 \in \N_Q)
$$
defines $\mu_{P,Q}:\N_P \to \N_Q$.

3) follows from 2) as in Lemma \ref{le:arbitrary-char}.
\end{proof}

 \subsection{Interpretation of $S(K,\N)$ in $\MA_K(X)$}
In this section  $K$ is  an infinite field and $X$ is a set with $|X| \geq 2$.  Our goal is to prove an analogue of Theorem \ref{Bauva} in the non-commutative case. 

As was discussed in Section \ref{se:N-in-A} for any non-invertible polynomial  $P \in \MA$ the one-variable polynomial  ring   $K[P]$ is definable with the parameter $P$ in  $\MA_K(X)$  uniformly in $K$, $X$,  and $P$.   By Theorem  \ref{Bauva}  the model $S(K,\N)$ is interpretable in the ring $K[P]$  uniformly in $K$ and $P$, hence  $S(K,\N) =\langle K, S(K),\MN; t(s,i,a), l(s), \frown\rangle$ is interpretable in $\MA$ with a parameter $P$ uniformly in $K,X$ and $P$. Denote this interpretation by 
 $$
 S(K,\N)_P = \langle K, S(K)_P,\MN_P; t_P(s,i,a), l_P(s), \frown_P\rangle.
 $$
  Recall, that the set  $S(K)_P$ of all finite sequences $s = (\alpha_0, \ldots, \alpha_n)$ in $K$ is interpretable in $K[P]$  as the set of all pairs of the the type $(\Sigma_{i=0}^n \alpha_iP^i,P^n)$, where $\alpha_i \in K, n \in \N$. 
The predicate $t_P(s,i,a)$ and the operations $l_P(s), \frown_P$ are defined  in $\MA$ by some formulas (see Theorem \ref{Bauva}) which we denote by $\phi_t(s,i,a,P), \phi_\ell(s,P)$, and $\phi_\frown(s_1,s_2,P)$, correspondingly.  
Our goal is to show that all the interpretations $S(K,\N)_P$ are definably isomorphic in $\MA$. By an isomorphism $\nu: S(K,\N)_P \to S(K,\N)_Q$ we understand  a pair  of isomorphisms:  the identity isomorphism $K \to K$ and the unique isomorphism $\N_P \to \N_Q$  (the unique isomorphism of interpretations of the arithmetic $\N$ which sends $ 1$ in $\N_P$ to $1$ in $\N_Q$). We refer to this pair of isomorphisms as to the {\em canonical} isomorphism of the interpretations.

 \begin{theorem} \label{th:S(F,N)-non-comm} 
 Let  $K$ be an infinite field and $X$ an arbitrary  set with $|X| \geq 2$. Then the following hold:
 \begin{itemize}
\item [1)]  for a given non-invertible polynomial $P \in \MA_K(X) $ one can interpret $S(K,\N)$ in $\MA_K(X) $  by $S(K,\N)_P$ as  above, using the parameter $P$ uniformly in $K$, $X$, and $P$. 
\item [2)] for any non-invertible polynomials $P, Q \in \MA_K(X) $ the canonical  isomorphism of interpretations $\nu_{P,Q} : S(K,\N)_P \to S(K,\N)_Q$ is definable in $\MA_K(X) $ uniformly in $K$, $X$, $P$, and $Q$.
\item [3)]  $S(K,\N)$ is 0-interpretable  in $\MA_K(X) $  uniformly in $K$ and $X$.
 
 \end{itemize}
 \end{theorem}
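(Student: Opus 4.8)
The plan is to reduce everything to the one-variable commutative results via Bergman's centralizer theorem, and then to carry over the ``definable isomorphism between parameter choices'' argument of Theorem~\ref{Bauva} to the non-commutative setting, using the non-commutative interpretation of arithmetic, Lemma~\ref{le:non-comm-arbitrary-char}, in place of the commutative Lemma~\ref{le:arbitrary-char-2}. Part~1) is exactly the discussion preceding the statement: for a non-invertible $P\in\MA$ the centralizer $C_\MA(P)$ is $0$-definable in $\MA$ with parameter $P$, is isomorphic to $K[t]$ by Corollary~\ref{co:Bergman}, and contains $P$ as a non-invertible element; hence by Lemma~\ref{le:int-poly-one} the subring $K[P]$ is definable with parameter $P$ inside $C_\MA(P)$, and Theorem~\ref{Bauva} (applied with $P$ in the role of the variable of the one-variable ring $K[P]$) interprets $S(K,\N)$ in $K[P]$ uniformly in $K$ and $P$. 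Composing the interpretations yields $S(K,\N)_P$, interpretable in $\MA$ with the single parameter $P$ uniformly in $K$, $X$, and $P$, along with the formulas $\phi_t(s,i,a,P)$, $\phi_\ell(s,P)$, $\phi_\frown(s_1,s_2,P)$ defining $t_P$, $l_P$, $\frown_P$.

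For part~2), I would describe the canonical isomorphism $\nu_{P,Q}$ as the triple: the identity on the first sort, which is interpreted in both $S(K,\N)_P$ and $S(K,\N)_Q$ by the same $P$-free formula cutting out the $0$-definable copy of $K$ inside $\MA$; the canonical isomorphism $\mu_{P,Q}\colon\N_P\to\N_Q$ of the third sorts, definable uniformly in $K$, $X$, $P$, $Q$ by Lemma~\ref{le:non-comm-arbitrary-char}; and the isomorphism $\sigma_{P,Q}\colon S(K)_P\to S(K)_Q$ of the second sorts induced by the identity on $S(K)$. The key point, as in the proof of part~2) of Theorem~\ref{Bauva}, is that a pair $s\in S(K)_P$ and a pair $s'\in S(K)_Q$ represent the same finite sequence over $K$ precisely when $\mu_{P,Q}(l_P(s))=l_Q(s')$ and, for every index $i\in\N_P$ with $j=\mu_{P,Q}(i)$ and every $a,b\in K$, one has $t_P(s,i,a)\wedge t_Q(s',j,b)\rightarrow a=b$. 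Written out with $\phi_t$, $\phi_\ell$ and the formula defining $\mu_{P,Q}$, this gives a formula of the ring language defining $\sigma_{P,Q}$, hence $\nu_{P,Q}$, uniformly in $K$, $X$, $P$, $Q$.

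Part~3) I would obtain by the gluing construction already used in Lemma~\ref{le:arbitrary-char} and Theorem~\ref{Bauva}: the non-invertible polynomials of $\MA$ form a $0$-definable set, the maps $\nu_{P,Q}$ are definable and, by uniqueness of the canonical isomorphisms, satisfy the cocycle identity $\nu_{Q,R}\circ\nu_{P,Q}=\nu_{P,R}$, so identifying $(P,s)$ with $(Q,s')$ whenever $\nu_{P,Q}(s)=s'$ is a $0$-definable equivalence relation; the induced operations and predicates make the quotient a structure isomorphic to $S(K,\N)$ that is $0$-interpretable in $\MA$ uniformly in $K$ and $X$. I expect the genuine content to lie entirely in the inputs already available --- Bergman's theorem, which attaches a copy of $K[t]$ to each non-invertible $P$, and Lemma~\ref{le:non-comm-arbitrary-char}, whose ``big powers'' monomials $a_m$ are what make the arithmetic sorts uniformly identifiable without a distinguished parameter --- so the main thing to be careful about will be uniformity in the arbitrary non-invertible parameter $P$ (no assumption that $P$ is a letter or irreducible) and the bookkeeping of the three sorts in the gluing step, neither of which should cause real trouble.
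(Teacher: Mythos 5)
Your proposal is correct and follows essentially the same route as the paper: part 1) by composing Bergman/Lemma \ref{le:int-poly-one} with Theorem \ref{Bauva}, part 2) by using the uniformly definable graph of $\mu_{P,Q}$ from Lemma \ref{le:non-comm-arbitrary-char} to match lengths and coordinates of the pairs $(\sum_i\alpha_iP^i,P^m)$ and $(\sum_i\alpha_iQ^i,Q^m)$, and part 3) by the standard gluing of the definably isomorphic parametric interpretations.
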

\begin{proof}

1) was shown already at the beginning of this section. 

To prove 2) observe that by Lemma \ref{le:non-comm-arbitrary-char} for any such $P$ and $Q$ there is a formula that defines the set of pairs $R = \{(P^m,Q^m) \mid m \in \N\}$ uniformly in $K$, $X$,  $P$, and $Q$.
Recall that a sequence $s = (\alpha_0, \ldots,  \alpha_m)$ is interpreted in $S(K,\N)_P$ as a pair $s_P = (\sum_{i = 0}^m \alpha_iP^i, P^m) \in S(K)_P$,   and similarly, by the pair $s_Q = (\sum_{i = 0}^m \alpha_iQ^i, Q^m) \in S(K)_Q$ in $S(K,\N)_Q$.  We need to show that the set of pairs $\{(s_P,s_Q) \mid s \in S(K)\}$ is definable in $\MA$ uniformly in $K, X, P, Q$. Since the set of pairs $R$ is definable it follows that the set of pairs  
$(s_P,r_Q)$ such that $s,r \in S(K)$ and $l_P(s_P) = l_Q(r_Q)$ (i.e., the lengths of the tuples $s$ and $r$ are equal)  is  definable in $\MA$ uniformly in $K, X, P, Q$.  Recall that the predicate $t_P(s,i,a)$ defines in $S(K,\N)_P$ the coordinate functions $ s_P  \to a  \in K$, where $a$ is the $i$'s term of  the sequence $s_P$,  uniformly in $K, i, X,P$ (here $0 \leq i \leq l(s)$ and $K$ is viewed as the set of invertible elements in $\MA$). Therefore, there is a formula which states that for any $0 \leq i \leq l(s) = l(r)$ the sequences   $s_P$ and $r_Q$ have the same $i$ terms. Hence   
the set of pairs 
$$
\{ (\sum_{i = 0}^m \alpha_iP^i, \sum_{i = 0}^m \alpha_iQ^i) \mid \alpha_i \in K, m \in \N\}
$$
 is also definable in $\MA_K(X)$ uniformly in $K, X, P$ and $Q$. This gives an isomorphism $ S(K,\N)_P \to S(K,\N)_Q$ definable in $\MA_K(X) $ uniformly in $K$, $X$, $P$, and $Q$, as claimed.

 This completes the proof that the canonical  isomorphism of interpretations $\nu_{P,Q} : S(K,\N)_P \to S(K,\N)_Q$ is  definable in $\MA_K(X) $ uniformly in $K$, $X$, $P$, and $Q$.

3) follows from 2).

\end{proof}

\subsection{Definable isomorphisms of centralizers}

In this section  $K$ is  an infinite field and $X$ is a set with $|X| \geq 2$. 
We say that a non-invertible polynomial $P \in \MA_K(X)$ {\em self-generates} its own centralizer $C_\MA(P)$ if $C_\MA(P) = K[P]$.

\begin{theorem} \label{th:def-centr-iso}
Let  $K$ be an infinite  field and $X$ an arbitrary  set with $|X| \geq 2$. Then the following hold:
\begin{itemize}
\item [1)] The subset of non-invertible polynomials that self-generate their own centralizers in $\MA_K(X)$ is 0-definable in $\MA_K(X)$ uniformly in $K$ and $X$.
\item [2)]  for any non-invertible   polynomials $P, Q \in \MA_K(X)$ that self-generate their own centralizers  there exists a formula $Is(x,y,P,Q)$ which defines the isomorphism $\Sigma_{i = 0}^n \alpha_i P^i \to  \Sigma_{i = 0}^n \alpha_i Q^i $  of the centralizers $ C_\MA(P)$ and $C_\MA(Q)$ uniformly in $K,X,P$ and $Q$.

\item [3)] The one-variable polynomial ring $K[t]$ over $K$ is 0-interpretable in $\MA_K(X)$ via all proper   centralizers   in $\MA_K(X)$. 
\end{itemize}
\end{theorem}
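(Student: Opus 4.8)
\emph{Proof plan.} All three parts are assembled from Bergman's theorem (Theorem~\ref{th:Bergman}, Corollary~\ref{co:Bergman}) and the interpretation machinery of Theorem~\ref{th:S(F,N)-non-comm} and Lemma~\ref{le:non-comm-arbitrary-char}. For part~1), recall that for non-invertible $P$ the centralizer $C_\MA(P)$ is $0$-definable in $\MA_K(X)$ with parameter $P$ and is isomorphic to $K[t]$, that inside $C_\MA(P)$ the set $K$ is exactly the group of units together with $0$, and that $P$ is non-invertible there. The plan is to express the condition $C_\MA(P)=K[P]$ by relativizing to $C_\MA(P)$ the formula of Lemma~\ref{le:int-poly-one} that cuts out $K[P]$ inside a one-variable polynomial ring, namely $\psi(Q,P)=\forall\alpha\in K\,\exists\beta\in K\,(P-\alpha\mid Q-\beta)$, with the quantifiers, the predicate $\mid$, and the set $K$ all read inside $C_\MA(P)$. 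Then "$P$ self-generates its centralizer" is equivalent to "$P$ is non-invertible and $\forall y\,(y\in C_\MA(P)\rightarrow\psi^{C_\MA(P)}(y,P))$", a first-order formula in the single free variable $P$, the same for every $K$ and $X$; this yields 1).

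For part~2), fix $P,Q$ that self-generate, so $C_\MA(P)=K[P]$ and $C_\MA(Q)=K[Q]$, and recall from Theorem~\ref{th:S(F,N)-non-comm}(1) the interpretation $S(K,\N)_P$ of $S(K,\N)$ in $\MA_K(X)$ with parameter $P$, in which a finite sequence $(\alpha_0,\dots,\alpha_n)$ over $K$ is coded by the pair $\bigl(\sum_{i=0}^n\alpha_iP^i,\,P^n\bigr)$ whose first component is precisely the element $\sum_i\alpha_iP^i$ of $K[P]=C_\MA(P)$. By Theorem~\ref{th:S(F,N)-non-comm}(2) the canonical isomorphism $\nu_{P,Q}\colon S(K,\N)_P\to S(K,\N)_Q$ is definable in $\MA_K(X)$ uniformly in $K,X,P,Q$, and on the $S(K)$-sort it is the identity on sequences, hence sends $\bigl(\sum\alpha_iP^i,P^n\bigr)$ to $\bigl(\sum\alpha_iQ^i,Q^n\bigr)$. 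Therefore the formula
\[
Is(x,y,P,Q)\ :\quad x\in C_\MA(P)\ \wedge\ y\in C_\MA(Q)\ \wedge\ \exists n\bigl(\,(x,P^n)\in S(K)_P\ \wedge\ (y,Q^n)\in S(K)_Q\ \wedge\ \nu_{P,Q}\bigl((x,P^n)\bigr)=(y,Q^n)\bigr)
\]
(the powers being taken in the interpretations $\N_P,\N_Q$ of Lemma~\ref{le:non-comm-arbitrary-char}) defines in $\MA_K(X)$ a total map $C_\MA(P)\to C_\MA(Q)$ — total because every element of $K[P]$ admits such a coding — which is, as an abstract map, the ring isomorphism $K[P]\to K[Q]$ sending $P^i\mapsto Q^i$; this is the required uniformly definable isomorphism, giving 2).

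For part~3) I would first reduce every proper centralizer to a self-generated one. If $P$ is non-invertible then $C_\MA(P)\cong K[t]$ by Bergman, so there is $P_0\in C_\MA(P)$ with $C_\MA(P)=K[P_0]$; since $K[P_0]\neq K$ we have $P_0\notin K$, so $P_0$ is non-invertible in $\MA_K(X)$. Now $P\in K[P_0]$ forces $C_\MA(P_0)\subseteq C_\MA(P)$, while $C_\MA(P)=K[P_0]$ centralizes $P_0$, so $C_\MA(P)\subseteq C_\MA(P_0)$; hence $C_\MA(P_0)=C_\MA(P)=K[P_0]$, i.e. $P_0$ self-generates. Thus the proper centralizers of $\MA_K(X)$ are exactly the sets $C_\MA(P_0)$ with $P_0$ ranging over the $0$-definable (by 1)) set of self-generating non-invertible polynomials. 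The isomorphisms $Is(\cdot,\cdot,P_0,Q_0)$ of 2) form a coherent family, $Is_{Q_0,R_0}\circ Is_{P_0,Q_0}=Is_{P_0,R_0}$, because each is $\sum\alpha_iP_0^i\mapsto\sum\alpha_iQ_0^i$. So the quotient of the $0$-definable set $\{(P_0,f):P_0\text{ self-generates},\ f\in C_\MA(P_0)\}$ by the $0$-definable equivalence $(P_0,f)\sim(Q_0,g)\Leftrightarrow Is(f,g,P_0,Q_0)$, equipped with the operations induced from $\MA_K(X)$, is a $0$-interpretable structure isomorphic to $K[t]$, exactly by the gluing argument of Lemma~\ref{le:arbitrary-char}(3). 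This is the asserted $0$-interpretation of $K[t]$ via all proper centralizers, uniform in $K$ and $X$.

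\emph{Main obstacle.} The genuinely nontrivial content is already contained in Theorem~\ref{th:S(F,N)-non-comm} and Bergman's theorem; the remaining difficulty is entirely one of \emph{uniformity and coherence}: one must verify that the pair-coding of $C_\MA(P)$ coming from $S(K,\N)_P$ has the element itself as first coordinate (so that $\nu_{P,Q}$ induces the centralizer isomorphism on the nose rather than up to some rescaling), and that the family $\{Is_{P_0,Q_0}\}$ is transitive, so that the equivalence relation in part~3) is genuine and the gluing is a bona fide interpretation.
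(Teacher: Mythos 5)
Your proposal is correct and follows essentially the same route as the paper: part 1) by relativizing the formula of Lemma~\ref{le:int-poly-one} to the centralizer, part 2) by extracting the graph $\{(\sum\alpha_iP^i,\sum\alpha_iQ^i)\}$ from the definable isomorphism of the interpretations $S(K,\N)_P$ and $S(K,\N)_Q$ of Theorem~\ref{th:S(F,N)-non-comm}, and part 3) by the gluing argument of Lemma~\ref{le:arbitrary-char}. Your explicit verification that every proper centralizer equals $C_\MA(P_0)$ for a self-generating $P_0$ (and that the maps $Is_{P_0,Q_0}$ are coherent) supplies detail the paper only asserts, but it is the same argument.
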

\begin{proof}

We claim that the set of  non-invertible polynomial $P  \in \MA_K(X)$ that self-generate their own centralizers  is 0-definable in $\MA_K(X)$.  Indeed, the polynomial subring  $K[P]$ is definable in the polynomial ring $C_\MA(P)$ by Lemma \ref{le:int-poly-one}. So one can write a formula $\Delta(P)$ that  states that $C_\MA(P) = K[P]$ uniformly in $K,X$ and $P$. This proves 1).

To see  2) let $P$ and $Q$ be non-invertible polynomials in $\MA$ that self-generate their own centralizers.  In the proof of Theorem \ref{th:S(F,N)-non-comm}  we showed that the set of  pairs 
$$
\{ (\sum_{i = 0}^n \alpha_iP^i, \sum_{i = 0}^n \alpha_iQ^i) \mid \alpha_i \in K, n \in \N\}
$$
is definable in $\MA_K(X)$ uniformly in $K, X, P$ and $Q$. But this is precisely the graph of an  isomorphism $C_\MA(P) \to C_\MA(Q)$, as claimed. 
 
 3) follows from 2) as was mentioned above (see the corresponding argument in the proof of Lemma \ref{le:arbitrary-char}). Indeed, it suffices to notice that every proper centralizer $C_{\MA_K(X)}(y)$ in $\MA_K(X)$ is the centralizer of some non-invertible polynomial $P$ in $\MA_K(X)$ that self-generate this centralizer. This condition on $y$ and $P$ can be written by a formula uniformly in $K,X,y,P$.
\end{proof}

Now we are ready to prove the following  result which is important  for our study of model theory of free associative algebras.

\begin{theorem} \label{th:centralziers}
 There exists a sentence $Isom$ of the language of ring theory $L$  such that:
\begin{itemize}
\item [1)] $\MA_K(X) \models Isom$ for any  infinite  field $K$ and any   set $X$ with $|X| \geq 2$.
\item [2)] for any unitary ring $A$ if $A \models Isom$ then all proper centralizers of $A$ of the type $C_A(P)$ where $P \in A$  are isomorphic.
\end{itemize}
\end{theorem}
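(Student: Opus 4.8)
The plan is to fold the three parts of Theorem~\ref{th:def-centr-iso} into a single first-order sentence. Let $Is(x,y,u,v)$ be the $L$-formula produced in Theorem~\ref{th:def-centr-iso}(2), and for a single variable $u$ write $C(u)$ for the $L$-definable set $\{x\mid xu=ux\}$, which in any ring $A$ is a subring containing $1$. Write $prop(u)$ for the formula $\exists z\,(zu\neq uz)$, which holds precisely when $C_A(u)$ is a proper centralizer, and abbreviate $\forall x\,(xu=ux\leftrightarrow xv=vx)$ as $C(u)=C(v)$. Next I would let $Iso(u,v)$ be the (routine) conjunction of $L$-formulas asserting that the binary relation defined by $Is(\cdot,\cdot,u,v)$ is the graph of a ring isomorphism from $C(u)$ onto $C(v)$: functionality and injectivity on $C(u)$, surjectivity onto $C(v)$, compatibility with $+$ and $\cdot$, and $Is(1,1,u,v)$. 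Finally I would take
\[
Isom\;:=\;\forall y_1\,\forall y_2\Big(prop(y_1)\wedge prop(y_2)\;\longrightarrow\;\exists P_1\,\exists P_2\big[\,C(P_1)=C(y_1)\wedge C(P_2)=C(y_2)\wedge Iso(P_1,P_2)\,\big]\Big).
\]

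For part~(1) I would fix an infinite field $K$ and a set $X$ with $|X|\geq 2$, and take non-central $y_1,y_2\in\MA_K(X)$. By (the proof of) Theorem~\ref{th:def-centr-iso}(3), for each $i$ there is a non-invertible polynomial $P_i$ that self-generates its own centralizer and satisfies $C_{\MA}(P_i)=C_{\MA}(y_i)$. By Theorem~\ref{th:def-centr-iso}(2) the relation defined by $Is(\cdot,\cdot,P_1,P_2)$ is exactly the graph of an isomorphism $C_{\MA}(P_1)\to C_{\MA}(P_2)$, so $Iso(P_1,P_2)$ holds and $P_1,P_2$ witness the existential quantifier in $Isom$. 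Hence $\MA_K(X)\models Isom$.

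For part~(2) I would let $A$ be a unital ring with $A\models Isom$ and let $C_A(P),C_A(Q)$ be two proper centralizers of $A$. Instantiating $Isom$ with $y_1:=P$ and $y_2:=Q$ produces $P_1,P_2\in A$ with $C_A(P_1)=C_A(P)$, $C_A(P_2)=C_A(Q)$, and $Iso(P_1,P_2)$; the last clause says precisely that $Is(\cdot,\cdot,P_1,P_2)$ defines a ring isomorphism from $C_A(P_1)$ onto $C_A(P_2)$. Therefore $C_A(P)=C_A(P_1)\cong C_A(P_2)=C_A(Q)$, and by transitivity all proper centralizers of the form $C_A(R)$ are pairwise isomorphic.

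The only genuinely delicate step is writing $Iso(u,v)$ so that it faithfully expresses ``$Is$ defines a ring isomorphism between the two definable centralizers'' using nothing but the language $L$; this is purely mechanical. Note that part~(2) requires no elementary-equivalence or transfer argument: the formula $Iso(P_1,P_2)$ literally certifies an isomorphism in whichever structure it is evaluated, so essentially all the mathematical content is already carried by Theorem~\ref{th:def-centr-iso}, which supplies definability of the self-generating polynomials, definability of the canonical centralizer isomorphisms, and the fact that each proper centralizer of $\MA_K(X)$ is generated by such a polynomial.
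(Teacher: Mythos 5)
Your proposal is correct and follows essentially the same route as the paper: both build $Isom$ from the formulas $\Delta$ and $Is(x,y,P,Q)$ of Theorem~\ref{th:def-centr-iso}, assert that every proper centralizer coincides with the centralizer of a suitable (self-generating) element, and observe that the clause stating ``$Is$ defines a ring isomorphism'' is itself first-order and hence certifies the isomorphism in any unital ring satisfying $Isom$, with no transfer argument needed. The only cosmetic difference is that you fold the self-generation condition into the existential witnesses rather than quoting $\Delta$ explicitly, which changes nothing of substance.
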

\begin{proof}
Let $\Delta(x)$ be a formula from Theorem \ref{th:def-centr-iso} item 1) which defines in $\MA_K(X)$ the set $\mathcal C$ of all non-invertible polynomials $P  \in \MA_K(X)$ that self-generate their own centralizers. Consider the  following conditions:
\begin{itemize}
\item for any element $x$ such that $C_A(x) \neq A$ there exists $P$ such that $\Delta(P)$ holds and $C_A(P) = C_A(x)$.
\item for any elements $P, Q$ which both satisfy the formula $\Delta(x)$ the formula $Is(x,y,P,Q)$ defines a map $x \to y$ which is an isomorphism of the centralizers $C_A(P)$ and $C_A(Q)$.
\end{itemize}
Note that the conditions above can be written by a sentence $Isom$ in the ring language $L$ in a such a way that $Isom$ satisfies the conditions 1)-2) from the conclusion of the theorem. Indeed, the formulas $\Delta(x)$ and $Is(x,y,P,Q)$ are given in Theorem \ref{th:def-centr-iso} and they hold in such $\MA_K(X)$. The centralizers of the type  $C_A(x)$  can be described by formulas with parameters $x$ (the ring $A$ is not involved, of course). To write all the other conditions is a straightforward exercise.  This proves the theorem.

\end{proof}


\subsection{Definability of bases in $\MA_K(X)$}

We continue to use notation from the previous sections. In particular, below   $K$ is  an infinite field,  $X = \{x_1, \ldots,x_n\}$ is a finite set with $n = |X|$, $\MA = \MA_K(X)$.

In  Lemma \ref{le:list-superstructure} we described how one can   0-interpret  the superstructure $S(\N,\N)  = \langle \N, S(\N),\MN; t(s,i,a), l(s), \frown, \in \rangle$  in $\N$. Fix a particular such interpretation and denote it by  $S(\N,\N)^\ast$. This allows us to assume that  the tuples from $S(\N)$ and operations and predicates from $S(\N,\N)$ are 0-interpretable  in $\N$. Furthermore, as was mentioned right after  Lemma \ref{le:list-superstructure} in the  interpretation $S(\N,\N)^\ast$ the set of tuples $S(\N)$ is  interpreted by a 0-definable subset of $\N$ (by the set of the codes of these tuples with respect to some fixed efficient enumeration of the tuples). 
 
 Consider the following interpretation of the free monoid $\MM_X$  in $S(\N,\N)$.  A monomial $M=x_{i_1}\ldots x_{i_m}\in\MM_X$ can be uniquely  represented by a tuple of natural numbers $ t_M=(i_1,\ldots, i_m)$.   Denote by $T$ the set of all tuples $t= (t_1, \ldots,t_m) \in S(\N)$  such that for any $i$ one has $1\leq t_i \leq n$.  Conversely,  with  any tuple $t= (t_1, \ldots,t_m) \in T$  one can associate a  monomial $M_t = x_{t_1} \ldots x_{t_m} \in \MM_X$. The multiplication in $\MM_X$  corresponds to  concatenation of  tuples  in  $T$, which is 0-definable in $S(\N,\N)$. The construction above gives a  0-interpretation of $\MM_X$  in $S(\N,\N)$. Combining this interpretation with the interpretation  $S(\N,\N)^\ast$ of  $S(\N,\N)$ in $\N$ one gets a 0-interpretation of $\MM_X$ in $\N$ which we denote by $\MM_X^*$. Observe that the map $M \to t_M$ gives rise to an isomorphism $\MM_X \to \MM_X^*$,   termed standard.
 
 By Lemma \ref{le:non-comm-arbitrary-char} for any  non-invertible  element  $P \in \MA_K(X)$ the arithmetic 
$\N$ is interpretable in $\MA$  as the structure $\N_P$  with the parameter $P$  uniformly in  $K$, $X$,  and $P$.  Combing this interpretation with the interpretation  $S(\N,\N)^\ast$ of  $S(\N,\N)$ in $\N$ one gets an interpretation of $S(\N,\N)$  in $\MA_K(X)$ with the parameter $P$  uniformly in  $K$, $X$,  and $P$, we denote this interpretation by ${S(\N,\N)_P}^\ast$.
Similarly,   the interpretation $\MM_X^*$ of $\MM_X$ in   $S(\N,\N)$ and the interpretation ${S(\N,\N)_P}^\ast$ gives rise to an interpretation of the monoid $\MM_X$ in $\MA_K(X)$ with the parameter $P$  uniformly in  $K$, $X$,  and $P$. We denote this interpretation by $\MM_{X,P}^*$. 
Composition of the standard isomorphism $\MM_X \to \MM_X^*$ above and the (unique) isomorphism of $\N$ and $\N_P$ one gets an isomorphism $\MM_X \to \MM_{X,P}^*$ which we again call the standard one. The set of tuples $T$ in $S(\N,\N)$ on which we based the interpretation $\MM_X^*$ is mapped by the standard isomorphism onto some subset of $\N_P$ which we denote by $T_P$.

 Recall that in Lemma \ref{le:monomials} we showed that the submonoid $K\MM_X = \{\alpha M \mid \alpha \in K, M \in X^\ast\} \leq \MA$ is definable  with parameters $X$ in $\MA$, while the free monoid $\MM_X$ is interpretable in $\MA$ as $K\MM_X/K$ with parameters $X$ uniformly in $K$ and $|X|$. 
 
 Clearly the structure  $\MM_{X,P}^*$ is very different from $K\MM_X/K$, though isomorphic. The next result shows that they are definably isomorphic inside $\MA$ in the language $L_X$.
 

\begin{lemma} \label{le:M-X-c}
Let $c \in X$. In the notation above the following hold:
\begin{itemize}
\item [1)] There is a formula $\Phi(y,z,X,c)$ of the language $L$ with parameters $X$ and $c$ such that 
for any elements $t,u \in \MA$ the formula $\Phi(t,u,X,c)$ holds in $\MA$ if and only if $t \in T_c$ and $u = \alpha M_t$ for some $\alpha \in K$.   
\item [2)] The standard  isomorphism $ \MM_{X,c}^* \to K\MM_X /K $ defined by the map $t \to KM_t/K$ is definable in $\MA_K(X)$ with parameters $X$ uniformly in $K$, $|X|$, and $c$.
\end{itemize}
\end{lemma}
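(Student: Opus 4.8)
The plan is to reduce the statement to item~1), i.e.\ to the construction of a formula $\Phi(y,z,X,c)$ expressing the ``evaluation'' relation between a code $t\in T_c$ and the monomial $M_t$ it names, and then to deduce 2) from 1). Every ingredient other than this evaluation step is cheap. Recall that a code $t\in T_c\subseteq\N_c$ encodes an index--tuple $\tau=(i_1,\ldots,i_k)$ with $1\le i_j\le n$; by Lemma~\ref{le:non-comm-arbitrary-char} together with the interpretation ${S(\N,\N)_c}^\ast$ recalled at the beginning of the section, the subset $T_c$, the length $t\mapsto\ell(t)\in\N_c$, the order on $\N_c$, and the coordinate predicate ``the $j$-th entry of the tuple coded by $t$ equals $i$'' are all definable in $\MA$ with parameter $c$, uniformly in $K,X,c$; and since $n=|X|$ is fixed and the numbers $1,\ldots,n$ are definable elements of $\N_c$, the relation ``$x_{t_j}=x_\ell$'' is a finite case split over $\ell\in\{1,\ldots,n\}$. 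By Lemma~\ref{le:monomials} the submonoid $K\MM_X\le\MA$ is definable with parameters $X$, as is the equivalence $\sim$ (equality up to a nonzero scalar). Hence, once $\Phi$ is available, the graph of the standard isomorphism $\MM_{X,c}^\ast\to K\MM_X/K$ is simply $\{(t,[u]_\sim):\Phi(t,u,X,c)\}$; this is bijective because $\Phi$ pins $u$ down up to a scalar and $M\mapsto t_M$ is a bijection $\MM_X\to\MM_X^\ast$, and it is a monoid homomorphism because concatenation of codes in $T_c$ corresponds to concatenation of index--tuples, hence to multiplication of the corresponding monomials, so it is indeed the standard isomorphism. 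This gives 2) from 1).

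To build $\Phi(t,u,X,c)$ --- which should assert ``$t\in T_c$, $u\in K\MM_X$, and $u\sim M_t$'' --- I would witness the sequence of partial products $x_{t_1},\,x_{t_1}x_{t_2},\,\ldots,\,x_{t_1}\cdots x_{t_k}=M_t$ inside $\MA$ by the ``big powers'' encoding of Lemma~\ref{le:main-decomp}. Fix $e=3$. For $t\in T_c$ with $k=\ell(t)\ge1$ and $u\in K\MM_X$, the formula asks for a marker monomial $a=a_{m'}$, quantified through the definable set of pairs $\{(\alpha a_{m'},m'):\alpha\in K,\ m'\in\N_{x_2}\}$ of Lemma~\ref{le:a-m}, and for a ``scaffold''
$$
F\;=\;a^{e}\,w_1\,a^{e+1}\,w_2\,a^{e+2}\cdots a^{e+k-1}\,w_k\,a^{e+k},
$$
where the powers $a^i$ are produced from $(a_{m'},i)$ by the definable exponentiation of the interpretation $\N_c$, and in the notation of Lemma~\ref{le:main-decomp} one has $f_0=f_{k+1}=1$ and $f_i=w_i$. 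Such an $m'$ exists --- take $a_{m'}$ longer than $u$, so that it is not a subword of $u$ and hence of none of its prefixes $w_1,\ldots,w_k$, the other premises of Lemma~\ref{le:main-decomp} holding automatically by Corollary~\ref{co:main-decomp} --- so the existential quantifiers are harmless. The key point is that the shape of $F$ is first-order expressible: by Lemma~\ref{le:main-decomp} the maximal occurrences of a power $a^j$ $(j\ge1)$ in $F$ are exactly those at exponents $e,e+1,\ldots,e+k$, each appearing once, $F$ begins with $a^e$ and ends with $a^{e+k}$, and for each such occurrence the surrounding factors $g_1\notin\MA a$, $g_2\notin a\MA$ are unique up to a scalar. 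Writing $w_j$ for the gap of $F$ lying after the maximal $a^{e+j-1}$-occurrence and before the maximal $a^{e+j}$-occurrence, the conjunction
$$
w_1\sim x_{t_1},\qquad w_{j+1}\sim w_j\,x_{t_{j+1}}\ \ (1\le j\le k-1),\qquad w_k\sim u
$$
(reading each $x_{t_j}$ through the finite case split above) then becomes a first-order formula: it is a bounded conjunction over $j\in\{1,\ldots,k\}$ in $\N_c$. The case $k=0$ (empty tuple) is covered by requiring $u\in K$. Conjoining ``$t\in T_c$'', ``$u\in K\MM_X$'' and this description yields $\Phi$, and the uniqueness clauses of Lemma~\ref{le:main-decomp} guarantee that $\Phi(t,u,X,c)$ forces $u\sim M_t$, as item~1) requires.

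The main obstacle is precisely the honest construction of the scaffold formula: one must ensure that the first-order description genuinely forces $F$ to have the displayed factorization --- no extra, missing, or out-of-order maximal $a$-occurrences, and gaps containing no copy of $a$ --- and this is exactly what the uniqueness statements of Lemma~\ref{le:main-decomp} supply (every maximal occurrence $a^j$ splits $f$ uniquely up to a scalar, and in a decomposition of the form (\ref{eq:decamp-a}) the factors are unique up to scalars). The remaining ingredients --- choosing a sufficiently long marker, translating ``$j$-th letter'' through the finite alphabet $X$, and the reduction of 2) to 1) indicated above --- are routine. Finally, note that the $a_m$-construction requires $|X|\ge2$; in the remaining case $|X|=1$ one has $\MA_K(X)=K[x_1]$ and the monomials are just the powers of $x_1$, so the statement is immediate by direct inspection.
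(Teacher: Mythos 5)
Your proposal is correct and follows essentially the same route as the paper: the same big-powers scaffold $a^{e}w_1a^{e+1}\cdots w_ka^{e+k}$ whose gaps are the partial products $x_{t_1}\cdots x_{t_j}$, with Lemma \ref{le:main-decomp} supplying the uniqueness of the decomposition, Lemma \ref{le:a-m} the definability of the markers, and Lemma \ref{le:monomials} plus the reduction of 2) to 1) handled exactly as in the paper (the only cosmetic difference being that the paper fixes $a=a_{\ell(t)}$ while you existentially quantify a sufficiently long marker).
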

\begin{proof}
The case $|X| = 1$ was done in Lemma \ref{le:arbitrary-char}. Assume now that $|X| \geq 2$. Without loss of generality we may assume that $c = x_2$.
Below we construct a formula $\Phi(y,z,c)$ of the language $L_X$  such that 
$\MA_K(X) \models \Phi(t,u,c)$ if and only if  $t \in T_c$ and  $u \in KM_t$. 

Notice that the set $T_c$  is 0-definable in $\N_c$ uniformly in $K, |X|$, and $c$.   Notice that the length function $\ell: T_c \to \N_c$ that gives the length of a tuple $t \in T_c$ is 0-definable in $S(\N,\N)_c$, as well as in $\MA_K(X)$ (this time with the parameter  $c$). 
  Hence there is a formula $\phi_1(y,y_1,c)$ in $L$ such that in the notation above $\MA_K(X) \models \phi_1(t,m,c)$ if and only if $t \in T_c$ and $m = \ell(t)$. Similarly, there exists a formula $\phi_2(y,y_2,y_3, c)$ in the language $L$ such that $\MA_K(X) \models \phi_2(t, i,s,c)$ if and only if $t \in T_c, i,s \in \mathbb{N}_c$, $1 \leq i \leq \ell(t)$, and $s$ is the $i$'s component of the tuple $t$.

Now for a tuple $t = (t_1, \ldots,t_m)\in T_c$ and a fixed number $p \geq 3$ define a word $w_t$ as follows, where $a = a_m$ defined in (\ref{eq:a}).  
\begin{equation}\label{eq:w}
w_t =a^px_{t_1}a^{p+1}x_{t_1}x_{t_2}a^{p+2}\ldots  a^{p+m-1}x_{t_1}\ldots x_{t_m}a^{p+m}.
\end{equation}

The monomial  $w_t$ is completely determined by the  tuple $t$ and the  following conditions:
\begin{itemize}
\item [a)] (head) $w_t = a^px_{t_1}a^{p+1}v$ for some $v \in \MM_X \smallsetminus a\MM_X$ ($v$  does not have $a$ as its prefix);
\item [b)] (tail) $w_t = w_1a^{p+m}$ where $m = \ell(t)$ and $w_1 \in \MM_X \smallsetminus \MM_Xa$ ($w_1$ does not have $a$ as its suffix);
\item [c)] (recursion) for any $i \in \N_c, 0 <  i < m$,  and any $ w_1,w_2,w_3 \in \MM_X$ such that $w_1$ does not have $a$ as its suffix, $w_2$ does not have $a$ neither as its suffix or prefix, and $w_3$ does not have $a$ as its prefix,  if $w=w_1a^{p+i -1}w_2a^{p+i}w_3$ then $w_3 = w_2x_{t_i}a^{p+i+1}v_1$ for some $v_1 \in \MM_X$ which does not have $a$ as its prefix. 
\item [d)] (uniqueness) if $w_t = w_1a^jw_2 = w_1^\prime a^jw_2^\prime$ for some $w_1,w_1^\prime \in \MM_X \smallsetminus \MM_Xa$, $w_2,w_2^\prime \in \MM_X \smallsetminus a\MM_X$ and  $j <m$ then $w_1 = w_1^\prime, 
 w_2 = w_2^\prime$.
\end{itemize}
Indeed, the product (\ref{eq:w}) satisfies the conditions of Lemma \ref{le:main-decomp}, since $a = a_m$ and $m = \ell(t)$  which is the degree of the polynomial $M_t$. Now the required uniqueness follows from  Lemma \ref{le:main-decomp}.  

Now if in the conditions a)-d) we replace $\MM_X$ by $K\MM_X$, and $a$ by any element from $Ka$ then the new conditions, say a') - d'), define not only the element $w_t$ but all the elements of the type $\alpha w_t$, where $\alpha \in K$ and only them. 
As in Lemmas \ref{le:a-m} and \ref{le:non-comm-arbitrary-char}  one can write down the condition a') -d') by formulas of the language $L_X$. Indeed, the only extra required tools  that did not occur in the  arguments in Lemmas \ref{le:a-m} and \ref{le:non-comm-arbitrary-char} are the ones that allow one to write down that  $m= \ell(t)$, to describe the components $t_i$ of $t$, and to write down the conditions on $w_i$ such as $w_i \in KM_X$,$w_i \not  \in aKM_X$,  or $w_i \not \in KM_Xa$. This can be done with the use of the formulas $\phi_1$ and $\phi_2$, and formula $\phi(a,X)$  from the proof of   Lemma \ref{le:monomials}. It follows that there is a formula $\phi_3(y,y_1, y_4, y_5,c)$ in the language $L_X$ such that $\phi_3(t,m,b,w,c)$ holds in $\MA$ on elements $t,m,a,w,c$ if and only if $t \in T_c, m \in \N_c, m = \ell(t), b = \alpha a_m, w = \beta w_t$ for some $\alpha, \beta \in K$. 

Notice, that by construction $w_t = w_1a^{p+m -1}M_ta^{p+m}$ for some $w_1$  such that $w_1 \in \MM_X \smallsetminus \MM_Xa$, and such $w_1$ is unique by the condition  d). So if  elements  $t,m,b, w \in \MA$  satisfy in $\MA$ the formula $\phi_3(t,m,b,w,c)$  then the condition that some element $u \in \MA$ is equal to  $ \gamma M_t$ for some $\gamma \in K$  is equivalent to the condition that $\exists w_1 (w= w_1a^{p+m -1}ua^{p+m})$, which can be described by a formula, say $\phi_4(y,y_1, y_4, y_5,z,c)$, in the language $L_X$. it follows that the formula 
$$
\phi_5(y,y_1, y_4, y_5,z,c)   = \phi_3(y,y_1, y_4, y_5,c) \wedge \phi_4(y,y_1, y_4, y_5,z,c)
$$
holds in $\MA$ on elements $t,m,b, w, u$ if and only if $t \in T_c, m \in \N_c, m = \ell(t), b = \alpha a_m, w = \beta w_t, u = \gamma M_t$ for some $\alpha, \beta, \gamma  \in K$.

 Hence the  formula 
 
 $$
 \Phi(y,z,c) = \exists y_1 \exists y_4 \exists y_5 \phi_5(y,y_1, y_4, y_5,z,c) 
 $$
defines all the pairs $(t,\gamma M_t)$ for $t \in T_c$, and $\gamma \in K$, as required in 1).

This formula defines an isomorphism of interpretations $ \MM_{X,c} \to \MM_X$ given  by the map $t \to Kt_M/K$, which proves 2).

\end{proof}

Building on the interpretation  $\MM_X^*$ of $\MM_X$ in $S(\N,\N)$  we interpret $\MA_K(X)$ in $S(K, \N)$ as follows. Let $\MM_{X,c}$ be the interpretation of $\MM_X$ in $S(\N,\N)^\ast$ as above.   For an element  $f=\sum_{i= 1}^s \alpha _iM_i \in\MA_K(X)$, where $\alpha_i \in K, M_i \in \MM_X$,  we associate a pair  $q_f = (\overline\alpha, \overline t)$, where $\overline\alpha = (\alpha_1, \ldots, \alpha_s)$, $\overline t = (t_{M_1}, \ldots,t_{M_s})$.  This gives interpretation, say $\MA_K(X)^*$, of $\MA_K(X)$ in $S(K,\N)$, hence by transitivity of interpretations, interpretation  $\MA_K(X)^{**}$ in $\MA_K(X)$.

To proceed we need a notation. The set $S(T)$ of all tuples of elements (which are also tuples) of $T$. The interpretation of $S(\N,\N)$ in $\N$ above, and of $\N$ in $\N_P$ gives the corresponding image $S(T)_P$ of $T$ in $\N_P$. For a tuple $s = (t_1, \ldots,t_e) \in S(T)$, as well as for $s \in S(T)_P$,  we introduce the following set of polynomials in  $\MA$:
$$
B(s,K) =  KM_{t_1} + \ldots + KM_{t_m}  = \{ \alpha_1M_{t_1} + \ldots \alpha_m M_{t_e} \mid \alpha_i \in K \}.
$$
Notice, that whether $s \in S(T)$ or $s \in S(T)_P$ the set $B(s,K)$ is the same. Also the set $B(s,K)$  depends only on the set $\hat s$ formed by all the coordinates of the tuple $s$, i.e., if $s_1, s_2 \in S(T)$ are such that $\hat{s_1} = \hat{s_2}$ then $B(s_1,K) = B(s_2,K)$.

\begin{lemma} \label{le:s-and-f} 
Let $\MA = \MA_K(X)$ and $c \in X$.  There exists a formula $\Psi(y,z,c,X)$ in  the ring language $L$  with parameters $c$ and $X$ such that for any $s, f \in \MA$ 
$$
\MA \models \Psi(s,f,c,X) \Longleftrightarrow s \in S(T)_c  \ and \  f  \in B(s,K)
$$
\end{lemma}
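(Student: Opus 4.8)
The plan is to re-run the ``big powers'' construction of Lemma~\ref{le:M-X-c}, this time encoding in a single polynomial $G$ not a chain of prefixes of one monomial but the chain of \emph{partial sums} of the linear combination $\sum_i\alpha_iM_{t_i}$ that one wants $f$ to equal. Membership $f\in B(s,K)$ is, after all, just ``$f$ is a $K$-linear combination of the monomials listed by $s$'', and the only issue is that the length $e=\ell(s)$ varies, so a single bounded-length $\exists\alpha_1\dots\exists\alpha_e$ is not available.

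First I would collect the ingredients already known to be definable in $\MA$ with the parameters $c,X$. Composing the $0$-interpretations of $S(\N,\N)$ and $S^2(\N,\N)$ in $\N$ (Lemmas~\ref{le:list-superstructure}, \ref{le:list-superstructure2}) with the interpretation $\N_c$ of $\N$ in $\MA$ (Lemma~\ref{le:non-comm-arbitrary-char}), the set $S(T)_c$ becomes definable, together with the length function $s\mapsto\ell(s)\in\N_c$ and the coordinate maps $s\mapsto t_i\in T_c$; by Theorem~\ref{th:S(F,N)-non-comm} the sort $S(K)_c$, with its length and coordinate functions, is definable; by Lemma~\ref{le:a-m} the set of pairs $(\alpha a_m,m)$ is definable (using two distinct letters $x_1\ne x_2$ of $X$); and by Lemma~\ref{le:M-X-c} there is a formula $\Phi(y,z,X,c)$ holding exactly when $y=t\in T_c$ and $z\in KM_t$. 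If $|X|=1$ then $\MA=K[x_1]$ is commutative and the statement follows routinely from the interpretation of $S(K,\N)$ in $K[x_1]$ (Theorem~\ref{Bauva}); so assume $|X|\ge 2$.

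The formula $\Psi(s,f,c,X)$ will assert: either $\ell(s)=0$ and $f=0$; or $s\in S(T)_c$ with $e:=\ell(s)\ge 1$ and there exist $m,N\in\N_c$ larger than $\ell(t_i)$ for every coordinate $t_i$ of $s$ (and $m$ large enough --- by inspection, cf.\ Lemma~\ref{co:main-decomp} --- that $a_m$ satisfies the hypotheses of Lemma~\ref{le:main-decomp}), an element $a$ with $(a,m)$ in the set of Lemma~\ref{le:a-m}, a tuple $\bar\alpha=(\alpha_1,\dots,\alpha_e)\in S(K)_c$ of length $e$, and a polynomial $G\in\MA$, such that, fixing $p\ge 3$ and writing $h_i$ for the $i$-th block, $G=a^ph_1a^{p+1}h_2a^{p+2}\cdots a^{p+e-1}h_ea^{p+e}$ is a valid $a$-decomposition in the sense of Lemma~\ref{le:main-decomp} --- head, tail, recursion and uniqueness clauses written exactly as in items (a)--(d) of the proof of Lemma~\ref{le:M-X-c}, with the multiplicative step $x_{t_1}\!\cdots x_{t_i}\mapsto x_{t_1}\!\cdots x_{t_{i+1}}$ replaced by the additive step $h_i\mapsto h_{i+1}=h_i+\alpha_{i+1}M_{t_{i+1}}$, the increment $\alpha_{i+1}M_{t_{i+1}}$ being recognised via $\Phi$ applied to the $(i{+}1)$-st coordinate of $s$ together with $\alpha_{i+1}$ the $(i{+}1)$-st coordinate of $\bar\alpha$ --- and where moreover $h_1-x_1^N\in KM_{t_1}$ (again via $\Phi$) and $h_e-x_1^N=f$. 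The point of shifting every block by the fixed monomial $x_1^N$, with $N>\max_i\ell(t_i)$, is twofold: since then $x_1^N$ is not among the $M_{t_i}$ and $a_m$ is not a subword of it, every block $h_i=x_1^N+\sum_{k\le i}\alpha_kM_{t_k}$ is nonzero (so the product $G$ does not collapse to $0$) and all its monomials have degree $<\deg a_m$ (so Lemma~\ref{le:main-decomp} genuinely applies); and the coefficient $1$ of $x_1^N$ in each $h_i$ removes the a priori up-to-scalar ambiguity of the $a$-decomposition, so the $h_i$, hence the $\alpha_i$, are uniquely read off $G$.

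Correctness is then immediate. If $\MA\models\Psi(s,f,c,X)$ with such witnesses, telescoping $h_1-x_1^N=\alpha_1M_{t_1}$ and $h_{i+1}-h_i=\alpha_{i+1}M_{t_{i+1}}$ gives $f=h_e-x_1^N=\sum_{i=1}^e\alpha_iM_{t_i}\in B(s,K)$; conversely, given $s\in S(T)_c$ and $f\in B(s,K)$, pick any representation $f=\sum_{i=1}^e\alpha_iM_{t_i}$, choose $m,N$ large, set $h_i=x_1^N+\sum_{k\le i}\alpha_kM_{t_k}$ and let $G$ be the corresponding big-powers word; all clauses of $\Psi$ hold. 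Uniformity in $K$, $|X|$ and $c$ is inherited from each ingredient. I expect the principal obstacle to be exactly the one already met in Lemma~\ref{le:M-X-c}: phrasing the recursion/uniqueness clause so that ``the block of $G$ sitting between the $i$-th and $(i{+}1)$-st increasing power of $a$'' is genuinely well defined, and verifying via Lemma~\ref{le:main-decomp}(1),(3) that the intended decomposition of $G$ is forced for every admissible $m$. The only new point over Lemma~\ref{le:M-X-c} is the need to keep the encoded blocks nonzero; the $x_1^N$-shift handles it and, conveniently, also kills the scalar ambiguity. Everything else is a routine re-derivation.
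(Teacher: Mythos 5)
Your proposal is correct and follows essentially the same route as the paper: both encode the chain of partial sums $h_1=\alpha_1M_{t_1}$, $h_{i+1}=h_i+\alpha_{i+1}M_{t_{i+1}}$ as the blocks of a single big-powers word $a^{p}h_1a^{p+1}\cdots h_ea^{p+e}$, pin the decomposition down via Lemma~\ref{le:main-decomp}, recognise the increments with the formula $\Phi$ of Lemma~\ref{le:M-X-c}, and read $f$ off as the last block. The only genuine difference is your $x_1^N$-shift of each block, which the paper does not use: it guarantees the blocks stay nonzero when some $\alpha_i$ (or an initial partial sum) vanishes and eliminates the up-to-scalar ambiguity of Lemma~\ref{le:main-decomp}(1), two degenerate points the paper's version handles only implicitly (the scalar ambiguity being harmless there because $B(s,K)$ is closed under scalar multiplication); your refinement makes the argument slightly more robust at no extra cost.
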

\begin{proof}

Fix a  tuple  $s= (t_1, \ldots,t_e) \in S(T)_c$. 
Let $m= \max\{\ell(t_i) \mid i = 1, \ldots,s\}$ and $a= a_m \in \MM_X$ defined in (\ref{eq:a}). As in Lemma \ref{le:M-X-c} fix a number  $p \geq 3$.

For a polynomial $f = \alpha_1M_{t_1} + \ldots + \alpha_eM_{t_e} \in B(s,K)$ consider the following polynomial 
\begin{equation} \label{eq:p-unique}
\hat f=a^{p+1}h_1a^{p+1}h_2\ldots a^{p+e}h_{s}a^{p+e+1},
\end{equation}
where $h_1=\alpha _1M_1$ and $ h_{i+1}=h_i+\alpha _{i+1}M_{i+1}$ for $ i = 1, \ldots, e-1$. Observe, that by construction  $h_e = f$.

If $e = 1$ then the polynomial $\hat f$ is completely  determined up to a multiplicative factor $\alpha \in K$ by the conditions $\hat f = a^{p+1}h_1a^{p+1}$ and $h_1 \in KM_{t_1}$.

Let $e\geq 2$. The polynomial $\hat f$ is completely determined up to a multiplicative factor $\alpha \in K$ by the following conditions:
\begin{itemize}
\item [i)] $\hat f = a^{p+1}h_1a^2h_2a^3g$, where $g \neq ag_1$ for any $g_1 \in \MA_K(X)$;
\item [ii)] for any $1\leq i \leq e$ if  $\hat f = g_1a^ig_2 = g_1^\prime a^ig_2^\prime$ then $g_1 = g_1^\prime, g_2 = g_2^\prime$ (up to a multiplicative constant from $K$).  (Follows from Lemma \ref{le:main-decomp}.)
\item [iii)] If $\hat f = g_0a^ig_1a^{i+1}g_2a^{i+2}g_3$ and $g_2 \neq ag_4$ for any $g_4  \in \MA_K(X)$,  $g_2-g_1=\alpha _{i+1}M_{i+1}$, then $g_3 = h_{i+2}a^{i+2}g_4$, where $g_4 \neq ag_5$ for any $g_5 \in \MA_K(X)$ , and $h_{i+2} = g_2 +\alpha_{i+2}M_{i+2}$.
\item [iv)] $\hat f = g_5a^{e+1}$, where $g_5 \neq g_6a$ for any $g_6  \in \MA_K(X)$.
\end{itemize}

Indeed, to show that i)-iv) determine $\hat f$ completely up to a multiplicative factor $\alpha \in K$ one needs the uniqueness of the decomposition (\ref{eq:p-unique}), which follows from Lemma \ref{le:main-decomp}.

Now it follows from the argument above that a polynomial $g \in \MA$ has the form $\hat f$ for some $f \in B(s,K)$ if and only if it satisfies the following conditions.

\begin{equation} \label{eq:p-unique-2}
g=a^{p+1}h_1a^{p+1}h_2\ldots a^{p+e}h_{e}a^{p+e+1},
\end{equation}
 where $h_1 \in KM_{t_1}, h_{i+1}=h_i+h_i'$ and $h_i' \in KM_{t_{i+1}}$, and $g$ and $h_i, h_i'$ satisfy the new  conditions  i') - iv') obtained from the conditions i)-iv) by replacing in  iii) the condition $h_{i+2} = g_2 +\alpha_{i+2}M_{i+2}$  with the new one: $h_{i+2} = g_2 +g_2'$ where $g_2' \in KM_{i+2}$, and leaving everything else the same.  
 
 By Lemma \ref{le:M-X-c} there is a formula  $\Phi(y,z,X,c)$ of the language $L$ with parameters $X$ and $c$ such that  for any elements $t,u \in \MA$ the formula $\Phi(t,u,X,c)$ holds in $\MA$ if and only if $t \in T_c$ and $u = \alpha M_t$ for some $\alpha \in K$. This allows one to describe the new conditions  i') - iv')  by a formula 
in the language $L$ with parameters $X$ and $c$. 
Therefore there is a formula $\Psi_1(y,z_1,c,X)$ in the language $L$ with parameters $X$ and $c$ such that for any $s, g \in \MA$ 
$$
\MA \models \Psi(s,g,c,X) \Longleftrightarrow s \in S(T)_c  \ and \  g  = \hat f \ for \ some \ f  \in B(s,K).
$$
Observe also that given an element $ g = \hat f$ for some $f \in B(s,K)$, the element $f$ is completely determined up to a multiplicative factor from $K$ by the conditions that $g =  g'a^{p+e}h_{e}a^{p+e+1}$, $e = \ell(s)$, $g' \not \in \MA a$, $h_e \not \in a\MA$, $h_e \not \in \MA a$, and $h_e = f$. We denote these conditions by  v). All these conditions v) again can be described by a formula, say $\Psi_2(y,z_1,z,c,X)$ in  the language $L$ with parameters $X$ and $c$, so that for any $s, g,f \in \MA$ the formula $\Psi_2(s,g,f,c,X)$ holds in $\MA$ if and only if $s, g,f$ satisfy the condition v).

Clearly, the following formula in  the language $L$ with parameters $X$ and $c$
$$
\Psi(y,z, c,X) = \exists z_1 (\Psi_1(y,z_1,c,X) \wedge \Psi_2(y,z_1,z,c,X)
$$
holds in $\MA$   on elements $s, f \in \MA$ if and only if 
$s \in S(T)_c$ and $ f  \in B(s,K)$. This proves the lemma.

\end{proof}

Now we are ready to prove the main result of this section. 

 \begin{theorem} \label{th:bases} 
 Let $K$ be an infinite field and $X$ an arbitrary finite set. Then the set of all free bases of $\MA_K(X)$ is 0-definable in $\MA_K(X)$.
 \end{theorem}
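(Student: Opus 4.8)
The plan is to reduce the statement to the problem of deciding, by a first-order formula, whether the $K$-algebra endomorphism $x_i\mapsto b_i$ of $\MA=\MA_K(X)$ is an automorphism, and then to decide this through the self-interpretation of $\MA$ provided by the earlier sections together with the big-powers coding of Lemmas \ref{le:main-decomp}--\ref{le:s-and-f}. Since every free basis of $\MA$ has exactly $n=|X|$ elements, it suffices to produce a formula $\beta(y_1,\dots,y_n)$ of $L$ such that $\MA\models\beta(\bar b)$ iff $\bar b=(b_1,\dots,b_n)$ is a free basis.

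First I would assemble the self-interpretation. Composing Theorem \ref{th:S(F,N)-non-comm}(3), which $0$-interprets $S(K,\N)$ in $\MA$, with Theorem \ref{th:int-A(X)-in-S(K,N)}, which $0$-interprets $\MA_K(X)$ in $S(K,\N)$, we obtain a $0$-interpretation $\MA^{**}$ of $\MA$ inside $\MA$. By construction $\MA^{**}$ carries $0$-definable ``abstract generators'' $\xi_1,\dots,\xi_n$ (the images of the one-letter monomials with coefficient $1$), and its scalar field is $0$-definably identified with the field $K\subseteq\MA$ of Theorem \ref{th:field}; write $\kappa$ for this field isomorphism. There is moreover a (non-definable) standard isomorphism $\sigma\colon\MA_K(X)\to\MA^{**}$ with $\sigma(x_i)=\xi_i$ and $\sigma|_K=\kappa^{-1}$. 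The observation driving the proof is purely algebraic: $\bar b$ is a free basis of $\MA$ if and only if there exists a ring isomorphism $h\colon\MA^{**}\to\MA$ with $h|_{K^{**}}=\kappa$ and $h(\xi_i)=b_i$ for all $i$; indeed $h\mapsto h\circ\sigma$ and $\phi\mapsto\phi\circ\sigma^{-1}$ are mutually inverse bijections between such isomorphisms $h$ and the $K$-algebra automorphisms of $\MA$ carrying $X$ onto $\bar b$.

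Next I would build, uniformly with parameters $\bar b$, a candidate $R_{\bar b}(v,u)$ for the graph of such an $h$, by running the recipe of Lemmas \ref{le:M-X-c} and \ref{le:s-and-f} with $\bar b$ in the role played there by $X$: for an abstract monomial $M=\xi_{j_1}\cdots\xi_{j_m}\in\MA^{**}$ the value $\mathrm{ev}_{\bar b}(M)=b_{j_1}\cdots b_{j_m}\in\MA$ is described by coding the finite sequence of its partial products into a single polynomial separated by big powers of $a_m[\bar b]=b_1b_2b_1b_2^2\cdots b_1b_2^m$ exactly as in \eqref{eq:w}, and then $R_{\bar b}(v,u)$ says that $v$ is the abstract polynomial $\sum_i\alpha_iM_i$ and that $u=\sum_i\kappa(\alpha_i)\,\mathrm{ev}_{\bar b}(M_i)$, the latter sum being again coded by big-powers-separated polynomials as in \eqref{eq:p-unique}. (One may equally existentially quantify over a separator and demand of it only the verifiable premises of Lemma \ref{le:main-decomp}, which avoids hard-wiring the particular word $a_m[\bar b]$.) Finally put $\beta(\bar b)$ to be the first-order assertion that $R_{\bar b}$ is the graph of a bijective ring homomorphism $\MA^{**}\to\MA$ restricting to $\kappa$ on $K^{**}$ and sending $\xi_i$ to $b_i$; here all quantifiers range over $\MA$ and over the $0$-interpreted structures $\MA^{**}$ and $S(K,\N)$, so $\beta$ is a formula of $L$. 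If $\bar b$ is a free basis then $\MA=\MA_K(\bar b)$, the words $a_m[\bar b]$ satisfy the premises of Lemma \ref{le:main-decomp} by Corollary \ref{co:main-decomp}, and the arguments of Lemmas \ref{le:M-X-c} and \ref{le:s-and-f} apply verbatim with $\bar b$ in place of $X$, so $R_{\bar b}$ is precisely the graph of $\mathrm{ev}_{\bar b}\circ\sigma^{-1}$, a ring isomorphism of the required form, whence $\MA\models\beta(\bar b)$. Conversely, if $\MA\models\beta(\bar b)$ then $R_{\bar b}$ yields an $h$ as in the observation, so $\bar b$ is a free basis.

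The main obstacle is the faithful adaptation of the big-powers machinery of Lemmas \ref{le:main-decomp}--\ref{le:s-and-f} to an \emph{a priori arbitrary} tuple $\bar b$: when $\bar b$ is not a free basis, $a_m[\bar b]$ need not be a free-monoid separator, the decomposition uniqueness of Lemma \ref{le:main-decomp} may fail, and $R_{\bar b}$ may define something meaningless. Two things must be checked in this case: that $\beta$ is nonetheless a bona fide formula of $L$ --- which it is, since its defining conditions are finitary combinatorial statements about $\MA$ that are meaningful whatever $\bar b$ is --- and that $\beta(\bar b)$ is false for every non-free $\bar b$ --- which is automatic, because the algebraic observation above shows that no ring isomorphism $\MA^{**}\to\MA$ sends $\xi_i$ to $b_i$ unless $\bar b$ is a free basis, so whatever relation $R_{\bar b}$ happens to define, it cannot be the graph of such an isomorphism. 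A lesser point needing care is the bookkeeping that the composite interpretation $\MA^{**}$ really does carry $0$-definable abstract generators $\xi_i$ and a $0$-definable canonical copy of $K$ together with $\kappa$; this is immediate from inspecting the interpretations of Theorems \ref{th:int-A(X)-in-S(K,N)} and \ref{th:S(F,N)-non-comm}.
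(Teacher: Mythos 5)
Your proposal is correct for $|X|\ge 2$ and is essentially the paper's own argument: both relativize the big-powers coding of Lemmas \ref{le:M-X-c} and \ref{le:s-and-f} to the candidate tuple $\bar b$ and then assert in first-order terms that the resulting evaluation of abstract monomials yields a $K$-basis of $\MA$ --- your condition that the coded relation be the graph of an isomorphism $\MA^{**}\to\MA$ sending $\xi_i\mapsto b_i$ is a repackaging of the paper's conditions 1') and 2') that the monomials in $\bar b$ span and are $K$-linearly independent, and your observation that the backward implication needs no correctness of the coding for non-free $\bar b$ is exactly the right way to close the argument. The only omission is the case $|X|=1$, where the two-letter separator $a_m[\bar b]$ does not exist; there the paper argues directly that $P$ is a basis of $K[x]$ if and only if the definable subring $K[P]$ of Lemma \ref{le:int-poly-one} is all of $K[x]$.
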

 
 \begin{proof} 
 If $|X| = 1$ then by Lemma \ref{le:int-poly-one} there is a formula $ \psi(y,z)$ of the ring language $L$  such that   for any non-invertible polynomial $P \in K[X]$ the formula  $ \psi(y,P)$ defines in $K[X]$ the polynomial ring $K[P]$. Hence $P$ generates $K[X]$, i.e., forms a basis for $K[X]$, if and only if $K[P] = K[X]$, which is equivalent to the condition that $P$ satisfies the following formula
 $$
 Gen(P) = \forall a \exists y (\psi(y,P) \wedge a = y).
 $$ 
 This proves the theorem in the case $|X| = 1$.
 
 Suppose now that $|X| = n \geq 2$. Observe that a set of  elements $V = \{v_1, \ldots, v_n\} \subset  \MA_K(X) $  forms a basis in  $\MA_K(X)$  if and only if the set of all monomials $\MM_V$ in $V$  is a basis of the $K$-vector space $\MA_K(X)$, i.e., 
 
 \begin{itemize}
 
 \item [1)]  every element of $\MA$ is a $K$-linear combination of elements from $\MM_V$, and 
 
 \item [2)] elements from $\MM_V$ are $K$-linearly independent. 
 
 \end{itemize}
 
  In the notation of Lemma \ref{le:s-and-f} one can write the conditions above as follows.  
 
 \begin{itemize}
 \item [1')]  $\forall a\in \MA \exists s \in S(T)_{v_2} (a \in B(s,K))$,
 \item [2')] $\forall s_1, s_2 \in S(T)_{v_2} (\hat{s_1} \neq \hat{s_2} \rightarrow B(s_1,K) \cap B(s_2,K) = \{0\})$.
 \end{itemize}
 Here $S(T)_{v_2}$ is the set of tuples $s= (t_1, \ldots,t_e)$ from the set $S(T)_{v_2}$ which is the interpretation of the set $S(T)$ in $\N_{v_2}$ (in Lemma \ref{le:s-and-f}  it was the interpretation $\N_c$ with $c = x_2$).  Observe, that the set $S(T)_{v_2}$ is definable in $\MA_K(X)$ with parameters $V$ uniformly in $K, X,$ and $V$.  By Lemma  \ref{le:s-and-f}  the formula $\Psi(s,f,c,V)$ (from this lemma) with parameters $V$  allows one to write down the conditions $a \in B(s,K)$ and $B(s_1,K) \cap B(s_2,K) = \{0\}$ from the above.  The set $\hat{s}$ is definable uniformly with parameter $s$ in $\N$ and in $\N_{v_2}$, so the condition  $\hat{s_1} \neq \hat{s_2}$ is also definable.  This gives a formula $Gen(V)$ which defines the set of bases in $\MA_K(X)$ uniformly in $K$ and $|X|$.  This proves the theorem.
 
 \end{proof}

\section{Tarski-type questions for free associative algebras}
 
 
 In this section we assume that all free associative algebras have non-zero rank.

\begin{theorem} \label{th:undecidable}
The first-order theory of $\MA_K(X)$ is undecidable for any filed $K$ and a non-empty set $X$.
\end{theorem}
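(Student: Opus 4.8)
The plan is to reduce the statement to the undecidability of first-order arithmetic, using the machinery already set up: an interpretation of $\N = \langle N; +,\cdot,0,1\rangle$ in $\MA_K(X)$ together with Corollary \ref{co:interp} (first bullet), which says that a $0$-interpretation of a structure with undecidable theory forces undecidability of the ambient theory. So the whole argument splits according to the cardinality of $X$, and in each case the work is already done in an earlier lemma.

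First I would dispose of the rank-one case $|X|=1$. Here $\MA_K(X)$ is (isomorphic to) the one-variable polynomial ring $K[t]$, and by Lemma \ref{le:arbitrary-char}(3) the arithmetic $\N$ is $0$-interpretable in $K[t]$; since $Th(\N)$ is undecidable, Corollary \ref{co:interp} gives that $Th(\MA_K(X))$ is undecidable. This is of course exactly Theorem \ref{Rob} (Robinson), so nothing new is needed.

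Next I would treat $|X|\geq 2$, finite or infinite, in one stroke: Lemma \ref{le:non-comm-arbitrary-char}(3) provides a $0$-interpretation of $\N$ in $\MA_K(X)$ (built, via Bergman's theorem \ref{th:Bergman}, inside a centralizer $C_{\MA_K(X)}(P)\cong K[t]$ and then glued over all parameters using the ``big powers'' monomials $a_m$ and the uniqueness of $a$-decompositions of Lemma \ref{le:main-decomp}). Applying Corollary \ref{co:interp} again, $Th(\MA_K(X))$ is undecidable. Since the two cases exhaust all non-empty $X$, the theorem follows.

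The point worth emphasizing is that there is no genuine obstacle at the level of this theorem: all the difficulty is already packaged into the interpretations of $\N$. In the rank-one case that is Robinson's classical interpretation in $K[t]$; in the higher-rank case it is the interpretation of Lemma \ref{le:non-comm-arbitrary-char}, whose hard part is the combinatorics of maximal $a$-occurrences in Lemma \ref{le:main-decomp}. Once those are in hand, Theorem \ref{th:undecidable} is a two-line corollary, and (for emphasis) one could equivalently invoke Theorem \ref{th:Ershov} in place of Corollary \ref{co:interp}.
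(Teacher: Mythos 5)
Your proof is correct and follows essentially the same route as the paper: the paper cites Robinson's undecidability of $K[t]$ (Theorem \ref{Rob}) together with the $0$-interpretation of $K[t]$ in $\MA_K(X)$ via centralizers (Theorem \ref{th:def-centr-iso}, item 3), while you simply unwind this one link further to the $0$-interpretation of $\N$ itself (Lemmas \ref{le:arbitrary-char} and \ref{le:non-comm-arbitrary-char}) and apply Corollary \ref{co:interp}. If anything your citations are slightly tighter, since Lemma \ref{le:non-comm-arbitrary-char} is stated for an arbitrary field whereas Theorem \ref{th:def-centr-iso} carries an ``infinite $K$'' hypothesis, and your remark that Theorem \ref{th:Ershov} with a parametric interpretation would also suffice covers every case of the statement.
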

 \begin{proof}
 The result follows from  Theorem \ref{Rob} and Theorem \ref{th:def-centr-iso} item 3).
 \end{proof}

Next we address the question about canonical elementary embeddings of free associative algebras of different ranks over the same filed. Recall that a substructure   $A$ of a structure   $B$ of a language $\L$ is an {\it elementary substructure } if for any formula $\phi(x_1, \ldots,x_n)$ of the  language $\L$ and for any elements $a_1, \ldots, a_n \in A$ the formula $\phi(a_1, \ldots,a_n)$ holds in $B$ if and only if it holds in $A$. By   Tarski-Vaught test a substructure $A$ is an elementary substructure of $B$ if for any formula $\phi(x,a_1, \ldots,a_n)$ with parameters $a_1, \ldots,a_n \in A$ the formula $\exists x \phi(x,a_1, \ldots,a_n)$ holds in $B$ if and only if there exists $a \in A$ such that  $\phi(a,a_1, \ldots,a_n)$ holds in $B$. 

The following result, in the case when the set $X$ is infinite, is known in the folklore (in the general case of free algebras in a variety). Nevertheless, we give a proof for both cases for the sake of completeness.

\begin{theorem} \label{th:elem-submodel}
Let $X$ and $Y$ be disjoint non-empty sets  and $K$ an arbitrary field. Then $\MA_K(X)$ is an elementary subring of $\MA_K(X \cup Y)$ if and only if $|X| = \infty$.  
\end{theorem}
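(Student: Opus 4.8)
The plan is to prove both directions separately. For the forward direction, suppose $\MA_K(X)$ is an elementary subring of $\MA_K(X\cup Y)$, and suppose for contradiction that $|X| = n < \infty$. By Theorem~\ref{th:rank-non-com} there is a set of sentences $\Psi_n$ in the ring language $L$ such that for any field and any set $Z$, $\MA_K(Z)\models\Psi_n$ if and only if $|Z| = n$. Since $\MA_K(X)\models\Psi_n$ and $\MA_K(X)$ is an elementary subring of $\MA_K(X\cup Y)$, elementary equivalence gives $\MA_K(X\cup Y)\models\Psi_n$, hence $|X\cup Y| = n$. But $X$ and $Y$ are disjoint non-empty sets, so $|X\cup Y| = |X| + |Y| > n$, a contradiction. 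Therefore $|X| = \infty$. (One could alternatively use a single sentence from $\Psi_n$, namely one of the $\psi_{n,m}$ that fails in $\MA_K(X\cup Y)$ whenever $|X\cup Y|\neq n$; either way the argument is immediate once Theorem~\ref{th:rank-non-com} is in hand.)

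For the converse, assume $|X| = \infty$; I would verify the Tarski--Vaught test: given a formula $\phi(x,\bar a)$ with parameters $\bar a = (a_1,\ldots,a_k)$ from $\MA_K(X)$, and an element $b \in \MA_K(X\cup Y)$ with $\MA_K(X\cup Y)\models\phi(b,\bar a)$, I must produce $a\in\MA_K(X)$ with $\MA_K(X\cup Y)\models\phi(a,\bar a)$. The key observation is that only finitely many elements of $X\cup Y$ occur in the (finitely many) monomials making up $b, a_1,\ldots,a_k$; call the ones lying in $Y$ the set $Y_0 = \{y_1,\ldots,y_r\}$, a finite set. Since $|X|=\infty$, we can pick distinct elements $x_1',\ldots,x_r'\in X$ not occurring in any of $b,a_1,\ldots,a_k$. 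There is an automorphism $\sigma$ of $\MA_K(X\cup Y)$ that swaps $y_i\leftrightarrow x_i'$ for $i=1,\ldots,r$ and fixes all other basis elements; this is an automorphism because swapping basis elements of a free associative algebra extends to an automorphism. By construction $\sigma$ fixes each $a_j$ (their monomials involve none of the swapped letters), while $\sigma(b)$ has all its monomials in the alphabet $X$, so $a := \sigma(b)\in\MA_K(X)$. Applying the automorphism $\sigma$ to the statement $\MA_K(X\cup Y)\models\phi(b,\bar a)$ yields $\MA_K(X\cup Y)\models\phi(\sigma(b),\sigma(\bar a))$, i.e.\ $\MA_K(X\cup Y)\models\phi(a,\bar a)$, as required. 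Hence by Tarski--Vaught, $\MA_K(X)$ is an elementary subring of $\MA_K(X\cup Y)$.

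The main point requiring care is the existence of the automorphism $\sigma$ of $\MA_K(X\cup Y)$ realizing the required permutation of basis elements and the verification that it genuinely fixes the parameters $\bar a$ and moves $b$ into $\MA_K(X)$ --- both follow from the universal property of free associative algebras and the fact that each of $b,a_1,\ldots,a_k$ is a finite $K$-linear combination of monomials, hence involves only finitely many letters. I expect no real obstacle here: the forward direction is an immediate consequence of Theorem~\ref{th:rank-non-com}, and the converse is the standard "finite support plus homogeneity" argument for free structures, with the infinitude of $X$ being exactly what guarantees enough room to move $b$'s letters out of $Y$ and into $X$ without disturbing the parameters.
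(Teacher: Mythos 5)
Your proposal is correct and follows essentially the same route as the paper: the forward direction via the rank sentences $\Psi_n$ of Theorem~\ref{th:rank-non-com}, and the converse via the Tarski--Vaught test together with an automorphism of $\MA_K(X\cup Y)$ induced by a permutation of the basis that fixes the (finitely many) letters of the parameters and moves the letters of $b$ lying in $Y$ into $X$. The one difference is cosmetic but in your favor: the paper builds a global bijection of $X\cup Y$ and splits into cases according to whether $Y$ is finite or infinite, whereas your finitely supported product of transpositions $y_i\leftrightarrow x_i'$ does the same job uniformly and avoids the case analysis.
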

\begin{proof}
If the set $X$ is finite then Theorem \ref{th:rank-non-com} shows that  $\MA_K(X \cup Y)$

To prove that $\MA_(X)$ is an elementary subring of $\MA_K(X \cup Y)$  we use the Tarski-Vaught test. Let  $\phi(x,a_1, \ldots,a_n)$ be a ring language formula with parameters $a_1, \ldots,a_n \in \MA_K(X)$  which holds in $\MA_K(X \cup Y)$ say on an element $b \in \MA_K(X \cup Y)$. One needs to show that there is an element $a \in \MA_K(X)$ such that $\phi(a,a_1, \ldots,a_n)$ holds in $\MA_K(X \cup Y)$. To do this it suffices to construct an automorphism $\theta$ of $\MA_K(X \cup Y)$ such that $\theta(a_i) = a_i, i = 1, \ldots,n$ and $\theta(b) \in \MA_K(X)$, because in this case $\phi(\theta(a),a_1, \ldots,a_n)$ holds in $\MA_K(X \cup Y)$.  We build  $\theta$ as follows.

Denote by $X_0$ a finite  subset of $X$ such that all monomials of every element $a_i$ are products of elements from $X_0$, i.e., $a_1, \ldots,a_n \in \MA_K(X_0)$. 
There are two cases to consider.

 1) If the set $Y$ is infinite then define  $Y_b$ be a finite subset of $Y$ such that all monomials of the element $b$ are products of elements from $X_0 \cup Y_b$, so $b \in \MA_K(X_0 \cup Y_b)$. Let $X_b$ be an arbitrary subset of $X \smallsetminus X_0$ with the same cardinality as $Y_b$. Now define a bijection $\lambda: X \cup Y \to X \cup Y$ as follows.   $\lambda$ maps:   $X_0$ identically  on $X_0$,   $X\smallsetminus X_0$ bijectively onto $X \smallsetminus (X_0 \cup X_b)$,
$Y_b$ bijectively onto $X_b$, $Y \smallsetminus Y_b$ bijectively onto $Y$. Since $X$ and $Y$ are infinite  such $\lambda$ exists.

2) If the set $Y$ is finite we do the following. Put $Y_b = Y$ and define $X_b$ as above. Take a subset $X_Y $ in $X \smallsetminus  (X_0 \cup X_b)$ of cardinality $|Y|$ and put $X_1 = X \smallsetminus (X_0 \cup X_b \cup X_Y)$, so $X = X_0 \cup  X_b \cup X_Y \cup X_1$. Now a bijection $\lambda: X \cup Y \to X \cup Y$ is defined as follows. 
$\lambda$ maps:   $X_0$ identically  onto  $X_0$, $X_Y$ onto $Y$, $X_b \cup X_1$ onto $X_Y \cup X_1$, $Y_b$ onto $X_b$. Since $X$ is infinite such a bijection $\lambda$ exists. 

The bijection $\lambda$ gives rise to an automorphism $\theta$ of the algebra $\MA_K(X \cup Y)$. Notice that   $\theta (a_i) = a_i, i = 1, \ldots,n$ and $\theta (b) \in \MA_K(X)$, as required.

\end{proof}

 Now we are ready to give first-order classification of free associative algebras over infinite fields.

\begin{theorem} \label{th:Tarski-unitary}
Free associative algebras ${\mathbb A}_{K_1}(X)$ and ${\mathbb A}_{K_2}(Y)$  over  fields $K_1, K_2$, at least one of which is infinite, are elementarily equivalent if and only if the following conditions hold:
\begin{itemize}
\item [1)] either their ranks are  finite and equal or both ranks are infinite; 
\item [2)] the fields $K_1$ and $K_2$ are equivalent in the weak second order logic, i.e.,  $HF(K_1) \equiv HF(K_2)$. 
\end{itemize}
\end{theorem}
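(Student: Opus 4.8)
The plan is to reduce the statement, through a chain of uniform interpretations, to the already available equivalence $HF(K_1)\equiv HF(K_2)\iff S(K_1,\N)\equiv S(K_2,\N)$ and to Theorems \ref{th:int-A(X)-in-S(K,N)} and \ref{th:S(F,N)-non-comm}. First I would settle condition 1). By Theorem \ref{th:rank-non-com} the sentence set $\Psi_n$ holds in $\MA_K(X)$ precisely when $|X|=n$, so elementary equivalence forces the ranks to be both infinite or both finite, and equal in the finite case; this gives 1) in the forward direction and will be used to match up interpreting formulas in the backward direction. When both ranks are infinite I would use Theorem \ref{th:elem-submodel} to pass to countably infinite subsets $X_0\subseteq X$ and $Y_0\subseteq Y$, over which the corresponding free algebras are elementary subalgebras; since $\MA_{K_i}(X_0)\equiv \MA_{K_i}(X)$, I may from now on assume that the two ranks are equal, either a common finite $n$ or $\aleph_0$, and in particular $|X|,|Y|\ge 2$ in all genuinely noncommutative cases. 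The rank-one case is exactly the polynomial ring $K[x]$, for which the quoted theorem of Bauval ($F[X]\equiv K[Y]$ iff $|X|=|Y|$ and $HF(F)\equiv HF(K)$) already gives the conclusion, so I concentrate on rank $\ge 2$.

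Backward direction. Assume 1) holds and $HF(K_1)\equiv HF(K_2)$. By Lemma \ref{le:superstructures} and Corollary \ref{co:interp}, $HF(K_1)\equiv HF(K_2)$ implies $S(K_1,\N)\equiv S(K_2,\N)$. By Theorem \ref{th:int-A(X)-in-S(K,N)} the algebra $\MA_{K_i}(X_i)$ is $0$-interpretable in $S(K_i,\N)$ by formulas that depend only on the cardinality of the basis; since the two bases have equal cardinality (after the reduction to countable rank in the infinite case), these formulas are the same for $i=1,2$, so Corollary \ref{co:interp} gives $\MA_{K_1}(X_0)\equiv \MA_{K_2}(Y_0)$, and hence, lifting along the elementary subalgebra inclusions, $\MA_{K_1}(X)\equiv \MA_{K_2}(Y)$. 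Note this direction uses nothing about the fields being infinite.

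Forward direction. Assume $\MA_{K_1}(X)\equiv \MA_{K_2}(Y)$ with $K_1$ infinite. The units of $\MA_K(X)$ are exactly the nonzero scalars, because $\MA_K(X)$ is a domain on which total degree is additive; hence for each $q$ the first-order sentence asserting that there are exactly $q$ invertible elements holds in $\MA_K(X)$ iff $|K|=q+1$. Since $K_1$ is infinite no such sentence holds in $\MA_{K_1}(X)$, hence none holds in $\MA_{K_2}(Y)$, so $K_2$ is infinite too. Now both fields are infinite and both ranks are $\ge 2$ (after the reduction above), so Theorem \ref{th:S(F,N)-non-comm} supplies one tuple of formulas $0$-interpreting $S(K_i,\N)$ in $\MA_{K_i}(X_i)$ for $i=1,2$; by Corollary \ref{co:interp}, $\MA_{K_1}(X)\equiv \MA_{K_2}(Y)$ yields $S(K_1,\N)\equiv S(K_2,\N)$, and then Lemma \ref{le:superstructures} together with Corollary \ref{co:interp} yields $HF(K_1)\equiv HF(K_2)$, which is condition 2). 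Combined with 1), this completes the proof.

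The individual steps are short once the interpretation machinery of the previous sections is in place; the points that need care are the verification that every interpretation invoked is genuinely uniform in the parameters that vary (so that the second part of Corollary \ref{co:interp} applies), and the reduction of an arbitrary infinite rank to $\aleph_0$ via Theorem \ref{th:elem-submodel}. The hypothesis that one field is infinite enters only in the forward direction, where it is needed to invoke Theorem \ref{th:S(F,N)-non-comm} (equivalently, to interpret the whole weak second order logic of $K$ inside $\MA_K(X)$); as shown above, it then propagates automatically to the other field.
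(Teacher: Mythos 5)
Your proposal is correct and follows essentially the same route as the paper: Theorem \ref{th:rank-non-com} for the ranks, Theorem \ref{th:elem-submodel} to reduce infinite rank to countable, Theorem \ref{th:int-A(X)-in-S(K,N)} for the backward direction, and the interpretation of $S(K,\N)$ in $\MA_K(X)$ (Theorem \ref{th:S(F,N)-non-comm}) for the forward direction. You are in fact more careful than the paper on two points it glosses over --- the rank-one (commutative) case, where you correctly fall back on Bauval's theorem, and the propagation of infiniteness from $K_1$ to $K_2$ via counting units, which is needed before Theorem \ref{th:S(F,N)-non-comm} can be applied to both algebras.
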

\begin{proof}
Suppose, in the notation above, ${\mathbb A}_{K_1}(X) \equiv {\mathbb A}_{K_2}(Y)$.    By Theorem \ref{th:rank-non-com} the sets $X$ and $Y$ are either finite and $|X| = |Y|$ or both infinite.  his proves 1). 

From  Theorem   \ref{th:int-A(X)-in-S(K,N)} and  properties of interpretations (Lemma \ref{co:interp} )  one deduces that $S(K_1, \N) \equiv S(K_2,\N)$, and then from Lemma \ref{le:superstructures} $HF(K_1) \equiv HF(K_2)$, which proves 2).

To show converse, suppose that 1) and 2) above hold. If the ranks of ${\mathbb A}_{K_1}(X)$ and ${\mathbb A}_{K_2}(Y)$
are both infinite then by Theorem \ref{th:elem-submodel}  both algebras have elementary free subalgebras $\MA_{K_1}(X_0)$ and $\MA_{K_2}(Y_0)$ of countable rank, so  $\MA_{K_1}(X_0) \equiv \MA_{K_1}(X)$ and $MA_{K_2}(Y_0) \equiv MA_{K_2}(Y)$. Hence, in this case it suffices to show that $\MA_{K_1}(X_0) \equiv MA_{K_2}(Y_0)$.  This shows that without loss of generality we may assume that the sets either finite or countable and in both cases $|X| =|Y|$. 
By Theorem  \ref{th:int-A(X)-in-S(K,N)} a free  associative algebra $\MA_K(X)$ is 0-interpretable in $S(K,\N)$ uniformly in $K$ and $|X|$, provided that $X$ is either finite or a countable set. Therefore, the condition $S(K_1, \N) \equiv S(K_2,\N)$ implies  that ${\mathbb A}_{K_1}(X) \equiv {\mathbb A}_{K_2}(Y)$. It is left to observe, that  $HF(K_1) \equiv HF(K_2)$ implies that $S(K_1, \N) \equiv S(K_2,\N)$. This proves the theorem.
\end{proof}

Equivalence of the fields $K_1$ and $K_2$ in the weak second order logic is a very strong condition. For example, we mentioned in Section \ref{se:HF-fields} that for some fields $K_1$  the condition $HF(K_1) \equiv HF(K_2)$ implies their isomorphism  $K_1 \simeq K_2$. The theorem above implies that if $K_1$ is a such field and $X$ is a finite set then for any set $Y$ and any field $K_2$ one has $\MA_{K_1}(X) \equiv \MA_{K_2}(Y)$ if and only if $K_1 \simeq K_2$ and $|X| = |Y|$, in which case the algebras ${\mathbb A}_{K_1}(X)$ and ${\mathbb A}_{K_2}(Y)$  are isomorphic.

\begin{cor} 
If $X$ is a finite set and a field $K_1$ is  one of the fields from Section 2.3, then the algebras ${\mathbb A}_{K_1}(X)$ and ${\mathbb A}_{K_2}(Y)$ are elementarily equivalent if and only if  they are isomorphic.
\end{cor}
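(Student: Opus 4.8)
The plan is to deduce this directly from Theorem \ref{th:Tarski-unitary} together with the characterization property of the fields listed in Section \ref{se:HF-fields}. First I would dispose of the trivial direction: if ${\mathbb A}_{K_1}(X) \simeq {\mathbb A}_{K_2}(Y)$ as rings then they satisfy exactly the same first-order sentences of the ring language, so ${\mathbb A}_{K_1}(X) \equiv {\mathbb A}_{K_2}(Y)$. This needs nothing beyond the fact that isomorphic structures are elementarily equivalent.

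For the substantive direction, assume ${\mathbb A}_{K_1}(X) \equiv {\mathbb A}_{K_2}(Y)$ with $X$ finite. By Theorem \ref{th:Tarski-unitary}(1) the ranks are either both infinite or both finite and equal; since $|X| < \infty$ this forces $Y$ to be finite with $|Y| = |X|$. By Theorem \ref{th:Tarski-unitary}(2) we obtain $HF(K_1) \equiv HF(K_2)$, i.e.\ $K_1$ and $K_2$ are equivalent in the weak second order logic. Now I invoke the hypothesis that $K_1$ is one of the fields from Section \ref{se:HF-fields}: each such field is determined up to isomorphism by its weak second-order theory, so $HF(K_1) \equiv HF(K_2)$ yields $K_1 \simeq K_2$. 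Combining $|X| = |Y|$ with $K_1 \simeq K_2$ produces an isomorphism ${\mathbb A}_{K_1}(X) \simeq {\mathbb A}_{K_2}(Y)$ by extending any bijection of the bases along the field isomorphism.

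The only non-formal ingredient is the appeal to the rigidity of the fields in Section \ref{se:HF-fields} under weak second-order equivalence; everything else is bookkeeping on top of Theorem \ref{th:Tarski-unitary}. Since that rigidity is already taken as known there (it rests on Gandy's theorem on fixed points of $\Sigma$-definable operators), I expect no real obstacle: the corollary is simply the specialization of the main classification theorem to fields with a rigid weak second-order theory.
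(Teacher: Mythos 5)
Your proposal is correct and is essentially the argument the paper itself gives (in the paragraph immediately preceding the corollary): apply Theorem \ref{th:Tarski-unitary} to get $|X|=|Y|$ and $HF(K_1)\equiv HF(K_2)$, invoke the rigidity of the Section 2.3 fields under weak second-order equivalence to get $K_1\simeq K_2$, and conclude by matching bases over the field isomorphism. The only (very minor) point left implicit in both your write-up and the paper's is that the fields listed in Section 2.3 are infinite, which is what licenses the application of Theorem \ref{th:Tarski-unitary}.
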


\section{Rings elementarily equivalent to $\MA_K(X)$}

In this section we study arbitrary rings $B$ which are elementarily equivalent to $\MA_K(X)$. As it was  mentioned in the introduction it is usually very difficult to describe all such $B$ unless some reasonable restrictions on $B$ are imposed. Here we assume that $B$ satisfies a rather  weak finitary condition, namely that $B$ has a proper centralizer which is Noetherian. 
Note, that  any ring $B$ elementarily equivalent to $\MA_K(X)$ must be a central algebra over a filed where each proper centralizer is commutative (see the argument below), therefore the class  of rings $B$ under consideration contains, for example, all central algebras which have a proper centralizer that is commutative and finitely generated as an algebra. 

There are many examples of rings $B$ as above (for example, free associative algebras $\MA_K(X)$), however, we do not know whether or not an arbitrary  finitely generated central $K$-algebra with all proper centralizers commutative has a proper Noeterian centralizer.

In this section we prove the following principal result.

\begin{theorem} \label{th:elem-classif-unital}
Let $K$ be an infinite field and $X = \{x_1, \ldots, x_n\}$ a finite set. Assume that $B$ is an arbitrary ring that has  a proper Noetherian centralizer. Then $\MA_K(X) \equiv B$ if and only if $B$ satisfies the following conditions:
\begin{itemize}
\item the center of $B$ is a field, say $K_1$,  in particular  $B$ is a central $K_1$-algebra;
\item as a  $K_1$-algebra $B$ is isomorphic to a free associative algebra  $\MA_{K_1}(Y)$;
\item   $HF(K) \equiv HF(K_1)$ and $|X| = |Y|$.
\end{itemize}
\end{theorem}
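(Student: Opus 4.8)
\textbf{Proof proposal for Theorem \ref{th:elem-classif-unital}.}

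The plan is to establish the equivalence by proving both directions, with the forward direction being the substantial one. The backward direction is immediate: if $B \simeq \MA_{K_1}(Y)$ with $HF(K) \equiv HF(K_1)$ and $|X| = |Y|$, then Theorem \ref{th:Tarski-unitary} gives $\MA_K(X) \equiv \MA_{K_1}(Y) \equiv B$ (noting that any free associative algebra of finite rank over an infinite field has a proper Noetherian centralizer by Bergman's Theorem \ref{th:Bergman}, since $K_1[t]$ is Noetherian and $HF(K)\equiv HF(K_1)$ forces $K_1$ infinite when $K$ is, hence the hypothesis on $B$ is automatically satisfied). So assume now $\MA_K(X) \equiv B$ where $B$ has a proper Noetherian centralizer.

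First I would extract the algebraic skeleton of $B$ from the first-order theory. Since $\MA_K(X) \models Isom$ (Theorem \ref{th:centralziers}), all proper centralizers $C_B(P)$ in $B$ are isomorphic; since $\MA_K(X)$ satisfies the first-order sentences asserting that every proper centralizer is commutative and that the center is a field (the center being $K$, which is a field, and proper centralizers being $K[t]$ which is commutative), $B$ inherits these: the center $K_1 = Z(B)$ is a field, $B$ is a central $K_1$-algebra, and every proper centralizer of $B$ is commutative. Next, by Theorem \ref{th:field} the field $K$ is $0$-interpretable in $\MA_K(X)$; by elementary equivalence the same formulas interpret a field in $B$, and one must check this interpreted field is (isomorphic to) $K_1 = Z(B)$ — this follows because the interpretation of Theorem \ref{th:field} produces the maximal ring of scalars $P(f_B)$ of the multiplication map, and the argument of Proposition \ref{th:scalar} (using now that proper centralizers of $B$ are commutative, which is the analogue of Bergman's theorem available to us) shows $P(f_B) = Z(B) = K_1$. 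Then Theorems \ref{th:rank-non-com}, \ref{th:centralziers}, \ref{th:S(F,N)-non-comm} transfer: $B$ has "rank $n$" in the sense of the sentences $\Psi_n$, and $S(K_1,\N)$ is $0$-interpretable in $B$ by the same formulas that interpret $S(K,\N)$ in $\MA_K(X)$, whence $S(K,\N) \equiv S(K_1,\N)$ and so $HF(K) \equiv HF(K_1)$ by Lemma \ref{le:superstructures}.

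The main obstacle — and the heart of the theorem — is to show that $B$ is actually \emph{isomorphic} to $\MA_{K_1}(Y)$ with $|Y| = n$, not merely elementarily equivalent. Here is where the established relationship between $\MA^\ast$ (the interpretation $\MA_K(X)^{**}$ of $\MA_K(X)$ inside itself via $S(K,\N)$, built in the discussion before Lemma \ref{le:M-X-c}) and $\MA_K(X)$ itself is used, together with definability of bases (Theorem \ref{th:bases}). The idea: inside $B$, the same formulas yield an interpreted copy $\MA_{K_1}(Y)^{**}$ of the free associative algebra $\MA_{K_1}(Y)$ (with $|Y|=n$), together with a definable map $B \to \MA_{K_1}(Y)^{**}$; in $\MA_K(X)$ the corresponding definable map $\MA_K(X) \to \MA_K(X)^{**}$ is an isomorphism (this is exactly the bi-interpretability-type statement that the paper asserts it has proved — that $\MA^\ast$ and $\MA_K(X)$ are "strongly related", concretely that the natural map is a definable isomorphism, which can itself be expressed by a first-order sentence). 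Since $\MA_K(X) \equiv B$, that sentence holds in $B$, so the definable map $B \to \MA_{K_1}(Y)^{**}$ is an isomorphism, i.e. $B \simeq \MA_{K_1}(Y)^{**} \simeq \MA_{K_1}(Y)$. The Noetherian-centralizer hypothesis enters precisely to guarantee that Bergman-type behaviour (proper centralizers are polynomial rings in one variable, not just commutative) holds in $B$: a proper centralizer $C_B(P)$ is a commutative Noetherian domain that is a $K_1$-algebra and, by the interpreted structure, a "ring of polynomials in one variable" in the first-order sense; combined with Noetherianness one forces $C_B(P) \cong K_1[t]$ genuinely, which is what makes the interpretation of $S(K_1,\N)$ and of $\MA_{K_1}(Y)$ inside $B$ behave correctly (rather than producing a nonstandard model). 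I would therefore organize the final step as: (i) invoke the definable-isomorphism sentence to get $B \simeq \MA_{K_1}(Y)^{**}$; (ii) use the Noetherian centralizer plus the interpreted polynomial-ring structure to pin down the scalar field and rank exactly; (iii) conclude $B \simeq \MA_{K_1}(Y)$ with $|Y| = |X| = n$ and $HF(K_1) \equiv HF(K)$, completing the forward direction and hence the theorem.
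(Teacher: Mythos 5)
Your proposal is correct and follows essentially the same route as the paper: identify the center with the maximal ring of scalars by first-order transfer, use the Noetherian proper centralizer together with Bauval's Theorem \ref{th:Bauval_poly} to force proper centralizers of $B$ to be genuine polynomial rings $K_1[t]$ (so the interpreted arithmetic and $S(K_1,\N)$ are standard), and then use the transferred basis formulas to exhibit a free basis $Y$ of $B$. The only cosmetic difference is that you package the last step as a single first-order ``definable isomorphism'' sentence for $B \to \MA_{K_1}(Y)^{**}$, whereas the paper transfers Lemmas \ref{le:M-X-c} and \ref{le:s-and-f} explicitly and re-runs the argument of Theorem \ref{th:bases} inside $B$ — these amount to the same thing.
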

\begin{proof}  
Let $B$ be a ring that has  a proper Noetherian centralizer  and $\MA_K(X) \equiv B$. 

Notice, that the field $K$ is the center  and  the maximal ring of scalars of  $\MA_K(X) $ (for the latter see Proposition \ref{th:scalar}).   The center of $\MA_K(X) $ is obviously interpretable in $\MA_K(X) $. Therefore, the center in $B$ is also a field, which we denote by $K_1$. By Theorem \ref{th:bilinear}  the maximal ring of scalars of  $\MA_K(X) $  is interpretable in $\MA_K(X) $ uniformly in the size of the finite complete system and the width of the multiplication (viewed as a $K$-bilinear map). It is clear that existence of a complete system  of a given size can be written by a sentence of the ring language, as well as the width of the multiplication. Hence the same formulas that interprets the maximal ring of scalars in $\MA_K(X) $ will interpret the maximal ring of scalars in $B$. Using this interpretation one can write down that the maximal ring is a field. Hence it is isomorphic to the center of $B$. In fact, one  can also write down a sentence that states that the center is the maximal ring of scalars in $\MA_K(X) $ (it suffices to write down that every element in the center is obtained from the identity 1 by  multiplication by  an element from the maximal filed) hence in $B$. This proves 1).

To prove 2) we show most of the objects proved   in Section \ref{se:Interpet_free assoc} to be interpretable in $\MA_K(X)$ are also interpretable and by the same formulas in $B$.  Indeed, notice first that by Theorems  \ref{th:centralziers} and  \ref{th:def-centr-iso} all proper centralizers of $B$ are definably isomorphic to each other as rings. Furthermore, this common ring, say $C$ is 0-interpretable in $B$ by the same formulas that the ring of one-variable polynomials $K[t]$ is interpretable in $\MA_K(X)$, it follows from the properties of 0-interpretations 
(see Lemma \ref{le:interpr_corol}) that $C \equiv K[t]$. Since in the ring $B$  at least one proper centralizer of $B$ is Noetherian the the ring $C$ is Noetherian. By Theorem \ref{th:Bauval_poly} the ring $C$ is isomorphic to $K_2[t]$ for some field $K_2$. Since $K_1$ is the set of all invertible (and $0$) elements in $C$ it follows that $K_2 = K_1$.  Thus, we showed that every proper centralizer of $B$ is isomorphic to $K_1[t]$. 

Now all the statements 1), 2), 3) of Lemma \ref{le:non-comm-arbitrary-char} hold in $B$ and the formulas that give the corresponding interpretations of  arithmetic $\N$ are exactly the same as in Lemma \ref{le:non-comm-arbitrary-char}.  Indeed, 1) holds because every proper centralizer of $B$ is isomorphic to $K_1[t]$, and the formulas used in the interpretations of $\N$ as $\N_P$ are uniform in the field $K$ (or $K_1$). To prove  2) it suffices to notice that since the isomorphisms of the interpretations $\N_P$ and $\N_Q$ of arithmetic  in $\MA_K(X)$ are uniformly definable by some formulas, say  $\Lambda$,  one can write down the condition that these formulas $\Lambda$ indeed give an isomorphism between the interpretations. Therefore the corresponding interpretations in $B$ will be also definably isomorphic, so 2) holds in $B$. 3) follows from 2) as usual.  This gives uniform interpretation of arithmetic $\N$ in $B$ precisely by the same formulas as in Lemma \ref{le:non-comm-arbitrary-char}.

 A similar argument shows that all statements of Theorem  \ref{th:S(F,N)-non-comm} also hold in $B$, and the corresponding interpretations are given precisely by the same formulas as in $\MA_K(X)$. This gives  interpretations of $S(\N,K_1)$ in $B$ which satisfy all the statements of Theorem  \ref{th:S(F,N)-non-comm}.
 
 The formula $Gen(V)$ of ring language, where $V = \{v_1, \ldots,v_n\}, n = |X|$,  from  Theorem  \ref{th:bases}  defines in $\MA$ the set of all free bases. It follows that $\exists V Gen(V)$ holds in $B$, say on a tuple $Y = \{y_1, \ldots,y_n\}$. Fix this tuple $Y$ in $B$ as a tuple of parameters (it  plays the same part in formulas of interpretations in $B$  as $X$ plays in $\MA$).

 Now we show that a direct  analog of Lemma \ref{le:M-X-c} holds in $B$. Indeed,  by Lemma \ref{le:monomials} the submonoid $K\MM_X$ of $\MA$  is defined in $\MA$ by a formula $\phi(a,X)$ with parameters $X$. Let $M_Y$ be a subset of $B$ which is defined in $B$ by the formula $\phi(a,Y)$ with variable $a$ and parameters $Y$.  $M_Y$ is a multiplicative submonoid of $B$  since $K\MM_X$ is a multiplicative submonoid of $\MA$ and $K\MM_X \equiv M_Y$.  $K\MM_X$ contains the field $K$ and this can be written by formulas with parameters $X$  because $K$ and $K\MM_X$ are both definable in $\MA$. Hence the submonoid $M_Y$ contains the  field $K_1$.  By Lemma \ref{le:monomials}  the monoid $K\MM_X/\sim$ is interpretable in $\MA$ with parameters $X$ and is isomorphic to the free monoid $\MM_X$.
 Lemma \ref{le:M-X-c} tells us that there is a definable isomorphism between the submonoid $\MM_X$ in $\MA$ and the free submonoid $\MM_{X,c}$ (here $c$ is an arbitrary element of $X$, say $c = x_2$) canonically interpreted in $S(\N,\N)$ (see the paragraph before Lemma \ref{le:M-X-c}). Since the analog of Theorem \ref{th:S(F,N)-non-comm} holds in $B$ the same formulas as in Lemma \ref{le:M-X-c}  interpret the structure $S(\N,\N)$ in $B$, hence the formulas that interpret the free monoid  $\MM_{X,c}$ in $S(\N,\N)_P$ in $\MA$   interpret a free monoid $\MM_{Y,v_2}$ isomorphic to $\MM_{X,c}$ in the corresponding interpretation of $S(\N,\N)$ in $B$. Therefore, the monoid $M_Y/\sim$ which is interpreted in $B$ is definably isomorphic to  the free monoid $\MM_{Y,v_2}$. Observe that, as in the case of $\MM_X/\sim$, the images of the elements from $Y$ in $M_Y/\sim$ form a basis of $M_Y/\sim$. It follows that the elements from $Y$ also generate a free monoid, which is isomorphic to $M_Y/\sim$ under the canonical projection $M_Y \to M_Y/\sim$. We denote this monoid $\MM_Y$. Obviously in this case $M_Y = K_1\MM_Y$. Direct inspection of the argument in Lemma \ref{le:M-X-c} shows that the formula $\Phi(y,z,X,c)$  described in this lemma is such that the formula  $\Phi(y,z,Y,v_2)$ obtained from $\Phi$ by replacing $X$ with $Y$ and $c$ with $v_2$ holds in $B$ on a pair of elements  $t,u$  if and only if $t \in T_{v_2}$ and $u = \alpha M_t$ for some $\alpha \in K_1$ (here we use notation from Lemma  \ref{le:M-X-c} adopted to the corresponding objects in $B$).  
 This shows that Lemma \ref{le:M-X-c}, with the appropriate adjustments in notation,  holds in $B$.
 
 Similarly, Lemma \ref{le:s-and-f} holds in $B$ after proper adjustment of notation. But then the argument from Theorem \ref{th:bases}  is valid in $B$ as well, and this shows that $B$ is a free associative algebra over the field $K_1$ and the formula $Gen(V)$ defines in $B$ the set of bases. In particular $Y$ is basis of $B$. Thus $B = \MA_{K_1}(Y)$ and $\MA_K(X) \equiv \MA_{K_1}(Y)$. By Theorem  \ref{th:Tarski-unitary} $HF(K) \equiv HF(K_1)$, so 3) follows. 
This proves the theorem.

\end{proof}

\section{Non-unitary free associative algebras} \label{se:non-unitary}

Let $K$ be a field and $X$ a non-empty finite set. Denote by ${\mathbb A}^0_K(X)$ 
a free associative algebra with basis $X$ without unity. One can view  elements in ${\mathbb A}^0_K(X)$ as linear combinations over $K$ of non-commutative monomials on $X$.  In this section we prove that the algebra ${\mathbb A}^0_K(X)$ has very  similar model theoretic properties as the free associative algebra $\MA_K(X)$. 

Recall (Theorem \ref{th:field}) that  the field $K$ and its action on  ${\mathbb A}^0_K(X)$ are 0- interpretable 
in ${\mathbb A}^0_K(X)$ uniformly in $K$. This fact allows one to prove the following result that is crucial in our study of model-theoretic properties of ${\mathbb A}^0_K(X)$.

\begin{theorem} \label{co:non-unitary}
Algebra ${\mathbb A}_K(X)$ is 0-interpretable  in ${\mathbb A}^0_K(X)$ uniformly in $K$ and $|X|$.
\end{theorem}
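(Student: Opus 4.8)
The plan is to interpret $\MA_K(X)$ inside $\MA^0_K(X)$ by building the missing unit "by hand" and reusing the machinery of Theorem~\ref{th:int-A(X)-in-S(K,N)}. First I would note that by Theorem~\ref{th:field} the field $K$ together with its scalar action is $0$-interpretable in $\MA^0_K(X)$ uniformly in $K$; and by the non-unitary analogue of Bergman's theorem (the centralizer of a non-invertible element is again a polynomial ring, here of the form $tK[t]$ or $K[t]$ up to the missing constant — in any case a commutative Noetherian domain from which arithmetic is interpretable) one gets that the arithmetic $\N$ and then the list superstructure $S(K,\N)$ are $0$-interpretable in $\MA^0_K(X)$ uniformly in $K$ and $|X|$, by literally the same sequence of lemmas (Lemma~\ref{le:non-comm-arbitrary-char}, Theorem~\ref{th:S(F,N)-non-comm}) that were used for $\MA_K(X)$, since those arguments only ever used finitely many generators, the definability of $K$, and the polynomial-ring structure of proper centralizers — none of which requires a global unit. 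Here the one point that must be checked is that $\MA^0_K(X)$ still has enough structure to run the "big powers" argument of Lemma~\ref{le:main-decomp}: the monomials $a_m = x_1x_2x_1x_2^2\cdots x_1x_2^m$ live in $\MA^0_K(X)$ just as well, and the decomposition/uniqueness arguments are statements about the free semigroup on $X$, so they carry over verbatim.

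Once $S(K,\N)$ is $0$-interpretable in $\MA^0_K(X)$ uniformly in $K$ and $|X|$, I would invoke Theorem~\ref{th:int-A(X)-in-S(K,N)}: the unital algebra $\MA_K(X)$ is itself $0$-interpretable in $S(K,\N)$ uniformly in $K$ and $|X|$. Composing the two interpretations (transitivity of interpretability) yields a $0$-interpretation of $\MA_K(X)$ in $\MA^0_K(X)$ uniform in $K$ and $|X|$, which is exactly the statement of the theorem. In effect the interpretation of $\MA_K(X)$ is the "abstract" copy $\MA_K(X)^\ast$ built from coefficient-tuples and monomial-tuples, where the monomials include the trivial monomial $1$ (coded by the number $0$), so the unit is manufactured inside the interpretation even though $\MA^0_K(X)$ has none.

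A cleaner, more self-contained route — which I would present as the actual proof if transitivity through $S(K,\N)$ feels heavy — is the standard trick of realizing $\MA_K(X)$ as a definable quotient-free construction inside $\MA^0_K(X)$ directly: every element of $\MA_K(X)$ is uniquely $\alpha\cdot 1 + f$ with $\alpha\in K$ and $f\in\MA^0_K(X)$, so take the definable set $K\times\MA^0_K(X)$ (both factors being definable, $K$ by Theorem~\ref{th:field}), with coordinatewise addition and with multiplication $(\alpha,f)\cdot(\beta,g) = (\alpha\beta,\ \alpha g+\beta f+fg)$; this is visibly definable in $\MA^0_K(X)$ using the interpreted scalar action, and the resulting ring is isomorphic to $\MA_K(X)$, with $(1,0)$ playing the role of the unit. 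The formulas are uniform in $K$ (via the uniform interpretation of $K$) and trivially uniform in $|X|$. The main obstacle in this approach is essentially bookkeeping: one must make sure the interpretation of $K$ and its action inside $\MA^0_K(X)$ are given by fixed formulas independent of $K$, which is precisely what Theorem~\ref{th:field} (through Theorem~\ref{th:bilinear} and Proposition~\ref{th:scalar}) supplies, and one must check that the multiplication formula above correctly encodes the unital multiplication — a routine verification. I expect the genuinely delicate part, if any, to be confirming that all the auxiliary definability results of Section~\ref{se:Interpet_free assoc} transfer to the non-unital setting with the same formulas, but since the theorem as stated only needs the interpretation itself (not uniformity of every downstream lemma), the direct construction above suffices and I would use it.
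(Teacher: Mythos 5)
Your second, ``direct'' construction --- interpreting $\MA_K(X)$ as the definable set $K\times\MA^0_K(X)$ with coordinatewise addition and multiplication $(\alpha,f)\cdot(\beta,g)=(\alpha\beta,\ \alpha g+\beta f+fg)$, using the $0$-interpretability of $K$ and its action from Theorem~\ref{th:field} --- is exactly the proof the paper gives, so the proposal is correct and takes essentially the same route (the detour through $S(K,\N)$ is unnecessary and would in any case fail for finite $K$). The only small point you omit is the case $|X|=1$, where $\MA^0_K(X)$ is commutative and Theorem~\ref{th:field} does not apply, so one must instead invoke Remark~\ref{re:def-field} to get $K$ and its action.
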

\begin{proof}
 Notice that ${\mathbb A}_K(X) \simeq 1 \cdot K \oplus {\mathbb A}^0_K(X)$. 
 
 Assume $|X| \geq 2$. As we noted above, due to Theorem \ref{th:field} the field $K$ and its action on $ {\mathbb A}^0_K(X)$ are 0-definable in $ {\mathbb A}^0_K(X)$. This allows one to interpret ${\mathbb A}_K(X) $  in ${\mathbb A}^0_K(X)$ as the set $K \times {\mathbb A}^0_K(X)$ and interpret the scalar multiplication by $K$ on this sets by formulas of ring theory. This gives an 0-interpretation of ${\mathbb A}_K(X)$  in ${\mathbb A}^0_K(X)$.

If $|X| =1$ then the field $K$ and its action is 0-interpretable in  ${\mathbb A}^0_K(X)$ by Remark \ref{re:def-field}.
\end{proof}

\subsection{Decidability and first-order classification}

Theorem \ref{co:non-unitary} allows one to reduce the Tarski's problems and the  elementary classification problem  for algebras ${\mathbb A}^0_K(X)$ to the corresponding problems for algebras ${\mathbb A}_K(X)$.

As a corollary of Theorems \ref{th:undecidable} and \ref{th:Tarski-unitary} for the unitary case and Theorem  \ref{co:non-unitary} one gets  the following results.

\begin{theorem}\label{th:undecidable-non-unitary}
The first-order theory of ${\mathbb A}^0_K(X)$ is undecidable for any filed $K$ and any  non-empty finite set $X$.
\end{theorem}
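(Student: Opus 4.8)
The plan is to deduce Theorem~\ref{th:undecidable-non-unitary} from the two facts established immediately before it, namely Theorem~\ref{co:non-unitary} (the unitary algebra $\MA_K(X)$ is $0$-interpretable in the non-unitary algebra $\MA^0_K(X)$ uniformly in $K$ and $|X|$) and Theorem~\ref{th:undecidable} (the first-order theory of $\MA_K(X)$ is undecidable for every field $K$ and every non-empty $X$). The whole argument is essentially a one-line application of the transfer principle for interpretations.

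First I would fix a field $K$ and a non-empty finite set $X$. By Theorem~\ref{co:non-unitary}, $\MA_K(X)$ is $0$-interpretable in $\MA^0_K(X)$; in particular it is interpretable (with or without parameters) in the sense of Definition~\ref{de:interpretable}. Next I would invoke the first bullet of Corollary~\ref{co:interp}: if $\MA$ is $0$-interpretable in $\MB$ and $Th(\MA)$ is undecidable, then $Th(\MB)$ is undecidable. Taking $\MA = \MA_K(X)$ and $\MB = \MA^0_K(X)$, and using Theorem~\ref{th:undecidable} to supply the undecidability of $Th(\MA_K(X))$, we conclude that $Th(\MA^0_K(X))$ is undecidable. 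Since $K$ and $X$ were arbitrary (with $X$ non-empty and finite), this proves the theorem.

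Alternatively, and perhaps more in the spirit of how Theorem~\ref{th:undecidable} itself was proved, one could argue directly: by Theorem~\ref{th:field} the field $K$ is $0$-interpretable in $\MA^0_K(X)$, and by Theorem~\ref{th:def-centr-iso} (item~3, together with Corollary~\ref{co:Bergman}-type reasoning applied to proper centralizers of $\MA^0_K(X)$, which are again one-variable polynomial rings by Bergman's theorem) the ring $K[t]$ is $0$-interpretable in $\MA^0_K(X)$; then Robinson's theorem (Theorem~\ref{Rob}) gives undecidability of $Th(K[t])$, whence $Th(\MA^0_K(X))$ is undecidable by Corollary~\ref{co:interp}. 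Either route works, but the first is the most economical since Theorem~\ref{co:non-unitary} has already packaged all the work.

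There is no real obstacle here: the genuine content lies in Theorems~\ref{th:field}, \ref{co:non-unitary}, and \ref{th:undecidable}, all of which are already available. The only point to be mildly careful about is that interpretability is transitive and preserves undecidability in the right direction (undecidability of the interpreted structure's theory forces undecidability of the ambient structure's theory, not the other way around), so one must apply Corollary~\ref{co:interp} with $\MA_K(X)$ as the interpreted structure and $\MA^0_K(X)$ as the ambient one; this is exactly the orientation supplied by Theorem~\ref{co:non-unitary}. The finiteness of $X$ is not needed for this particular statement beyond matching the hypotheses of Theorem~\ref{co:non-unitary}; non-emptiness of $X$ is what makes the centralizer/interpretation machinery non-trivial.
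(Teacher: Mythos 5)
Your proposal is correct and follows exactly the paper's route: the paper derives Theorem~\ref{th:undecidable-non-unitary} as a corollary of Theorem~\ref{th:undecidable} together with the $0$-interpretation of $\MA_K(X)$ in $\MA^0_K(X)$ from Theorem~\ref{co:non-unitary}, transferring undecidability via Corollary~\ref{co:interp}. You also correctly note the crucial orientation of that transfer (undecidability of the interpreted structure implies undecidability of the ambient one), so nothing is missing.
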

 
\begin{theorem} \label{th:Tarski-non-unitary}
Free associative non-unitary algebras ${\mathbb A}^0_{K_1}(X)$ and ${\mathbb A}^0_{K_2}(Y)$  of finite rank over infinite fields $K_1, K_2$ are elementarily equivalent if and only if their ranks are the same and $HF(K_1)\equiv HF(K_2).$
\end{theorem}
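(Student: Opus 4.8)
The plan is to deduce Theorem~\ref{th:Tarski-non-unitary} directly from the corresponding result in the unitary case (Theorem~\ref{th:Tarski-unitary}) together with the interpretation result Theorem~\ref{co:non-unitary}, which says that $\MA_K(X)$ is $0$-interpretable in $\MA^0_K(X)$ uniformly in $K$ and $|X|$. The two directions are handled separately, and in fact the easier (``if'') direction will follow from the same interpretation run backwards, so I first record that the non-unitary algebra is in turn interpretable in a suitable structure over $K$.

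\textbf{The ``only if'' direction.} Suppose $\MA^0_{K_1}(X) \equiv \MA^0_{K_2}(Y)$ with $X,Y$ finite and $K_1,K_2$ infinite. By Theorem~\ref{co:non-unitary} the algebra $\MA_{K_i}(X)$ is $0$-interpretable in $\MA^0_{K_i}(X)$ by formulas that do not depend on $i$ (they depend only on $|X|$ through the parameter-free interpretation of $K$ and its action from Theorem~\ref{th:field}, but we can quantify over the relevant data, or simply note that elementary equivalence of the non-unitary algebras forces the same rank first). So first I would argue that $|X| = |Y|$: the rank of $\MA^0_K(X)$ is expressible, e.g.\ by passing to the interpreted unitary algebra and applying Theorem~\ref{th:rank-non-com}, or more directly since $\MA^0_K(X)/(\MA^0_K(X))^2$ has $K$-dimension $|X|$ and $K$ is $0$-definable in $\MA^0_K(X)$ by Remark~\ref{re:def-field}. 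Once $|X| = |Y| = n$ is fixed, the interpretations of $\MA_{K_1}(X)$ in $\MA^0_{K_1}(X)$ and of $\MA_{K_2}(Y)$ in $\MA^0_{K_2}(Y)$ are given by the same formulas, so Corollary~\ref{co:interp} yields $\MA_{K_1}(X) \equiv \MA_{K_2}(Y)$. Now Theorem~\ref{th:Tarski-unitary} applies and gives $HF(K_1) \equiv HF(K_2)$ (and again $|X|=|Y|$, consistently).

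\textbf{The ``if'' direction.} Suppose $|X| = |Y|$ and $HF(K_1) \equiv HF(K_2)$. I want $\MA^0_{K_1}(X) \equiv \MA^0_{K_2}(Y)$. The cleanest route is to observe that $\MA^0_K(X)$ is $0$-interpretable in $S(K,\N)$ uniformly in $K$ and $|X|$: indeed $\MA^0_K(X)$ sits inside $\MA_K(X) \simeq K\cdot 1 \oplus \MA^0_K(X)$ as the definable ideal of elements with zero constant term, and $\MA_K(X)$ itself is $0$-interpretable in $S(K,\N)$ uniformly in $K$ and $|X|$ by Theorem~\ref{th:int-A(X)-in-S(K,N)}; restricting that interpretation to the definable subset corresponding to $\MA^0_K(X)$ gives the desired uniform interpretation. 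Since $HF(K_1)\equiv HF(K_2)$ implies $S(K_1,\N)\equiv S(K_2,\N)$ (Lemma~\ref{le:superstructures}), Corollary~\ref{co:interp} then gives $\MA^0_{K_1}(X)\equiv\MA^0_{K_2}(Y)$, using $|X|=|Y|$ so that the interpreting formulas match.

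\textbf{Main obstacle.} There is no deep obstacle here; the content has already been done in the unitary case and in Theorem~\ref{co:non-unitary}. The one point requiring a little care is bookkeeping of uniformity: one must make sure that the interpretation of $K$ (and hence of $\MA_K(X)$) inside $\MA^0_K(X)$ used in Theorem~\ref{co:non-unitary} is genuinely given by formulas independent of $K$ once the rank is fixed --- the case $|X|\ge 2$ uses Theorem~\ref{th:field} (uniform $0$-interpretation of $K$) and the case $|X|=1$ uses Remark~\ref{re:def-field}, so one should treat rank $1$ separately (there $\MA^0_K(x)\cong xK[x]$ and the statement reduces to the known one-variable polynomial case, cf.\ Robinson and Bauval). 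With that noted, the theorem follows by chaining Theorem~\ref{co:non-unitary}, Corollary~\ref{co:interp}, Theorem~\ref{th:Tarski-unitary}, and Theorem~\ref{th:int-A(X)-in-S(K,N)} as above.
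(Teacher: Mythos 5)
Your proposal is correct and follows the same route the paper intends: the paper states this theorem as an immediate corollary of Theorem~\ref{th:Tarski-unitary} and the interpretation result Theorem~\ref{co:non-unitary}, and your write-up is a faithful elaboration of exactly that reduction (including the necessary bookkeeping of first fixing the rank so the interpreting formulas match, and running the converse through $S(K,\N)$ via Theorem~\ref{th:int-A(X)-in-S(K,N)}). The only nitpick is that for $|X|\ge 2$ the $0$-definability of $K$ in $\MA^0_K(X)$ comes from Theorem~\ref{th:field} rather than Remark~\ref{re:def-field}, a point you already acknowledge in your final paragraph.
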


\begin{theorem} \label{th:elem-classif-non-unital}
Let $K$ be an infinite field and $X = \{x_1, \ldots, x_n\}$ a finite set. Assume that $B$ is a ring with   a proper Noetherian centralizer. Then $\MA_K^0(X) \equiv B$ if and only if $B$ satisfies the following conditions:
\begin{itemize}
\item the maximal ring of scalars of $B$ is a field, say $K_1$, in particular,  $B$ is a  $K_1$-algebra;
\item as an  $K_1$-algebra $B$ is isomorphic to a free associative algebra  $\MA_{K_1}^0(Y)$;
\item   $HF(K) \equiv HF(K_1)$ and $|X| = |Y|$.
\end{itemize}
\end{theorem}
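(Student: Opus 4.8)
The plan is to reduce Theorem \ref{th:elem-classif-non-unital} to its unital counterpart, Theorem \ref{th:elem-classif-unital}, via the $0$-interpretability established in Theorem \ref{co:non-unitary}, exactly in the same spirit as the reductions in Theorems \ref{th:undecidable-non-unitary} and \ref{th:Tarski-non-unitary}. The ``only if'' direction is the substantive one and requires the analogues of all the interpretability lemmas of Section \ref{se:Interpet_free assoc} to be transported into $B$; the ``if'' direction is immediate from Theorem \ref{th:Tarski-non-unitary} together with the observation that the listed conditions are preserved under first-order equivalence of $\MA^0_{K}(X)$ with $\MA^0_{K_1}(Y)$.

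For the ``only if'' direction, suppose $B$ is a ring with a proper Noetherian centralizer and $\MA^0_K(X) \equiv B$. First I would invoke Theorem \ref{co:non-unitary}: the unital algebra $\MA_K(X) \simeq 1\cdot K \oplus \MA^0_K(X)$ is $0$-interpretable in $\MA^0_K(X)$ by fixed formulas, uniformly in $K$ and $|X|$. Applying the same formulas to $B$ produces a ring $\widetilde B$ that is $0$-interpretable in $B$, and by Lemma \ref{le:interpr_corol} (properties of $0$-interpretations) the equivalence $\MA^0_K(X) \equiv B$ gives $\MA_K(X) \equiv \widetilde B$. The next point to check is that $\widetilde B$ still has a proper Noetherian centralizer: the centralizer structure of $\widetilde B$ is interpretable in $B$ by the same formulas that describe centralizers of $\MA_K(X)$ inside $\MA^0_K(X)$, and ``Noetherian'' here really means ``the interpreted ring $C$ satisfies $\Th(K_1[t])$'', which is a first-order-transferable property (as in the proof of Theorem \ref{th:elem-classif-unital}, where being Noetherian is used only through Theorem \ref{th:Bauval_poly}); one must phrase this carefully so no genuine infinitary Noetherian hypothesis on $\widetilde B$ is needed beyond what is already guaranteed. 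Then Theorem \ref{th:elem-classif-unital} applies to $\widetilde B$: its center is a field $K_1$, as a $K_1$-algebra $\widetilde B \simeq \MA_{K_1}(Y)$ with $|X|=|Y|$, and $HF(K)\equiv HF(K_1)$. Finally I would recover $B$ from $\widetilde B$: since $\widetilde B = 1\cdot K_1 \oplus \MA^0_{K_1}(Y)$ is the unitalization of $\MA^0_{K_1}(Y)$, and the interpretation of $\MA_K(X)$ in $\MA^0_K(X)$ in Theorem \ref{co:non-unitary} is set up so that the non-unital part $\MA^0_K(X)$ sits inside it as a definable ideal, the same definable-ideal formula picks out $\MA^0_{K_1}(Y)$ inside $\widetilde B$, hence a definable copy of it inside $B$; identifying $B$ with that copy gives $B \simeq \MA^0_{K_1}(Y)$ as a $K_1$-algebra, where $K_1$ is the maximal ring of scalars of $B$ (it is a field by Proposition \ref{th:scalar}, Theorem \ref{th:field} and the transfer argument). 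This yields all three bullet points.

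For the converse, assume $B$ satisfies the three conditions, so $B \simeq \MA^0_{K_1}(Y)$ with $|X|=|Y|$ and $HF(K)\equiv HF(K_1)$. Then Theorem \ref{th:Tarski-non-unitary} directly gives $\MA^0_K(X) \equiv \MA^0_{K_1}(Y) \equiv B$, and $B$ has a proper Noetherian centralizer because by Bergman's theorem (Theorem \ref{th:Bergman}, in the non-unital form) the centralizer of a non-invertible element of $\MA^0_{K_1}(Y)$ is $K_1[t]$ or $tK_1[t]$, which is Noetherian.

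The main obstacle I anticipate is the bookkeeping around the word ``Noetherian'' in the hypothesis. In Theorem \ref{th:elem-classif-unital} the Noetherian hypothesis on $B$ is used to force the interpreted one-variable polynomial ring $C$ to be genuinely Noetherian and hence, by Theorem \ref{th:Bauval_poly}, of the form $K_1[t]$; but after passing through the interpretation of Theorem \ref{co:non-unitary}, the proper centralizers of the interpreted unital ring $\widetilde B$ are not literally the centralizers of $B$, so one cannot simply quote ``$B$ has a proper Noetherian centralizer''. The fix is to note that the relevant consequence --- that the common proper centralizer ring $C$ of $\widetilde B$ is first-order equivalent to $K_1[t]$ \emph{and} Noetherian --- already follows from $C$ being first-order equivalent to $K_1[t]$ together with Theorem \ref{th:Bauval_poly} applied in the form ``a ring first-order equivalent to $K_1[t]$ and Noetherian is $\simeq K_2[Y']$''; and the Noetherianity of $C$ is inherited from the proper Noetherian centralizer of $B$ itself, because the centralizers of $\widetilde B$ are interpretable inside centralizers of $B$ (the non-unital centralizer of $P\in\MA^0$ and its unitalization differ only by adjoining $1\cdot K_1$, which preserves Noetherianity). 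Making this chain airtight --- i.e. checking that the specific interpretation in Theorem \ref{co:non-unitary} carries a Noetherian centralizer of $B$ to a Noetherian centralizer situation for $\widetilde B$ --- is the one place where a little genuine care, rather than routine transfer, is required.
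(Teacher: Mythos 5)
Your proposal follows essentially the same route as the paper: unitalize via Theorem \ref{co:non-unitary} to get $\widetilde B = 1\cdot K_1 \oplus B$ with $\MA_K(X)\equiv\widetilde B$, observe that a proper Noetherian centralizer $C_B(b)$ unitalizes to the proper Noetherian centralizer $C_{\widetilde B}(b)=1\cdot K_1\oplus C_B(b)$, apply Theorem \ref{th:elem-classif-unital}, and then descend back to $B$; your worry about the Noetherian bookkeeping is resolved exactly as in the paper, by this direct unitalization of a centralizer rather than by any detour through $Th(K_1[t])$. The one place where your write-up is looser than the paper is the final descent: knowing $\widetilde B\simeq\MA_{K_1}(Y)$ and that $B$ is a definable ideal complementing $K_1\cdot 1$ does not by itself ``pick out $\MA^0_{K_1}(Y)$'' --- a priori $B$ corresponds to some codimension-one ideal complement, not necessarily the augmentation ideal of the basis $Y$. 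The paper closes this by invoking the definability of the set of free bases (Theorem \ref{th:bases}) and expressing that some free basis of $\widetilde B$ lies entirely inside the summand $B$, which forces $B=\MA^0_{K_1}(Y)$ for that basis; alternatively your version can be repaired by the one-line algebraic remark that any ideal $I$ with $\MA_{K_1}(Y)=K_1\cdot 1\oplus I$ determines a character $\epsilon$ and the translation automorphism $y_i\mapsto y_i-\epsilon(y_i)$ carries the augmentation ideal onto $I$. Either fix is routine, so the argument is sound once this step is made explicit.
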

\begin{proof}
Let $B$ be a ring with   a proper Noetherian centralizer  such that $\MA_K^0(X) \equiv B$.

Notice that the same formulas that in Theorem \ref{co:non-unitary} interpret $\MA_K(X)$ in $\MA_K^0(X)$ 
also interepret a ring $B_1 = 1\cdot K_1 \oplus B$ in $B$. Since $\MA_K(X)^0 \equiv B$ it follows that $\MA_K(X) \equiv B_1$. Now to apply Theorem \ref{th:elem-classif-unital} in this case one needs to show that $B_1$ is also a ring with   a proper Noetherian centralizer. But this is obvious because if $b \in B$ is an element such that the centralizer $C_B(b)$ is proper in $B$ and Noetherian then the centralizer $C_{B_1}(b) = 1\cdot K_1 \oplus C_B(b)$ is also Noetherian.  By Theorem \ref{th:elem-classif-unital} $B_1 \simeq \MA_{K_1}(X)$. It remains to be seen that in this case $B \simeq \MA_{K_1}^0(X)$. Since the field and its action is definable in $\MA_K^0(X)$,  the set of free bases  of $\MA_K^0(X)$ can be defined  in the theory of $\MA_K^0(X)$ as in Theorem \ref{th:bases}.   Notice that in this case we know from the interpretation we use that there exist bases which all their components belong  to the direct summond $\MA_K(X)^0$ of  the interpretation $1 \cdot K \oplus {\mathbb A}^0_K(X)$ of $\MA_K(X)$  in $\MA_K(X)^0$ from Theorem \ref{co:non-unitary}. This can be written by formulas in $\MA_K(X)^0$. It is easy to see by direct  inspection that the same formulas will define some bases of $B_1 \simeq \MA_{K_1}(X)$ that belong to $B$. Hence $B$ is isomorphic to $\MA_K(X)^0$, as claimed.
 \end{proof}

\subsection{Arbitrary rings elementarily equivalent to $\MA_K^0(X)$}\label{se:finite-width}

Let $A$ be an associative ring.  For $n \in N$ denote by $A^n$ the $n$-th power of $A$, i.e., the subgroup of the additive group $A^+$ generated by all the products of the type $a_1 \ldots a_n$, where $a_i \in A$. In fact,  $A^n$ is a (two-sided) ideal of $A$. If $A$ is an algebra over a field $K$, then $A^n$ is the subspace generated by the products $a_1 \ldots a_n$. 
 $A$ is {\em nilpotent} of nilpotency class $c$ if $A^c\neq 0$, but $A^{c+1} = 0$, and $A$ is {\em residually nilpotent} if $\bigcap_{n=1}^\infty A^n = 0$. 

\begin{definition} We say that $A^n$ has finite width if there is a positive integer $k$ such that every element $a$ in $A^n$ is a sum of at most $k$ products of the type $a_1 \ldots a_n$, where $a_i \in A$.
The least such $k$ is termed the {\em width} of $A^n$ (denoted by $width(A^n)$).
\end{definition}

\begin{lemma}  \label{le:19}
The following holds:
\begin{enumerate}
\item [1)] for any $n,k \in \N$ there is a formula $\phi_{n,k}(y)$ that  defines without parameters the ideal $A^n$ in any ring $A$  with $width(A^n) = k$; 
\item [2)] there exists a first-order sentence  $W_{n,k}$ of ring theory such  that for any ring $A$
 $$
 A \models W_{n,k} \Longleftrightarrow width(A^n) = k.
 $$
\end{enumerate}
\end{lemma}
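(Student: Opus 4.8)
The plan is to write down explicit first-order formulas capturing ``being a product of $n$ elements'' and ``being a sum of $k$ such products,'' and then use the fact that $A^n$, being generated by the products $a_1\cdots a_n$, coincides with the set of finite sums of such products. The point of the finite-width hypothesis is precisely that this ``finite'' can be replaced by a fixed bound $k$, making the description first-order.

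First I would introduce, for each $n$, the formula
$$
\pi_n(y) \;=\; \exists z_1 \ldots \exists z_n\ \bigl(y = z_1 z_2 \cdots z_n\bigr),
$$
which holds in a ring $A$ on an element $a$ exactly when $a$ is a single product of $n$ elements of $A$. Then for each $n,k$ set
$$
\phi_{n,k}(y) \;=\; \exists u_1 \ldots \exists u_k\ \Bigl(\bigwedge_{i=1}^{k}\pi_n(u_i)\Bigr) \wedge \Bigl(y = u_1 + u_2 + \cdots + u_k\Bigr).
$$
Since the additive group $A^+$ is abelian and $0$ is a product of $n$ elements (namely $0 = 0\cdot a_2\cdots a_n$ when $n\ge 1$), any sum of \emph{at most} $k$ products of length $n$ is in particular a sum of \emph{exactly} $k$ such products (pad with zeros), so $\phi_{n,k}(y)$ defines the set of all elements expressible as a sum of at most $k$ products $a_1\cdots a_n$. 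Now if $A$ is a ring with $\mathrm{width}(A^n)=k$, then by definition every element of $A^n$ is such a sum, and conversely every such sum lies in the ideal $A^n$; hence $\phi_{n,k}$ defines $A^n$ in $A$, with no parameters and uniformly in all such $A$. This proves 1).

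For 2) I would express ``$\mathrm{width}(A^n)=k$'' as the conjunction of two conditions: (a) every element of $A$ that is a sum of $n$-fold products (of any length, i.e.\ every element of the additively-generated subgroup) is already a sum of at most $k$ of them, and (b) $k$ is least with this property, i.e.\ there exists an element which is \emph{not} a sum of at most $k-1$ such products. Condition (b), for $k\ge 1$, is just $\exists y\, \neg\phi_{n,k-1}(y)$ (and for $k=0$ it is vacuous / handled separately, meaning $A^n=0$, i.e.\ $\forall y\,(\pi_n(y)\to y=0)$). Condition (a) requires a little care because ``sum of finitely many products'' is not a priori first-order; the clean way is to observe that the additive subgroup generated by the products of length $n$ is closed under addition, so it suffices to require that the \emph{set defined by} $\phi_{n,k}$ is closed under addition of one more product — that is,
$$
W_{n,k}^{(a)} \;=\; \forall y\, \forall z\ \bigl(\phi_{n,k}(y)\wedge \pi_n(z)\bigr)\ \rightarrow\ \phi_{n,k}(y+z),
$$
together with closure under negation, $\forall y\,(\phi_{n,k}(y)\to\phi_{n,k}(-y))$, which is automatic since $-(a_1\cdots a_n)=(-a_1)a_2\cdots a_n$ is again a product of length $n$. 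Once the set $S$ defined by $\phi_{n,k}$ is closed under adding a single length-$n$ product and contains $0$, an easy induction (carried out in the metatheory, not inside the formula) shows $S$ contains every finite sum of length-$n$ products, hence $S\supseteq A^n$; the reverse inclusion $S\subseteq A^n$ is immediate. So $S=A^n$ and every element of $A^n$ is a sum of at most $k$ products, i.e.\ $\mathrm{width}(A^n)\le k$. Combining with (b) gives $\mathrm{width}(A^n)=k$, and conversely if $\mathrm{width}(A^n)=k$ then by 1) $S=A^n$, which is visibly closed under adding a product and is a proper superset of the set defined by $\phi_{n,k-1}$, so $W_{n,k}:=W_{n,k}^{(a)}\wedge(\exists y\,\neg\phi_{n,k-1}(y))$ (with the $k=0$ and $k=1$ cases treated as above) holds. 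The only mild obstacle is this bookkeeping point — that ``sum of finitely many products'' becomes first-order only after one reformulates it as ``closed under adding one product,'' and that one must keep the induction establishing $S\supseteq A^n$ outside the formula — but no real difficulty arises, and the sentences $\phi_{n,k},W_{n,k}$ are manifestly uniform in $A$.
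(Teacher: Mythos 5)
Your part 1) and the upper-bound half of part 2) are essentially the paper's argument: the paper uses the same formula $\phi_{n,k}$ (a sum of exactly $k$ products of length $n$, with padding by zero products giving ``at most $k$''), and its sentence $\psi_{n,k}=\forall y\,(\phi_{n,k+1}(y)\rightarrow\phi_{n,k}(y))$ is logically the same as your closure condition $W^{(a)}_{n,k}$: both say that the set $S$ defined by $\phi_{n,k}$ absorbs one more length-$n$ product, whence $S=A^n$ and $\mathrm{width}(A^n)\le k$.

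There is, however, a genuine error in your minimality clause. You formalize ``$k$ is least'' as $\exists y\,\neg\phi_{n,k-1}(y)$, where $y$ ranges over all of $A$. But the witness must lie in $A^n$: an element outside $A^n$ is automatically not a sum of $k-1$ products, so your clause is satisfied for trivial reasons whenever $A^n\neq A$, and it does not force $\mathrm{width}(A^n)>k-1$. Concretely, take $A$ a nonzero ring with zero multiplication and $n=2$. Then $A^2=0$ and $\mathrm{width}(A^2)=1$, yet $W^{(a)}_{2,k}$ holds vacuously and any $y\neq 0$ satisfies $\neg\phi_{2,k-1}(y)$ for every $k\ge 1$, so your $W_{2,k}$ holds simultaneously for all $k\ge 1$ — the sentences cannot be detecting the width. (A unital example: $A=2\mathbb{Z}$, $n=2$, where $A^2=4\mathbb{Z}$ has width $1$ but $y=2$ witnesses $\neg\phi_{2,1}$, so your $W_{2,2}$ holds.) The fix is to relativize the existential to $A^n$, i.e.\ replace your clause by $\exists y\,\bigl(\phi_{n,k}(y)\wedge\neg\phi_{n,k-1}(y)\bigr)$; this is exactly the paper's $\neg\psi_{n,k-1}$, and with that change your argument goes through.
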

\begin{proof}  
Put
$$
\phi_{n,k}(y) = \exists w_{11},\ldots w_{1n} \ldots w_{k1} \ldots w_{kn}(y=\Sigma _{j=1}^k 
 w_{j1} \ldots w_{jn}).
$$
 It follows from the definitions that if $width(A^n) = k$ in some ring $A$ then $\phi_{n,k}(y)$ defines $A^n$ in $A$. This proves 1).

To show 2) consider a sentence  
$$
\psi _{n,k}=\forall y ( \phi_{n,k+1}(y) \rightarrow \phi_{n,k}(y))
$$
 which states that any sum of $k+1$  $n$-products of  elements in a ring is in fact a sum of $k$ $n$-products of elements. Clearly, for any ring $A$ 
$$
width(A^n)\leq k \Longleftrightarrow A \models \psi _{n,k}.
 $$
 Therefore 
 $$
 width(A^n) = k \Longleftrightarrow A \models \psi _{n,k} \wedge \neg \psi _{n,k-1} .
 $$
\end{proof}

Now let  $\MA = \MA_K^0(X)$.   Set $r= |X|$.

\begin{lemma} \label{width} The following holds in $\MA_K^0(X)$:
\begin{itemize}
\item [1)] For any $n$, $width(\MA^n)\leq r^n$;
\item [2)] $\MA$ is residually nilpotent, i.e., $\bigcup_{n = 1}^\infty \MA^n = 0$;
\item [3)] for any $n$ $\MA/\MA^n$ is a free  nilpotent associative algebra $N_{n,K}(X)$ over $K$ of class $n$  and rank $r$.
\end{itemize}
\end{lemma}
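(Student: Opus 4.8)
\textbf{Proof plan for Lemma \ref{width}.}

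The plan is to work directly with the natural grading of $\MA = \MA_K^0(X)$ by monomial degree. Write $\MA = \bigoplus_{d \geq 1} \MA_{(d)}$, where $\MA_{(d)}$ is the $K$-span of all monomials of degree exactly $d$ in the letters $X$; this is the free (non-unital) associative algebra, so the monomials in $X$ form a $K$-basis and degree is additive under multiplication. First I would observe that $\MA^n = \bigoplus_{d \geq n} \MA_{(d)}$: indeed every product $a_1 \cdots a_n$ of elements of $\MA$ is, after expanding each $a_i$ into homogeneous components, a $K$-linear combination of monomials of degree $\geq n$ (each factor contributes degree $\geq 1$), and conversely every monomial $w$ of degree $d \geq n$ factors as $w = u_1 \cdots u_{n-1} v$ with each $u_i$ a single letter and $v$ the remaining suffix of degree $d - n + 1 \geq 1$, hence lies in $\MA^n$. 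This identification immediately gives residual nilpotency (2): an element of $\bigcap_n \MA^n$ would have all homogeneous components of degree $\geq n$ for every $n$, hence be $0$.

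For the width bound (1), fix $n$ and take any $z \in \MA^n = \bigoplus_{d\ge n}\MA_{(d)}$. Using the basis of monomials, $z = \sum_w \alpha_w w$ where each $w$ has degree $\geq n$. The key point is that for each of the $r^n$ length-$n$ words $\ell = y_1\cdots y_n$ over $X$, I can collect all basis monomials $w$ of $z$ whose first $n$ letters are exactly $\ell$; such a monomial is $w = \ell w'$ with $w'$ a (possibly empty-product, i.e.\ the monomial is exactly $\ell$) suffix — more carefully, write $w = y_1 y_2 \cdots y_{n-1}(y_n w')$ so that $w$ is the $n$-fold product $y_1 \cdot y_2 \cdots y_{n-1}\cdot(y_n w')$. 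Summing the corresponding scalars into the last factor, the partial sum $z_\ell := \sum_{w:\,\mathrm{prefix}(w,n)=\ell}\alpha_w w$ equals the single $n$-product $y_1\cdot y_2\cdots y_{n-1}\cdot f_\ell$ where $f_\ell = \sum \alpha_w (y_n w') \in \MA$. Since $z = \sum_{\ell}z_\ell$ ranges over the $r^n$ possible prefixes $\ell$, $z$ is a sum of at most $r^n$ products of $n$ elements, so $width(\MA^n)\le r^n$. (One must be mildly careful with degree-$n$ monomials, where the "suffix" $w'$ is empty; then the last factor is just the single letter $y_n$ times the scalar, which is still an element of $\MA$, so the argument is unaffected.)

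For (3), the identification $\MA^n = \bigoplus_{d\ge n}\MA_{(d)}$ shows $\MA/\MA^n \cong \bigoplus_{1\le d < n}\MA_{(d)}$ as a $K$-algebra, with multiplication that truncates any product of total degree $\ge n$ to $0$. I would then invoke the universal property: $\MA/\MA^n$ is generated by the images $\bar x_1,\dots,\bar x_r$ of the basis elements, it is nilpotent of class $\le n$ (product of $n$ elements lies in $\MA^n$, hence is $0$), and it is class exactly $n$ since the monomial $x_1^n$ has degree $n$ but, e.g., $x_1^{n-1}$ is nonzero in the quotient. Any map of $X$ into a nilpotent associative $K$-algebra of class $\le n$ extends uniquely to $\MA$ and kills $\MA^n$, hence factors through $\MA/\MA^n$; this is precisely the universal property defining the free nilpotent associative algebra $N_{n,K}(X)$ of class $n$ and rank $r$, so $\MA/\MA^n \cong N_{n,K}(X)$.

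The main obstacle is purely bookkeeping: getting the prefix-grouping argument in (1) stated cleanly so that the degree-$n$ boundary case and the "trailing scalar absorbed into the last factor" step are unambiguous, and making sure the factorization $w = y_1\cdots y_{n-1}\cdot(y_n w')$ is legitimate in the \emph{non-unital} algebra (it is, because $y_n w'$ — or just $y_n$ when $w'$ is empty — is a genuine element of $\MA$, no unit required). Everything else is a direct consequence of the monomial basis and degree additivity, both of which are standard features of free associative algebras. Note this lemma is stated only about $\MA_K^0(X)$, so no interpretation machinery is needed here; it is the combinatorial input that will later be combined with Lemma \ref{le:19} to get first-order consequences.
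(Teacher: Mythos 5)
Your proof is correct and part 1) follows essentially the same prefix-collecting argument as the paper: group the monomials of an element of $\MA^n$ by their initial word of $n$ letters, obtaining at most $r^n$ products of $n$ factors. For parts 2) and 3) the paper simply cites them as well known (referring to Procesi's book), whereas you supply the short direct arguments via the grading $\MA^n=\bigoplus_{d\ge n}\MA_{(d)}$ and the universal property; these are correct and harmless additions.
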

 \begin{proof}   Every monomial of degree at least $n$ begins with the product of $n$ letters from the basis $X$. So collecting all summands $w_{j1} \ldots w_{jn}$ in an element $a = \Sigma _{j=1}^k w_{j1} \ldots w_{jn}$ from $\MA^n$ with the same initial product one gets a sum of at most $r^n$ products.
 
 2) and 3)  are well known, can be found, for example, in \cite{poly}.

 \end{proof}

Recall (see, for example \cite{Baum67a,Baum67b}) that a $K$-algebra $R$ is {\em para-free} if it is residually nilpotent and for any $n \in\mathbb{N}$ $R/R^n \simeq \MA/\MA^n$ as $K$-algebras.

\begin{theorem} \label{th:width}
If a ring $B$ is elementarily equivalent to a free associative algebra ${\mathbb A}^0_K(X)$ of rank $n$ ,  then $B$ is an associative algebra over a field $K_1$, such that:
\begin{itemize}
 \item $K_1$ is elementarily equivalent to $K$,
\item $B/B^n\equiv C_n$, where $C_n$ is a free $n$-nilpotent associative algebra with basis $X$ over the field $K_1$.\end{itemize}
In particular, if $B$ is residually nilpotent, then $B$ is para-free.
\end{theorem}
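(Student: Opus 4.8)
The plan is to combine the finite-width machinery (Lemma \ref{le:19} and Lemma \ref{width}) with the definability of the field established in Theorem \ref{th:field}, and then invoke the interpretation results that were just proved for the non-unitary case. First I would record that by Lemma \ref{width} we have $width(\MA^m)\leq r^m$ for every $m$ in $\MA = \MA^0_K(X)$, and in particular $width(\MA^m)$ is finite and equal to some explicit number $k_m$; hence for each $m$ the sentence $W_{m,k_m}$ of Lemma \ref{le:19} part 2) holds in $\MA^0_K(X)$, and therefore holds in $B$ since $B\equiv\MA^0_K(X)$. Consequently $width(B^m)=k_m$ is finite for every $m$, and by Lemma \ref{le:19} part 1) the ideal $B^m$ is defined in $B$ by exactly the same parameter-free formula $\phi_{m,k_m}(y)$ that defines $\MA^m$ in $\MA$.

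Next I would handle the field. By Theorem \ref{th:field} (respectively Remark \ref{re:def-field} when $|X|=1$) the field $K$, together with its scalar action on $\MA^0_K(X)$, is $0$-interpretable in $\MA^0_K(X)$ by formulas that do not depend on $K$; moreover one can write a ring-language sentence asserting that this interpreted structure is a field that acts as a maximal ring of scalars making $B$into a central algebra over it. Since $B\equiv\MA^0_K(X)$, the same formulas interpret in $B$ a field $K_1$ acting on $B$, so $B$ is a $K_1$-algebra, and by Lemma \ref{le:interpr_corol} (transfer of sentences across $0$-interpretations) we get $K_1\equiv K$. This gives the first bullet. For the second bullet, observe that by Lemma \ref{width} part 3) the quotient $\MA/\MA^n$ is the free $n$-nilpotent associative algebra $N_{n,K}(X)$ of rank $r$ over $K$; since $\MA^n$ is $0$-definable in $\MA$ by $\phi_{n,k_n}$ and $B^n$ is $0$-definable in $B$ by the same formula, the quotient rings $\MA/\MA^n$ and $B/B^n$ are $0$-interpretable by identical formulas, hence $B/B^n\equiv\MA/\MA^n=N_{n,K}(X)$. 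Writing $C_n$ for the free $n$-nilpotent associative $K_1$-algebra with basis $X$, one has $N_{n,K_1}(X)\equiv N_{n,K}(X)$ because $K_1\equiv K$ and $N_{n,-}(X)$ is uniformly interpretable in its coefficient field (its underlying module is finite-dimensional, so the whole algebra is interpretable in the field by a fixed formula); composing these equivalences yields $B/B^n\equiv C_n$.

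Finally, for the "in particular" clause, suppose $B$ is residually nilpotent, i.e. $\bigcap_{m=1}^\infty B^m=0$. We have just shown $B$ is a $K_1$-algebra with $K_1\equiv K$ and $B/B^m\equiv \MA/\MA^m$ for every $m$. To upgrade elementary equivalence of the nilpotent quotients to isomorphism, I would use that each free $n$-nilpotent associative algebra $N_{n,K_1}(X)$ of finite rank is finite-dimensional over $K_1$ and is determined among finite-dimensional $K_1$-algebras of its dimension by a first-order description of its structure constants relative to the interpreted field; since $B/B^m$ satisfies exactly this description (being elementarily equivalent to $N_{m,K}(X)$ and a $K_1$-algebra), we conclude $B/B^m\simeq N_{m,K_1}(X)$ as $K_1$-algebras. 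Together with residual nilpotence this is precisely the definition of a para-free $K_1$-algebra (recalled just before the statement), so $B$ is para-free, completing the proof. The main obstacle I anticipate is the last step: passing from $B/B^m\equiv N_{m,K}(X)$ to an actual $K_1$-algebra isomorphism $B/B^m\simeq N_{m,K_1}(X)$ requires that a finite-dimensional free nilpotent algebra be pinned down up to isomorphism by first-order data over its interpreted coefficient field — this is routine because of finite-dimensionality (choose a basis adapted to the lower central filtration and record the finitely many structure constants), but it must be stated carefully so that the isomorphism is genuinely over $K_1$ and not just an abstract ring isomorphism.
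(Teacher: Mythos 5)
Your proposal is correct, and its first half --- using Lemma \ref{width} to bound $width(\MA^m)$, Lemma \ref{le:19} to transfer each sentence $W_{m,k_m}$ to $B$ and to define $B^m$ by the same parameter-free formula $\phi_{m,k_m}$, and then concluding $B/B^m\equiv\MA/\MA^m$ from the properties of $0$-interpretations --- is exactly the paper's argument. Where you genuinely diverge is the endgame. The paper obtains the field $K_1$ and the identification of $B/B^m$ in one stroke by citing \cite{M1}, the elementary classification of finite-dimensional algebras: any ring elementarily equivalent to the finite-dimensional algebra $N_{m,K}(X)$ is itself $N_{m,K_1}(X)$ for some field $K_1\equiv K$. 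You instead build this internally: you extract $K_1$ and its action on $B$ from Theorem \ref{th:field} (Remark \ref{re:def-field} in rank one), transfer the sentence ``the interpreted ring of scalars is a field'' via Lemma \ref{le:interpr_corol} and Corollary \ref{co:interp}, and then pin down $B/B^m$ by a single ring-language sentence asserting the existence of elements whose images form a $K_1$-basis of the quotient with the $0$--$1$ structure constants of the monomial basis of $N_{m,K}(X)$. That sentence is legitimately first-order because $K_1$, its action, and $B^m$ are all $0$-definable in $B$, and its truth in $\MA^0_K(X)$ forces $B/B^m\simeq N_{m,K_1}(X)$ as a $K_1$-algebra. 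Your version is more self-contained --- it replaces the external citation by the same transfer device the paper itself deploys in Theorem \ref{th:elem-classif-unital} --- at the cost of the care you already flag: the field must be interpreted in $B$ itself rather than in the nilpotent quotient, whose maximal ring of scalars need not be $K$. Both routes arrive at the same reading of the final clause, namely that $B$ is para-free as a $K_1$-algebra with $K_1\equiv K$, which is evidently what the paper intends.
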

\begin{proof}  Lemmas  \ref{width} and \ref{le:19} imply that the ideals $\MA_K(X)^n$  are 0-definable in ${\mathbb A}^0_K(X)$ by formulas of the ring language and that  the same formulas define in the ring $B$ the ideals  $B^n$. Hence from the properties of 0-interpretations it follows that  $B/B^n\equiv \MA/\MA^n$ for every $n$.  By Lemma \ref{width}  the algebra $\MA/\MA^n$ is a free  nilpotent associative algebra $N_{n,K}(X)$ over $K$ of class $n$  and rank $r = |X|$.   It was shown in \cite{M1} that in this case $B/B^n$ is  a free  nilpotent associative algebra $N_{n,K_1}(X)$ over $K_1$ of class $n$  and rank $r = |X|$, where $K \equiv K_1$. This proves the theorem.

\end{proof}

The following result shows that there non-para-free algebras that are  elementarily equivalent to $\MA_K^0(X)$.
\begin{theorem}\label{th:non-res-nilpotent}
There is a countable  not residually nilpotent algebra $B$ such that $B \equiv \MA_K^0(X)$.
\end{theorem}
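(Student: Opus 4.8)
The strategy is a standard compactness/ultraproduct construction, modified so that the resulting algebra is genuinely not residually nilpotent. First I would recall from Lemma \ref{width} that $\MA = \MA_K^0(X)$ is residually nilpotent: $\bigcap_{n\geq 1}\MA^n = 0$. By Lemmas \ref{width} and \ref{le:19} each ideal $\MA^n$ has finite width (at most $r^n$), so $\MA^n$ is $0$-definable in $\MA$ by the formula $\phi_{n,r^n}(y)$, uniformly. The key observation is that residual nilpotency is \emph{not} a first-order property precisely because it is an infinite conjunction of the first-order facts "$a\notin \MA^n$" over all $n$, and each of these is expressible but their conjunction over a \emph{fixed} witness is not. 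So I would introduce a new constant symbol $c$ to the ring language $L$ and consider the theory
$$
T = Th(\MA_K^0(X)) \cup \{\, \neg\phi_{n,r^n}(c) \to \mathrm{false} \;:\; n\in\N \,\}
$$
— more precisely, $T = Th(\MA_K^0(X)) \cup \{\phi_{n,r^n}(c) : n\in\N\} \cup \{c \neq 0\}$, asserting that $c$ is a nonzero element lying in $\MA^n$ for every $n$.

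**Compactness step.** Every finite subset $T_0$ of $T$ involves only finitely many of the sentences $\phi_{n,r^n}(c)$, hence only finitely many $n$; taking $N$ larger than all of them, it suffices to realize $\phi_{N,r^N}(c) \wedge c\neq 0$ together with $Th(\MA_K^0(X))$. But in $\MA_K^0(X)$ itself the ideal $\MA^N$ is nonzero (e.g. the monomial $x_1^N$ lies in it), so interpreting $c$ as $x_1^N$ shows $\MA_K^0(X) \models T_0$ under a suitable expansion. Hence $T_0$ is consistent, and by compactness $T$ has a model $\langle B, c_B\rangle$. Its $L$-reduct $B$ satisfies $Th(\MA_K^0(X))$, so $B \equiv \MA_K^0(X)$, yet $c_B$ is a nonzero element of $\bigcap_{n\geq 1} B^n$ (using again that $\phi_{n,r^n}$ defines $B^n$ in any such $B$ by Lemma \ref{le:19}, since $B$ inherits $width(B^n) = r^n$ from the theory), so $B$ is not residually nilpotent. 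Finally, by downward Löwenheim–Skolem I would take $B$ countable (the element $c_B$ can be kept in the elementary substructure), giving a countable $B \equiv \MA_K^0(X)$ that is not residually nilpotent, hence not para-free by the contrapositive of Theorem \ref{th:width}.

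**Main obstacle.** The only subtle point — and the place I would be careful — is confirming that $\phi_{n,r^n}(y)$ really does define $B^n$ in \emph{every} model $B$ of $Th(\MA_K^0(X))$, not just in $\MA_K^0(X)$ itself. This needs $width(B^n) \le r^n$, which by Lemma \ref{le:19}(2) is expressed by the sentence $W_{n,r^n}$ (or $\psi_{n,r^n}$); since $\MA_K^0(X)\models W_{n,r^n}$ by Lemma \ref{width}(1), this sentence is in $Th(\MA_K^0(X))$ and so holds in $B$. Thus $\phi_{n,r^n}(y)$ defines $B^n$, and the argument in Theorem \ref{th:width} already shows $B/B^n$ is the free nilpotent algebra over a field $K_1\equiv K$ — so the whole infrastructure transfers. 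Everything else is routine compactness. An alternative, parameter-free presentation avoids the extra constant: take a nonprincipal ultrapower $\MA^{*}$ of $\MA$ and observe directly that, in $\MA^{*}$, the "diagonal" element $[(x_1^n)_{n}]$ is nonzero but lies in $\bigcap_n (\MA^{*})^n$; then cut down to a countable elementary substructure containing it. Either route works; I would present the compactness version as it makes the "not residually nilpotent" conclusion most transparent.
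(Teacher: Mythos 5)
Your proof is correct and follows essentially the same route as the paper's: the paper realizes the $1$-type $\Phi = \{\phi_{n,r^n}(y) \mid n \in \N\} \cup \{y \neq 0\}$ in a countable model of $Th(\MA_K^0(X))$, which is exactly your compactness-plus-L\"owenheim--Skolem argument with the auxiliary constant $c$ interpreted by $x_1^N$ for the finite fragments. Your additional check that $\phi_{n,r^n}(y)$ still defines $B^n$ in any $B \equiv \MA_K^0(X)$ (via Lemma \ref{le:19} and the width bound of Lemma \ref{width} transferring as a first-order sentence) is a detail the paper leaves implicit, and it is correct.
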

\begin{proof} Consider $\MA_K^0(X)$ with $|X| = r$.
Consider the following infinite set of formulas of the first-order language of ring theory in one variable $y$:
$$
\Phi  = \{  \phi_{n,r^n}(y) \mid n \in \N\} \cup \{y \neq 0\} 
$$
It is clear that any finite subset  of formulas from $\Phi$ can be satisfied in $\MA_K^0(X)$  on some particular element. Indeed, every finite subset $\Phi_0 \subset \Phi$  states that $y$ belongs to $\MA^n$, where 
$n$ is the largest index that occur in the formulas $\phi_{n,r^n} \in \Phi_0$. It follows that $\Phi$ is a set of formulas that is locally consistent with the theory $Th(\MA_K^0(X))$, i.e., it is 1-type in $Th(\MA_K^0(X))$. Therefore there is a countable model $B$ of $Th(\MA_K^0(X))$ that realizes this type, say on an element $b \in B$. Then $B \equiv \MA_K^0(X)$  and $0\neq b \in \bigcap_{n = 1}^\infty B^n$.

\end{proof}

\section{Some open problems for free associative algebras}

\begin{problem}
 Are free associative algebras $\MA_K(X)$ equationally Noetherian?
 \end{problem}
 Recall that a ring $R$ is called equationally Noetherian if every infinite system of equation in finitely many variables with constants from $R$  is equivalent over $R$ (has the same solution set) to some of its finite subsystems.

\begin{problem}
Is it true that any finitely generated central algebra where all proper centralizers are commutative has at least one Noetherian  proper centralizer.
\end{problem}

If the answer to the problem above is affirmative then Theorem \ref{th:elem-classif-unital} will give a description of all such algebras that are elementarily equivalent to $\MA_K(X)$.

 In view of Theorems \ref{th:width} and \ref{th:non-res-nilpotent}  in Section \ref{se:finite-width} the following problem is of prime interest in our study of rings which are elementarily equivalent to $\MA_K^0(X)$.
 \begin{problem}
 Describe  para-free central algebras that are elementarily equivalent to $ \MA_K^0(X)$
 \end{problem}

\end{document}